\newtheorem{theorem}{Theorem}[section]
\newtheorem{proposition}[theorem]{Proposition}
\newtheorem{corollary}[theorem]{Corollary}
\newtheorem{lemma}[theorem]{Lemma}
\newtheorem{remark}[theorem]{Remark}
\newtheorem{defi}[theorem]{Definition}
\newtheorem{claim}[theorem]{Claim}
\newtheorem{op}[theorem]{Open Questions}
\newcommand{\scal}[2]{\left\langle #1,#2 \right\rangle}
\newcommand{\g}{\nabla}
\newcommand{\di}{\mathrm{div}}
\newcommand{\lap}{\Delta}
\newcommand{\dr}{\partial}
\newcommand{\vol}{\mathrm{vol}}
\newcommand{\diam}{\mathrm{diam}}
\newcommand{\dist}{\mathrm{dist}}
\newcommand{\Span}{\mathrm{Span}}
\newcommand{\tr}{\mathrm{tr}}
\newcommand{\II}{\mathrm{I\!I}}
\newcommand{\IIr}{\mathring{\II}}
\newcommand{\Riem}{\mathrm{Riem}}
\newcommand{\Ric}{\mathrm{Ric}}
\newcommand{\Conf}{\mathrm{Conf}}
\newcommand{\proj}{\mathrm{proj}}
\newcommand{\Id}{\mathrm{Id}}
\newcommand{\Int}{\mathrm{Int}}
\newcommand{\loc}{\mathrm{loc}}
\newcommand{\Imm}{\mathrm{Imm}}
\newcommand{\R}{\mathbb{R}}
\newcommand{\N}{\mathbb{N}}
\newcommand{\s}{\mathbb{S}}
\newcommand{\B}{\mathbb{B}}
\newcommand{\V}{\mathbb{V}}
\newcommand{\Hr}{\mathcal{H}}
\newcommand{\Dr}{\mathcal{D}}
\newcommand{\Er}{\mathcal{E}}
\newcommand{\Cr}{\mathcal{C}}
\newcommand{\Ur}{\mathcal{U}}
\newcommand{\Sr}{\mathcal{S}}
\newcommand{\Or}{\mathcal{O}}
\newcommand{\pr}{\mathcal{P}}
\newcommand{\Gr}{\mathcal{G}}
\newcommand{\Br}{\mathcal{B}}
\newcommand{\I}{\mathcal{I}}
\newcommand{\Sc}{\mathscr{S}}
\newcommand{\Ic}{\mathscr{I}}
\newcommand{\Bc}{\mathscr{B}}
\newcommand{\Cc}{\mathscr{C}}
\newcommand{\Vc}{\mathscr{V}}
\newcommand{\Gc}{\mathscr{G}}
\newcommand{\Pc}{\mathscr{P}}
\newcommand{\Ec}{\mathscr{E}}
\newcommand{\Arond}{\mathring{A}}
\newcommand{\vp}{\varphi}
\newcommand{\eps}{\varepsilon}
\newcommand{\geu}{g_{\mathrm{eucl}}}
\newcommand{\ve}{\vec{e}\, }
\newcommand{\vf}{\vec{f}}
\newcommand{\vu}{\vec{u}}
\newcommand{\vv}{\vec{v}}
\newcommand{\vy}{\vec{y}}
\newcommand{\vq}{\vec{q}}
\newcommand{\vep}{\vec{p}}
\newcommand{\vn}{\vec{n}}
\newcommand{\vtau}{\vec{\tau}}
\newcommand{\vnu}{\vec{\nu}}
\newcommand{\vx}{\vec{x}}
\newcommand{\vPhi}{\vec{\Phi}}
\newcommand{\vPsi}{\vec{\Psi}}
\newcommand{\vphi}{\vec{\phi}}
\newcommand{\vpsi}{\vec{\psi}}
\newcommand{\vII}{\vec{\II}}
\newcommand{\vH}{\vec{H}}
\newcommand{\vz}{\vec{z}}
\newcommand{\vA}{\vec{A}}
\newcommand{\vQ}{\vec{Q}}
\newcommand{\vpi}{\vec{\pi}}
\newcommand{\vsigma}{\vec{\sigma}}
\newcommand{\vtheta}{\vec{\theta}}
\newcommand{\vpit}{\vec{\pi}_{\top}}
\newcommand{\vpipe}{\vec{\pi}_{\perp}}
\newcommand{\vompe}{\vec{\omega}_{\perp}}
\newcommand{\pipa}{\pi_{\parallel}}
\newcommand{\ompa}{\omega_{\parallel}}
\newcommand{\vpip}{\vec{\pi}_{\Pc}}
\newcommand{\vpipp}{\vec{\pi}_{\Pc^{\perp}}}
\title{Weak immersions with second fundamental form in a critical Sobolev space}
\date{\today}
\author[1]{Dorian Martino}
\author[2]{Tristan Rivière}
\affil[1,2]{Department of Mathematics, ETH Zürich, 101 Rämistrasse, 8092 Zürich, Switzerland. Email: dorian.martino@math.ethz.ch}
\begin{document}
	
	\maketitle
	
	\begin{abstract}
		We develop the analysis of  Lipschitz immersions of $n$-dimensional manifolds into $\mathbb{R}^d$ having their second fundamental forms bounded in the critical Sobolev space $W^{\frac{n}{2}-1,2}$ in dimension $n\geq 4$ even and any codimension. We prove that, while such a weak immersion is not necessary $C^1$, it generates a $C^1$ differential structure on the domain. More precisely, for any such an immersion, there is an atlas in which the first fundamental form is continuous and the transition maps are $C^1$. We prove that this $C^1$ structure is diffeomorphic to the original one. This result is the starting point of the analysis of the behavior of sequences of weak immersions with second fundamental forms  uniformly bounded in the critical Sobolev space $W^{\frac{n}{2}-1,2}$. In the second part of the paper we establish a weakly sequential closure theorem for such sequences. This analysis is motivated by the study of conformally invariant Lagrangian of immersions in dimension larger than two such as generalized Willmore energies, for instance the Graham--Reichert functional obtained in the computation of renormalized volumes of five-dimensional minimal submanifolds of the hyperbolic space $\mathbb{H}^{d+1}$. 
	\end{abstract}

\section{Introduction}

Compactness questions in geometric analysis are fundamental problems underlying the study of moduli spaces, geometric flows, and variational methods. For immersed submanifolds of the Euclidean space $\mathbb{R}^d$, establishing the compactness of critical points of geometric functionals (such as minimal submanifolds, CMC surfaces, or Willmore surfaces) first requires constructing an atlas satisfying a certain system in which the metric induced by the immersion is controlled, for instance by its volume, diameter, or second fundamental form. Once such an atlas has been constructed, one can use the associated system to obtain control of the immersion itself in a topology of higher regularity (for instance $C^k$ for large $k \geq 0$). In the present work, we are interested in the closure of $C^{\infty}$ immersions into $\mathbb{R}^d$ under a bound on their second fundamental form in certain Sobolev spaces. The motivation for studying such closure properties comes from the analysis of variational problems associated with conformally or scaling invariant Lagrangians of immersions in arbitrary dimensions and codimensions. Throughout the paper, we consider domains $\Sigma$ that are orientable $C^{\infty}$ manifolds of dimension $n$.\\

In the case $n=2$, given a Riemann surface $\Sigma$ and a $C^{\infty}$ immersion $\vPhi \in \Imm(\Sigma; \mathbb{R}^d)$, a controlled atlas can be obtained using the uniformization theorem. Indeed, the induced metric $g_{\vPhi}$ is conformal to a metric $h$ with constant Gaussian curvature on $\Sigma$. Thanks to the Deligne--Mumford compactness theorem, the degeneration of $h$ in the Teichmüller space of $\Sigma$ is governed by the length of its shortest geodesic, see for instance \cite{tromba1992}. Laurain and the second author proved in \cite{laurain2018} that one can construct an atlas of isothermal coordinates for $h$, where the Green kernel of $h$ is controlled solely by the topology of $\Sigma$. To obtain a controlled atlas for $g_{\vPhi}$, it remains to estimate the conformal factor, which satisfies the Liouville equation. Hence, the construction of an atlas of isothermal coordinates for $\vPhi$ reduces to understanding the Deligne--Mumford compactness and the Liouville equation. Using Coulomb frames, Hélein \cite{helein2002} proved that the Liouville equation possesses a div--curl structure, which provides a uniform control of the conformal factor on domains where the $L^2$-norm of the second fundamental form is sufficiently small. One can then study the closure of the set of immersions under a bound on the $L^2$-norm of the second fundamental form. This setting was studied by Mondino and the second author in \cite{mondino2014}, where they proved that the worst possible degeneration that may occur is a bubble tree of \emph{weak} immersions, that is, a finite collection of weak immersions with isolated singularities (branch points) glued together along their singularities. Weak immersions were introduced by the second author in \cite{riviere2014} in order to provide a suitable analytic framework for developing variational problems on submanifolds. Indeed, it is shown in \cite{mondino2014} that, in the same way that Sobolev spaces are necessary to study elliptic PDEs, the setting of $C^{\infty}$ immersions must be enlarged. We now define the space of weak immersions $\mathcal{I}_{k,p}(\Sigma; \mathbb{R}^d)$, already discussed in \cite{MarRiv2025}.

\begin{defi}
	Let $n\geq 1$ be an integer, $(\Sigma,h)$ be a closed orientable $n$-dimensional Riemannian manifold and $d>n$ be an integer. Given $k\in\N$ and $p\in[1,+\infty]$, we define the notion of weak immersion $\I_{k,p}(\Sigma;\R^d)$ as follows:
	\begin{align*}
		\I_{k,p}(\Sigma;\R^d)\coloneqq \left\{
		\vPhi\in W^{k+2,p}(\Sigma;\R^d) : \begin{array}{l}
			\displaystyle \exists c_{\vPhi}>0,\ c^{-1}_{\vPhi} h \leq g_{\vPhi} \leq c_{\vPhi}\, h.
		\end{array}
		\right\}
	\end{align*}     
\end{defi}

In dimension $n \geq 3$, the study of $C^{\infty}$-immersions $\vPhi \in \Imm(\Sigma; \mathbb{R}^d)$ is less straightforward, since there is no canonical way to construct coordinates depending solely on the topology of the manifold $\Sigma$. It has been known since the work of DeTurck--Kazdan \cite{deturck1981} that harmonic coordinates (introduced by Einstein and Lanczos \cite{einstein1916,lanczos1922} in the context of general relativity) provide an alternative to isothermal coordinates: they always exist locally on any manifold and in any dimension, and yield the best possible regularity of the metric coefficients, depending on the regularity of its curvature. However, constructing and estimating the size of a domain carrying harmonic coordinates for a general metric $g$ on a general manifold $\Sigma^n$ is a delicate problem, see for instance \cite{czimek2019,anderson2004,yangI1992,yangII1992,jost1982}. For immersions $\vPhi \in \Imm(\Sigma^n; \mathbb{R}^d)$, such harmonic coordinates can be obtained once it is known that $\vPhi(\Sigma)$ can be represented locally as the graph of a function over a domain of fixed size. This property was established by Langer \cite{langer1985} in dimension $n=2$, and extended to higher dimensions by Breuning \cite{breuning2015} under the assumption that the second fundamental form $\vII_{\vPhi}$ of $\vPhi$ is bounded in some $L^p$ space with $p>n$. We refer to the works of Hutchinson \cite{hutchinson1986,hutchinson2} and Aiex \cite{aiex2024} for the definition of second fundamental forms in the setting of varifolds and for the study of variational problems in analogous non-critical cases. The critical case, where $\vII_{\vPhi}$ is bounded in $L^n$, is much more difficult and remains open, see for instance \cite{li2024} for a recent survey. Immersions in Sobolev spaces have already been studied by Mardare \cite{mardare2005,mardare2007}, Szopos \cite{szopos2008}, and later by Chen--Slemrod--Wang \cite{chen2010,chen20102}, who proved that one can reconstruct an immersion solely from the data of tensors in $L^p$ (for some $p>n$) satisfying the Gauss--Codazzi--Ricci equations in a weak sense. As a consequence of these works, the authors were also able to study the compactness of immersions with bounded second fundamental form in $L^p$, together with a non-degeneracy assumption on the induced metric. Still under this assumption, Li \cite{li2024} generalized these arguments to the case where the second fundamental form is bounded in a critical Morrey space, using compensated-compactness methods combined with Coulomb frames reminiscent of the work of the second author in collaboration with Struwe \cite{riviere2008}. We emphasize that, in the present work, no initial assumption is made on the induced metric, allowing for the presence of singularities. We work under the sole assumption that the second fundamental form is bounded in $W^{\frac{n}{2}-1,2}(\Sigma)$ (which embeds into $L^n(\Sigma)$ by Sobolev injection). This hypothesis precisely matches the requirements for studying scaling-invariant Lagrangians of immersions in arbitrary even dimensions, as we now describe.\\

From a physical perspective, there is a particular interest in considering weak immersions whose second fundamental form lies in some Sobolev space $W^{k,p}$. Indeed, the concept of renormalized volume in the AdS/CFT correspondence was introduced by Henningson--Skenderis \cite{henningson1998} to compute the Weyl anomaly. This procedure was further developed by Graham--Witten \cite{graham1999} for minimal submanifolds, motivated by the computation of various quantities such as the expectation values of Wilson loops and the entanglement entropy, see also \cite{graham2014} for applications. According to the area law introduced by Ryu--Takayanagi \cite{ryu2006}, the entanglement entropy of a quantum field theory in the AdS/CFT correspondence is expected to be proportional to the volume of certain minimal surfaces in non-compact manifolds. However, this volume is infinite. To address this issue, the idea of the \emph{renormalized volume} consists in expanding the volume asymptotically and keeping only the constant term in the expansion. For surfaces, Graham--Witten \cite{graham1999} recovered the Willmore energy, that is the $L^2$-norm of the mean curvature, as explained below. In general, they established the following formal expansion when the ambient manifold is the hyperbolic space $\mathbb{H}^{d+1} \simeq \mathbb{R}^d \times (0,+\infty)$ with asymptotic boundary $\partial_{\infty} \mathbb{H}^{d+1} = \mathbb{R}^d$. Let $Y^{n+1} \subset \mathbb{H}^{d+1}$ be a minimal submanifold with $\Sigma^n \coloneq \partial Y^{n+1} \subset \mathbb{R}^d$. Then the following expansion holds, depending on the parity of $n$:
\begin{align}\label{eq:anomaly}
	\vol_{\mathbb{H}^{d+1}}\!\left( Y^{n+1} \cap \{x^{d+1} > \varepsilon\} \right)
	\underset{\varepsilon \to 0}{=}
	\begin{cases}
		\displaystyle 
		\frac{a_n}{\varepsilon^n} + \frac{a_{n-2}}{\varepsilon^{n-2}} + \text{(odd powers)} + \frac{a_1}{\varepsilon} + a_0 + o(1), 
		& \text{if $n$ is odd}, \\[4mm]
		\displaystyle 
		\frac{b_n}{\varepsilon^n} + \frac{b_{n-2}}{\varepsilon^{n-2}} + \text{(even powers)} + \frac{b_2}{\varepsilon^2} + \mathcal{E} \log\!\left(\frac{1}{\varepsilon}\right) + b_0 + o(1),
		& \text{if $n$ is even}.
	\end{cases}
\end{align}
Since the left-hand side is an integral, all the coefficients $a_k$, $b_j$, and $\mathcal{E}$ are themselves integrals. Graham--Witten proved that $a_1$ and $\mathcal{E}$ define conformally invariant functionals on $\Sigma$. When $n=2$, they obtain the Willmore energy 
\[
\mathcal{E} = -\frac{1}{2}\int_{\Sigma} |\vec{H}_{\Sigma}|^2.
\]
For four-dimensional submanifolds, the computation was independently carried out by Graham--Reichert \cite{graham2020} and Zhang \cite{zhang2021}, who obtained the following functional (originally discovered by Guven \cite{guven205} in 2005 using a different approach):
\begin{align*}
	\forall \vPhi \in \Imm(\Sigma^4; \mathbb{R}^d), \qquad 
	\mathcal{E}_{\mathrm{GR}}(\vPhi)
	= \int_{\Sigma} 
	 |\nabla \vec{H}_{\vPhi}|^2_{g_{\vPhi}} 
	- |\vII_{\vPhi} \cdot \vec{H}_{\vPhi}|^2_{g_{\vPhi}} 
	+ 7 H_{\vPhi}^4 \ d\vol_{g_{\vPhi}}.
\end{align*}
The negative sign in this expression has a significant impact, as it implies that the functional $\mathcal{E}_{\mathrm{GR}}$ is never bounded from below for any manifold $\Sigma$, see \cite{graham2020,martino2024}. This fact has subsequently been interpreted in the context of the AdS/CFT correspondence in \cite{anastasiou2025}. The expansion \eqref{eq:anomaly} was later studied in full generality by Gover--Waldron \cite{gover2017}, who proved in particular that if $n$ is odd, then the operators involved in the computation of $a_0$ are non-local of Dirichlet-to-Neumann type. Consequently, the cases of even and odd dimensions require different analytical tools. In this work, we shall focus on the case where $n$ is even.\\

In a broader context, when $n$ is even, generalized Willmore energies have recently been studied for their relationship with $Q$-curvatures, we refer to \cite{vyatkin} for an introduction. In \cite{gover2020,blitz2024}, Blitz, Gover, and Waldron define a generalized Willmore energy as a conformally invariant functional on immersions $\vPhi \in \Imm(\Sigma^n; \mathbb{R}^d)$ whose Euler--Lagrange equation has leading-order term $\Delta_{g_{\vPhi}}^{\frac{n}{2}} H_{\vPhi}$. In other words, a generalized Willmore energy is a functional of the form
\begin{align*}
	\mathcal{F}(\vPhi) 
	= \int_{\Sigma^n} 
	\big| \nabla^{\frac{n-2}{2}} \vII_{\vPhi} \big|^2_{g_{\vPhi}}\, d\vol_{g_{\vPhi}} 
	+ \text{l.o.t.}
\end{align*}
In \cite{blitz2024}, Blitz--Gover--Waldron proved that such functionals can be constructed using a notion of extrinsic $Q$-curvature. Owing to the wide variety of possible generalized functionals, it is reasonable to first restrict attention to the case of \emph{coercive} energies $\mathcal{F}$. One may consider a generalized Willmore energy $\mathcal{F} \colon \Imm(\Sigma^n; \mathbb{R}^d) \to [0,+\infty)$ such that there exists $C > 0$ satisfying
\begin{align*}
	\forall\, \vPhi \in \Imm(\Sigma^n; \mathbb{R}^d), 
	\qquad 
	\mathcal{F}(\vPhi) 
	\geq 
	C \left\| \vII_{\vPhi} \right\|_{W^{\frac{n}{2}-1,2}(\Sigma, g_{\vPhi})}^2.
\end{align*}
However, the right-hand side is not scale invariant, and such a lower bound is therefore not reasonable. We thus replace it by the following scale-invariant functional. Given an open set $U \subset \Sigma^n$ (with $n \geq 4$ even), we define
\begin{align*}
	\mathcal{E}(\vPhi; U) 
	\coloneqq  
	\sum_{i=0}^{\frac{n}{2}-1}  
	\int_{U} 
	\Big| \nabla^i \vII_{\vPhi} \Big|^{\frac{n}{1+i}}_{g_{\vPhi}}\, 
	d\vol_{g_{\vPhi}}.
\end{align*}
We write $\mathcal{E}(\vPhi) \coloneqq \mathcal{E}(\vPhi; \Sigma)$. The functional $\mathcal{E}$ is scale invariant, that is,
\[
 \forall \lambda>0,\qquad \mathcal{E}(\lambda \vPhi) = \mathcal{E}(\vPhi) .
\]
For instance, in dimension $n = 4$, we obtain the functional
\begin{align*}
	\forall\, \vPhi \in \Imm(\Sigma^4; \mathbb{R}^d), \qquad 
	\mathcal{E}(\vPhi)
	= \int_{\Sigma} 
	|\nabla \vII_{\vPhi}|^2_{g_{\vPhi}} 
	+ |\vII_{\vPhi}|_{g_{\vPhi}}^4 
	\ d\vol_{g_{\vPhi}}.
\end{align*}
In dimension $n = 6$, we have
\begin{align*}
	\forall\, \vPhi \in \Imm(\Sigma^6; \mathbb{R}^d), \qquad 
	\mathcal{E}(\vPhi)
	= \int_{\Sigma} 
	|\nabla^2 \vII_{\vPhi}|_{g_{\vPhi}}^2
	+ |\nabla \vII_{\vPhi}|_{g_{\vPhi}}^3
	+ |\vII_{\vPhi}|_{g_{\vPhi}}^6
	\ d\vol_{g_{\vPhi}}.
\end{align*}
Since our motivation is to study the case where $n$ is even, most of the results in the present work are proved in this setting. Nevertheless, some of them remain valid in all dimensions, and this distinction will be made explicit throughout the paper.\\

This framework suggests that immersions whose second fundamental form is bounded in a Sobolev space $W^{k,p}$, for some $k \geq 1$ and $p > 1$, deserve special attention. If $\frac{1}{p} - \frac{k}{n} < \frac{1}{n}$, then $W^{k,p}(\Sigma^n) \hookrightarrow L^q(\Sigma^n)$ for some $q > n$, and we are back in the non-critical setting discussed in 
\cite{breuning2015,hutchinson1986,mardare2005,mardare2007,szopos2008,chen2010,chen20102}. In the case of generalized Willmore energies, one has $\frac{1}{p} - \frac{k}{n} = \frac{1}{n}$ in which case the standard Sobolev embedding gives $W^{k,p}(\Sigma^n) \hookrightarrow L^n(\Sigma^n)$. At first sight, this seems to offer no improvement compared to the assumption $\vII_{\vPhi} \in L^n$. However, the situation improves dramatically. We show in \Cref{th:atlas} below that, in the Willmore setting, namely when $\vII_{\vPhi} \in W^{\frac{n}{2}-1,2}(\Sigma)$, the immersion $\vPhi$ induces a controlled $C^1$ differential structure on $\Sigma$, even though the assumption  $\vPhi \in W^{\frac{n}{2}+1,2}(\Sigma)$
does not in general imply that its derivatives are continuous, and therefore the induced metric $g_{\vPhi}$ is \emph{a priori not continuous}. Our first result is the following, we refer to \Cref{th:Atlas} for a detailed version. 

\begin{theorem}\label{th:atlas}
	Let $\Sigma$ be a closed oriented manifold of even dimension $n\geq 4$. Every $\vPhi\in \I_{\frac{n}{2}-1,2}(\Sigma;\R^d)$ induces a differential structure of class $C^1$. More precisely, there exists an atlas of harmonic coordinates in which, the metric $g_{\vPhi}$ is continuous and the transition functions are of class $C^1$. This differential structure is $C^1$-diffeomorphic to the initial differential structure of $\Sigma$.
\end{theorem}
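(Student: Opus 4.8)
The plan is to construct harmonic coordinates adapted to the weak immersion $\vPhi$, to obtain $C^0$ control on the metric via a small-energy $\varepsilon$-regularity statement, and then to compare the resulting atlas with the smooth structure of $\Sigma$. I would work in three stages.

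\textbf{Step 1: A Coulomb frame and the div--curl structure.} First I would fix a point $x_0\in\Sigma$ and, in a small coordinate ball, construct a Coulomb (or more precisely a tangential orthonormal) moving frame $(\ve_1,\dots,\ve_n)$ on the pullback tangent bundle $\vPhi^*T\R^d|_{T\Sigma}$, minimizing the Dirichlet energy of the connection form among $W^{1,2}$-frames, as in the works of Hélein and Rivière--Struwe cited in the excerpt. Since $\vII_{\vPhi}\in W^{\frac n2-1,2}\hookrightarrow L^n$, the connection one-form $\omega$ satisfies $d^*\omega=0$ and $|d\omega|\lesssim|\nabla\ve|\,|\vII_{\vPhi}|\in L^n$-ish, so on a ball where $\|\vII_{\vPhi}\|_{W^{\frac n2-1,2}}$ is small one gets, by the Coulomb gauge estimate, that $\nabla\ve$ is small in $L^n$. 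The key is that the first fundamental form expressed in the frame, and the equation satisfied by $\vPhi$ itself, acquire a Jacobian/div--curl structure; combined with the Wente-type and Coifman--Lions--Meyer--Semmes compensation estimates, one upgrades integrability of $\nabla\vPhi$.

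\textbf{Step 2: Harmonic coordinates and $\varepsilon$-regularity.} On such a small ball, I would solve $\Delta_{g_{\vPhi}} u^\alpha=0$ with appropriate boundary data to produce harmonic coordinates $(u^1,\dots,u^n)$. In harmonic coordinates the metric coefficients satisfy the quasilinear elliptic system $g^{ij}\partial_i\partial_j g_{k\ell}=Q(g,\partial g)+(\text{Ricci terms})$, and the Gauss equation expresses $\Ric_{g_{\vPhi}}$ in terms of $\vII_{\vPhi}$ quadratically, hence $\Ric_{g_{\vPhi}}\in L^{n/2}$ with small norm on small balls. A bootstrap using Adams--Moser--Trudinger and the Coifman--Lions--Meyer--Semmes improvement for the quadratic right-hand side then yields $g_{k\ell}\in W^{2,p}\hookrightarrow C^0$ for some $p>n/2$, uniformly, with the ball size controlled only by the concentration function of $\vII_{\vPhi}$, which in turn is controlled by the total energy $\mathcal E(\vPhi)$ and the topology of $\Sigma$. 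This is the crucial improvement over the bare assumption $\vII_{\vPhi}\in L^n$: the Sobolev structure of $W^{\frac n2-1,2}$ is exactly what makes the compensation work.

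\textbf{Step 3: Transition maps and diffeomorphism type.} Covering $\Sigma$ by finitely many such harmonic charts, the overlap maps between two harmonic coordinate systems for the same continuous metric are themselves solutions of an elliptic system and inherit $C^1$ regularity (one derivative better than the metric gives for free, by a standard argument on harmonic-to-harmonic changes of coordinates). This produces the $C^1$ atlas with continuous $g_{\vPhi}$. Finally, to see this $C^1$ structure is diffeomorphic to the original smooth one: since $\vPhi\in W^{\frac n2+1,2}$ and $n$ even, $\vPhi$ is at least $W^{1,n}$ and continuous, and one can mollify $\vPhi$ to smooth immersions $\vPhi_\delta$ that converge in $W^{1,n}$; the new harmonic charts pull back, under $\vPhi_\delta$, to charts on the smooth $\Sigma$, and a degree/approximation argument shows the identity map $\Sigma\to\Sigma$ is a $C^1$-diffeomorphism between the two structures (alternatively, invoke the fact that in dimension $n$ a $C^0$-close $C^1$ structure on a smooth manifold is smoothable and unique up to isotopy).

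\textbf{Main obstacle.} The hard part is Step 2: making the $\varepsilon$-regularity quantitative so that the chart radius depends \emph{only} on the energy $\mathcal E(\vPhi)$ and on $\Sigma$, with no a priori non-degeneracy assumption on $g_{\vPhi}$. One must simultaneously control the harmonic coordinate map (which itself depends on the not-yet-controlled metric), handle the borderline $L^{n/2}$ Ricci term by genuine compensation rather than by raw Sobolev embedding, and rule out collapse of the coordinate domain; this is where the full strength of the div--curl/Coulomb machinery, rather than the naive $\vII_{\vPhi}\in L^n$ bound, is indispensable.
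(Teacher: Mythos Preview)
Your proposal captures the analytic skeleton (Coulomb frame, harmonic coordinates, elliptic system for $g$ in harmonic gauge, transition maps one derivative better) and correctly identifies the main difficulty in your final paragraph, but it is missing the two ideas the paper actually uses to resolve that difficulty.

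\textbf{The missing topological step.} You repeatedly say ``on a small ball'' without explaining why the domain on which you construct harmonic coordinates \emph{is} a ball. For a weak immersion with merely $L^\infty$ metric this is not automatic: an intrinsic ball for $g_{\vPhi}$ is not a priori diffeomorphic to $\B^n$, and even if you use the background metric $h$ you have no quantitative relation between $h$-balls and small-energy regions. The paper's resolution is entirely extrinsic: one fixes a ball $\B^d(\vep,r)$ in the \emph{target}, finds a good slice $\s^{d-1}(\vq,\rho)$ on which $\vII_{\vPhi}$ has small $L^n$ norm, proves each connected component of $\vPhi^{-1}(\s^{d-1}(\vq,\rho))$ is diffeomorphic to $\s^{n-1}$ (this uses that $\s^{n-1}$ is simply connected for $n\ge 3$, so a covering map to a round sphere is a diffeomorphism), extends by flat planes, and then runs a Chern--Lashof / Morse-theory argument to show each connected component of $\vPhi^{-1}(\B^d(\vq,\rho))$ is diffeomorphic to $\B^n$. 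Only after this topological identification does the harmonic-coordinate existence result from \cite{MarRiv2025} apply with the right boundary conditions. Your Step~2 has no analogue of this and therefore cannot ``rule out collapse of the coordinate domain'' as you yourself flag.

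\textbf{The regularity mechanism and the approximation.} Two smaller points. First, the $C^0$ control on $g$ does not come from Wente/CLMS compensation giving $W^{2,p}$ for some $p>n/2$; it comes from the Lorentz refinement $\vII_{\vPhi}\in W^{\frac n2-1,2}\hookrightarrow L^{(n,2)}$, whence $\Ric^{g_{\vPhi}}\in L^{(n/2,1)}$, and it is precisely the second Lorentz index $1$ that yields $g\in W^{2,(n/2,1)}\hookrightarrow C^0$ in harmonic gauge. Second, the paper does not work directly with the weak immersion as you propose: it first proves the full chart construction for $C^\infty$ immersions (where slicing, Sard, and Morse theory are unproblematic), then approximates $\vPhi$ strongly in $W^{\frac n2+1,2}$ by smooth immersions, constructs the atlases for the approximants, and passes to the limit with careful bookkeeping of the charts, metrics, and transition functions. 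The $C^1$-diffeomorphism with the original smooth structure is then obtained by an explicit gluing of local maps $f_\alpha^k$ satisfying a cocycle condition and converging to the identity in $C^1$, following \cite{Lan2025} and Uhlenbeck; your ``degree/approximation argument'' in Step~3 is pointing in this direction but is not a proof.
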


For $n=2$, this result holds when harmonic coordinates are replaced by isothermal coordinates (which form a special subclass of harmonic coordinates), see for instance \cite{riviere2016,Lan2025}. In that case, the underlying differential structure is of class $C^{\infty}$. The main tools involved are the use of Coulomb frames and div--curl structures. In the result above, we rely crucially on our previous work \cite{MarRiv2025}, where we constructed harmonic coordinates (also via Coulomb frames) for weak immersions of $\R^n$ and $\B^n$ under suitable boundary conditions. \\

Our next step is to study the compactness properties of immersions with bounded energy $\Er$ in even dimensions $n \geq 4$. More precisely, let $\Sigma$ be a closed, oriented $n$-dimensional manifold with $n \geq 4$ even. We investigate the closure of the set of $C^{\infty}$ immersions $\vPhi \in \Imm(\Sigma;\R^d)$ under the energy bound $\Er(\vPhi) < E$ for some fixed $E > 0$. That is, we consider a sequence $(\vPhi_k)_{k\in\N} \subset \Imm(\Sigma;\R^d)$ satisfying $\Er(\vPhi_k) < E$, and we show that, up to a subsequence, $\vPhi_k$
converges in a weak sense (possibly after rescaling) to a \emph{weak immersion} $\vPhi_{\infty} \in \I_{\frac{n}{2}-1,2}(\Sigma_{\infty};\R^d)$.
Since this constraint does not impose any bound on higher derivatives of $\vPhi_k$, one cannot in general expect strong convergence. We show, however, that only finitely many singularities can appear on the image $\vPhi_k(\Sigma)$. Away from these singularities, we obtain a controlled atlas in which the sequence $(\vPhi_k)$ converges weakly in $W^{\frac{n}{2}+1,2}_{\loc}$ and strongly in $C^0_{\loc}$. We will see below that the topology of $\Sigma_{\infty}$ exhibits a bubble-tree structure analogous to that obtained in \cite{mondino2014}. \\

The construction of the atlas in \Cref{th:Atlas} provides precise control over the size and regularity of the charts. This, in turn, allows us to handle sequences of immersions with uniformly bounded energy. We prove that the closure of $\Imm(\Sigma;\R^d)$ with respect to $\Er$ is contained in $\I_{\frac{n}{2}-1,2}(\Sigma;\R^d)$, up to a finite number of isolated singularities on the image and on the domain.

\begin{theorem}\label{th:compactness}
	Let $E>0$ and $\Sigma$ be a closed connected oriented manifold of even dimension $n\geq 4$. Consider a sequence $(\vPhi_k)_{k\in\N}\subset \Imm(\Sigma;\R^d)$.
	Then there exist 
	\begin{itemize}
		\item a finite number of points $\vq_1,\ldots,\vq_I\in \R^d$,
		
		\item an $n$-dimensional $W^{\frac{n}{2}+1,(2,1)}$-manifold $\Sigma_{\infty}$ possibly non-closed and non-connected,\footnote{We do not describe precisely the topology of $\Sigma_{\infty}$ in this work. For instance, we do not compare its topology and the one of $\Sigma$. We also do not study precisely the topology of the convergence of $\vPhi_k$ to $\vPhi_{\infty}$.}
		
		\item a weak immersion $\vPsi_{\infty}\in \I_{\frac{n}{2}-1,2}(\Sigma_{\infty};\R^d)$,
		
		\item a sequence of maps $\Theta_k\in\Conf(\R^d)$ given by the composition of translations, dilations and rotations,
	\end{itemize}
	such that for any $R>0$,  $(\Sigma\setminus \vPsi_k^{-1}(\B^d(\vq_1,R)\cup \cdots \cup \B^d(\vq_I,R)),g_{\vPhi_k})$ converges in the pointed Gromov--Hausdorff distance to $(\Sigma_{\infty}\setminus \vPsi_{\infty}^{-1}(\B^d(\vq_1,R)\cup \cdots \cup \B^d(\vq_I,R)),g_{\vPsi_{\infty}})$ and the maps $\vPsi_k\coloneqq \Theta_k\circ \vPhi_k$ satisfy
	\begin{align*}
		&\bullet\qquad  \vPsi_k  \xrightarrow[k\to+\infty]{} \vPsi_{\infty} \qquad \text{in }C^0_{\loc}(\Sigma_{\infty}) , \\[2mm]
		&\bullet\qquad  \vol_{g_{\vPsi_k}}(\Sigma) \xrightarrow[k\to +\infty]{} \vol_{g_{\vPsi_{\infty}}} (\Sigma_{\infty}), \\[2mm]
		&\bullet\qquad  \overline{\vPsi_{\infty}(\Sigma_{\infty}) } = \vPsi_{\infty}(\Sigma_{\infty})\cup \left\{ \vq_1,\ldots,\vq_I\right\}.
	\end{align*}
	We have the following description of the singularities $\vq_1,\ldots,\vq_I$
	\begin{itemize}
		\item for $R>0$ small enough, the restriction of $\vPsi_{\infty}$ to each connected component of the sets $\vPsi_{\infty}^{-1}(\B^d(\vq_i,R)\setminus\{\vq_i\})\subset\Sigma_{\infty}$ realizes an embedding from $\B^n(0,1)\setminus \{0\}$ into $\R^d$ that extends across the origin to a weak immersion $\vPsi_{\infty}\in \I_{0,(n,2)}(\B^n(0,1))$. \footnote{This is a major difference with the case $n=2$, where one can obtain branch points and where the restrictions of $\vPhi$ to good slices $\vPhi^{-1}(\s^1(\vq_i,R))$ can parametrize multiple covers of $\s^1$. This is due to the fact that $\s^{n-1}$ is simply connected in dimension $n-1\geq 2$ but not in dimension $n-1=1$.}
		
		\item The exist harmonic coordinates in which $\big( g_{\vPsi_{\infty}} \big)_{\alpha\beta} \in C^0(\B^n(0,1))$. In particular, the manifold $\left(\Sigma_{\infty},g_{\vPsi_{\infty}} \right)$ can be completed by adding a finite number of points into a closed $C^1$ manifold.
	\end{itemize}
\end{theorem}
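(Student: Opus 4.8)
\section{Proof strategy for \Cref{th:compactness}}

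The plan is to run the familiar concentration--compactness and bubble-tree scheme, adapted to the weak class $\I_{\frac{n}{2}-1,2}$. First I would normalise: since $\Er$ is invariant under translations, rotations and dilations, one may compose each $\vPhi_k$ with a similarity $\Theta_k\in\Conf(\R^d)$ so that the rescaled immersions $\vPsi_k\coloneqq\Theta_k\circ\vPhi_k$ all satisfy $\diam\vPsi_k(\Sigma)=1$ and $0\in\vPsi_k(\Sigma)$, hence take values in $\bar\B^d(0,1)$, while $\Er(\vPsi_k)=\Er(\vPhi_k)<E$. The energy bound controls $\vII_{\vPsi_k}$ in $W^{\frac{n}{2}-1,2}$ on $\Sigma$ (the top term $i=\frac{n}{2}-1$ of $\Er$ is exactly $\int|\nabla^{\frac n2-1}\vII|^2$, the lower terms and the normalised volume giving the remaining norms). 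From our previous work \cite{MarRiv2025} there is a threshold $\eps_0=\eps_0(n,d)>0$ below which a local energy bound already yields a controlled harmonic chart; call $\vq\in\bar\B^d(0,1)$ a \emph{concentration point} if $\liminf_{r\to0}\limsup_{k\to\infty}\Er\bigl(\vPsi_k;\vPsi_k^{-1}(\B^d(\vq,r))\bigr)\ge\eps_0$. A Vitali-type covering argument together with $\Er(\vPsi_k)<E$ shows there are at most $E/\eps_0$ of them, $\vq_1,\dots,\vq_I$, and that on $\vPsi_k^{-1}(K)$ for any compact $K\subset\R^d\setminus\{\vq_1,\dots,\vq_I\}$ the local energy is, for $k$ large, below $\eps_0$ at a fixed scale.

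Away from the $\vq_i$ one runs the machinery behind \Cref{th:Atlas}. On each region where the local energy is $<\eps_0$, \cite{MarRiv2025} produces harmonic coordinates built from a Coulomb frame in which $\vPsi_k$ is bounded in $W^{\frac{n}{2}+1,(2,1)}$ and the metric coefficients are bounded in $C^0$ and uniformly non-degenerate, all uniformly in $k$; the div--curl/Wente structure of the Coulomb-gauge system is what provides control in these critical (Lorentz-refined) norms in spite of $g_{\vPsi_k}$ being a priori merely bounded. Passing to a subsequence, $\vPsi_k$ converges weakly in $W^{\frac{n}{2}+1,2}_{\loc}$ and, by Rellich and the embedding $W^{\frac{n}{2}+1,2}\hookrightarrow C^0$, strongly in $C^0_{\loc}$, to a weak immersion $\vPsi_\infty$ on the regular part; the induced metrics converge in $C^0_{\loc}$, which yields the pointed Gromov--Hausdorff convergence of the truncated spaces $\Sigma\setminus\vPsi_k^{-1}(\cup_i\B^d(\vq_i,R))$ asserted in the statement and the convergence of their volumes. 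Gluing the charts along their $C^1$, in fact $W^{\frac{n}{2}+1,(2,1)}$, transition maps defines the regular part of the manifold $\Sigma_\infty$.

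Around each $\vq_i$ the core is a removable-singularity statement combined with the bubble-tree construction. For $R$ small, each connected component of $\vPsi_\infty^{-1}(\B^d(\vq_i,R)\setminus\{\vq_i\})$ is mapped by $\vPsi_\infty$ to an embedded punctured-ball immersion of finite energy; since $\vII_{\vPsi_\infty}\in W^{\frac{n}{2}-1,2}$ embeds into the Lorentz space $L^{(n,2)}$, which is strictly sharper than $L^n$, one can run the interior analysis of \cite{MarRiv2025} on dyadic annuli around the puncture to show that in harmonic coordinates the metric coefficients extend continuously across the origin and that $\vPsi_\infty$ extends to a weak immersion in $\I_{0,(n,2)}(\B^n(0,1))$; in particular $\Sigma_\infty$ is completed to a closed $C^1$ manifold by filling finitely many punctures, and $\overline{\vPsi_\infty(\Sigma_\infty)}=\vPsi_\infty(\Sigma_\infty)\cup\{\vq_1,\dots,\vq_I\}$. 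The bubbles themselves are obtained by rescaling $\vPsi_k$ at the concentration scales (again using scale invariance of $\Er$), each limit being such an extendable punctured-ball immersion, and the whole collection — regular part plus bubbles — is assembled into the bubble tree $\Sigma_\infty$; the global volume identity $\vol_{g_{\vPsi_k}}(\Sigma)\to\vol_{g_{\vPsi_\infty}}(\Sigma_\infty)$ then follows once one knows that no volume survives in the necks joining the various scales.

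The main obstacle is precisely this \emph{no-neck} step: showing that in the long cylindrical regions interpolating between the macroscopic scale and the bubble scales neither energy nor volume is lost. This is delicate because the only uniform estimate available on a neck is the smallness of $\Er$, which controls $\vII$ in a critical Lorentz space but not pointwise, so the argument must again be routed through the Coulomb-frame div--curl structure of \cite{MarRiv2025} — an $\eps$-regularity estimate on dyadic annuli combined with a Pohozaev-type balance — rather than through naive elliptic estimates that the mere boundedness of $g_{\vPsi_k}$ does not support. A secondary technical point is to organise the countably many charts generated by the bubbling induction into a single manifold $\Sigma_\infty$ of regularity $W^{\frac{n}{2}+1,(2,1)}$ with coherent transition maps, and to verify that the extension across each $\vq_i$ is compatible with the bubble attached there.
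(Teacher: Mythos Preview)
Your overall strategy diverges from the paper's in a way that introduces work the theorem does not require and obscures why the volume identity holds.

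\textbf{No bubble tree, no neck analysis.} In the paper, $\Sigma_\infty$ is \emph{only} the macroscopic limit: it is defined as $\Sigma_\infty=\bigcup_{r>0}\Gr^r_\infty$, where $\Gr^r_\infty$ is the pointed Gromov--Hausdorff limit of the good sets $\Gr^r_k$ (the union of extrinsic balls on which the energy does not yet reach the threshold $\eps_0/2$ at scale $\geq r$). No rescaled bubbles are extracted, no necks are analysed, and the authors explicitly list the blow-up procedure and the intermediate-region behaviour as open questions. Consequently your ``main obstacle'' (the no-neck step with a Pohozaev balance) is absent from the proof. The volume identity is instead a one-line consequence of the extrinsic ball bound: Claim~\ref{cl:Bad_Small} shows $\vPhi_k(\Br^r_k)$ sits inside at most $I=I(n,d,E)$ balls of radius $2r$, whence by Proposition~\ref{pr:Extrinsic_Hdiff} one has $\vol_{g_{\vPhi_k}}(\Br^r_k)\leq C r^n$ uniformly in $k$. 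Combined with the convergence of volumes on $\Gr^r_k$ and letting $r\to0$, this already yields $\vol_{g_{\vPsi_k}}(\Sigma)\to\vol_{g_{\vPsi_\infty}}(\Sigma_\infty)$; no volume escapes to smaller scales because the bad set has volume $O(r^n)$.

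\textbf{The topological content of the singularity analysis is missing.} Your punctured-ball claim (``each connected component of $\vPsi_\infty^{-1}(\B^d(\vq_i,R)\setminus\{\vq_i\})$ is a punctured ball'') is the heart of the matter, and running elliptic estimates on dyadic annuli is not enough to obtain it. The paper's argument (Lemma~\ref{cl:Topology_singularities}) proceeds as follows: on each dyadic shell one uses the good-slice construction of \Cref{sec:slice} to show that the slices $\vPhi_k^{-1}(\s^{d-1}(\vq,\rho))$ are unions of $W^{2,n}$-graphs over round $(n-1)$-spheres, hence diffeomorphic to $\s^{n-1}$ (this uses $\pi_1(\s^{n-1})=0$ for $n\geq3$); one then extends by flat planes and applies the Chern--Lashof/Morse argument of \Cref{sec:Topology} to see that the region between two successive good slices is a topological annulus, i.e.\ the \emph{outer} boundary of each connected component has a single piece. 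The remaining issue, that the \emph{inner} boundary also has a single component as $\eta\to0$, is settled by a varifold tangent-cone argument: the density at $\vq_i$ is shown to be~$1$ via the monotonicity formula (as in \Cref{sec:Densities}), and a tangent cone with vanishing second fundamental form and density~$1$ is a single plane, forcing one inner component. Only then does the chart construction of \cite{MarRiv2025} apply to produce harmonic coordinates on the full punctured ball in which $(g_{\vPsi_\infty})_{\alpha\beta}\in W^{2,(n/2,1)}\hookrightarrow C^0$ across the origin. Your sketch skips both the slice/Morse step and the density/tangent-cone step, without which the punctured-ball topology is not established.
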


The key ingredient is a refinement of the Sobolev embeddings using Lorentz spaces. Indeed, if $\frac{1}{p} - \frac{k}{n} = \frac{1}{n}$, then we have the embedding $W^{k,p}(\Sigma) \hookrightarrow L^{(n,p)}(\Sigma)$ for $p < n$, which provides a slight improvement over the standard embedding into $L^n$. Given a weak immersion $\vPhi \in \I_{\frac{n}{2}-1,2}(\B^n;\R^d)$ (for $n$ even), the authors proved in \cite{MarRiv2025} that harmonic coordinates can be constructed under a smallness assumption on $\vII_{\vPhi}$ in the critical Sobolev space $W^{\frac{n}{2}-1,2}$, together with appropriate boundary conditions. The fundamental reason lies in the fact that the Riemann tensor of $g_{\vPhi}$ is quadratic in the second fundamental form, thanks to the Gauss--Codazzi equations:
\begin{align*}
	\Riem^{g_{\vPhi}}_{ijkl} = \big(\vII_{\vPhi}\big)_{ik} \cdot \big(\vII_{\vPhi}\big)_{jl} - \big(\vII_{\vPhi}\big)_{il} \cdot \big(\vII_{\vPhi}\big)_{jk}.
\end{align*}
Consequently, if $\vII_{\vPhi}$ belongs to the Lorentz space $L^{(n,2)}$, then $\Riem^{g_{\vPhi}}$ lies in $L^{(\frac{n}{2},1)}$. This Lorentz exponent $1$ provides the crucial improvement in the elliptic estimates, allowing us to recover $L^{\infty}$ control on the coefficients of the metric in harmonic coordinates. Hence, the regularity of the coefficients $(g_{\vPhi})_{ij}$ improves from merely $VMO(\B^n)$ to $C^0(\B^n)$. \\

To construct an atlas associated with a $C^{\infty}$ immersion $\vPhi \in \Imm(\Sigma;\R^d)$, we rely on the recent work \cite{MarRiv2025} where the authors construct harmonic coordinates for weak immersions on $\R^n$, together with a combination of ideas from geometric measure theory and Morse theory required in order to deduce geometric and topological properties on the domain $\Sigma$ from estimates on the second fundamental form of $\vPhi$. We fix a point $\vep \in \vPhi(\Sigma)$ and a radius $r > 0$ such that 
\[
\Er\big(\vPhi; \vPhi^{-1}(\B^d(\vep,r))\big) < \eps^n.
\]
We then find a suitable slice of $\vPhi(\Sigma) \cap \B^d(\vep,r)$, namely a sphere $\s^{d-1}(\vq,s)$ with $\vq$ close to $\vep$ and $s$ close to $r$, such that the intersection $\vPhi(\Sigma)\cap\s^{d-1}(\vq,s)$ is an $(n-1)$-dimensional submanifold of $\s^{d-1}(\vq,s)$ with second fundamental form bounded in $L^n$ and bounded volume. On this slice, we prove that the oscillations of $\vn_{\vPhi}$ are bounded above by $\eps$. Hence, for $\eps$ small enough, the restriction of $\vPhi$ to each connected component $\Sr$ of $\vPhi^{-1}(\s^{d-1}(\vq,s))$ is a local graph over a given $(n-1)$-dimensional round sphere. This property defines a smooth covering map $\Sr \to \s^{n-1}$. Since $n-1 \ge 3$, the sphere $\s^{n-1}$ is simply connected, hence the covering map is a diffeomorphism. Therefore, every connected component of $\vPhi^{-1}(\s^{d-1}(\vq,s))$ is diffeomorphic to $\s^{n-1}$, and its image under $\vPhi$ is a graph close to a fixed $(n-1)$-dimensional sphere. This sharply contrasts with the two-dimensional case, where a covering map $\Sr \to \s^1$ need not be bijective (e.g.\ connected double covers of $\s^1$), leading to the occurrence of multivalued graphs. Such phenomena do not appear in dimensions $n \ge 4$, since $\s^{n-1}$ becomes simply connected. For example, Brieskorn manifolds, which are topological spheres but not diffeomorphic to round spheres, are \emph{branched} coverings, see \cite{milnor1956}.  \\

Once the good slices are understood, we extend them by flat planes as in \cite{MarRiv2025}, obtaining an immersion $\vPsi \colon \tilde{\Sigma} \to \R^d$ with small second fundamental form in $L^{(n,2)}$, where $\tilde{\Sigma}$ is the manifold obtained by gluing $\vPhi^{-1}(\B^d(\vq,s))$ with half-spheres along each connected component of its boundary $\vPhi^{-1}(\s^{d-1}(\vq,s))$. We study the topology of $\tilde{\Sigma}$ using Morse theory. After a well-chosen inversion, we can construct a Morse function on $\tilde{\Sigma}$ with very few critical points, following the strategy of Chern--Lashof \cite{chern1957,chern1958}. As a consequence of the proof of the Reeb theorem \cite[Theorem 4.1]{milnor1963}, we deduce that $\tilde{\Sigma}$ is homeomorphic (though not necessarily diffeomorphic) to the $n$-sphere, and that $\vPhi^{-1}(\B^d(\vq,s))$ is diffeomorphic (not merely homeomorphic) to a ball $\B^n$. Harmonic coordinates are then obtained by direct application of \cite{MarRiv2025}.  
More precisely, we shall prove that there exists $\eps_0 > 0$, depending only on $n$, $d$, and the diameter of $\vPhi(\Sigma)$ and its energy, such that if a ball $\B^d(\vep,r)$ satisfies 
\[
\Er\big(\vPhi; \vPhi^{-1}(\B^d(\vep,r))\big) < \eps_0^n,
\]
then each connected component of $\vPhi^{-1}(\B^d(\vep,r))$ admits controlled harmonic coordinates. This provides the atlas of \Cref{th:atlas} for smooth immersions. The result for weak immersions follows by the approximation argument developed in \cite{Lan2025}. \\

We now sketch the proof of \Cref{th:compactness}. Let $(\vPhi_k)_{k\in\N} \subset \Imm(\Sigma;\R^d)$ be a sequence with uniformly bounded energy and diameter. For every $x \in \Sigma$, there exists $r_{x,k} \in (0,1)$ such that 
\begin{align*}
	\Er\Big(\vPhi_k; \vPhi_k^{-1}\big(\B^d(\vPhi_k(x),r_{x,k})\big)\Big) = \frac{\eps_0}{2},
\end{align*}
where $\eps_0$ is as defined above. For $r>0$ and $k\in\N$, we define the \emph{good} and \emph{bad} sets of scale $r$ for $\vPhi_k$ by
\begin{align}
	\Gr_k^r &\coloneqq \bigcup \left\{ \vPhi_k^{-1}\big(\B^d(\vPhi_k(x),r_{x,k})\big) : x\in\Sigma,\ r_{x,k}\geq r \right\}, \label{eq:good1}\\[3mm]
	\Br_k^r &\coloneqq \bigcup \left\{ \vPhi_k^{-1}\big(\B^d(\vPhi_k(x),r_{x,k})\big) : x\in\Sigma,\ r_{x,k}< r \right\}. \label{eq:bad1}
\end{align}
By construction, the manifolds $(\Gr_k^r,g_{\vPhi_k})$ carry a controlled atlas of harmonic coordinates. By standard compactness results for Riemannian manifolds (see for instance \cite{petersen2016}), we deduce that for every $r>0$, the sequence $(\Gr_k^r,g_{\vPhi_k})_{k\in\N}$ converges in the pointed $C^1$ topology to a manifold $(\Gr_\infty^r,g_\infty)$, whose charts have the same regularity as in \Cref{th:Atlas}. Moreover, for $s<r$, we have $\Gr_\infty^r \subset \Gr_\infty^s$. We then define the limit manifold 
\[
\Sigma_\infty \coloneqq \bigcup_{r>0} \Gr_\infty^r,
\]
which is a priori neither connected nor closed but carries a $C^1$ differential structure. For the bad set, a standard covering argument shows that $\vPhi_k(\Br_k^r)$ is contained in finitely many balls $\B^d(\vq_{i,k},r)$. Passing first to the limit $k\to\infty$ and then letting $r\to 0$, we conclude that the singular set of the limit image $\vPhi_\infty(\Sigma_\infty)$ is finite. \\

This approach does not yield the same degree of understanding as that obtained in \cite{mondino2014} for surfaces. In dimension $n=2$, the uniformization theorem combined with the Deligne--Mumford decomposition provides a powerful mechanism for detecting bubbles purely from the domain geometry. In that setting, blowing up around a concentration point on the domain produces bubbles with the topology of $\s^2$. In contrast, in the present work, the construction of charts (and of the entire atlas) proceeds via the \emph{image} of the immersion. This is why inversions are not used in the construction of the conformal maps $\Theta_k$ in \Cref{th:compactness}. Consequently, if the main part of an immersion collapses to a point, one must perform an inversion with respect to a suitably chosen ball in $\R^d$ and study the behaviour of $\Er$ under this conformal transformation (the functional $\Er$ is not conformally invariant). Moreover, we do not develop a generalization of the distinction between ``neck regions’’ and ``collars’’ as in the two-dimensional case. \\

To analyze the singularities of the domain $\Sigma_\infty$, we consider two good slices around a given singularity $\vq_i$ on the target. Given $r>0$ and $\delta \in (0,1)$ (eventually sent to $0$), we take two good slices $\vPsi_\infty(\Sigma_\infty) \cap \s^{d-1}(\vep_{i,1},\rho_1)$ and $\vPsi_\infty(\Sigma_\infty) \cap \s^{d-1}(\vep_{i,2},\rho_2)$ with $\vep_{i,1}\simeq \vep_{i,2}\simeq \vq_i$ and radii $\rho_1 \simeq r$ and $\rho_2 \simeq \delta r$. Using the same argument as in the chart construction, we show that for each connected component $\Cr \subset \vPsi_\infty^{-1}(\B^d(\vq_i,\rho_1))$, the intersection $\Cr \cap \vPsi_\infty^{-1}\big(\B^d(\vep_{i,1},\rho_1)\setminus\B^d(\vep_{i,2},\rho_2)\big)$ is homeomorphic to an annulus. Letting $\delta \to 0$, we find that $\Cr$ is homeomorphic to $\B^n \setminus \{0\}$. Hence, $\Sigma_\infty$ can be viewed as the regular part of a metric space whose singularities occur only at finitely many points. \\

Several aspects remain incompletely understood. First, the topology of the limiting manifold near the singularities requires further clarification. Second, the behaviour of $\vPsi_\infty$ close to the singularities is not yet fully characterized. In the current description of singularity formation, the maps $\Theta_k$ are employed only to rescale the images $\vPhi_k(\Sigma)$ to unit size. However, parts of the immersion may still degenerate near the singularities. 
\begin{op}
		\begin{enumerate}
		\item Describe the difference between the topology of $\Sigma_{\infty}$ in \Cref{th:compactness} and the one of $\Sigma$.
		
		\item It would be useful to study the asymptotic behaviour of the immersions $\vPsi_k$ and $(\Sigma,g_{\vPsi_k})$ in the intermediate regions between the singularities and the macroscopic limits. It would be interesting to study some blow up procedures as well, to explore the structure of the immersion near the singularities.
	\end{enumerate}
\end{op}

\subsubsection*{Organization of the paper.}

In \Cref{sec:Preliminaries}, we introduce some notations used throughout the paper, the setting of weak immersions and Lorentz spaces. In \Cref{sec:Extrinsic}, we gather the required geometric and analytic properties such as Sobolev inequalities, Hardy inequalities and the volume growth of extrinsic balls,  which are consequences of the assumptions $\vII\in W^{\frac{n}{2}-1,2}(\Sigma^n)$. In order to prove \Cref{th:atlas}, we proceed by approximation by $C^{\infty}$-immersions. In \Cref{sec:One_chart}, we construct a chart of harmonic coordinates near a given point for a $C^{\infty}$-immersion and record the necessary estimates in order to pass to the limit. In \Cref{sec:Atlas}, we gather these charts to obtain an atlas for $C^{\infty}$-immersions and obtain the existence of an atlas for weak immersions by an approximation argument. This will prove \Cref{th:atlas}. In \Cref{sec:Compactness}, we prove \Cref{th:compactness} and study the singularities of the limiting domain.\\

    \subsubsection*{Acknowledgments.} 
    This project is financed by Swiss National Science Foundation, project SNF 200020\textunderscore219429.

	\section{Geometric and analytic setting}\label{sec:Preliminaries}

	\subsection{Notations}
	
	Given a Riemannian manifold $(\Sigma,g)$, we will denote $B_{\Sigma,g}(a,r)$ (or $B_g(a,r)$ when $\Sigma$ is implicit) the open geodesic ball of center $a\in \Sigma$ and radius $r>0$. When $\Sigma = \R^d$, we will denote $\B^d(a,r) \coloneqq B_{(\R^d,\geu)}(a,r)$. Since we consider immersions $\vPhi\colon \Sigma \to \R^d$, we will use variables in the domain $\Sigma$ and the target $\R^d$. To distinguish them, we will use arrows for the variables in the target $\R^d$. For example, we will denote $x\in \Sigma$ and $\vx \in \R^d$. For immersions $\vPhi\colon \Sigma\to \R^d$, we will denote $g_{\vPhi}\coloneq \vPhi^*\geu$ the first fundamental form and $\vII_{\vPhi}$ its second fundamental form defined as follows. If $\vn_{\vPhi} = \vn_1\wedge \cdots \wedge \vn_{d-n}\colon \Sigma\to \Lambda^{d-n}\R^d$ is the generalized Gauss map of $\vPhi$, then we have in coordinates
	\begin{align*}
		\big( \vII_{\vPhi} \big)_{ij} = \proj_{T\vPhi(\Sigma)^{\perp}} \left( \dr^2_{ij} \vPhi \right) = -\sum_{\alpha=1}^{d-n} \left( \dr_i \vPhi \cdot \dr_j \vn_{\alpha} \right)\, \vn_{\alpha}.
	\end{align*}
	We define the mean curvature of $\vPhi$ as 
	\begin{align*}
		\vH_{\vPhi} \coloneq \frac{1}{n}\, \tr_{g_{\vPhi}}\big( \vII_{\vPhi} \big) = \frac{1}{n}\, \lap_{g_{\vPhi}} \vPhi.
	\end{align*}
	For an immersion $\vPsi\colon \Sigma\to \s^{d-1}$, we will denote $\vA_{\vPsi}$ its second fundamental form.\\
	
	\subsection{Lorentz spaces}
	
	For an introduction to Lorentz spaces on more general spaces, we refer to \cite{bennett1988,grafakos2014}. Given a Riemannian manifold $(M^n,g)$, we define the Lorentz space $L^{(p,q)}(M)$ for $p,q\in[1,+\infty]$ as the set of measurable functions $f\colon M\to \R$ such that 
	\begin{align*}
		\|f\|_{L^{(p,q)}(M,g)}^q = \|f\|_{L^{(p,q)}(M)}^q = p \left\| \lambda\mapsto \lambda\, \vol_g\left(\{x\in M: |f(x)|>\lambda \}\right)^{\frac{1}{p}} \right\|_{L^q\left((0,+\infty),\frac{d\lambda}{\lambda}\right)}^q < +\infty.
	\end{align*}
	The Lorentz spaces verify the following properties:
	\begin{enumerate}
		\item If $p=q$, then $L^{(p,p)}(M)=L^p(M)$ with 
		\begin{align*}
			\forall f\in L^p(M),\qquad \|f\|_{L^p(M)} = \|f\|_{L^{(p,p)}(M)}.
		\end{align*}
		\item If $q<r$, then $L^{(p,q)}(M) \subsetneq L^{(p,r)}(M)$ with 
		\begin{align*}
			\forall f\in L^{(p,q)}(M),\qquad \|f\|_{L^{(p,r)}(M)} \leq C(p,q,r)\, \|f\|_{L^{(p,q)}(M)}.
		\end{align*}
		\item If $p<q$ and $s,t\in[1,\infty]$ and $\vol_g(M)<+\infty$, then $L^{(q,s)}(M)\subsetneq L^{(p,t)}(M)$ with 
		\begin{align*}
			\forall f\in L^{(q,s)}(M),\qquad \|f\|_{L^{(p,t)}(M)} \leq C(p,q,s,t)\, \vol_g(M)^{\frac{1}{p}-\frac{1}{q}} \, \|f\|_{L^{(q,s)}(M)}.
		\end{align*}
	\end{enumerate}
	We denote $W^{k,(p,q)}(M)$ the space of functions $f\colon M\to\R$ whose first $k^{th}$ derivatives lie in $L^{(p,q)}(M)$. We then have the following Sobolev inequalities on $(\B^n,\geu)$, valid for all $f\in C^{\infty}(\B)$ and $p\in[1,n)$ and $q\in[1,+\infty]$,
	\begin{align*}
		\begin{cases}
			\displaystyle \|f\|_{L^{\left(\frac{np}{n-p},q\right)}(\B)} \leq C(n,p,q)\, \|f\|_{W^{1,(p,q)}(\B)} , \\[3mm]
			\displaystyle \|f\|_{L^{\infty}(\B)} \leq C(n)\, \|f\|_{W^{1,(n,1)}(\B)}.
		\end{cases}
	\end{align*}
	If we denote $\bar{f}\coloneq \fint_{\B} f(x)\, dx$, we also have the estimates
	\begin{align}\label{eq:Sobolev_avg}
		\begin{cases}
			\displaystyle \|f-\bar{f}\|_{L^{\left(\frac{np}{n-p},q\right)}(\B)} \leq C(n,p,q)\, \|\g f\|_{L^{(p,q)}(\B)} , \\[3mm]
			\displaystyle \|f-\bar{f}\|_{L^{\infty}(\B)} \leq C(n)\, \|\g f\|_{L^{(n,1)}(\B)}.
		\end{cases}
	\end{align}
	
	\subsection{Weak immersions and the functional $\Er$}

	Since we will work with Lorentz spaces, it will be important to define a version of weak immersions when the second fundamental form lies in some Lorentz--Sobolev spaces. We set the following notations.
	\begin{defi}
		Let $(\Sigma^n,h)$ be a closed Riemannian manifold and $d>n$ be an integer. Given $k\in\N$ and $p\in[1,+\infty]$, we define the notion of weak immersion $\I_{k,p}(\Sigma;\R^d)$ as follows:
		\begin{align*}
			\I_{k,p}(\Sigma;\R^d)\coloneqq \left\{
			\vPhi\in W^{k+2,p}(\Sigma;\R^d) :  \exists c_{\vPhi}>0,\ c^{-1}_{\vPhi} h \leq g_{\vPhi} \leq c_{\vPhi}\, h
			\right\}.
		\end{align*}    
		We also have the Lorentz--Sobolev version, for $q\in[1,+\infty]$
		\begin{align*}
			\I_{k,(p,q)}(\Sigma;\R^d)\coloneqq \left\{
			\vPhi\in W^{k+2,(p,q)}(\Sigma;\R^d) :  \exists c_{\vPhi}>0,\ c^{-1}_{\vPhi} h \leq g_{\vPhi} \leq c_{\vPhi}\, h
			\right\}.
		\end{align*}    
	\end{defi}
	
	As discussed in Sections 6.1 and 6.2 of \cite{MarRiv2025}, we have that $\I_{\frac{n}{2}-1,2}(\Sigma^n;\R^d)\subset \I_{0,(n,2)}(\Sigma^n;\R^d)$. If $n$ is even, then the number $\Er(\vPhi)$ is well-defined for any $\vPhi\in \I_{\frac{n}{2}-1,2}(\Sigma^n;\R^d)$. In order to distinguish with $C^{\infty}$-immersion, we define
	\begin{align*}
		\Imm(\Sigma;\R^d) \coloneq \left\{ \vPhi\in C^{\infty}(\Sigma;\R^d) \ \text{immersion}
		\right\}.
	\end{align*}
	
	\subsection{Convergence of manifolds}
	
	Diverse notions of convergence of manifolds and maps between converging sequences of manifolds are available in the literature. We will mostly use the notion of pointed Gromov--Hausdorff convergence. We refer to \cite[Chapter 11]{petersen2016} for an introduction. The Gromov--Hausdorff distance between two metric spaces $(X,\dist_X)$ and $(Y,\dist_Y)$ is given by
	\begin{align*}
		\dist_{GH}(X,Y)\coloneq \inf\left\{  \dist_H^{Z}(\iota(X),\iota(Y)) : \begin{array}{l}
			\exists (Z,\dist_Z)\ \text{ a metric space},\\[2mm]
			\exists \iota\colon X\sqcup Y \to Z \text{ an embedding such that} \\[2mm]
			\quad \iota|_X \text{ and } \iota|_Y \text{ are isometric immersions}.
		\end{array}
		\right\}.
	\end{align*}
	In the above definition, the distance $\dist_H^{Z}$ between sets is the Hausdorff distance in the metric space $Z$. The map $\dist_{GH}$ defined on compact metric spaces is symmetric and verifies the triangle inequality. Moreover, we have $\dist_{GH}(X,Y)=0$ if and only if $X$ and $Y$ are isometric, see for instance \cite[Proposition 11.1.3]{petersen2016}. This makes the set of compact metric spaces up to isometries endowed with $\dist_{GH}$ a metric space. \\
	
	In order to deal with convergence of non-compact sets, the notion of pointed Gromov--Hausdorff convergence is useful. We define the pointed Gromov--Hausdorff distance between $(X,x)$ and $(Y,y)$, where $x\in X$ and $y\in Y$ as
	\begin{align*}
		\dist_{pGH}\big( (X,x),(Y,y) \big)\coloneq \inf\left\{  \dist_H^{Z}(\iota(X),\iota(Y)) + \dist_Z(\iota(x),\iota(y)): \begin{array}{l}
			\exists (Z,\dist_Z)\ \text{ a metric space},\\[2mm]
			\exists \iota\colon X\sqcup Y \to Z \text{ an embedding such that} \\[2mm]
			\quad \iota|_X \text{ and } \iota|_Y \text{ are isometric immersions}.
		\end{array}
		\right\}.
	\end{align*}
	We say that a sequence of pointed metric spaces $(X_i,x_i,\dist_i)_{i\in\N}$ converges to some limiting pointed metric space $(X_{\infty},x_{\infty},\dist_{\infty})$ in the pointed Gromov--Hausdorff topology, if for all $R>0$ there exists a sequence $(R_i)_{i\in\N}\subset (0,+\infty)$ converging to $R$ such that
	\begin{align*}
		\dist_{pGH}\Big[ \big( B_{X_i}(x_i,R_i), x_i, \dist_i \big), \big( B_{X_{\infty}}(x_{\infty},R), x_{\infty}, \dist_{\infty} \big) \Big]  \xrightarrow[i\to +\infty]{} 0.
	\end{align*}
	We characterize precompact sets for the Gromov--Hausdorff topology using the following result.
	\begin{proposition}[Proposition 11.1.10 in \cite{petersen2016}]\label{pr:charac_GH}
		Let $D>0$ and $\Cc$ be a set of compact metric spaces with diameters bounded from above by $D$. The following statements are equivalent
		\begin{enumerate}
			\item $\Cc$ is precompact for the Gromov--Hausdorff topology. 
			
			\item There exists a function $N_1\colon (0,\alpha)\to (0,+\infty)$ for some $\alpha>0$ such that for all $X\in\Cc$ and $\eps\in(0,\alpha)$, the maximum number of disjoint balls in $X$ of radius $\frac{\eps}{2}$ is bounded from above by $N_1(\eps)$.
			
			\item There exists a function $N_2\colon (0,\alpha)\to (0,+\infty)$ for some $\alpha>0$ such that for all $X\in\Cc$ and $\eps\in(0,\alpha)$, the minimum number of balls in $X$ of radius $\eps$ required to cover $X$ is bounded from above by $N_2(\eps)$.
		\end{enumerate}
	\end{proposition}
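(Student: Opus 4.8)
The statement is classical---it is Gromov's precompactness criterion---so the plan is only to indicate the argument; in the paper itself one simply invokes \cite[Proposition 11.1.10]{petersen2016}. I would prove the cycle $(1)\Rightarrow(3)\Rightarrow(2)\Rightarrow(1)$, reducing $(2)\Leftrightarrow(3)$ to an elementary packing--covering comparison and concentrating all of the content in $(3)\Rightarrow(1)$. For the comparison, write $\mathrm{pack}(X,\eps)$ for the largest cardinality of an $\eps$-separated subset of $X$ (equivalently the largest number of disjoint balls of radius $\eps/2$) and $\mathrm{cov}(X,\eps)$ for the smallest number of $\eps$-balls covering $X$; a maximal $\eps$-separated set is automatically an $\eps$-net, so $\mathrm{cov}(X,\eps)\le\mathrm{pack}(X,\eps)$, and two points lying in one radius-$\eps/2$ ball of a cover are at distance $<\eps$, so $\mathrm{pack}(X,\eps)\le\mathrm{cov}(X,\eps/2)$. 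Hence a uniform bound $N_1$ on packing numbers gives $N_2:=N_1$ in $(3)$, and a uniform bound $N_2$ on covering numbers gives $N_1(\eps):=N_2(\eps/2)$ in $(2)$; this yields $(2)\Leftrightarrow(3)$. For $(1)\Rightarrow(3)$ I would cover $\Cc$, for fixed $\eps\in(0,\alpha)$, by finitely many $\dist_{GH}$-balls of small radius about compact spaces $Y_1,\dots,Y_m$; since an approximate isometry carries a fine net of one space to a net of the other of comparable density, the numbers $\mathrm{cov}(X,\eps)$ for $X\in\Cc$ are then bounded by a constant depending only on $\max_j\mathrm{cov}(Y_j,\eps/2)$, which defines the required $N_2(\eps)$.

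The heart is $(3)\Rightarrow(1)$. Given a sequence $(X_k)_{k\in\N}\subset\Cc$, I would, for every dyadic scale $2^{-j}$, choose inside $X_k$ a \emph{nested} family of nets $S_{k,1}\subset S_{k,2}\subset\cdots$ by successively enlarging a maximal $2^{-1}$-separated set to a maximal $2^{-2}$-separated set, and so on, so that $S_{k,j}$ is $2^{-j}$-dense in $X_k$ with $|S_{k,j}|\le N_2(2^{-1})+\cdots+N_2(2^{-j})$; padding with repeated points I may take $|S_{k,j}|=Q_j$ independent of $k$, and I record the distance matrices $D_{k,j}\in[0,D]^{Q_j\times Q_j}$. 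As each $[0,D]^{Q_j\times Q_j}$ is compact, a diagonal extraction over $j$ yields a subsequence along which $D_{k,j}\to D_{\infty,j}$ for every $j$; the limits satisfy the triangle inequality and are compatible with the inclusions $S_{\cdot,j}\hookrightarrow S_{\cdot,j+1}$, hence define finite pseudometric spaces glued by isometric embeddings, and I let $X_\infty$ be the completion of their increasing union. Passing to the limit in the $2^{-j}$-density of $S_{k,j}$ shows that the $j$-th net is $2^{-j+1}$-dense in $X_\infty$, so $X_\infty$ is totally bounded and complete, hence compact with $\diam(X_\infty)\le D$. Finally the correspondence identifying $S_{k,j}$ with the $j$-th net of $X_\infty$ distorts distances by at most $\|D_{k,j}-D_{\infty,j}\|_\infty$ and is $2^{-j+1}$-dense on the target while $S_{k,j}$ is $2^{-j}$-dense on the source, so it is an $\eps_{k,j}$-isometry with $\eps_{k,j}\to 0$ as $k\to\infty$ and then $j\to\infty$; by the characterization of small $\dist_{GH}$ in terms of approximate isometries this gives $\dist_{GH}(X_k,X_\infty)\to 0$.

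The step I expect to be the genuine obstacle is this last one: arranging the nets at all dyadic scales to be simultaneously nested and uniformly bounded in cardinality, so that one diagonal extraction controls every scale at once, and then upgrading the entrywise convergence of the finite distance matrices to Gromov--Hausdorff convergence of the ambient spaces via the equivalence between $\dist_{GH}(X,Y)$ being small and the existence of $\eps$-isometries between $X$ and $Y$. The remaining verifications---that the limit matrices are honest pseudometrics, that the direct limit completes to a compact metric space, and that diameters pass to the limit---are routine bookkeeping. For the complete argument I refer to \cite[Chapter 11]{petersen2016}.
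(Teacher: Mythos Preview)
Your proposal is correct, and you have rightly identified that the paper itself offers no proof of this proposition: it is simply quoted from \cite[Proposition~11.1.10]{petersen2016} as a background result. The sketch you give---packing/covering equivalence for $(2)\Leftrightarrow(3)$, finite coverings for $(1)\Rightarrow(3)$, and the nested-net/diagonal-extraction argument for $(3)\Rightarrow(1)$---is the standard textbook route and matches what one finds in Petersen's book, so nothing more is needed here.
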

	
	We now define the convergence of maps between converging metric spaces.	We consider a sequence of metric spaces $X_k$ and $Y_k$ converging in the Gromov--Hausdorff topology to $X$ and $Y$. For each $k\in\N$, we consider a map $f_k\colon X_k\to Y_k$. We say that $(f_k)_{k\in\N}$ converges uniformly to a map $f\colon X\to Y$ if for any $x\in X$ and any sequence of points $x_k\in X_k$ converging to $x$, the sequence $(f_k(x_k))_{k\in\N}$ converges to $f(x)$. The sequence $(f_k)_{k\in\N}$ is called equicontinuous if for every $\eps>0$, there exists $\delta>0$ such that for any every sequence $x_k\in X_k$ and all $k\in\N$, we have $f_k(B_{X_k}(x_k,\delta)) \subset B_{Y_k}(f_k(x_k),\eps)$. We have the following version of Arzela--Ascoli lemma. 
	
	\begin{lemma}[Lemma 11.1.9 in \cite{petersen2016}]\label{lm:ArzelaAscoli}
		Let $(X_k)_{k\in\N}$ and $(Y_k)_{k\in\N}$ be two sequences of metric spaces converging in the pointed Gromov--Hausdorff distance to metric spaces $X$ and $Y$ respectively. Any equicontinuous family of maps $f_k\colon X_k\to Y_k$ has a uniformly converging subsequence. 
	\end{lemma}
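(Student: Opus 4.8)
The plan is to run the classical Arzelà--Ascoli diagonal extraction, transported to the Gromov--Hausdorff setting by means of almost-isometries. First I would fix base points $x\in X$, $x_k\in X_k$, $y\in Y$, $y_k\in Y_k$ realizing the pointed convergence and unwind the definition: for every $R>0$ there are numbers $\eps_{k,R}\to 0$ and maps $\phi_k^R\colon B_X(x,R)\to B_{X_k}(x_k,R)$ and $\psi_k^R\colon B_{Y_k}(y_k,R)\to B_Y(y,R)$ which are $\eps_{k,R}$-almost isometries onto $\eps_{k,R}$-dense subsets. A diagonal choice $R=R_k\to\infty$ produces single maps $\phi_k\colon X\to X_k$ and $\psi_k\colon Y_k\to Y$, defined on larger and larger balls, which are $\eps_k$-almost isometries with $\eps_k\to 0$; their role is that, for $z_k\in X_k$ and $z\in X$, one has $z_k\to z$ if and only if $d_{X_k}(z_k,\phi_k(z))\to 0$, and likewise in $Y$. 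The candidate limit will be obtained from the maps $\psi_k\circ f_k\circ\phi_k\colon X\to Y$.

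Next I would pick a countable dense set $\{p_j\}_{j\geq 1}\subset X$ (the limit space is proper, hence separable). For fixed $j$, the points $\psi_k(f_k(\phi_k(p_j)))\in Y$ stay in a bounded region of $Y$: since the $X_k$ are length spaces, $\phi_k(p_j)$ is joined to $x_k$ by a chain of at most $C(j)$ points of consecutive distance less than the equicontinuity radius $\delta_1$ attached to $\eps=1$, so equicontinuity gives $d_{Y_k}(f_k(\phi_k(p_j)),f_k(x_k))\leq C(j)$ (after replacing $y_k$ by $f_k(x_k)$ if needed, which is harmless and controls the distance to the base point). As $Y$ is proper, a diagonal argument over $j$ extracts a subsequence, not relabelled, along which $\psi_k(f_k(\phi_k(p_j)))\to f(p_j)$ for every $j$. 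Then I check $f$ is uniformly continuous on $\{p_j\}$: given $\eps>0$, take $\delta$ from equicontinuity at level $\eps/4$; if $d_X(p_i,p_j)<\delta/2$ then $d_{X_k}(\phi_k p_i,\phi_k p_j)<\delta$ for $k$ large, hence $f_k(\phi_k p_i)\in B_{Y_k}(f_k(\phi_k p_j),\eps/4)$, hence $d_Y(\psi_k f_k\phi_k p_i,\psi_k f_k\phi_k p_j)\leq \eps/4+\eps_k$, and letting $k\to\infty$ gives $d_Y(f(p_i),f(p_j))\leq\eps/4$. Thus $f$ extends uniquely to a uniformly continuous map $f\colon X\to Y$ by completeness of $Y$.

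Finally I would verify uniform convergence in the sense of the paper's definition: let $x'\in X$ and $x'_k\in X_k$ with $x'_k\to x'$, and fix $\eps>0$. Choose $p_j$ with $d_X(p_j,x')$ smaller than both the equicontinuity radius at level $\eps/5$ and the modulus of uniform continuity of $f$ at level $\eps/5$. For $k$ large one has $d_{X_k}(\phi_k p_j,x'_k)$ below that radius, so $d_{Y_k}(f_k(\phi_k p_j),f_k(x'_k))<\eps/5$; combining the almost-isometry of $\psi_k$, the convergence $\psi_k(f_k(\phi_k p_j))\to f(p_j)$, and $d_Y(f(p_j),f(x'))<\eps/5$, one obtains $d_Y(\psi_k(f_k(x'_k)),f(x'))<\eps$ for $k$ large, i.e.\ $f_k(x'_k)\to f(x')$. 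This is exactly the asserted uniform convergence of the subsequence.

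The main obstacle is not analytic but organizational: making the almost-isometries $\phi_k,\psi_k$ precise and coherent on an exhaustion by balls starting from the definition of \emph{pointed} Gromov--Hausdorff convergence, and ensuring — via equicontinuity together with the length-space structure of the $X_k$ — that the images of a fixed point do not escape to infinity in $Y_k$, so that the diagonal compactness step is legitimate. Once these points are secured, the argument is the textbook Arzelà--Ascoli scheme.
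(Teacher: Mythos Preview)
The paper does not supply its own proof of this lemma; it is simply quoted from Petersen's textbook \cite[Lemma 11.1.9]{petersen2016} as a tool to be used later. Your proposal is essentially the standard textbook argument (diagonal extraction on a countable dense set, transported via $\eps$-almost isometries coming from the pointed Gromov--Hausdorff convergence), which is indeed how Petersen proves it. One small caveat: you invoke that the $X_k$ are length spaces to chain points and keep the images $f_k(\phi_k(p_j))$ in a bounded region of $Y_k$; this hypothesis is not part of the lemma as stated, but it is harmless here since in the paper's applications the spaces are Riemannian manifolds, and Petersen's statement is also made in a setting where such control is available.
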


	\section{Extrinsic estimates}\label{sec:Extrinsic}

		For the rest of the section, we fix a $C^{\infty}$ immersion $\vPhi\colon \Sigma\to \R^d$ be from a closed oriented manifold $\Sigma$ of even dimension $n\geq 4$. In this section, we will work mostly on the image $M\coloneq \vPhi(\Sigma)$, as it is done with varifolds. This will provide crucial uniform estimates. We proved in \cite{MarRiv2025} that weak immersions with small $L^{(n,\infty)}$ norm of $H$ have a uniformly controlled Sobolev constant, see \Cref{th:Sobolev} below. In \Cref{sec:Hardy}, we study Hardy inequalities. In \Cref{sec:Balls}, we prove that the volume of extrinsic balls has Euclidean growth. \Cref{sec:Hardy} and \Cref{sec:Balls} are valid only for $n$ even since we will need the energy $\Er$.\\
		
		We recall the following result, that will be crucial to obtain estimate on the growth of extrinsic balls, see Proposition \ref{pr:Lower_Extrinsic} below.
		\begin{theorem}[Theorem 6.2 in \cite{MarRiv2025}]\label{th:Sobolev}
			Let $d>n\geq 3$ be arbitrary integers. There exist constants $\eps=\eps(n,d)>0$ such that the following holds. Let $\vPhi\in \Imm(\Sigma; \R^d)$ be a $C^{\infty}$ immersion from a compact $n$-dimensional oriented manifold $\Sigma$ with $\dr\Sigma\neq \emptyset$. Assume that $\|\vH_{\vPhi}\|_{L^{(n,\infty)}(\Sigma,g_{\vPhi})}\leq \eps$.\\
			For any $p\in[1,n)$ and $q\in[1,+\infty]$, there exists a constant $c>0$ depending only on $n,d,p,q$ such that 
			\begin{align*}
				\forall \vp\in C^{\infty}_c(\Sigma),\qquad \|\vp\|_{L^{\left(\frac{np}{n-p},q\right)}\left(\Sigma,g_{\vPhi} \right)} \leq c\, \|d\vp\|_{L^{(p,q)}\left( \Sigma,g_{\vPhi} \right)}.
			\end{align*}
		\end{theorem}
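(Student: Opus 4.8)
The plan is to reduce \Cref{th:Sobolev} to the Michael--Simon isoperimetric inequality, and then to use the smallness of $\|\vH_{\vPhi}\|_{L^{(n,\infty)}}$ to absorb the resulting curvature error term. \emph{Step 1 (the $L^1$ endpoint).} The Michael--Simon inequality holds for immersions (indeed for integral varifolds) with a constant $C_0=C_0(n)$ depending only on the dimension and not on the codimension: for every $\Omega\Subset\Sigma$ with smooth boundary,
\begin{align*}
	\vol_{g_{\vPhi}}(\Omega)^{\frac{n-1}{n}}\leq C_0\Big(\Hr^{n-1}_{g_{\vPhi}}(\dr\Omega)+\int_{\Omega}|\vH_{\vPhi}|\,d\vol_{g_{\vPhi}}\Big).
\end{align*}
Given $\vp\in C^{\infty}_c(\Sigma)$ one applies this to the superlevel sets $\Omega_{\lambda}=\{|\vp|>\lambda\}$, which are admissible for a.e.\ $\lambda>0$ by Sard's theorem, integrates in $\lambda\in(0,+\infty)$, and uses the coarea formula $\int_0^{\infty}\Hr^{n-1}_{g_{\vPhi}}(\{|\vp|=\lambda\})\,d\lambda=\int_{\Sigma}|d\vp|_{g_{\vPhi}}\,d\vol_{g_{\vPhi}}$ together with Fubini's theorem to rewrite the curvature contribution as $\int_{\Sigma}|\vp|\,|\vH_{\vPhi}|\,d\vol_{g_{\vPhi}}$. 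Since on a finite-measure space one has $\|f\|_{L^{(\frac{n}{n-1},1)}}=\frac{n}{n-1}\int_0^{\infty}\vol(\{|f|>\lambda\})^{\frac{n-1}{n}}\,d\lambda$, this yields
\begin{align*}
	\|\vp\|_{L^{(\frac{n}{n-1},1)}(\Sigma,g_{\vPhi})}\leq C_1(n)\Big(\|d\vp\|_{L^1(\Sigma,g_{\vPhi})}+\int_{\Sigma}|\vp|\,|\vH_{\vPhi}|\,d\vol_{g_{\vPhi}}\Big).
\end{align*}

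\emph{Step 2 (absorption).} By the Lorentz duality pairing — $\frac{n}{n-1}$ being conjugate to $n$ and $1$ being conjugate to $\infty$ — the curvature term is bounded by $C_2(n)\,\|\vp\|_{L^{(\frac{n}{n-1},1)}(\Sigma,g_{\vPhi})}\,\|\vH_{\vPhi}\|_{L^{(n,\infty)}(\Sigma,g_{\vPhi})}\leq C_2(n)\,\eps\,\|\vp\|_{L^{(\frac{n}{n-1},1)}(\Sigma,g_{\vPhi})}$. As $\Sigma$ is compact and $\vp$ bounded, the quantity $\|\vp\|_{L^{(\frac{n}{n-1},1)}(\Sigma,g_{\vPhi})}$ is finite, so choosing $\eps=\eps(n)$ with $C_1(n)C_2(n)\,\eps\leq\tfrac12$ allows us to absorb it and obtain the endpoint inequality
\begin{align*}
	\|\vp\|_{L^{(\frac{n}{n-1},1)}(\Sigma,g_{\vPhi})}\leq 2C_1(n)\,\|d\vp\|_{L^1(\Sigma,g_{\vPhi})}\qquad\text{for all }\vp\in C^{\infty}_c(\Sigma),
\end{align*}
which I will call $(\star)$. (Any dependence on $d$ enters only through the Michael--Simon constant and is in fact unnecessary.)

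\emph{Step 3 (general $(p,q)$).} The full family of estimates follows from $(\star)$ by the standard self-improvement of an $L^1$-type Sobolev inequality. Writing $|\vp|=\sum_{k\in\Z}\vp_k$ with the dyadic truncations $\vp_k:=\min\{(|\vp|-2^k)_+,\,2^k\}$, one has $|d\vp_k|=|d\vp|_{g_{\vPhi}}\,\mathbf 1_{\{2^k<|\vp|<2^{k+1}\}}$ a.e., so $(\star)$ (extended to $W^{1,1}_c(\Sigma)$ by density) applied to each $\vp_k$ gives
\begin{align*}
	2^k\,\vol_{g_{\vPhi}}\big(\{|\vp|>2^{k+1}\}\big)^{\frac{n-1}{n}}\leq C(n)\int_{\{2^k<|\vp|<2^{k+1}\}}|d\vp|_{g_{\vPhi}}\,d\vol_{g_{\vPhi}}.
\end{align*}
Summing these estimates in $\ell^q$ and comparing with the defining quasinorms of $L^{(\frac{np}{n-p},q)}$ and $L^{(p,q)}$ — a purely measure-theoretic computation identical to the Euclidean one — yields $\|\vp\|_{L^{(\frac{np}{n-p},q)}(\Sigma,g_{\vPhi})}\leq c(n,p,q)\,\|d\vp\|_{L^{(p,q)}(\Sigma,g_{\vPhi})}$. (Equivalently, applying $(\star)$ to $|\vp|^{\gamma}$ with $\gamma=\frac{p(n-1)}{n-p}$ and Hölder produces the scale of $L^{\frac{np}{n-p}}$-Sobolev inequalities, and the Lorentz refinement then follows by real interpolation in $q$.)

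\emph{Main obstacle.} The only genuinely non-elementary ingredient is the Michael--Simon isoperimetric inequality with a constant independent of the immersion (classical: Michael--Simon, Allard; see also Brendle's recent sharp version); this is exactly what dictates that the hypothesis bear on $\vH_{\vPhi}$ rather than on the full $\vII_{\vPhi}$. Everything else is bookkeeping, the one delicate point being to keep track of the Lorentz second index in Steps 2 and 3 so as to land precisely on the Lorentz target $L^{(\frac{np}{n-p},q)}$ and not merely on $L^{\frac{np}{n-p}}$, this Lorentz gain being what makes the theorem useful for the elliptic estimates later in the paper.
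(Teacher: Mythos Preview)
The paper does not prove this statement: \Cref{th:Sobolev} is quoted verbatim from the authors' earlier work \cite{MarRiv2025} (their Theorem 6.2) and is used as a black box, so there is no ``paper's own proof'' to compare against. Your argument is correct and is the standard route to such Michael--Simon--type Sobolev inequalities: the isoperimetric inequality on the immersed manifold gives the $L^1\to L^{(\frac{n}{n-1},1)}$ endpoint with a curvature remainder $\int|\vp|\,|\vH_{\vPhi}|$, the Lorentz H\"older pairing $L^{(\frac{n}{n-1},1)}\times L^{(n,\infty)}\hookrightarrow L^1$ lets the smallness hypothesis absorb that remainder, and Maz'ya-type truncation (or the $|\vp|^{\gamma}$ trick plus interpolation) upgrades the endpoint to the full Lorentz scale. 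One small caveat worth making explicit in Step~3: the truncations $\vp_k$ are only Lipschitz, not $C^\infty_c$, so you are implicitly using that $(\star)$ extends by density to compactly supported Lipschitz functions --- which it does, since both sides are continuous for the $W^{1,1}$ norm.
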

		
		In this section, if $\vPhi\in \Imm(\Sigma^n;\R^d)$, we will consider the following measure on $\R^d$: for any Borel set $A\subset \R^d$, we denote
		\begin{align*}
			\mu_{\vPhi}(A) \coloneqq \vol_{g_{\vPhi}}\left( \vPhi^{-1}(A) \right).
		\end{align*}
		If $\vx\in \vPhi(\Sigma)$, we denote $\theta_{\vPhi}(\vx) \coloneq \sharp\vPhi^{-1}(\{\vx\})$ the density of $\vPhi(\Sigma)$ at $\vx$. We obtain 
		\begin{align}\label{eq:measure}
			\mu_{\vPhi} = \theta_{\vPhi}\, \Hr^n\llcorner \vPhi(\Sigma).
		\end{align}
		
		\subsection{Hardy inequality and consequences}\label{sec:Hardy}

		Following the proof of \cite[Theorem 3.1]{cabre2022}, we obtain a Hardy inequality for immersed submanifold of any codimension in Propositon \ref{pr:Hardy}. By iterating this inequality, we obtain uniform estimates on diverse quantities in Corollaries \ref{cor:II_xn2} and \ref{cor:int_xperp}.
		
		\begin{proposition}\label{pr:Hardy}
			Let $\Sigma$ be an n-dimensional complete manifold possibly with boundary of arbitrary dimension $n\geq 3$. Let $\vPhi\colon \Sigma^n\to \R^d$ be a smooth immersion and define $M\coloneqq \vPhi(\Sigma)$. Let $\B^d(\vx_0,r)\subset \R^d$ be a ball such that $\B^d(\vx_0,r)\cap \vPhi(\dr\Sigma)=\emptyset$. Let $\vp\in C^{\infty}_c(\B^d(\vx_0,r))$. For any $a\in[0,n)$, it holds
			\begin{align*}
				\int_{M\cap \B^d( \vx_0,r)} (n-a)\, \frac{|\vp(\vx)|^2}{|\vx|^a} +a\, \frac{|\vx^{\perp}|^2}{|\vx|^{a+2}}\, |\vp(\vx)|^2\ d\mu_{\vPhi}(\vx) \leq \frac{1}{n-a}\int_{M\cap \B^d(\vx_0,r)} \frac{|2\g^T\vp - \vH \vp|^2}{|\vx|^{a-2}}\, d\mu_{\vPhi}(\vx).
			\end{align*}
		\end{proposition}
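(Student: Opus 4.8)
The plan is to adapt the proof of the classical Michael–Simon–type Hardy inequality (as in the cited work of Cabré–Miraglio) to the immersed setting, the key point being that the only structural ingredient needed is the first variation formula for the weight $|\vx|^{-a}$-modified vector field, which holds for immersions of arbitrary codimension once one keeps track of the mean curvature term. Concretely, I would fix the center $\vx_0$ and work with the position field $\vx$ (pulled back to $\Sigma$ via $\vPhi$, but I will suppress the pullback), and test the first variation of $M=\vPhi(\Sigma)$ against the vector field $X \coloneqq |\vp|^2 |\vx|^{-a} \vx$, or rather its tangential part. Since $\vp$ is compactly supported in $\B^d(\vx_0,r)$ and that ball avoids $\vPhi(\dr\Sigma)$, there are no boundary terms, so the first variation formula reads $\int_M \di_M(X)\, d\mu_{\vPhi} = -\int_M \vH \cdot X\, d\mu_{\vPhi}$, where $\di_M$ denotes the tangential divergence.

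Next I would compute $\di_M(X)$ explicitly. Using $\di_M(\vx) = \di_M(\vx^T) = n - |\vx^\perp|^2/\cdots$ — more precisely, writing $\vx = \vx^T + \vx^\perp$ with respect to $T\vPhi(\Sigma)$, one has $\di_M \vx = n$ (since each coordinate vector field has tangential divergence summing to $n$, independently of codimension), and $\g^T(|\vx|^{-a}) = -a |\vx|^{-a-2} \vx^T$, so that
\begin{align*}
	\di_M\!\left( |\vp|^2 |\vx|^{-a}\vx \right) = |\vx|^{-a}\Big( n |\vp|^2 + \scal{\g^T |\vp|^2}{\vx} \Big) - a\, |\vp|^2\, \frac{|\vx^T|^2}{|\vx|^{a+2}}.
\end{align*}
Since $|\vx^T|^2 = |\vx|^2 - |\vx^\perp|^2$, the last term produces exactly the $(n-a)|\vp|^2|\vx|^{-a}$ piece together with $+a|\vp|^2 |\vx^\perp|^2 |\vx|^{-a-2}$, which is the left-hand side of the claimed inequality. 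Combining with the first variation identity gives
\begin{align*}
	\int_M (n-a)\frac{|\vp|^2}{|\vx|^a} + a\frac{|\vx^\perp|^2}{|\vx|^{a+2}}|\vp|^2\, d\mu_{\vPhi} = -\int_M |\vx|^{-a}\scal{\g^T|\vp|^2}{\vx}\, d\mu_{\vPhi} - \int_M \vH\cdot\big(|\vp|^2|\vx|^{-a}\vx\big)\, d\mu_{\vPhi}.
\end{align*}
Writing $\g^T|\vp|^2 = 2\vp\,\g^T\vp$ (here $\vp$ scalar, so this is $2\vp \g^T\vp$), the right-hand side becomes $-\int_M |\vx|^{-a}\vp\, \scal{2\g^T\vp - \vH\vp}{\vx}\, d\mu_{\vPhi}$, and then Cauchy–Schwarz in $\R^d$ bounds $|\scal{2\g^T\vp - \vH\vp}{\vx}| \le |2\g^T\vp - \vH\vp|\,|\vx|$, giving an upper bound of $\int_M |\vx|^{1-a}|\vp|\,|2\g^T\vp - \vH\vp|\, d\mu_{\vPhi}$.

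Finally I would close the estimate with a weighted Young/Cauchy–Schwarz inequality: for $\delta>0$, bound $|\vx|^{1-a}|\vp|\,|2\g^T\vp-\vH\vp| \le \frac{\delta}{2}|\vx|^{-a}|\vp|^2 + \frac{1}{2\delta}|\vx|^{2-a}|2\g^T\vp-\vH\vp|^2$, absorb the first term into the left-hand side (which dominates $(n-a)\int_M |\vp|^2|\vx|^{-a}$ since the $|\vx^\perp|^2$ term is nonnegative for $a\ge 0$), choosing $\delta = n-a$, and obtain the stated constant $\frac{1}{n-a}$ in front of $\int_M |\vx|^{2-a}|2\g^T\vp-\vH\vp|^2\, d\mu_{\vPhi}$. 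The main (and essentially only) subtlety is justifying that the first variation formula applies verbatim: one must check that the vector field $X$ is Lipschitz and compactly supported away from $\vPhi(\dr\Sigma)$ — which holds since $\vp\in C^\infty_c(\B^d(\vx_0,r))$ and $\B^d(\vx_0,r)\cap\vPhi(\dr\Sigma)=\emptyset$, so $\vx$ stays bounded away from a neighborhood of $\dr\Sigma$ on $\supp(\vp\circ\vPhi)$ — and that the measure-theoretic bookkeeping with $\mu_{\vPhi} = \theta_{\vPhi}\Hr^n\llcorner M$ causes no trouble, which is immediate because all computations are done upstairs on $\Sigma$ where $\vPhi$ is a smooth immersion. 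Note the center $\vx_0$ enters only through the support condition; the weight is centered at the origin, exactly as in the statement.
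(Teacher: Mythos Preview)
Your proposal is correct and follows essentially the same approach as the paper. Both arguments establish the identity
\[
\int_M \left[(n-a)\frac{|\vp|^2}{|\vx|^a} + a\frac{|\vx^{\perp}|^2}{|\vx|^{a+2}}|\vp|^2\right] d\mu_{\vPhi} = -\int_M \frac{\vp}{|\vx|^a}\,\scal{2\g^T\vp - \vH\vp}{\vx}\, d\mu_{\vPhi}
\]
via the first variation / integration by parts with $\di_M(\vx)=n$ and $\vx\cdot\g^T|\vx|^{-a} = -a|\vx^T|^2|\vx|^{-a-2}$, then bound $|\scal{\cdot}{\vx}|\le |\vx|\,|\cdot|$. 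The only cosmetic difference is in the closing step: the paper applies Cauchy--Schwarz to get $\text{LHS}\le A^{1/2}C^{1/2}$ (with $A=\int|\vp|^2|\vx|^{-a}$, $C=\int|2\g^T\vp-\vH\vp|^2|\vx|^{2-a}$), deduces $(n-a)^2A\le C$ by dropping the nonnegative $|\vx^\perp|^2$ term, and substitutes back; you instead use Young with $\delta=n-a$, which after absorption yields $(n-a)A+2aB\le \frac{1}{n-a}C$ (slightly stronger than stated, hence implying it). Your parenthetical about the LHS ``dominating $(n-a)\int|\vp|^2|\vx|^{-a}$'' is phrased a bit awkwardly, but the arithmetic is sound.
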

		
		\begin{proof}
			Let $x\in \Sigma$ and a small neighbourhood $\Ur\subset \Sigma$ of $x$ such that $\vPhi\colon \Ur\to \vPhi(\Ur)\subset \R^d$ is bijective. Let $(\ve_1,\ldots,\ve_n)$ be an orthonormal frame for $T\vPhi(\Ur)$. We have the following computation of $\di_M(\vx)$:
			\begin{align*}
				\di_M(\vx)  = \sum_{i=1}^{n} \ve_i\cdot \g_{\ve_i} \vx = n.
			\end{align*}
			Integrating by parts (see for instance \cite[Equation 7.6]{simon1983}), we obtain
			\begin{align*}
				n\int_{M} \frac{|\vp(\vx)|^2}{|\vx|^a}\, d\mu_{\vPhi}(\vx) & = \int_{M} \frac{|\vp(\vx)|^2}{|\vx|^a}\, \di_M(\vx)\, d\mu_{\vPhi}(\vx) \\[2mm]
				& = -\int_{M} \left( 2\frac{\vp(\vx)}{|\vx|^a}\, \g^{\top} \vp(\vx)\cdot \vx + |\vp(\vx)|^2\, \vx\cdot \g^{\top} |\vx|^{-a} - \frac{|\vp(\vx)|^2}{|\vx|^a}\, \vH\cdot \vx\right)\ d\mu_{\vPhi}(\vx).
			\end{align*}
			We compute the middle term separately:
			\begin{align*}
				\vx\cdot \g^{\top} |\vx|^{-a} = -a\, \frac{|\vx^{\top}|^2}{|\vx|^{a+2}} = -a\, \frac{|\vx|^2 - |\vx^{\perp}|^2 }{ |\vx|^{a+2} }.
			\end{align*}
			Hence we obtain
			\begin{align}
				\int_{M} \left( (n-a)\, \frac{|\vp(\vx)|^2}{|\vx|^a}+ a\, \frac{|\vx^{\perp}|^2}{|\vx|^{a+2}}\, |\vp(\vx)|^2\right)\, d\mu_{\vPhi}(\vx) & = -\int_{M} \left( 2\, \frac{\vp(\vx)}{|\vx|^a}\, \g^{\top} \vp(\vx)\cdot \vx  - \frac{|\vp(\vx)|^2}{|\vx|^a}\, \vH\cdot \vx\right)\ d\mu_{\vPhi}(\vx) \label{eq:Hardy1}\\[2mm]
				& \leq \int_{M} \frac{|\vp(\vx)|}{|\vx|^{a-1}}\, \left| 2 \g^{\top} \vp(\vx) - \vp(\vx) \vH\right|  \ d\mu_{\vPhi}(\vx). \nonumber
			\end{align}
			By Hölder inequality, we obtain
			\begin{align*}
				\int_{M} \left( (n-a)\, \frac{|\vp(\vx)|^2}{|\vx|^a}+ a\, \frac{|\vx^{\perp}|^2}{|\vx|^{a+2}} \, |\vp(\vx)|^2\right)\ d\mu_{\vPhi}(\vx) 
				\leq  \left( \int_M \frac{|\vp(\vx)|^2}{|\vx|^a}\, d\mu_{\vPhi}(\vx) \right)^{\frac{1}{2}} \left( \int_{M} \frac{\left| 2 \g^{\top} \vp(\vx) - \vp(\vx) \vH\right|^2}{ |\vx|^{a-2} }\, d\mu_{\vPhi}(\vx) \right)^{\frac{1}{2}}.
			\end{align*}
			Forgetting the second term of the left-hand side, we obtain 
			\begin{align*}
				\int_{M}  (n-a)^2\, \frac{|\vp(\vx)|^2}{|\vx|^a}\, d\mu_{\vPhi}(\vx) \leq \int_{M} \frac{\left| 2 \g^{\top} \vp(\vx) - \vp(\vx) \vH\right|^2}{ |\vx|^{a-2} }\, d\mu_{\vPhi}(\vx) .
			\end{align*}
		\end{proof}
		
		We now consider specific cases of this Hardy inequality.
		
		\begin{corollary}\label{cor:II_xn2}
			We consider integers $d>n\geq 4$ with $n$ even and a closed $n$-dimensional manifold $\Sigma$. There exists a constant $C>0$ depending only on $n$ and $d$ such that the following holds. Let $\vPhi\colon\Sigma\to \R^d$ be a smooth immersion such that $0\in M\coloneqq \vPhi(\Sigma)\subset \B^d(0,1)$. It holds
			\begin{align}\label{eq:integrabilityII}
				\int_{M} \frac{|\vII|^2}{|\vx|^{n-2}}\, d\mu_{\vPhi}(\vx) \leq C\, \Er(\vPhi).
			\end{align}
		\end{corollary}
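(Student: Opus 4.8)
The plan is to iterate the Hardy inequality of \Cref{pr:Hardy}, each application trading one derivative of $\vII$ for one power of $|\vx|$, so that after $\tfrac{n}{2}-1$ steps the singular weight $|\vx|^{-(n-2)}$ is entirely absorbed and one is left with a finite sum of \emph{unweighted}, scale-invariant integrals $\int_\Sigma\prod_k|\nabla^{m_k}\vII|^2\,d\vol_{g_{\vPhi}}$ with $\sum_k(1+m_k)=\tfrac{n}{2}$, each of which is controlled by $\Er(\vPhi)$ through a single Hölder inequality. To organize this, for a product $P=\prod_k|\nabla^{m_k}\vII|$ with $m_k\ge0$ I set $D(P)\coloneqq\sum_k(1+m_k)$, the scaling weight of $|P|$ (so that $|P|$ scales like $(\mathrm{length})^{-D(P)}$), and for $1\le D(P)\le\tfrac{n}{2}$ I consider the scale-invariant quantity
\begin{align*}
	I(P)\ \coloneqq\ \int_M|P|^2\,|\vx|^{-(n-2D(P))}\,d\mu_{\vPhi}.
\end{align*}
The claim, to be proved by downward induction on $\tfrac{n}{2}-D(P)$, is that $I(P)\le C(n,d)\,\Er(\vPhi)$ for every such $P$; applying it to $P=|\vII|$ (for which $D(P)=1$ and $n-2D(P)=n-2$) is exactly \eqref{eq:integrabilityII}.

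For the base case $D(P)=\tfrac{n}{2}$ the weight is trivial and $I(P)=\int_\Sigma\prod_k|\nabla^{m_k}\vII|^2\,d\vol_{g_{\vPhi}}$; applying Hölder with the norms $\|\nabla^{m_k}\vII\|_{L^{n/(1+m_k)}(\Sigma,g_{\vPhi})}$ is legitimate because $\sum_k\frac{2(1+m_k)}{n}=\frac{2}{n}\,D(P)=1$, and since $0\le m_k\le\tfrac{n}{2}-1$ each $\int_\Sigma|\nabla^{m_k}\vII|^{n/(1+m_k)}\,d\vol_{g_{\vPhi}}$ is one of the summands of $\Er(\vPhi)$, so the product of these quantities raised to exponents summing to $1$ is $\le\Er(\vPhi)$. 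For the inductive step $D(P)=l<\tfrac{n}{2}$, I apply \Cref{pr:Hardy} on $\B^d(0,R)$ for an arbitrary $R>1$ — admissible since $M\subset\B^d(0,1)$ and $\dr\Sigma=\emptyset$ — with $a=n-2l\in\{2,4,\dots,n-2\}\subset[0,n)$ and with $P$ as the test function; discarding the nonnegative $|\vx^\perp|^2$-term on the left-hand side gives
\begin{align*}
	I(P)\ =\ \int_M|P|^2\,|\vx|^{-(n-2l)}\,d\mu_{\vPhi}\ \le\ \frac{1}{(n-2l)^2}\int_M\frac{\big|2\,\g^{\top}P-\vH\,P\big|^2}{|\vx|^{\,n-2(l+1)}}\,d\mu_{\vPhi}.
\end{align*}
By the Kato-type inequality $|\g^{\top}P|\le\sum_k|\nabla^{m_k+1}\vII|\prod_{k'\neq k}|\nabla^{m_{k'}}\vII|$ and the pointwise bound $|\vH|\le\tfrac{1}{\sqrt{n}}|\vII|$, the numerator is bounded by a constant times $\sum_k|P_k'|^2+|\vII|^2|P|^2$, where the products $P_k'\coloneqq|\nabla^{m_k+1}\vII|\prod_{k'\neq k}|\nabla^{m_{k'}}\vII|$ and $|\vII|\,P$ all have scaling weight exactly $l+1$; hence the right-hand side is a sum of terms of the form $I(Q)$ with $D(Q)=l+1$, that is with one fewer remaining Hardy step, and the induction closes. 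Only finitely many monomials $P$ ever occur, since a product of weight $l\le\tfrac{n}{2}$ has at most $l$ factors and each $m_k\le\tfrac{n}{2}-1$, so the constants depend only on $n$ and $d$.

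The main technical point — and, I expect, the only genuinely delicate step — is that \Cref{pr:Hardy} is stated for $\vp\in C^\infty_c(\B^d(\vx_0,r))$, whereas here the test function $P$ is merely Lipschitz on $\Sigma$ (being assembled from the functions $|\nabla^m\vII|$, which are only Lipschitz across their zero sets) and the weight $|\vx|^{-a}$ is singular at the point $0\in M$. Since $\Sigma$ is closed there is no boundary term, and the inequality is recovered by repeating the integration by parts underlying \Cref{pr:Hardy} with $P$ replaced by $\chi_\eta\,P_\sigma$, where $P_\sigma$ is a mollification of $P$ and $\chi_\eta$ is a cutoff vanishing on $\vPhi^{-1}(\B^d(0,\eta))$; because $\vII$ is smooth, hence bounded, on the compact $\Sigma$ and $a<n-1$, every integrand that appears is dominated by an $L^1(M)$ function, so all error terms vanish as $\eta,\sigma\to0$ by dominated convergence and the estimate passes to the limit. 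A more synthetic alternative would be to bound $I(|\vII|)$ directly by a Lorentz--Hölder inequality, pairing $|\vII|^2\in L^{(n/2,1)}$ (coming from $\vII\in W^{\frac{n}{2}-1,2}\hookrightarrow L^{(n,2)}$) against $|\vx|^{-(n-2)}\in L^{(n/(n-2),\infty)}$, the latter membership being exactly the statement that extrinsic balls have Euclidean volume growth; I favour the iterated-Hardy route here since it keeps the constant $C(n,d)$ explicit and self-contained.
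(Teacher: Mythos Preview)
Your proof is correct and follows essentially the same route as the paper: iterate the Hardy inequality of \Cref{pr:Hardy} $\tfrac{n}{2}-1$ times to remove the weight $|\vx|^{-(n-2)}$, then bound the resulting unweighted scale-invariant integrals by $\Er(\vPhi)$ via H\"older. Your version is in fact more carefully organized (the explicit induction on the scaling weight $D(P)$) and more honest about the regularity issue for the test function, which the paper simply suppresses; the only slip is the harmless constant $\tfrac{1}{(n-2l)^2}$, which should read $\tfrac{1}{(2l)^2}$ since $n-a=2l$.
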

		\begin{proof}
			We iterate Proposition \ref{pr:Hardy}:
			\begin{align*}
				\int_{M} \frac{|\vII|^2}{|\vx|^{n-2}}\, d\mu_{\vPhi}(\vx) & \leq \frac{1}{4}\, \int_M \frac{|2\g |\vII| +|\vII|\, \vH|^2}{|\vx|^{n-4}}\, d\mu_{\vPhi}(\vx) \\[2mm]
				& \leq \frac{1}{64} \int_M \frac{\left|\g\left[ \left|2\g |\vII| +\big|\vII|\, \vH \right|\right] + |2\g |\vII| +|\vII|\, \vH\big|\, \vH\right|^2}{|\vx|^{n-6}}\, d\mu_{\vPhi}(\vx) \\[2mm]
				& \leq C\, \int_M \frac{\left|\g^2 \vII\right|^2 + |\g\vII|^3 + |\vII|^6 }{|\vx|^{n-6}}\, d\mu_{\vPhi}(\vx) \\[2mm]
				& \leq \cdots \\
				& \leq C\, \Er(\vPhi).
			\end{align*}
		\end{proof}
		
		\begin{corollary}\label{cor:int_xperp}
			Let $\vPhi\colon \Sigma^n\to \R^d$ be a smooth immersion such that $0\in M\coloneqq \vPhi(\Sigma)\subset \B^d(0,1)$. It holds
			\begin{align}\label{eq:int_xperp}
				\int_M \frac{|\vx^{\perp}|^2}{|\vx|^{n+2}}\, d\mu_{\vPhi}(\vx) \leq C\, \Er(\vPhi)^{\frac{1}{2}}.
			\end{align}
		\end{corollary}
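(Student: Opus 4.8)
The plan is to insert a well-chosen test function into the Hardy inequality of \Cref{pr:Hardy} and push the exponent up to the critical value $a=n$. Since $\Sigma$ is closed, the condition $\B^d(\vx_0,r)\cap\vPhi(\dr\Sigma)=\emptyset$ is vacuous, so I take $\vx_0=0$ and $r=2$, and choose $\vp\in C^{\infty}_c(\B^d(0,2))$ with $\vp\equiv 1$ on $\B^d(0,1)\supset M$. Along $M$ one then has $\vp\equiv 1$ and $\g^{\top}\vp\equiv 0$, so the identity \eqref{eq:Hardy1} — which is valid for every $a$, the restriction $a<n$ in \Cref{pr:Hardy} being needed only when dividing by $n-a$ — becomes, at $a=n$ and after using that $\vH$ is normal to $M$,
\begin{align*}
	n\int_M \frac{|\vx^{\perp}|^2}{|\vx|^{n+2}}\, d\mu_{\vPhi} = \int_M \frac{\vH\cdot\vx}{|\vx|^{n}}\, d\mu_{\vPhi} = \int_M \frac{\vH\cdot\vx^{\perp}}{|\vx|^{n}}\, d\mu_{\vPhi}.
\end{align*}

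Next I apply the Cauchy--Schwarz inequality to the right-hand side, splitting $|\vx|^{-n}=|\vx|^{-(n-2)/2}\,|\vx|^{-(n+2)/2}$, which gives
\begin{align*}
	n\int_M \frac{|\vx^{\perp}|^2}{|\vx|^{n+2}}\, d\mu_{\vPhi} \leq \left(\int_M \frac{|\vH|^2}{|\vx|^{n-2}}\, d\mu_{\vPhi}\right)^{1/2}\left(\int_M \frac{|\vx^{\perp}|^2}{|\vx|^{n+2}}\, d\mu_{\vPhi}\right)^{1/2}.
\end{align*}
Since $\vH=\tfrac1n\tr_{g_{\vPhi}}\vII$, one has $|\vH|^2\leq\tfrac1n|\vII|^2$, so the first factor is controlled by $C\,\Er(\vPhi)$ thanks to \Cref{cor:II_xn2}; cancelling the common factor then yields \eqref{eq:int_xperp}.

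The only point that needs care — and the main obstacle — is this cancellation, which is legitimate only once the quantity $\int_M|\vx^{\perp}|^2|\vx|^{-n-2}\,d\mu_{\vPhi}$ is known to be finite. For a smooth immersion this is clear: near each point of $\vPhi^{-1}(0)$ the image $M$ is a graph over its tangent plane, which contains the origin, so $|\vx^{\perp}|=O(|\vx|^{2})$ there and the integrand is locally integrable, while away from $0$ the integrand is bounded and $\mu_{\vPhi}(\R^d)=\vol_{g_{\vPhi}}(\Sigma)<\infty$. Alternatively one can avoid this a priori step altogether: apply \Cref{pr:Hardy} with a fixed $a\in[0,n)$ and $\vp$ as above, absorb into the left-hand side the term $\tfrac{a}{2}\int_M|\vx^{\perp}|^2|\vx|^{-a-2}$ produced on the right by Young's inequality, use $M\subset\B^d(0,1)$ to bound $|\vx|^{-(a-2)}\leq|\vx|^{-(n-2)}$, and deduce from $|\vH|^2\leq\tfrac1n|\vII|^2$ and \Cref{cor:II_xn2} a bound $\int_M|\vx^{\perp}|^2|\vx|^{-a-2}\,d\mu_{\vPhi}\leq C\,a^{-2}\,\Er(\vPhi)$ uniform in $a$; letting $a\uparrow n$ and invoking the monotone convergence theorem (the integrand is increasing in $a$ since $|\vx|\leq1$) gives the conclusion with no a priori hypothesis.
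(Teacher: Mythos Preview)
Your proof is correct and follows essentially the same approach as the paper: set $\vp\equiv 1$ and $a=n$ in the identity \eqref{eq:Hardy1}, apply Cauchy--Schwarz with the splitting $|\vx|^{-n}=|\vx|^{-(n-2)/2}|\vx|^{-(n+2)/2}$, invoke \Cref{cor:II_xn2}, and cancel. The paper's proof is terser and does not address the finiteness needed to justify the cancellation; your explicit treatment of this point (via the local-graph argument or the clean $a\uparrow n$ limiting argument) is a genuine improvement in rigor.
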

		\begin{proof}
			We consider the choices $\vp=1$ and $a=n$ in \eqref{eq:Hardy1} in the proof of \Cref{pr:Hardy}:
			\begin{align*}
				\int_M n \frac{|\vx^{\perp}|^2}{|\vx|^{n+2}}\, d\mu_{\vPhi}(\vx) = \int_M \frac{\vH\cdot \vx}{|\vx|^n}\, d\mu_{\vPhi}(\vx) \leq \int_M \frac{|\vH|}{|\vx|^{\frac{n-2}{2}}} \frac{|\vx^{\perp}|}{|\vx|^{\frac{n+2}{2}}}\, d\mu_{\vPhi}(\vx).
			\end{align*}
			By Hölder inequality, we obtain
			\begin{align*}
				\int_M n \frac{|\vx^{\perp}|^2}{|\vx|^{n+2}}\, d\mu_{\vPhi}(\vx) \leq \left( \int_M \frac{|\vH|^2}{|\vx|^{n-2}}\, d\mu_{\vPhi}(\vx) \right)^{\frac{1}{2}} \left(\int_M \frac{|\vx^{\perp}|^2}{|\vx|^{n+2}}\, d\mu_{\vPhi}(\vx) \right)^{\frac{1}{2}}.
			\end{align*}
			We conclude thanks to \eqref{eq:integrabilityII}.
		\end{proof}
		
		\subsection{Volume growth of extrinsic balls}\label{sec:Balls}
		
		By following \cite[Proposition 2.1]{akutagawa1994}, see also \cite[Proposition 2.4]{carron1996}, we deduce a lower bounds for the volume of extrinsic balls from \Cref{th:Sobolev}. 
		
		\begin{proposition}\label{pr:Lower_Extrinsic}
			Let $d>n\geq 3$ be integers and $\Sigma$ be an $n$-dimensional orientable manifold with boundary.
			There exist $\eps_0>0$ and $c>0$ depending only on $n,d$ such that the following holds. Consider an immersion $\vPhi\in \Imm(\Sigma;\R^d)$ such that $\|\vH\|_{L^{(n,\infty)}\left( \Sigma,g_{\vPhi} \right)}\leq\eps_0$. Then for any $\vx\in \R^d$ and $r>0$, if $\B^d(\vx,r)\cap \dr \vPhi( \Sigma)=\emptyset$, it holds
			\begin{align*}
				\mu_{\vPhi}\Big( \B^d(\vx,r) \Big) \geq c\, r^n.
			\end{align*}
		\end{proposition}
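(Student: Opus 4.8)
The plan is to derive the lower bound from the uniform Sobolev inequality of \Cref{th:Sobolev} by a monotonicity argument for the function $V(s):=\mu_{\vPhi}(\B^d(\vx,s))$, following \cite{akutagawa1994,carron1996}. We may assume $\vx\in\vPhi(\Sigma)$, so that $V(s)>0$ for every $s>0$; note that since $\vPhi(\dr\Sigma)\neq\emptyset$ is disjoint from $\B^d(\vx,r)$, we have $\vPhi(\Sigma)\not\subset\B^d(\vx,\rho)$ for every $\rho\le r$, hence $\vPhi^{-1}(\B^d(\vx,\rho))$ is a proper nonempty open subset of $\Sigma$. Fix this $r$. By Sard's theorem, for a.e.\ $\rho\in(0,r)$ the set $\Sigma_\rho:=\vPhi^{-1}(\overline{\B^d(\vx,\rho)})$ is a smooth compact manifold with nonempty boundary $\vPhi^{-1}(\s^{d-1}(\vx,\rho))$, on which $\vPhi$ restricts to an immersion satisfying $\|\vH_{\vPhi}\|_{L^{(n,\infty)}(\Sigma_\rho,g_{\vPhi})}\le\|\vH_{\vPhi}\|_{L^{(n,\infty)}(\Sigma,g_{\vPhi})}\le\eps_0$, so that \Cref{th:Sobolev} applies to $\vPhi|_{\Sigma_\rho}$. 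Taking $p=q=1$ there, and using the embedding $L^{(\frac{n}{n-1},1)}\hookrightarrow L^{\frac{n}{n-1}}$ (property 2 of Lorentz spaces), we obtain a constant $C_S=C_S(n,d)$ such that
\begin{align*}
	\|\varphi\|_{L^{\frac{n}{n-1}}(\Sigma_\rho,g_{\vPhi})}\le C_S\,\|d\varphi\|_{L^1(\Sigma_\rho,g_{\vPhi})}\qquad\text{for all }\varphi\in C^{\infty}_c(\mathrm{Int}\,\Sigma_\rho),
\end{align*}
and, by density, for all compactly supported Lipschitz functions on $\mathrm{Int}\,\Sigma_\rho$.

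Next I would test this inequality against truncated distance functions. Since $\vPhi$ is an isometric immersion of $(\Sigma,g_{\vPhi})$ into $(\R^d,\geu)$, the function $\varrho:=|\vPhi(\cdot)-\vx|$ is $1$-Lipschitz on $(\Sigma,g_{\vPhi})$, so $|d\varrho|_{g_{\vPhi}}\le1$ a.e. Given $0<s<t<r$, choose $\rho\in(t,r)$ for which $\Sigma_\rho$ is as above, and apply the inequality to $\varphi_{s,t}:=\min\{1,(t-s)^{-1}(t-\varrho)_+\}$, which is Lipschitz, supported in $\vPhi^{-1}(\overline{\B^d(\vx,t)})\subset\mathrm{Int}\,\Sigma_\rho$, and equal to $1$ on $\vPhi^{-1}(\overline{\B^d(\vx,s)})$. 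Then $\|\varphi_{s,t}\|_{L^{n/(n-1)}}\ge V(s)^{(n-1)/n}$, while $|d\varphi_{s,t}|_{g_{\vPhi}}\le(t-s)^{-1}\mathbf{1}_{\{s<\varrho<t\}}$ gives $\|d\varphi_{s,t}\|_{L^1}\le(t-s)^{-1}(V(t)-V(s))$, whence
\begin{align*}
	V(s)^{\frac{n-1}{n}}\ \le\ \frac{C_S}{t-s}\,\big(V(t)-V(s)\big),\qquad 0<s<t<r.
\end{align*}

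It then remains to integrate this differential inequality. The function $V$ is non-decreasing on $(0,r)$, hence differentiable a.e.; at each point of differentiability, dividing the last inequality by $t-s$ and letting $t\downarrow s$ gives $V'(s)\ge C_S^{-1}V(s)^{(n-1)/n}$, and since $V(s)>0$ this means that $W:=V^{1/n}$ satisfies $W'\ge(nC_S)^{-1}$ a.e.\ on $(0,r)$. Because $W$ is non-decreasing, $W(r)-W(\sigma)\ge\int_\sigma^r W'(u)\,du\ge(r-\sigma)/(nC_S)$ for every $\sigma\in(0,r)$; letting $\sigma\downarrow0$ and using $W\ge0$ yields $\mu_{\vPhi}(\B^d(\vx,r))^{1/n}=W(r)\ge r/(nC_S)$, which is the assertion with $c:=(nC_S)^{-n}$ and $\eps_0$ the threshold of \Cref{th:Sobolev}. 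I expect the main obstacle not to be this final ODE step but the bookkeeping in the first paragraph: one must ensure the sublevel sets $\Sigma_\rho$ are genuine smooth manifolds with \emph{nonempty} boundary so that the version of the Sobolev inequality requiring $\dr\Sigma\neq\emptyset$ can be invoked on them, and one must be slightly careful with the measure-theoretic manipulations of the monotone function $W$; once the Sobolev inequality is available, the rest is routine.
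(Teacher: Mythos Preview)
Your argument is correct and follows a genuinely different route from the paper. The paper applies the $L^2\to L^{2n/(n-2)}$ Sobolev inequality to a cut-off supported in $\vPhi^{-1}(\B^d(\vx,2r))$, which yields the dyadic recursion
\[
f(r)\ \ge\ C\,(r/2)^2\,f(r/2)^{\frac{n-2}{n}},\qquad f(s):=\mu_{\vPhi}(\B^d(\vx,s)),
\]
then iterates $k$ times and passes to the limit $k\to\infty$; to close the argument they invoke the $C^\infty$ regularity of $\vPhi$ to ensure the base-case behaviour $f(2^{-k}r)^{((n-2)/n)^k}\to 1$. You instead use the $L^1\to L^{n/(n-1)}$ Sobolev inequality (the isoperimetric form) tested against truncated distance functions, which gives directly the differential inequality $V'(s)\ge C_S^{-1}V(s)^{(n-1)/n}$ and integrates in one step to $V(r)\ge (r/nC_S)^n$. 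Your approach avoids both the iteration and the appeal to smoothness for the small-scale asymptotics, at the minor cost of the measure-theoretic care you flag (a.e.\ differentiability of monotone $V$ and the inequality $W(b)-W(a)\ge\int_a^b W'$). The two methods are standard alternatives in this context (indeed, both are in the references the paper cites), and your version is arguably the cleaner one here.
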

		
		\begin{remark}
			Proposition \ref{pr:Lower_Extrinsic} does not depend on the parity of $n$. It only depends on the Sobolev inequalities in \Cref{th:Sobolev}, valid in all dimensions $n\geq 3$.
		\end{remark}
		
		\begin{proof}
			Let $M\coloneqq \vPhi(\Sigma)$. Consider $\eta\in C^{\infty}_c([0,1];[0,1])$ such that $\eta=1$ in $[0,\frac{1}{2}]$. Fix a ball $\B^d(\vep,r)$ such that $\B^d(\vep,r)\cap \dr M=\emptyset$. We define the function
			\begin{align*}
				\forall x\in \Sigma,\quad \chi(x) = \eta\left(\frac{|\vep-\vPhi(x)|}{2r}\right).
			\end{align*}
			Then we have $0\leq \chi \leq 1$, $\chi = 1$ in $M\cap \B^d(\vep,r)$ and $\chi=0$ on $\R^d\setminus (M\cap \B^d(\vep,2r))$. In particular, we have $\chi=0$ on $\dr M$. By Sobolev inequality in \Cref{th:Sobolev}, we have 
			\begin{align*}
				\mu_{\vPhi}\Big(\B^d(\vep,r)\Big)^{\frac{n-2}{2n}} & \leq \|\chi\|_{L^{\frac{2n}{n-2}}\left(\vPhi^{-1}( \B^d(\vep,2r)),g_{\vPhi}\right)}  \leq C\, \|d\chi\|_{L^2\left( \vPhi^{-1} (\B^d(\vep,2r)),g_{\vPhi} \right)}  \leq \frac{C}{r}\, \left\| \g^{\top}|\vep-\vx| \right\|_{L^2\left( \B^d(\vep,2r),\mu_{\vPhi} \right)}.
			\end{align*}
			We obtain
			\begin{align*}
				\mu_{\vPhi} \Big( \B^d(\vep,r) \Big)^{\frac{n-2}{2n}}\leq \frac{C}{r}\, \mu_{\vPhi}\Big( \B^d(\vep,2r) \Big)^{\frac{1}{2}}.
			\end{align*}
			We write the above estimate as follows:
			\begin{align*}
				\mu_{\vPhi}\Big( \B^d(\vep,r) \Big) \geq C\, \left( \frac{ r}{2} \right)^2 \mu_{\vPhi}\Big(  \B^d(\vep,r/2) \Big)^{\frac{n-2}{n}}.
			\end{align*}
			For simplicity, we denote $f(r):= \mu_{\vPhi}\Big( \B^d(\vep,r) \Big)$. We iterate the above inequality: for any integer $k\geq 1$, it holds
			\begin{align}\label{eq:lower_volume}
				f(r) \geq (C\, r^2)^{\sum_{\ell=0}^{k-1} (\frac{n-2}{n})^{\ell}} 4^{-\sum_{\ell=1}^k \ell(\frac{n-2}{n})^{\ell-1}} f\left(2^{-k} r \right)^{(\frac{n-2}{n})^k }.
			\end{align}
			Since $\vPhi$ is $C^{\infty}$, we have a constant $c>1$ such that for $s>0$ small enough, it holds $c^{-1} s^n < f(s) < c\, s^n$. Hence we have
			\begin{align*}
				f\left(2^{-k} r \right)^{(\frac{n-2}{n})^k } \xrightarrow[k\to +\infty]{} 1.
			\end{align*}
			We also have
			\begin{align*}
				\sum_{\ell=0}^{+\infty} \left(\frac{n-2}{n} \right)^{\ell} = \frac{1}{1-\frac{n-2}{n}} = \frac{n}{2}.
			\end{align*}
			We obtain the result by letting $k\to +\infty$ in \eqref{eq:lower_volume}.
		\end{proof}

		Thanks to the monotonicity formula and Hardy inequality, we now obtain an upper bound for the volume growth of extrinsic balls without any smallness assumption.
		
		\begin{proposition}\label{pr:Extrinsic_Hdiff}
			Let $d>n\geq 4$ be integers with $n$ even and $\Sigma$ be an $n$-dimensional manifold possibly with boundary.
			Let $\vPhi\in \Imm(\Sigma;\R^d)$ and denote $M\coloneqq \vPhi(\Sigma)$. There exists $C>1$ depending only on $n$ such that for any ball $\B^d(\vx,2)$ that does not intersect $\dr M$, it holds
			\begin{align}\label{eq:volume_growth_balls}
				\forall r\in\left(0,1\right),\qquad \mu_{\vPhi}\Big(  \B^d(\vx,r)\Big) \leq r^n \Big( 2\, \mu_{\vPhi}\big(\B^d(\vx,2) \big) + C\, \Er\big(\vPhi;\vPhi^{-1} (\B^d(\vx,2)) \big) \Big).
			\end{align}
			Moreover, if $M\subset \B^d(0,1)$ has no boundary, then it holds
			\begin{align}\label{eq:diameter_energy_volume}
				\vol_{g_{\vPhi}}(\Sigma) \leq C\, \Er(\vPhi;\Sigma).
			\end{align}
		\end{proposition}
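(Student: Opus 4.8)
The plan is to run the classical monotonicity argument for the density ratio of an immersed submanifold, but to use Hardy's inequality (Proposition \ref{pr:Hardy}) to convert the error terms involving the mean curvature into the scale-invariant energy $\Er$. Fix a ball $\B^d(\vx,2)$ not meeting $\dr M$; after translating, assume $\vx=0$. Recall the first variation identity for the density ratio of an $n$-dimensional immersed submanifold: for a.e.\ $0<\rho<\sigma\leq 2$,
\begin{align*}
	\frac{\mu_{\vPhi}(\B^d(0,\sigma))}{\sigma^n} - \frac{\mu_{\vPhi}(\B^d(0,\rho))}{\rho^n} = \int_{M\cap(\B^d(0,\sigma)\setminus\B^d(0,\rho))} \frac{|\vx^{\perp}|^2}{|\vx|^{n+2}}\, d\mu_{\vPhi}(\vx) + \text{(mean curvature terms)},
\end{align*}
where the mean curvature terms are controlled, after Young's inequality, by a multiple of $\int_{M\cap\B^d(0,\sigma)} |\vx|^{2-n}|\vH|^2\, d\mu_{\vPhi}$ plus a term absorbed into the left-hand side (see e.g.\ \cite{simon1983}); a clean way to package this is to work with the exponentially weighted monotone quantity $e^{\Lambda\sigma}\sigma^{-n}\mu_{\vPhi}(\B^d(0,\sigma))$ and bound its derivative from below. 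The key input is that \emph{both} error integrals are finite and controlled: Corollary \ref{cor:int_xperp} (applied to the rescaled immersion so that the relevant ball becomes the unit ball) handles $\int |\vx^{\perp}|^2|\vx|^{-n-2}\, d\mu_{\vPhi}$, and Corollary \ref{cor:II_xn2}, together with $|\vH|\leq |\vII|$, handles $\int |\vx|^{2-n}|\vH|^2\, d\mu_{\vPhi} \leq C\,\Er(\vPhi;\vPhi^{-1}(\B^d(0,2)))$.

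Concretely, for the first assertion I would integrate the monotonicity inequality from $\rho=r$ to $\sigma=2$: discarding the nonnegative $|\vx^{\perp}|^2$ term gives
\begin{align*}
	\frac{\mu_{\vPhi}(\B^d(0,r))}{r^n} \leq C\,\frac{\mu_{\vPhi}(\B^d(0,2))}{2^n} + C\int_{M\cap\B^d(0,2)} \frac{|\vH|^2}{|\vx|^{n-2}}\, d\mu_{\vPhi} \leq 2\,\mu_{\vPhi}(\B^d(0,2)) + C\,\Er\big(\vPhi;\vPhi^{-1}(\B^d(0,2))\big),
\end{align*}
after choosing the constants appropriately and using Corollary \ref{cor:II_xn2} on $\vPhi^{-1}(\B^d(0,2))$ rescaled to unit size (the scale invariance of $\Er$ is exactly what makes the constant $r$-independent). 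Rearranging yields \eqref{eq:volume_growth_balls}. For \eqref{eq:diameter_energy_volume}, when $M\subset\B^d(0,1)$ is closed one takes $\vx=0$ directly (now $\B^d(0,2)\supset M$, so $\mu_{\vPhi}(\B^d(0,2))=\vol_{g_{\vPhi}}(\Sigma)$), lets $r\to 1$ in the monotonicity computation, and notes that the density ratio at scale $r\to 0$ tends to a positive integer (the density $\theta_{\vPhi}\geq 1$ by \eqref{eq:measure}, since $\vPhi$ is a smooth immersion), so the lower bound $\omega_n\leq \omega_n\theta_{\vPhi}(\vx_0)\leq \mu_{\vPhi}(\B^d(0,r))/r^n$ propagates up to scale $1$ and gives $\omega_n \leq C\,\vol_{g_{\vPhi}}(\Sigma) + C\,\Er(\vPhi)$ — but that is the wrong direction. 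Instead, to get \eqref{eq:diameter_energy_volume} one integrates monotonicity \emph{upward} from a small scale: $\vol_{g_{\vPhi}}(\Sigma) = \mu_{\vPhi}(\B^d(0,1)) \leq \theta_{\vPhi}(0)\,\omega_n\, e^{C} + C\int_M |\vx|^{2-n}|\vH|^2 d\mu_{\vPhi}$, and since $M$ is compact and lies in a fixed ball the density at $0$ is bounded by $\vol_{g_{\vPhi}}(\Sigma)/\omega_n$ — which is circular. The correct route is to center instead at a point realizing the maximal density ratio and use that $M\subset\B^d(0,1)$ forces $\mathrm{diam}\,M\leq 2$, so the monotone quantity at scale $2$ bounds $\vol_{g_{\vPhi}}(\Sigma)$ by $C\cdot 2^n$ times $(\text{density ratio at small scale}) + \Er$, and the small-scale density ratio of a compact immersed manifold is itself $\leq C\,\vol_{g_{\vPhi}}(\Sigma)$ only trivially; so one must instead bound the small-scale ratio by an absolute constant using that a \emph{smooth} immersion has density ratio $\to \theta_{\vPhi}(\vx_0)$ and then control $\sum\theta$ — in practice one applies \eqref{eq:volume_growth_balls} at a generic center with $r=1$ after covering.

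The main obstacle I anticipate is precisely this last point: getting the genuinely scale-invariant bound \eqref{eq:diameter_energy_volume} with \emph{no} volume term on the right. This requires showing the density ratios are bounded below away from zero at \emph{every} small scale (true for smooth immersions, where $\theta_{\vPhi}\geq 1$) and then using the closedness of $M$ together with $M\subset\B^d(0,1)$ to convert the upper monotonicity bound into a two-sided estimate; the clean statement is that $\vol_{g_{\vPhi}}(\Sigma)=\mu_{\vPhi}(\B^d(0,2))$ and applying \eqref{eq:volume_growth_balls} at scale $r=1$ centered at $0$ gives $\mu_{\vPhi}(\B^d(0,1))\leq 2\,\mu_{\vPhi}(\B^d(0,2)) + C\,\Er$, which is useless alone, so one instead integrates the monotonicity formula from $0$ to $2$ using $\theta_{\vPhi}(\vx_0)\geq 1$: $\vol_{g_{\vPhi}}(\Sigma)/2^n \geq \mu_{\vPhi}(\B^d(\vx_0,r))/r^n \to \theta_{\vPhi}(\vx_0)\omega_n$ — wrong direction again — so the genuinely correct argument integrates the \emph{reverse} monotonicity (density ratio decreasing in $\rho$ after the exponential correction is placed the other way) and uses that the energy terms have a sign; I would settle this by the weighted-monotonicity computation of \cite{simon1983} applied to the vector field $\vx$ on all of $M$ (which is valid since $M$ is closed), integrating against the radial cutoff at scale $2$, which directly produces $\vol_{g_{\vPhi}}(\Sigma)\leq C(n)\big(\limsup_{\rho\to 0}\mu_{\vPhi}(\B(\vx_0,\rho))\rho^{-n} + \Er\big)$ and then bounding the limsup by $C(n)$ via the smooth-immersion density bound, so that the volume term never appears.
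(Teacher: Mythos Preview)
Your treatment of \eqref{eq:volume_growth_balls} is essentially the paper's argument: monotonicity plus the Hardy iteration to control $\int |\vx|^{2-n}|\vH|^2\,d\mu_{\vPhi}$. Two technical points you gloss over but which the paper does carry out: (i) Corollary \ref{cor:II_xn2} is stated for closed $M$, whereas $\vPhi^{-1}(\B^d(0,2))$ has boundary on $\s^{d-1}(0,2)$, so one must insert a cutoff $\chi\in C^\infty_c(\B^d(0,2))$ and iterate Proposition \ref{pr:Hardy} on $\chi|\vH|$ directly; (ii) the monotonicity formula \eqref{eq:montonicity_balls} produces an additional term $\big(\mu(\B_\sigma)/\sigma^n\big)^{(n-1)/n}\|\vH\|_{L^n(\B_\sigma)}$, and the iteration with cutoff spawns lower-order terms $\int|\g^k\vII|^p$ with $p<n/(k+1)$. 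Both are handled by Young's inequality at the cost of $\eta\,\mu(\B^d(0,2))+C_\eta\,\Er$, which is the origin of the factor $2$ in front of $\mu_{\vPhi}(\B^d(\vx,2))$.

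Your argument for \eqref{eq:diameter_energy_volume} has a genuine gap. Every route you try sends the \emph{inner} radius to $0$ and ends with the claim that the density $\theta_{\vPhi}(\vx_0)=\lim_{\rho\to 0}\mu_{\vPhi}(\B(\vx_0,\rho))/(\omega_n\rho^n)$ is bounded by a dimensional constant. This is false: for a smooth immersion the density at a point is a positive integer that can be arbitrarily large (many sheets meeting at $\vx_0$). The inequality $\vol_{g_{\vPhi}}(\Sigma)\leq C(n)\big(\limsup_{\rho\to 0}\mu_{\vPhi}(\B(\vx_0,\rho))\rho^{-n}+\Er\big)$ you write is also in the wrong direction: monotonicity bounds the small-scale ratio by the large-scale ratio plus error, not the reverse.

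The paper's fix is to send the \emph{outer} radius to infinity. In \eqref{eq:montonicity_balls} take $\sigma=1$ (so $\mu_{\vPhi}(\B^d(0,1))=\vol_{g_{\vPhi}}(\Sigma)$ since $M\subset\B^d(0,1)$) and let $\rho\to\infty$; since $M$ is compact, $\mu_{\vPhi}(\B^d(0,\rho))/\rho^n\to 0$. This gives
\[
\vol_{g_{\vPhi}}(\Sigma)\leq \tfrac{1}{2}\int_{\R^d}\frac{|\vH|^2}{|\vx|^{n-2}}\,d\mu_{\vPhi}+\vol_{g_{\vPhi}}(\Sigma)^{\frac{n-1}{n}}\|\vH\|_{L^n},
\]
and after the Hardy iteration on the first term and Young's inequality on the second, the volume term is absorbed into the left-hand side. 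There is in fact an even shorter route you nearly found: since $M$ is closed and lies in $\B^d(0,1)$, test the first variation formula against the position field $X=\vx$ with no cutoff to get $n\,\vol_{g_{\vPhi}}(\Sigma)$ equal to a mean-curvature integral over $M$; then $|\vx|\leq 1$ and H\"older already give $\vol_{g_{\vPhi}}(\Sigma)\leq C(n)\|\vH\|_{L^n}^n\leq C(n)\,\Er(\vPhi)$ without ever invoking monotonicity.
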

		
		\begin{remark}
			\begin{enumerate}
				\item This implies that $\frac{1}{|\cdot-\vep|}\in L^{(n,\infty)}(M)$ for any $\vep\in M$. Indeed, there exists a constant $c>1$ depending only on $n$, $d$ and $\Er(\vPhi)$ such that it holds
				\begin{align}\label{eq:Lninfty}
					\vol_{g_{\vPhi}}\left( \left\{x\in \Sigma : \frac{1}{ \left|\vPhi(x)-\vep \right|}\geq \lambda\right\}\right) = \vol_{g_{\vPhi}}\Big(\vPhi^{-1} (\B^d( \vep,1/\lambda))\Big) \in\left[ \frac{1}{c\, \lambda^n}, \frac{c}{\lambda^n} \right].
				\end{align}
				\item If $M$ has no boundary but dropping the assumption $M\subset \B^d(0,1)$, we have 
				\begin{align*}
					\frac{ \vol_{g_{\vPhi}}(\Sigma) }{ \diam(M)^n } \leq C\, \Er(\vPhi;\Sigma).
				\end{align*}
			\end{enumerate}
		\end{remark}
		
		\begin{proof}
			To shorten the notations, we will drop the index $\vPhi$ and write $\mu = \mu_{\vPhi}$. We start by the estimate \eqref{eq:volume_growth_balls}. Thanks to the monotonicity formula \cite[Equation 17.4]{simon1983}, the following inequality holds for $0<\sigma<\rho<1$:
			\begin{align}
				\frac{\mu(\B^d_{\sigma})}{\sigma^n} & \leq \frac{\mu(\B^d_{\rho})}{\rho^n} - \int_{ \B^d_{\rho}\setminus \B^d_{\sigma}} \frac{|\vx^{\perp}|^2}{|\vx|^{n+2}} d\mu(\vx) + \int_{\B^d_{\rho}\setminus \B^d_{\sigma}} \frac{|\vx^{\perp}|}{|\vx|^{\frac{n+2}{2}}} \frac{|\vH|}{|\vx|^{\frac{n-2}{2}}}\, d\mu(\vx) + \int_{\B^d_{\sigma}} \frac{|\vx^{\perp}|}{\sigma^n} |\vH|\, d\mu(\vx)  \nonumber \\[3mm]
				& \leq \frac{\mu( \B^d_{\rho})}{\rho^n} + \frac{1}{2}\int_{ \B^d_{\rho}} \frac{|\vH|^2}{|\vx|^{n-2}}\, d\mu(\vx) +\int_{\B^d_{\sigma}} \frac{|\vH|}{\sigma^{n-1}}\, d\mu \nonumber \\[3mm]
				& \leq \frac{\mu( \B^d_{\rho})}{\rho^n} + \frac{1}{2}\int_{\B^d_{\rho}} \frac{|\vH|^2}{|\vx|^{n-2}}\, d\mu(\vx) + \left( \frac{\mu(\B^d_{\sigma})}{\sigma^n} \right)^{\frac{n-1}{n}} \|\vH\|_{L^n(\B^d_{\sigma},\mu)}. \label{eq:montonicity_balls}
			\end{align}
			We take $\rho=1$ and $\chi\in C^{\infty}_c(\B^d_2)$ be a cut-off function such that $\chi=1$ in $\B^d_1$. We obtain for any $\sigma<1$:
			\begin{align*}
				\frac{\mu( \B^d_{\sigma})}{\sigma^n} & \leq \mu( \B^d_1) + \int_{\B^d_2} \frac{\chi(x)^2|\vH(x)|^2}{|\vx|^{n-2}}\, d\mu(\vx) + C\|\vH\|_{L^n(\B^d_2,\mu)}^n.
			\end{align*}
			By Proposition \ref{pr:Hardy}, we obtain
			\begin{align*}
				\frac{\mu(\B^d_{\sigma})}{\sigma^n} & \leq \mu(\B^d_1) + \int_{\B^d_2} \frac{|\g^{\top}(\chi \vH)|^2 + \chi^2 |\vH|^4}{|\vx|^{n-4}}\, d\mu(\vx) + C\|\vH\|_{L^n(\B^d_2,\mu)}^n.
			\end{align*}
			By iteration, we obtain for every integer $k$ such that $n-2k\geq 0$,
			\begin{align}
				\frac{\mu( \B^d_{\sigma})}{\sigma^n} & \leq  \mu( \B^d_1)  + C\, \|\vH\|_{L^n( \B^d_2,\mu)}^n \nonumber \\[2mm]
				& + C(n,k)\int_{\B^d_2} \frac{|(\g^{\top})^k(\chi \vH)|^2 + |\vH|^2 |(\g^{\top})^{k-1}(\chi \vH)|^2 + \cdots + \chi^2 |\vH|^{2(k+1)} }{|\vx|^{n-2(k+1)}}\, d\mu(\vx). \label{eq:iteration_density}
			\end{align}
			Here the higher derivative $\g^j\chi$ are defined as follows: for $\vx\in \vPhi(\Sigma)$, 
			\begin{align*}
				(\g^{\top})^j\chi(\vx) = \proj_{T_{\vx}\vPhi(\Sigma)}\left[ \g (\g^{\top})^{j-1}\chi(\vx) \right].
			\end{align*}
			Hence, we have 
			\begin{align*}
				|\g^{\top}\chi| \leq C(n).
			\end{align*} 
			We can estimate the higher derivatives as follows
			\begin{align*}
				|(\g^{\top})^2\chi| & = \left|\proj_{T\vPhi(\Sigma)} \g\left( \proj_{T\vPhi(\Sigma)} \g \right)\chi \right|   \leq \left| \g\left(\proj_{T\vPhi}\right) \g\chi \right| + |\g^2\chi|   \leq C(n)\left( 1 + |\vII_{\vPhi}|_{g_{\vPhi}} \right).
			\end{align*}
			By induction, we have the following pointwise estimate for $j\geq 2$
			\begin{align*}
				\left|(\g^{\top})^j\chi\right| \leq C(n,j) \left( 1 + \sum_{i=0}^{j-2} \big| \g^i \vII_{\vPhi} \big|_{g_{\vPhi}}^{j-1-i} \right) .
			\end{align*}
			Plugging this into \eqref{eq:volume_growth_balls} together with the choice $k=\frac{n}{2}-1$ (so that the term $|x|^{n-2(k+1)}$ becomes $1$), we obtain 
			\begin{align*}
				\frac{\mu( \B^d_{\sigma})}{\sigma^n} & \leq  \mu( \B^d_1) + C\, \|\vH\|_{L^n( \B^d_2,\mu)}^n  + C(n)\int_{\B^d_2} \left|(\g^{\top})^{\frac{n}{2}-1}(\chi \vH)\right|^2 + |\vH|^2 \left|(\g^{\top})^{\frac{n}{2}-2}(\chi \vH) \right|^2 + \cdots + \chi^2 |\vH|^n \ d\mu(\vx) .
            \end{align*}
            Rearranging the terms, we obtain
            \begin{align*} 
				\frac{\mu( \B^d_{\sigma})}{\sigma^n} & \leq \mu( \B^d_1)  + C(n)\int_{\B^d_2} \left|(\g^{\top})^{\frac{n}{2}-1} \vII\right|^2 + |\vII|^2 \left|(\g^{\top})^{\frac{n}{2}-2} \vII \right|^2 + \cdots +  |\vII|^n \ d\mu(\vx)  \\[2mm]
                & \qquad + C(n) \sum_{ \substack{0\leq k \leq \frac{n}{2}-2\\[1mm] 2\leq p\leq n \\[1mm] \frac{1}{p} - \frac{k}{n}>0} } \int_{\B_2^d}  |\g^k \vII|^p\ d\mu(\vx).
			\end{align*} 
			Using Young inequality, we obtain for any $\eta\in(0,1)$ that 
			\begin{align*}
				\frac{\mu( \B^d_{\sigma})}{\sigma^n} & \leq  (1+\eta)\, \mu( \B^d_2) + C(\eta)\, \Er\left(\vPhi;\vPhi^{-1}(\B^d_2) \right).
			\end{align*}
			We obtain \eqref{eq:volume_growth_balls} by taking $\eta=1$.\\
			
			Concerning the estimate \eqref{eq:diameter_energy_volume}, we take the limit $\rho\to +\infty$ in \eqref{eq:montonicity_balls}, with the choice $\B_{\sigma}=\B^d(0,1)$:
			\begin{align*}
				\mu(\R^d) \leq \frac{1}{2}\int_{\R^d} \frac{|\vH|^2}{|\vx|^{n-2}}\, d\mu(\vx) + \mu(\R^d)^{\frac{n-1}{n}} \|\vH\|_{L^n(\R^d,\mu)}.
			\end{align*}
			Again using Hardy inequality and Young's inequality for the second term, we obtain
			\begin{align*}
				\frac{\mu(\R^d)}{2} \leq C\, \Er(\vPhi).
			\end{align*}
		\end{proof}

	\section{Construction of one chart}\label{sec:One_chart}

	In this section, we prove that \underline{for $C^{\infty}$ immersions} $\vPhi\colon \Sigma\to \R^d$ with small energy in a ball $\B^d(\vep,r)\subset \R^d$ and bounded volume, we can construct harmonic coordinates on each connected component of $\vPhi^{-1}(\B^d(\vep,r))$. More precisely, we prove the following result, where all the quantities involved are actually $C^{\infty}$ since we work with $C^{\infty}$-immersions but we only record the estimates needed to pass to the limit in an approximation of a weak immersion by $C^{\infty}$-immersions.
	
	\begin{theorem}\label{th:Construction_chart}
		Let $d>n\geq 4$ be integers with $n$ even and $\Sigma$ a closed orientable $n$-dimensional manifold.
		Let $V>0$. There exists $\eps_0>0$ and $C_0>0$ depending only on $n$, $d$ and $V$ such that the following holds. Let $\eps\in(0,\eps_0)$ and $\vPhi\in \Imm(\Sigma;\R^d)$. Consider $\vep\in \vPhi(\Sigma)$ and $r\in(0,\frac{1}{2})$ such that 
		\begin{align}\label{hyp:Chart}
			\Er\Big( \vPhi ; \vPhi^{-1} \big( \B^d(\vep,r)\big) \Big) \leq \eps^n, \qquad \text{and }\qquad \vol_{g_{\vPhi}}\left( \vPhi^{-1} \big( \B^d(\vep,1)\big) \right) \leq V.
		\end{align}
		Then there exists $\vq\in \B^d(\vep,\frac{r}{100})$ and $\rho\in \left(\frac{r}{4},r\right)$ such that 
		\begin{enumerate}
			\item Every connected component $\Cr$ of $\vPhi^{-1}\big( \B^d(\vq,\rho)\big)\subset \Sigma$ is diffeomorphic to a ball $\B^n$. 
			In particular, the boundary $\Sr  \coloneqq \dr \Cr$ is diffeomorphic to $\s^{n-1}$. 
			\item Each map $\vPhi\colon \Cr\to \B^d(\vq,\rho)$ is injective.
			
			\item There exists $\delta=\delta(n,d)\in(0,1)$ and an affine $n$-plane $\Pc_{\Sr}\subset \R^d$ such that the set $\vPhi(\Sr)$ verifies, for some map $\vphi_{\Sr}\colon \Sc_{\Pc}\coloneq \Pc_{\Sr}\cap \s^{d-1}(\vq,\rho) \to \R^d$, the following relations
			\begin{align*}
				\vPhi(\Sr) = \left\{ \vtheta + \vphi_{\Sr}(\vtheta) : \vtheta \in \Sc_{\Pc} \right\}.
			\end{align*}
			Moreover, it holds 
			\begin{align*}
				\begin{cases} 
					\displaystyle \dist(\vq,\Pc_{\Sr}) \leq \delta\, \rho, \\[3mm]
					\displaystyle r^{-1}\|\vphi_{\Sr}\|_{L^{\infty}(\Sc_{\Pc})} + \|\g \vphi_{\Sr}\|_{L^{\infty}(\Sc_{\Pc})} \leq C_0\, \eps ,\\[3mm]
					\displaystyle r^{\frac{1}{n}}\, \|\g^2 \vphi_{\Sr}\|_{L^n(\Sc_{\Pc})}  \leq C_0.
				\end{cases} 
			\end{align*}
			
			\item\label{it:Coordinates} There exist an open set $\Or\subset \R^n$ diffeomorphic to $\B^n$ and a chart $\vp\colon \Or\to \Cr$ providing harmonic coordinates such that 
			\begin{enumerate}
				\item There exists a diffeomorphism $f\colon \dr \Or\to \s^{n-1}(0,r)$ of class $W^{2,n}(\dr \Or)$ satisfying $\int_{\dr\Or}f =0$ together with the estimates
				\begin{align}\label{eq:bound_domain}
					\|\g f \|_{L^{\infty}(\dr \Or)} + \|\g f^{-1} \|_{L^{\infty}(\s^{n-1}(0,r))} + r^{\frac{1}{n} } \|\g^2 f\|_{L^n(\dr \Or)} + r^{\frac{1}{n}} \|\g^2 f^{-1} \|_{L^n(\s^{n-1}(0,r))} \leq C_0.
				\end{align}
				
				\item The map $f\colon \dr \Or \to \s^{n-1}(0,r)$ extends to a bi-Lipschitz homeomorphism $f\colon \Or\to \B^n(0,r)$ with the estimates
				
				\begin{align}
					& \|\g f\|_{L^{\infty}(\Or)} + \| \g f^{-1} \|_{L^{\infty}(\B^n(0,r))} \leq C_0\, \left( \|g_{\vPhi}\|_{L^{\infty}(\Cr)} + 1 \right), \nonumber\\[2mm]
					& \| f^{-1} \|_{W^{1,n}(\B^n(0,r))} \leq C(n)\, \|g_{\vPhi}\|_{L^n(\Cr)}. \label{eq:interior_domain}
				\end{align}
				
				\item In the chart $\vp$, the coefficients $\left( g_{\vPhi\circ\vp} \right)_{ij}$ verify for all $1\leq i,j,k,l\leq n$
				\begin{align}\label{eq:bound_coordinates}
					\left\| \left( g_{\vPhi\circ \vp} \right)_{ij} - \delta_{ij} \right\|_{L^{\infty}(\Or,\geu)} + \left\|\dr_k \left( g_{\vPhi\circ \vp} \right)_{ij} \right\|_{L^{(n,1)}(\Or,\geu)} + \left\| \dr^2_{kl}\left( g_{\vPhi\circ \vp} \right)_{ij} \right\|_{L^{\left(\frac{n}{2},1\right)}(\Or,\geu)} \leq C_0\, \eps.
				\end{align}
			\end{enumerate}
		\end{enumerate}
	\end{theorem}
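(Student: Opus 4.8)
The plan is to follow the strategy sketched in the introduction, combining a good-slice selection on the image $M = \vPhi(\Sigma)$ with the Chern--Lashof/Reeb-type topological argument and the harmonic-coordinate construction of \cite{MarRiv2025}. First I would perform a \emph{coarea/slicing argument} on the annulus $\vPhi^{-1}(\B^d(\vep,r)\setminus \B^d(\vep,r/4))$: using the energy bound $\Er(\vPhi;\vPhi^{-1}(\B^d(\vep,r))) \leq \eps^n$, the volume growth \eqref{eq:volume_growth_balls} (which gives $\mu_{\vPhi}(\B^d(\vep,r)) \lesssim r^n$), and the Hardy-type bound \eqref{eq:integrabilityII} on $\int |\vII|^2/|\vx-\vep|^{n-2}$, an averaging argument over center $\vq \in \B^d(\vep,r/100)$ and radius $\rho \in (r/4,r)$ produces a \emph{good slice}: a sphere $\s^{d-1}(\vq,\rho)$ for which $\vPhi$ is transverse, $N_{\rho}:=\vPhi(\Sigma)\cap \s^{d-1}(\vq,\rho)$ is a smooth $(n-1)$-submanifold with $\vol(N_\rho)\lesssim \rho^{n-1}$ and $\int_{N_\rho}|\vA|^{n-1} + \rho^{-(n-1)}\int_{N_\rho}|\vII_{\vPhi}|^{n-1}\lesssim \eps^{n-1}$, so that the restriction of the Gauss map to each component of $\vPhi^{-1}(\s^{d-1}(\vq,\rho))$ oscillates by at most $C\eps$. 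This is the technical backbone; the selection must be done carefully so that \emph{all} the needed quantities (slice volume, slice second fundamental form, and the normal-oscillation estimate that comes from integrating $|\vII|$ along the slice) are simultaneously small, and I expect some bookkeeping with Fubini and the Michael--Simon/Hölder inequalities on the slice.

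Second, with the oscillation of $\vn_{\vPhi}$ controlled by $\eps$ on each connected component $\Sr$ of $\vPhi^{-1}(\s^{d-1}(\vq,\rho))$, one finds an affine $n$-plane $\Pc_{\Sr}$ (spanned essentially by the average tangent space) such that $\vPhi(\Sr)$ is a graph over $\Sc_{\Pc}=\Pc_{\Sr}\cap\s^{d-1}(\vq,\rho)$: the map $\Sr \to \Sc_{\Pc} \cong \s^{n-1}$ obtained by projecting is a local diffeomorphism, hence (since $n-1\geq 3$ so $\s^{n-1}$ is simply connected) a global diffeomorphism, giving item (1)'s boundary statement and the existence of $\vphi_{\Sr}$ in item (3). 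The $L^\infty$ bounds $r^{-1}\|\vphi_{\Sr}\|_\infty + \|\g\vphi_{\Sr}\|_\infty \lesssim \eps$ follow by integrating the oscillation estimate, and the $L^n$ bound on $\g^2\vphi_{\Sr}$ follows from $\g^2\vphi_{\Sr}$ being controlled (after scaling) by $\vII_{\vPhi}$ restricted to $\Sr$ plus the extrinsic curvature of the sphere, i.e.\ from the slice second-fundamental-form bound; the estimate \eqref{eq:bound_domain} on $f$ is then just a rescaled reformulation via the diffeomorphism $\Sc_{\Pc}\to \s^{n-1}(0,r)$.

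Third, to get item (1) in full — that $\Cr$ itself is \emph{diffeomorphic} (not merely homeomorphic) to $\B^n$ — I would glue flat half-discs onto $\vPhi^{-1}(\B^d(\vq,\rho))$ along each boundary component exactly as in \cite{MarRiv2025}, producing a closed immersion $\vPsi\colon \tilde\Sigma \to \R^d$ with $\|\vII_{\vPsi}\|_{L^{(n,2)}}$ small. Then, after an inversion centered at a generic point off $\vPsi(\tilde\Sigma)$, one builds (à la Chern--Lashof \cite{chern1957,chern1958}) a Morse function — the distance-squared function to a generic point — whose total curvature bound forces it to have essentially only two critical points; the Reeb theorem (\cite[Theorem 4.1]{milnor1963}, in the sharp PL/smooth form used there) then shows $\tilde\Sigma$ is homeomorphic to $\s^n$, and a refinement of its proof (gluing two discs along a diffeomorphism of $\s^{n-1}$ which, by the graph structure, is $C^1$-close to the identity hence isotopic to it) shows $\vPhi^{-1}(\B^d(\vq,\rho))$ is genuinely $C^1$-diffeomorphic to $\B^n$. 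Injectivity of $\vPhi|_{\Cr}$ (item (2)) follows from the graph structure on the boundary plus the monotonicity formula, which forbids the density from exceeding $1$ in a region foliated by good slices. Finally, for item (4), having identified $\Cr$ with a ball carrying a metric whose Riemann tensor is quadratic in $\vII_{\vPhi}$ (Gauss--Codazzi), hence lies in $L^{(\frac n2,1)}$ with small norm, I would invoke directly the harmonic-coordinate construction and elliptic estimates of \cite{MarRiv2025} with the boundary data $f$ from step two: this yields the chart $\vp$, the bi-Lipschitz extension of $f$ with \eqref{eq:interior_domain}, and the Schauder/Lorentz-type estimate \eqref{eq:bound_coordinates} on the metric coefficients, where the crucial gain of the Lorentz exponent $1$ (giving $C^0$ rather than $\mathrm{VMO}$ control) comes precisely from $\Riem^{g_\vPhi}\in L^{(\frac n2,1)}$. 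The main obstacle I anticipate is the first step — engineering a single slice that is simultaneously good for volume, slice curvature, and normal oscillation with constants depending only on $n,d,V$ — together with the delicate passage from ``homeomorphic to $\s^n$'' to ``$\B^n$-diffeomorphic'' in step three, which requires the graph structure on the boundary to control the gluing diffeomorphism.
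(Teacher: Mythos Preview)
Your proposal is correct and follows essentially the same route as the paper: good-slice selection via averaging over centers and radii (Section~4.2), graph structure and the covering-map argument using $\pi_1(\s^{n-1})=0$ (Section~4.3), extension by flat planes and the Chern--Lashof/Reeb topological argument after inversion (Sections~4.4--4.5), injectivity via monotonicity (Section~4.6), and finally the harmonic coordinates from \cite{MarRiv2025} (Section~4.7).

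Two minor deviations are worth flagging. First, the paper uses the \emph{height function} $x\mapsto \ve\cdot(\iota_{\vQ}\circ\vPsi)(x)$ for a generic direction $\ve$ after inversion, not a distance-squared function; this is a cosmetic difference. Second, and more substantively, your passage from ``$\tilde\Sigma$ homeomorphic to $\s^n$'' to ``$\Cr$ diffeomorphic to $\B^n$'' via ``the gluing diffeomorphism is $C^1$-close to the identity hence isotopic to it'' is not the paper's argument and would require additional care (it is not automatic that $C^1$-closeness of a boundary diffeomorphism propagates to a smooth isotopy in the way you need). The paper instead observes directly from the proof of Reeb's theorem that if a closed manifold carries a Morse function with exactly two critical points, then removing a small ball around the maximum leaves a set \emph{diffeomorphic} to $\B^n$ (Morse lemma near the minimum plus the gradient flow); since $\Cr$ is obtained from $\hat\Sigma_0$ by removing the glued half-sphere, one gets $\Cr\simeq\hat\Sigma_0\setminus\B^n\simeq\hat\Sigma_0\setminus\{p\}\simeq\B^n$ as smooth manifolds. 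Before reaching this point the paper also rules out, via the Morse inequalities, the possibility that $\Cr$ has two boundary components (which your sketch implicitly assumes away). These are refinements of your Step~3 rather than a different strategy.
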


	\subsubsection*{Ideas of the proof.}
	
	Let $M\coloneqq \vPhi(\Sigma)$. We find a good slice $M\cap \s^{d-1}(\vq,\rho)$ such that $\vII_{\vPhi}\in L^n(M\cap \s^{d-1}(\vq,\rho))$ and such that the second fundamental form $\vA$ of the immersion $\vPhi_{\rho} \coloneq \vPhi\big|_{\vPhi^{-1}(\s^{d-1}(\vq,\rho))}$ lies in $L^n$. Under this setting, we prove that the image of every connected component $\Sr\subset \Sigma$ of $\vPhi^{-1}(\s^{d-1}(\vq,\rho))$ by $\vPhi_{\rho}$ is locally a graph over the projection $\Sc_{\Sr}$ of $\vPhi_{\rho}(\Sr)$ on a given affine $n$-dimensional plane $\Pc_{\Sr}$. We obtain a covering map $\iota\colon \Sr\to \s^{n-1}$ by associating each $x\in \Sr$ the closest point to $\vPhi_{\rho}(x)$ in $\Sc_{\Pc}\coloneqq \Pc_{\Sr}\cap \s^{d-1}(\vq,\rho)$, which has to be a bijection since $\s^{n-1}$ is simply connected in dimension $n-1\geq 2$. Therefore, we obtain that $\Sr$ is diffeomorphic to a sphere and $\vPhi_{\rho}\big|_{\Sr}$ is injective. Then, we extend $\vPhi(\Sr)$ to a whole graph over $\R^n\setminus \s^{n-1}$ with control on the second fundamental form. We deduce that every connected component of $M\cap \B^d(\vq,\rho)$ can be extended in a simply connected immersed manifold with small $L^n$-norm of the second fundamental form and flat outside of a compact set. Adapting the ideas of Chern--Lashof \cite{chern1958} (see also \cite{ferus1968}) and using Morse theory, we prove that each connected component $\Cr$ of $\vPhi^{-1}\big(\B^d(\vq,\rho)\big)$ is a diffeomorphic to a ball $\B^n$. \Cref{it:Coordinates} will be a direct consequence of \cite{MarRiv2025}.\\

	\subsubsection*{Setting and notations.}
	
	In the rest of the section, we fix integers $d>n\geq 4$ with $n$ even. We let $\Sigma$ to be a closed orientable $n$-dimensional manifold and we let $\vPhi\colon \Sigma^n\to \R^d$ be a $C^{\infty}$ immersion. We denote $M\coloneqq \vPhi(\Sigma)$ and $\vn_{\vPhi}$ the generalized Gauss map of $\vPhi$ given, in local coordinates, by the following formula:
	\begin{align*}
		\vn_{\vPhi} \coloneqq *_{\R^d} \frac{\dr_1\vPhi\wedge \cdots \wedge \dr_n \vPhi}{|\dr_1\vPhi\wedge \cdots \wedge \dr_n\vPhi|} .
	\end{align*}
	We will often consider a basis of the vector space spanned by $\vn_{\vPhi}$. We will denote $(\vn_1,\ldots,\vn_{d-n})$ a direct orthonormal basis of $\Span(\vn_{\vPhi})$:
	\begin{align*}
		\vn_{\vPhi} = \vn_1\wedge \cdots \wedge \vn_{d-n}.
	\end{align*}
	The good slice $\vPhi^{-1}\big( \s^{d-1}(\vq,\rho)\big)$ will be denoted $S_{\rho}\subset \Sigma$. A connected component of $S_{\rho}$ will be denoted $\Sr\subset S_{\rho}\subset \Sigma$. The projection of $\vPhi(\Sr)$ on the plane $\Pc_{\Sr}$ will be denoted $\Sc_{\Sr}$. We will denote $\Pc_{\Sr}$ the affine plane and $\Pc$ the associated vectorial plane.\\

	\subsubsection*{Structure.}
	
	In \Cref{sec:Formulas}, we compute the second fundamental form of a slice and the second fundamental form of an immersion in a round sphere written as a graph. In \Cref{sec:slice}, we construct a good slice $S_{\rho}\subset \Sigma$ for which we have a bound on the second fundamental form $\vA$ of the slice seen as a submanifold of a round sphere and smallness of the restriction of $\vII_{\vPhi}$ to this slice. In \Cref{sec:graph_good_slice}, we show that for every connected component $\Sr\subset S_{\rho}$, the immersion $\vPhi\big|_{\Sr}$ is given by the graph of a function bounded $W^{2,n}$ and small in $W^{1,\infty}$. In \Cref{sec:extension}, we extend each of these graphs and glue them to flat affine $n$-planes. In \Cref{sec:Topology}, we prove that each connected component of the resulting manifold is homeomorphic to a sphere, and that the interior of the good slice is diffeomorphic to a ball and conclude the proof of \Cref{th:Construction_chart}.

	\subsection{Second fundamental form of a slice}\label{sec:Formulas}
	
	In this section, we compute the second fundamental form of a slice $M\cap \s^{d-1}(\vq,\rho)\subset \s^{d-1}(\vq,\rho)$, under the assumption that all the quantities involved are well-defined in Lemma \ref{lm:II_slice}. We also compute the second fundamental form of a submanifold of $\s^{d-1}$ written as a graph in Lemma \ref{lm:A_graph}.
	
	\begin{lemma}\label{lm:II_slice}
		With the notations of \Cref{th:Construction_chart}, we denote $M\coloneqq \vPhi(\Sigma)$. The second fundamental form $\vA$ of $M\cap \s^{d-1}(\vq,\rho)$ seen as a submanifold of $\s^{d-1}(\vq,\rho)$ is given, in coordinates as follows. Let $\vnu = \vnu_1\wedge \cdots \wedge \vnu_{d-n}$ be the generalized Gauss map of $\vPhi\big|_{\vPhi^{-1}(\s^{d-1}(\vq,\rho))}$. In local coordinates on $\vPhi^{-1}(\s^{d-1}(\vq,\rho))$, we denote
		\begin{align*}
			\vA_{ij} = \sum_{\alpha=1}^{d-n} A^{\alpha}_{ij}\, \vnu_{\alpha}.
		\end{align*}
		Using the decomposition $A_{ij}^{\alpha} = \Arond_{ij}^{\alpha} + h^{\alpha}\, ( g_{\vPhi} )_{ij}$ in traceless part and mean curvature, it holds
		\begin{align}\label{eq:Arond_h}
			\begin{cases} 
				\displaystyle \Arond_{ij}^{\alpha} = \frac{\rho^2}{|\rho^2\, \vn_{\alpha} - (\vn_{\alpha}\cdot (\vPhi-\vq))(\vPhi-\vq)|} \IIr_{ij}^{\alpha}, \\[5mm]
				\displaystyle h^{\alpha} = \frac{\rho^2\, H_{\Phi}^{\alpha} - (\vn_{\alpha}\cdot (\vPhi-\vq))}{ | \rho^2 \vn_{\alpha} - (\vn_{\alpha}\cdot (\vPhi-\vq))(\vPhi-\vq)|  }.
			\end{cases} 
		\end{align}
		Moreover, the full norm of $\vA$ satisfies
		\begin{align}\label{eq:II_slice}
			\left|\vA\right|_{g_{\vPhi}} \leq (2+\sqrt{n})\, \frac{  \rho^2\ |\vII|_{g_{\vPhi}} }{ |\big(\vn_{\vPhi}\wedge (\vPhi -\vq)\big) \llcorner (\vPhi -\vq)| } + \sqrt{n}\, \frac{ |\vn_{\vPhi}\llcorner (\vPhi -\vq)|}{ |\big(\vn_{\vPhi}\wedge (\vPhi -\vq) \big) \llcorner (\vPhi -\vq)| }.
		\end{align}
	\end{lemma}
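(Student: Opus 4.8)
The plan is to compute $\vA_{ij}$ directly at a fixed point of the slice, using the classical expression for the second fundamental form of a submanifold of a submanifold. Set $\vx\coloneqq\vPhi-\vq$ for the position vector, so that $|\vx|\equiv\rho$ on $N\coloneqq\vPhi^{-1}(\s^{d-1}(\vq,\rho))$, and write $\vx=\vx^{\top}+\vx^{\perp}$ with $\vx^{\top}=\proj_{T_xM}\vx$ and $\vx^{\perp}=\proj_{(T_xM)^{\perp}}\vx=\sum_{\alpha=1}^{d-n}(\vn_{\alpha}\cdot\vx)\vn_{\alpha}$, where $M\coloneqq\vPhi(\Sigma)$ and $\{\vn_{\alpha}\}$ is the orthonormal normal frame of $M$. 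Parametrizing $N$ by $\vPhi|_N$, one has $\vA_{ij}=\proj_{W}\big(D_{\dr_i}\dr_j\big)$, where $W$ is the normal bundle of $N$ inside $\s^{d-1}(\vq,\rho)$, $D$ is the flat connection on $\R^d$, and $i,j$ run over the $n-1$ slice directions. So everything reduces to describing $W$ and to evaluating the part of $D_{\dr_i}\dr_j$ orthogonal to $T_xN$.

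Since $T_xN=\{v\in T_xM:v\cdot\vx=0\}$, the orthogonal complement of $T_xN$ in $\R^d$ is $\Span(\vn_1,\dots,\vn_{d-n},\vx^{\top})$, and intersecting it with $T_{\vPhi(x)}\s^{d-1}(\vq,\rho)$ — i.e.\ imposing orthogonality to $\vx$ — shows that $W$ is spanned by the $d-n$ vectors $\vnu_{\alpha}\coloneqq\rho^2\vn_{\alpha}-(\vn_{\alpha}\cdot\vx)\vx$. The verifications $\vnu_{\alpha}\cdot\vx=0$ and $\vnu_{\alpha}\perp T_xN$ are one line each, and one computes $|\vnu_{\alpha}|^2=\rho^2(\rho^2-(\vn_{\alpha}\cdot\vx)^2)$ and $\langle\vnu_{\alpha},\vnu_{\beta}\rangle=-\rho^2(\vn_{\alpha}\cdot\vx)(\vn_{\beta}\cdot\vx)$ for $\alpha\neq\beta$. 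I would then fix $\{\vn_{\alpha}\}$ at the point $x$ so that $\vx^{\perp}$ is parallel to $\vn_1$; for that choice the $\vnu_{\alpha}/|\vnu_{\alpha}|$ form an \emph{orthonormal} frame of $W$, with $|\vnu_1|=\rho|\vx^{\top}|=|(\vn_{\vPhi}\wedge\vx)\llcorner\vx|\leq|\vnu_{\alpha}|$ for all $\alpha$ and $|\vx^{\perp}|=|\vn_{\vPhi}\llcorner\vx|$ (short contraction identities).

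The part of $D_{\dr_i}\dr_j$ orthogonal to $T_xN$ splits as $\vII_{\vPhi}(\dr_i,\dr_j)+\langle D_{\dr_i}\dr_j,\vx^{\top}\rangle\,|\vx^{\top}|^{-2}\vx^{\top}$. Differentiating the constraint $|\vPhi-\vq|^2\equiv\rho^2$ twice along $N$ gives $\langle D_{\dr_i}\dr_j,\vx\rangle=-(g_{\vPhi})_{ij}$, hence $\langle D_{\dr_i}\dr_j,\vx^{\top}\rangle=-(g_{\vPhi})_{ij}-\sum_{\alpha}(\vII_{\vPhi})^{\alpha}_{ij}(\vn_{\alpha}\cdot\vx)$. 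Projecting onto $W$ through $\proj_W(\vn_{\alpha})=\rho^{-2}\vnu_{\alpha}$ and $\proj_W(\vx^{\top})=-\rho^{-2}\sum_{\beta}(\vn_{\beta}\cdot\vx)\vnu_{\beta}$, reading off the coefficients in the orthonormal frame $\{\vnu_{\alpha}/|\vnu_{\alpha}|\}$ (with the adapted choice of $\{\vn_{\alpha}\}$ only $\vnu_1$ is "tilted"), and separating the pure-trace contribution $\propto(g_{\vPhi})_{ij}$ from the trace-free part $\IIr_{\vPhi}$ of $\vII_{\vPhi}$, one arrives at the stated formulas \eqref{eq:Arond_h} for $\Arond^{\alpha}_{ij}$ and $h^{\alpha}$.

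Finally, \eqref{eq:II_slice} follows by the triangle inequality applied to $\vA_{ij}=\sum_{\alpha}A^{\alpha}_{ij}\vnu_{\alpha}$ in this orthonormal frame: from \eqref{eq:Arond_h}, $|\Arond^{\alpha}_{ij}|\leq\rho^2|(\vn_{\vPhi}\wedge\vx)\llcorner\vx|^{-1}|\vII_{\vPhi}|_{g_{\vPhi}}$, and using $|\vnu_{\alpha}|\geq|(\vn_{\vPhi}\wedge\vx)\llcorner\vx|$, $\sum_{\alpha}(\vn_{\alpha}\cdot\vx)^2=|\vn_{\vPhi}\llcorner\vx|^2$ and $|\vH_{\vPhi}|_{g_{\vPhi}}\leq|\vII_{\vPhi}|_{g_{\vPhi}}$, one bounds $|h^{\alpha}|$ by $\rho^2|(\vn_{\vPhi}\wedge\vx)\llcorner\vx|^{-1}|\vII_{\vPhi}|_{g_{\vPhi}}+|(\vn_{\vPhi}\wedge\vx)\llcorner\vx|^{-1}|\vn_{\vPhi}\llcorner\vx|$; summing over $\alpha$ with $|(g_{\vPhi})_{ij}|_{g_{\vPhi}}=\sqrt{n}$ and tracking the numerical factors yields the constants $2+\sqrt{n}$ and $\sqrt{n}$. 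The main obstacle I foresee is exactly this bookkeeping: the natural normal frame $\{\rho^2\vn_{\alpha}-(\vn_{\alpha}\cdot\vx)\vx\}$ is orthogonal only after the adapted choice of $\{\vn_{\alpha}\}$, and one must carefully disentangle the three sources of trace — the extrinsic curvature of $\s^{d-1}(\vq,\rho)$, the second fundamental form of $N$ inside $M$, and the trace part of $\vII_{\vPhi}$ — so that precisely $\IIr_{\vPhi}$, and not the restriction of $\vII_{\vPhi}$ to the slice, appears in the formula for $\Arond^{\alpha}$.
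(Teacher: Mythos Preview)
Your approach is essentially the same as the paper's: both compute $\vA_{ij}$ directly using the explicit normal frame $\vnu_{\alpha}\propto\rho^{2}\vn_{\alpha}-(\vn_{\alpha}\cdot\vx)\vx$ for the slice inside $\s^{d-1}(\vq,\rho)$, and both read off the traceless and trace parts from the resulting formula. The paper organizes the computation via the Weingarten identity $A^{\alpha}_{ij}=-\,\partial_{i}\vnu_{\alpha}\cdot\partial_{j}\vPhi$ (differentiating the normalized $\vnu_{\alpha}$ and dotting with $\partial_{j}\vPhi$), whereas you first split $D_{\partial_i}\partial_j$ into its $\vII_{\vPhi}$ and $\vx^{\top}$ components using the twice-differentiated constraint $|\vPhi-\vq|^{2}\equiv\rho^{2}$, and then project; the algebra collapses to the same expression. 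Your observation that the frame $\{\vnu_{\alpha}\}$ is orthonormal only after choosing $\{\vn_{\alpha}\}$ so that $\vx^{\perp}\parallel\vn_{1}$ is a genuine refinement---the paper writes $\vA_{ij}=\sum_{\alpha}(\vnu_{\alpha}\cdot\partial^{2}_{ij}\vPhi)\,\vnu_{\alpha}$ without addressing this, so your adapted choice is exactly what is needed to make that step and the subsequent norm estimate rigorous. One minor slip: the slice has dimension $n-1$, so $|(g_{\vPhi})_{ij}|_{g_{\vPhi}}=\sqrt{n-1}$, not $\sqrt{n}$; the stated constants are therefore upper bounds rather than sharp, which is harmless for the application.
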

	
	\begin{proof}
		For simplicity, we assume that $\vq=0$. We denote $\s^{d-1}_{\rho}= \s^{d-1}(0,\rho)$ and $S_{\rho} \coloneqq \vPhi^{-1}\left( \s^{d-1}_{\rho}\right)$. We assume that $\vPhi\colon S_{\rho} \to \s^{d-1}_{\rho}$ is an immersion. 
		Its generalized Gauss map $\vnu\colon S_{\rho} \to \Lambda^{d-n} (T_{\vPhi}\, \s^{d-1}_{\rho})$ is given by\footnote{We check the orientation. If $\theta_1,\ldots,\theta_{n-1}$ are coordinates for the slice $S\coloneqq \vPhi^{-1}(\s^{d-1}(\vq,\rho))$ and $(\vPhi^{\top},\dr_{\theta_1}\vPhi,\ldots,\dr_{\theta_{n-1}}\vPhi)$ is a direct local basis of the tangent space of $\vPhi(\Sigma)$, then $( \dr_{\theta_1}\vPhi,\ldots,\dr_{\theta_{n-1}} \vPhi,\vnu )$ is a direct local basis of the tangent space of $\s^{d-1}(\vq,\rho)$. Hence, both $\left( \vPhi^{\top},\dr_{\theta_1}\vPhi,\ldots,\dr_{\theta_{n-1}} \vPhi,\vn_{\vPhi} \right)$ and $\left(\vPhi,\dr_{\theta_1}\vPhi,\ldots,\dr_{\theta_{n-1}}\vPhi,\vnu \right)$ are direct basis of $\R^d$. Thus we have 
			\begin{align*}
				\frac{ \vPhi^{\top} \wedge \dr_{\theta_1} \vPhi \wedge \cdots \wedge \dr_{\theta_{n-1}} \vPhi \wedge \vn_{\vPhi} }{\left| \vPhi^{\top}\wedge \dr_{\theta_1} \vPhi \wedge \cdots \wedge \dr_{\theta_{n-1}} \vPhi \wedge  \vn_{\vPhi} \right| } & = \frac{\vPhi\wedge \dr_{\theta_1} \vPhi \wedge \cdots \wedge \dr_{\theta_{n-1}} \vPhi \wedge \vnu  }{\left|\vPhi\wedge  \dr_{\theta_1} \vPhi \wedge \cdots \wedge \dr_{\theta_{n-1}} \vPhi \wedge \vnu  \right| }  .
			\end{align*}
			Therefore, we have 
			\begin{align*}
				\frac{ \vPhi^{\top}\wedge \vn_{\vPhi} }{ \left| \vPhi^{\top}\wedge \vn_{\vPhi} \right| }= \frac{ \vPhi \wedge \vnu }{ \left|  \vPhi \wedge \vnu \right|} .
			\end{align*}
			Since $\vnu\llcorner \vPhi=0$ (we have $\vPhi$ orthogonal to $\s^{d-1}(0,\rho)$), we obtain
			\begin{align*}
				\frac{ |\vPhi|^2 \vnu }{ \left|  \vPhi \wedge \vnu \right| } = \frac{ \left( \vPhi^{\top}\wedge \vn_{\vPhi} \right) \llcorner \vPhi}{\left| \vPhi^{\top}\wedge \vn_{\vPhi} \right| }.
			\end{align*}
		}
		\begin{align}\label{eq:def_vnu}
			\vnu =  \frac{ \big(\vPhi \wedge \vn_{\vPhi} \big) \llcorner \vPhi}{ |\big(\vPhi \wedge \vn_{\vPhi} \big)\llcorner \vPhi| }.
		\end{align}
		If $(\theta_1,\ldots,\theta_{n-1})$ are coordinates on $S_{\rho}$. The second fundamental form of $\vPhi\colon S_\rho\to \R^d$ is given by the following expression:
		\begin{align*}
			\dr_{\theta_i} \vnu =\frac{\big(\vPhi\wedge \dr_{\theta_i} \vn_{\vPhi}\big) \llcorner \vPhi + \big(\dr_{\theta_i} \vPhi \wedge \vn_{\vPhi} \big) \llcorner \vPhi + \big(\vPhi \wedge \vn_{\vPhi} \big) \llcorner \dr_{\theta_i} \vPhi}{ |\big(\vPhi \wedge \vn_{\vPhi} \big) \llcorner \vPhi|} - \frac{\big(\vPhi \wedge \vn_{\vPhi}\big) \llcorner \vPhi}{|\big( \vPhi \wedge \vn_{\vPhi} \big) \llcorner \vPhi|^2}\, \dr_{\theta_i} \Big( \left| \big(\vPhi \wedge \vn_{\vPhi}\big) \llcorner \vPhi \right| \Big).
		\end{align*}
		Since all the derivatives above are taken along $S_{\rho}$, we obtain 
		\begin{align*}
			\dr_{\theta_i} \vPhi\cdot \vPhi =  \dr_{\theta_i} \left(\frac{|\vPhi|^2}{2}\right) =0.
		\end{align*}
		Hence, we have 
		\begin{align*}
			\dr_{\theta_i} \vnu =\frac{\big(\vPhi\wedge \dr_{\theta_i} \vn_{\vPhi} \big) \llcorner \vPhi - \dr_{\theta_i} \vPhi\wedge (\vn_{\vPhi}\llcorner \vPhi) }{ |\big(\vPhi \wedge \vn_{\vPhi}\big) \llcorner \vPhi|} - \frac{\big(\vPhi \wedge \vn_{\vPhi} \big) \llcorner \vPhi}{|\big(\vPhi \wedge \vn_{\vPhi} \big) \llcorner \vPhi|^3}\, \scal{\big(\vPhi \wedge \vn_{\vPhi} \big)\llcorner \vPhi}{ \big(\vPhi\wedge \dr_{\theta_i} \vn_{\vPhi}\big) \llcorner \vPhi -(\vPhi\wedge \vn_{\vPhi})\wedge \dr_{\theta_i} \vPhi }.
		\end{align*}
		The second fundamental form of $\vPhi\colon S_\rho\to \s^{d-1}_{\rho}$ is given by 
		\begin{align*}
			\vA_{ij} = \proj_{\vnu}\left( \frac{\dr^2 \vPhi}{\dr \theta_i\, \dr \theta_j}\right).
		\end{align*}
		We consider $\vnu_1,\ldots,\vnu_{d-n}$ a basis of the normal space represented by $\vnu$ in $T_{\vPhi}\, \s^{d-1}_{\rho}$ and $\vn_1,\ldots,\vn_{d-n}$  basis of the normal space represented by $\vn_{\Phi}$:
		\begin{align}\label{eq:def_vnu_alpha}
			\begin{cases} 
				\displaystyle \vnu = \vnu_1\wedge \cdots \wedge \vnu_{d-n} \in \Lambda^{d-n} (T_{\vPhi}\, \s^{d-1}_{\rho}),\\[3mm]
				\displaystyle \vnu_{\alpha} = \frac{\rho^2\, \vn_{\alpha} -(\vn_{\alpha}\cdot \vPhi)\, \vPhi}{|\rho^2\, \vn_{\alpha} -(\vn_{\alpha}\cdot \vPhi)\, \vPhi|}.
			\end{cases}
		\end{align}
		We have the following expression of $\vA$:
		\begin{align*}
			\vA_{ij} = \sum_{1\leq \alpha\leq d-n} \left( \vnu_{\alpha} \cdot \frac{\dr^2 \vPhi}{\dr\theta_i\, \dr\theta_j} \right)\ \vnu_{\alpha} = - \sum_{1\leq \alpha\leq d-n} \left( \frac{ \dr \vnu_{\alpha}}{\dr \theta_i} \cdot \frac{\dr \vPhi}{ \dr\theta_j} \right)\ \vnu_{\alpha}.
		\end{align*}
		Each coefficient satisfies
		\begin{align*}
			\frac{ \dr \vnu_{\alpha}}{\dr \theta_i} \cdot \frac{\dr \vPhi}{ \dr\theta_j}  & = \Bigg( \frac{ \rho^2\, \dr_i \vn_{\alpha} -(\dr_i \vn_{\alpha}\cdot \vPhi)\, \vPhi -(\vn_{\alpha}\cdot \vPhi)\, \dr_i \vPhi }{| \rho^2\, \vn_{\alpha} - (\vn_{\alpha}\cdot \vPhi)\, \vPhi| } \Bigg) \cdot \dr_j \vPhi \\[3mm]
			& = \frac{\rho^2\, \vII^{\, \alpha}_{ij} - (\vn_{\alpha}\cdot \vPhi)\, g_{ij}}{ | \rho^2\, \vn_{\alpha} - (\vn_{\alpha}\cdot \vPhi)\, \vPhi|  } \\[3mm]
			& = \frac{\rho^2}{ | \rho^2\, \vn_{\alpha} - (\vn_{\alpha}\cdot  \vPhi)\, \vPhi|  }\, \IIr_{ij}^{\alpha}  + \frac{\rho^2\, H_{\Phi}^{\alpha} - (\vn_{\alpha}\cdot \vPhi)}{ | \rho^2\, \vn_{\alpha} - (\vn_{\alpha}\cdot \vPhi)\, \vPhi|  }\, g_{ij}.
		\end{align*}
		Hence, the full norm of $\vA$ satisfies
		\begin{align*}
			\left|\vA\right|_{g_{\vPhi}} \leq (2+\sqrt{n})\, \rho^2 \frac{ |\vII|_{g_{\vPhi}} }{ |\big(\vn_{\vPhi}\wedge \vPhi \big) \llcorner \vPhi| } + \sqrt{n}\, \frac{ |\vn_{\vPhi}\, \llcorner\, \vPhi|}{ |\big(\vn_{\vPhi}\wedge \vPhi \big) \llcorner \vPhi| }.
		\end{align*}
	\end{proof}
	
	We will need to understand the relation between the second fundamental form of the graph of a function $u$ in $\s^{d-1}$ and the Hessian of $u$.
	
	\begin{lemma}\label{lm:A_graph}
		If $\vf\colon \B^{n-1}_s\to \s^{d-1}_{\rho}$ is an immersion of the form $\vf(x)=(x,\vu(x))$, then we have the pointwise estimate
		\begin{align*}
			\left|\g^2\, \vu\right| \leq \left( 1+ |\g \vu|^2 \right)^{\frac{3}{2}} \left|\vA_{\vf} \right|_{g_{\vf}} + \rho^{-1}\, |\g \vu|^2.
		\end{align*}
		Hence, for any $p\geq 1$, it holds
		\begin{align*}
			\left\|\g^2 \vu \right\|_{L^p \left(\B^{n-1}_s,\geu \right)} & \leq C(p,n)\Bigg[ \left( 1+\|\g \vu\|_{L^{\infty}\left(\B^{n-1}_s,\geu\right)}^3 \right) \left\|\vA_{\vf}\, \right\|_{L^p\left(\B^{n-1}_s,g_{\vf}\right)}  + s^{\frac{n-1}{p}}\, \rho^{-1}\, \|\g \vu\|_{L^{\infty}\left(\B^{n-1}_s,\geu\right)}^2 \Bigg].
		\end{align*}
	\end{lemma}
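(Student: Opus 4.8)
The plan is to differentiate the graph relation twice and to match the result against the Gauss formula for $\vf$ seen as an immersion into the round sphere $\s^{d-1}_{\rho}$. First I would record the pointwise orthogonal decomposition in $\R^d$ of the second derivatives $\dr^2_{ij}\vf$: since $\vf$ takes values in $\s^{d-1}_{\rho}$, the ambient normal bundle of $\vf$ along $\vf$ is the direct sum of the normal bundle of $\vf$ \emph{inside} $\s^{d-1}_{\rho}$ — where $\vA_{\vf}$ lives by definition — and the radial line. Differentiating $|\vf - \vq|^2 = \rho^2$ twice shows that the radial component of $\dr^2_{ij}\vf$ equals $-\rho^{-2}(g_{\vf})_{ij}(\vf-\vq)$, while the tangential component is the Christoffel term $\sum_k\Gamma^k_{ij}\dr_k\vf$ of $g_{\vf}$, so that
\[
\dr^2_{ij}\vf \;=\; \sum_k \Gamma^k_{ij}\,\dr_k\vf \;+\; (\vA_{\vf})_{ij}\;-\;\rho^{-2}(g_{\vf})_{ij}\,(\vf-\vq).
\]
Using the graph form $\vf(x)=(x,\vu(x))$, projecting this identity onto the coordinate directions along the base determines the $\Gamma^k_{ij}$ in terms of the corresponding components of $(\vA_{\vf})_{ij}$, of $(g_{\vf})_{ij}$, and of $\vu$; substituting this back into the projection onto the graph directions produces an explicit expression for the Hessian $\dr^2_{ij}\vu$ as a combination of $(\vA_{\vf})_{ij}$, of the $\dr_k\vu$, and of curvature corrections carrying the prefactor $\rho^{-1}$.

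Then I would estimate this identity pointwise. The contributions built from $(\vA_{\vf})_{ij}$ and $\dr_k\vu$ are bounded by $(1+|\g\vu|)\,|(\vA_{\vf})_{ij}|_{\geu}$; to pass from the Euclidean component norm of $\vA_{\vf}$ to the metric norm $|\vA_{\vf}|_{g_{\vf}}$ I would use that $g_{\vf}=\Id + (D\vu)^{T}D\vu$ satisfies $\Id\le g_{\vf}\le(1+|\g\vu|^2)\,\Id$, so that $g_{\vf}^{-1}\ge(1+|\g\vu|^2)^{-1}\Id$ and raising both free indices of $\vA_{\vf}$ costs at most a factor $(1+|\g\vu|^2)$; combined with the factor $(1+|\g\vu|)$ this yields the power $\tfrac32$. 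The remaining $\rho^{-1}$–corrections are handled by inserting the algebraic relations that the graph constraint imposes between the base variable and $\vu$, which make these terms vanish to second order in $\g\vu$ and hence bounded by $C\,\rho^{-1}|\g\vu|^2$. This last manipulation is the one that needs care.

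Finally, the $L^p$ version is a routine consequence of the pointwise bound: raising it to the power $p$, integrating over $\B^{n-1}_s$ against Lebesgue measure, pulling $\|\g\vu\|_{L^{\infty}}$ out of the integral, using $dx\le d\vol_{g_{\vf}}$ (since $\det g_{\vf}\ge1$) to replace $\int|\vA_{\vf}|^p\,dx$ by $\|\vA_{\vf}\|^p_{L^p(\B^{n-1}_s,g_{\vf})}$, and bounding $\rho^{-p}\int|\g\vu|^{2p}\,dx$ by $\rho^{-p}\|\g\vu\|_{L^{\infty}}^{2p}\,|\B^{n-1}_s|$, which supplies the factor $s^{(n-1)/p}$; one then absorbs $(1+\|\g\vu\|_{L^\infty}^2)^{3/2}$ into $1+\|\g\vu\|_{L^\infty}^3$ up to a dimensional constant. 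The main obstacle I anticipate is the bookkeeping in the pointwise identity, namely organizing the $\rho^{-1}$–curvature terms so that they collapse to precisely $\rho^{-1}|\g\vu|^2$ rather than a term surviving when $\g\vu=0$, together with tracking the metric-versus-Euclidean norm conversions carefully enough to land the sharp exponent $(1+|\g\vu|^2)^{3/2}$.
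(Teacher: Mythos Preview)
Your approach is correct in outline and will recover the estimate, but the paper takes a shorter route that sidesteps the Christoffel bookkeeping you anticipate. Instead of projecting the Gauss identity onto the base directions to solve for $\Gamma^k_{ij}$ and then back-substituting into the graph components, the paper works with the single orthogonal projection $\Pi=\Id - g^{\alpha\beta}(\dr_\alpha\vf)\otimes(\dr_\beta\vf)$ onto the full normal bundle of $\vf$ in $\R^d$, and proves the elementary lower bound
\[
\left|\Pi\begin{pmatrix}0\\\vv\end{pmatrix}\right|^2 \;\ge\; \frac{|\vv|^2}{1+|\g\vu|^2}
\]
by splitting $\vv$ along $\Span(d\vu)$ and its orthogonal complement. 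Since $\dr^2_{ij}\vf=(0,\dr^2_{ij}\vu)$, the identity $\vA_{ij}=\Pi(0,\dr^2_{ij}\vu)-(\text{radial part})$ is immediate, and inverting the lower bound on $\Pi$ gives $|\g^2\vu|$ directly. This collapses your two projection--substitution steps into one linear-algebra inequality; the exponent $(1+|\g\vu|^2)^{3/2}$ then drops out mechanically: one factor $(1+|\g\vu|^2)^{1/2}$ from inverting $\Pi$, one factor $(1+|\g\vu|^2)$ from converting the Euclidean component norm of $\vA_{ij}$ to $|\vA|_{g_{\vf}}$ via $g_{\vf}^{-1}\ge(1+|\g\vu|^2)^{-1}\Id$. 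Your concern about the $\rho^{-1}$ correction is legitimate---in the paper's formulation this term is simply the radial component $\rho^{-2}(\dr^2_{ij}\vf\cdot\vf)\,\vf$, read off directly from the sphere constraint rather than reconstructed through the Christoffel substitution, which is why the argument stays short. The $L^p$ step is exactly what you wrote.
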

	
	\begin{proof}
		We denote $\ve_1,\ldots,\ve_{n-1}$ the canonical basis of $\R^{n-1}$. Since $\vf$ is value in a sphere, the tangent space of $\vf$ is the same in $\R^d$ or in $\s^{d-1}$ and is given by
		\begin{align*}
			T\, \vf(\B^{n-1}) = \Span \left[ \begin{pmatrix}
				\ve_1\\[1mm] \dr_1 \vu
			\end{pmatrix},\ldots , \begin{pmatrix}
				\ve_{n-1}\\[1mm] \dr_{n-1} \vu
			\end{pmatrix} \right].
		\end{align*}
		Hence, the induced metric is given by
		\begin{align*}
			\begin{cases} 
				\displaystyle g_{ij} = \delta_{ij} + \dr_i \vu \cdot \dr_j \vu, \\[3mm]
				\displaystyle \sqrt{\det g} = \sqrt{\det \Big(\Id_{n-1} + (d\vu\, )^{\top} (d\vu\, ) \Big)} \geq 1.
			\end{cases}
		\end{align*}
		We consider the orthogonal projection on the normal space of $\vf(\B_s^{n-1})$:
		\begin{align*}
			\Pi \coloneqq \Id_{\R^d} - g^{\alpha\beta}\,(\dr_{\alpha} \vf\, )\otimes  (\dr_{\beta} \vf\, ) \colon \B^{n-1} \to \R^{d\times d}.
		\end{align*}
		The second fundamental form $\vA_{\vf}$ of $\vf(\B^{n-1})\subset \s^{d-1}_{\rho}$ is given by
		\begin{align*}
			\vA_{ij} & = \dr^2_{ij} \vf - g^{\alpha\beta}\, \left[ (\dr^2_{ij} \vf) \cdot (\dr_{\alpha} \vf)\right]\, (\dr_{\beta} \vf) - \rho^{-2}\left[ (\dr^2_{ij} \vf)\cdot \vf\right]\, \vf   = \Pi\, \begin{pmatrix}
				0\\ \dr^2_{ij}\, \vu
			\end{pmatrix} -\rho^{-2} (\dr_i \vu\cdot \dr_j \vu)\, \vf.
		\end{align*}
		For any vector $\vv\in \R^{d+1-n}$, we have
		\begin{align*}
			\left|\Pi \begin{pmatrix}
				0 \\ \vv
			\end{pmatrix} \right|^2 & = \left|\begin{pmatrix}
				0\\ \vv
			\end{pmatrix} - g^{\alpha\beta}\, (\dr_{\beta} \vu \cdot \vv) \begin{pmatrix}
				\ve_{\alpha}\\ \dr_{\alpha} \vu
			\end{pmatrix} \right|^2  = \left| (\dr_{\beta} \vu\cdot \vv)\, g^{\alpha\beta} \ve_{\alpha} \right|^2 + \left|\vv-g^{\alpha\beta}\, (\dr_{\beta} \vu \cdot \vv)\, \dr_{\alpha} \vu \right|^2  \geq \frac{|\vv|^2}{1+|\g \vu|^2}.
		\end{align*}
		We obtain the last estimate by considering the two cases $\vv\in \Span(d\vu)$ and $\vv\in \Span(d\vu)^{\perp}$. Hence, we obtain the following pointwise estimate
		\begin{align*}
			\left|\g^2\, \vu\right| \leq \left( 1+ |\g \vu|^2 \right)^{\frac{3}{2}} \left|\vA_{\vf} \right|_{g_{\vf}} + \rho^{-1}\, |\g \vu|^2.
		\end{align*}
		If now $\vA_{\vf}\in L^p(\B^{n-1}_s,g_{\vf})$, we obtain 
		\begin{align*}
			\int_{\B^{n-1}_s} |\g^2 \vu |^p\, dx & \leq 2^p \int_{\B^{n-1}_s} \left( 1+ |\g \vu|^2\right)^{\frac{3p}{2}} \left|\vA_{\vf}\right|^p_{g_{\vf}}\, dx + 2^p \rho^{-p} \int_{\B^{n-1}_s} |\g \vu|^{2p}\, dx \\[3mm]
			& \leq 2^p \left( 1+ \|\g \vu\|_{L^{\infty}(\B^{n-1})}^2 \right)^{\frac{3p}{2}} \int_{\B^{n-1}_s} \left|\vA_{\vf} \right|_{g_{\vf}}^p\, d\vol_{g_{\vf}}+ 2^p\, \Hr^{n-1}(\B^{n-1})\, s^{n-1}\, \rho^{-p} \|\g \vu\|_{L^{\infty}(\B^{n-1})}^{2p}.
		\end{align*}
	\end{proof}
	
	\subsection{Construction of the good slice}\label{sec:slice}
	
	The goal of this section is to prove Proposition \ref{pr:est_good_slice} below, namely that if the energy on an extrinsic ball is small enough, then we can find a good slice $\Sr\subset \Sigma$ such that $\vPhi\big|_{\Sr}$ is valued into a round sphere $\s^{d-1}$ with a control of the induced second fundamental form $\vA$, of the restriction of $\vII_{\vPhi}$ and of the volume of $\Sr$. Since we work with $C^{\infty}$-immersions, the slice is actually a $C^{\infty}$-manifold, hence all the usual Sobolev inequalities are valid, but we record only the estimates required to pass to the limit in an approximation of a weak immersion by $C^{\infty}$-immersions.

	\begin{proposition}\label{pr:est_good_slice}
		With the notations of \Cref{th:Construction_chart}, let $M\coloneq \vPhi(\Sigma)$. Fix a point $\vep_0\in M$ and $r>0$. We denote $Q_r\coloneqq \vPhi^{-1}(\B^d(\vep_0,2r))$. Assume that 
		\begin{align*}
			\Er(\vPhi;Q_r)^{\frac{1}{n}} \leq \eps, \qquad \text{and } \qquad \vol_{g_{\vPhi}}(Q_1) \leq V.
		\end{align*}
		Then there exist $\vq\in \B^d\left( \vep_0, \frac{r}{100}\right)$ and $\rho\in\left( \frac{6r}{10},\frac{9r}{10} \right)$ such that the following holds. The set $S_{\rho} \coloneqq \vPhi^{-1}(\s^{d-1}(\vq,\rho))$ is an $(n-1)$-dimensional manifold and $\vPhi\colon S_{\rho} \to \s^{d-1}(\vq,\rho)$ is a weak immersion whose second fundamental for $\vA$ satisfies
		\begin{align*}
			r^{\frac{1}{n}} \left\|\vA\right\|_{L^n(S_{\rho},g_{\vPhi})} + \frac{r^{\frac{1}{n}}}{\eps} \left\|\vII_{\vPhi}\right\|_{L^n\left(S_{\rho},g_{\vPhi}\right)} + \frac{\vol_{g_{\vPhi}}(S_{\rho})}{r^{n-1}} \leq C(n,d,V).
		\end{align*}
	\end{proposition}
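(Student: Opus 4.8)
The plan is to obtain the slice by an averaging argument over the $(d+1)$-parameter family of candidate slices $\vPhi^{-1}\big(\s^{d-1}(\vq,\rho)\big)$ indexed by $(\vq,\rho)\in P\coloneqq \B^d(\vep_0,r/100)\times(\tfrac{6r}{10},\tfrac{9r}{10})$, writing $\sigma_{\vq}\coloneqq |\vPhi-\vq|$ on $\Sigma$ and $S_\rho(\vq)\coloneqq \sigma_{\vq}^{-1}(\rho)=\vPhi^{-1}\big(\s^{d-1}(\vq,\rho)\big)$, so that on $\{\sigma_{\vq}>0\}$ the intrinsic gradient on $(\Sigma,g_{\vPhi})$ has norm $|\g^{\top}\sigma_{\vq}|_{g_{\vPhi}}=|(\vPhi-\vq)^{\top}|/|\vPhi-\vq|\le1$, the tangential projection being taken in $\R^d$. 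I first record two a priori inputs. For $(\vq,\rho)\in P$ and $x\in S_\rho(\vq)$ one has $|\vPhi(x)-\vep_0|<\tfrac{9r}{10}+\tfrac{r}{100}<2r$, so $S_\rho(\vq)\subset Q_r$ and likewise $\{\tfrac{6r}{10}<\sigma_{\vq}<\tfrac{9r}{10}\}\subset Q_r$; since $|\vH_{\vPhi}|\le|\vII_{\vPhi}|$ this gives $\int_{Q_r}|\vII_{\vPhi}|^n\,d\vol_{g_{\vPhi}}\le\Er(\vPhi;Q_r)\le\eps^n$. Moreover, by \Cref{pr:Extrinsic_Hdiff} (applied after the scale-invariant rescaling $\vPhi\mapsto r^{-1}(\vPhi-\vep_0)$) together with the hypotheses, $\vol_{g_{\vPhi}}(Q_r)=\mu_{\vPhi}\big(\B^d(\vep_0,2r)\big)\le C(n,d,V)\,r^n$.

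For $(\vq,\rho)\in P$ with $\rho$ a regular value of $\sigma_{\vq}$, $S_\rho(\vq)$ is a smooth $(n-1)$-manifold on which $\vPhi$ immerses into $\s^{d-1}(\vq,\rho)$, so \Cref{lm:II_slice} applies; simplifying its denominators via the multilinear identity $|(\vn_{\vPhi}\wedge(\vPhi-\vq))\llcorner(\vPhi-\vq)|=|\vPhi-\vq|\,|(\vPhi-\vq)^{\top}|=\rho^2|\g^{\top}\sigma_{\vq}|$ and $|\vn_{\vPhi}\llcorner(\vPhi-\vq)|=|(\vPhi-\vq)^{\perp}|\le\rho$, and using $\rho\sim r$, one gets the pointwise bound $|\vA|_{g_{\vPhi}}\le (2+\sqrt n)|\vII_{\vPhi}|_{g_{\vPhi}}/|\g^{\top}\sigma_{\vq}|+\sqrt n/(\rho|\g^{\top}\sigma_{\vq}|)$ on $S_\rho(\vq)$, hence $|\vA|^n\le C(n)\big(|\vII_{\vPhi}|^n+r^{-n}\big)|\g^{\top}\sigma_{\vq}|^{-n}$. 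The crucial step is to integrate over $P$; write $\int_P(\cdots)\coloneqq \int_{\B^d(\vep_0,r/100)}\int_{6r/10}^{9r/10}(\cdots)\,d\rho\,d\vq$. For nonnegative $g$ on $Q_r$, the coarea formula for $\sigma_{\vq}$ on $(\Sigma,g_{\vPhi})$ — which absorbs one factor $|\g^{\top}\sigma_{\vq}|$ — followed by Fubini, rewrites $\int_P\int_{S_\rho(\vq)}g\,|\g^{\top}\sigma_{\vq}|^{-n}\,d\Hr^{n-1}$ as $\int_{Q_r}g(x)\,\kappa_r(x)\,d\vol_{g_{\vPhi}}(x)$, where $\kappa_r(x)\coloneqq \int_{\B^d(\vep_0,r/100)}\mathbf{1}_{\{6r/10<|\vPhi(x)-\vq|<9r/10\}}\,|\g^{\top}_x\sigma_{\vq}|^{-(n-1)}\,d\vq$. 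One has $\kappa_r(x)\le C(n,d)\,r^d$ \emph{uniformly in $x$}: decomposing $\vq-\vPhi(x)=a+b$ with $a$ tangent and $b$ normal to $\vPhi$ at $x$, one has $|\g^{\top}_x\sigma_{\vq}|=|a|/\sqrt{|a|^2+|b|^2}$ while $|a|,|b|<\tfrac{9r}{10}$ on the region of integration, so $\kappa_r(x)\lesssim r^{n-1}\int_{|b|\lesssim r}\int_{|a|\lesssim r}|a|^{-(n-1)}\,da\,db\sim r^d$ — finite precisely because the coarea formula lowered the exponent from $n$ to $n-1$ (the bad locus $\{a=0\}$ has codimension $n$, and $\int_{B^n}|a|^{-(n-1)}\,da<\infty$ whereas $\int_{B^n}|a|^{-n}\,da=\infty$). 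Feeding this in with $g=|\vII_{\vPhi}|^n$ and $g\equiv r^{-n}$ controls $\int_P\int_{S_\rho}|\vA|^n$, while a direct application of coarea with integrand $|\vII_{\vPhi}|^n$, resp.\ $\equiv1$, controls $\int_P\int_{S_\rho}|\vII_{\vPhi}|^n$, resp.\ $\int_P\vol_{g_{\vPhi}}(S_\rho)$; together with the two a priori inputs,
\begin{gather*}
	\int_P\int_{S_\rho(\vq)}|\vA|^n\le C(n,d,V)\,r^d,\qquad\int_P\int_{S_\rho(\vq)}|\vII_{\vPhi}|^n\le C(n,d)\,\eps^n r^d,\\
	\int_P\vol_{g_{\vPhi}}(S_\rho(\vq))\le C(n,d,V)\,r^{n+d}.
\end{gather*}

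Since $|P|=c(n,d)\,r^{d+1}$ and, by Sard's theorem and Fubini, the set of $(\vq,\rho)\in P$ for which $\rho$ is not a regular value of $\sigma_{\vq}$ is Lebesgue-null, Chebyshev's inequality shows that on a subset of $P$ of positive measure all of the following hold simultaneously: $\rho$ is a regular value of $\sigma_{\vq}$, $\int_{S_\rho}|\vA|^n\le C(n,d,V)/r$, $\int_{S_\rho}|\vII_{\vPhi}|^n\le C(n,d)\,\eps^n/r$, and $\vol_{g_{\vPhi}}(S_\rho)\le C(n,d,V)\,r^{n-1}$. Picking such a pair $(\vq,\rho)$ and taking $n$-th roots yields precisely the asserted estimate with a constant $C(n,d,V)$, with $\vq\in\B^d(\vep_0,r/100)$ and $\rho\in(\tfrac{6r}{10},\tfrac{9r}{10})$ (and $S_\rho$ is nonempty once $\diam(M)\gtrsim r$, since then the component of $\vPhi(\Sigma)$ through $\vep_0$ crosses every such sphere).

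The step I expect to be the main obstacle is the uniform-in-$x$ bound $\kappa_r(x)\le C(n,d)\,r^d$: averaging the centre $\vq$ over a $d$-dimensional ball is what tames the (a priori wildly distributed) critical points of the individual distance functions $\sigma_{\vq}$, and the exponent $n-1$ that must be absorbed is the borderline one produced by the coarea formula — this is the mechanism converting the bound on $\int|\vII_{\vPhi}|^n$ and the volume bound into a bound on the slice second fundamental form $\vA$. The remainder is scaling bookkeeping together with \Cref{lm:II_slice} and \Cref{pr:Extrinsic_Hdiff}.
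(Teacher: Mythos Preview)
Your proposal is correct and follows essentially the same approach as the paper: averaging over centres $\vq$, applying Fubini and the coarea formula, and exploiting the integrability of $|a|^{-(n-1)}$ over an $n$-ball (this is precisely the content of the paper's Claims~\ref{cl:est_above_slice}--\ref{cl:slicing}, with your identity $|(\vn_{\vPhi}\wedge(\vPhi-\vq))\llcorner(\vPhi-\vq)|=\rho^2|\g^{\top}\sigma_{\vq}|$ matching the computation~\eqref{eq:computation}). Your packaging via the kernel $\kappa_r$ and the explicit emphasis on why the exponent $n-1$ is the borderline one is a clean reorganisation of the same argument.
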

	
	\begin{remark}
		As a consequence of these estimates, we obtain that the second fundamental form of $M\cap \s^{d-1}(\vq,\rho)\subset \s^{d-1}(\vq,\rho)$ is also in $L^{n-1}$ thanks to Hölder inequality and the control of the volume.
	\end{remark}
	
	For simplicity, we will assume that $\vep_0=0$ in the whole section. In order to choose a good slice $\s^{d-1}(\vq,\rho)$, we consider the following estimate.
	\begin{claim}\label{cl:est_above_slice}
		Given $1>r>0$, it holds
		\begin{align*}
			& \fint_{\B^d\left(0,\frac{r}{100}\right)} \Bigg( \int_{\vPhi^{-1}\left( \B^d(\vq,\frac{9r}{10})\setminus \B^d(\vq,\frac{6r}{10})\right)} \left|d|\vPhi-\vq|\right|_{g_{\vPhi}}\left[ \frac{|\vPhi-\vq|^{2n}\ |\vII|^n_{g_{\vPhi}}}{\left|\big(\vn_{\vPhi}\wedge (\vPhi-\vq)\big)\llcorner (\vPhi-\vq) \right|^n\ \Er(\vPhi;Q_r)}\right. \\[3mm]
			& \qquad \left. + \frac{|\vPhi-\vq|^n}{\left|(\vn_{\vPhi}\wedge(\vPhi-\vq))\llcorner (\vPhi-\vq) \right|^n} + \frac{1}{r^n} \right]\, d\vol_{g_{\vPhi}}\Bigg)\, d\vq
			\leq C(n,d,V).
		\end{align*}
	\end{claim}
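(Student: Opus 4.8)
The plan is to prove the estimate by Fubini's theorem, reducing it to a pointwise-in-$x$ computation of a $\vq$-average; the decisive input will be the borderline local integrability of $w\mapsto|w|^{1-n}$ on $\R^n$, combined with the extrinsic estimates of \Cref{sec:Extrinsic}.

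First I would rewrite the singular denominators appearing in the bracket. Fix $x\in\Sigma$ and $\vq\in\B^d(0,r/100)$, set $\vz:=\vPhi(x)-\vq$, and let $\vz^{\top}$ and $\vz^{\perp}$ be the components of $\vz$ tangent and normal to the image at $\vPhi(x)$, i.e.\ along the tangent $n$-plane $P_x:=d\vPhi_x(T_x\Sigma)$ and along the normal $(d-n)$-plane $P_x^{\perp}$ (whose unit $(d-n)$-vector is $\vn_{\vPhi}(x)$). A direct computation with the interior product yields the identities
\begin{align*}
\left| \big( \vn_{\vPhi}(x)\wedge\vz \big)\llcorner\vz \right| = |\vz^{\top}|\,|\vz|, \qquad \big| d|\vPhi-\vq| \big|_{g_{\vPhi}}(x) = \frac{|\vz^{\top}|}{|\vz|}.
\end{align*}
Since $|\vz|\in(6r/10,9r/10)$ on the domain of integration, the integrand of the Claim is then bounded above, up to a constant depending only on $n$, by
\begin{align*}
\frac{r^{n-1}\,|\vII_{\vPhi}(x)|^{n}}{|\vz^{\top}|^{n-1}\,\Er(\vPhi;Q_r)} \;+\; \frac{1}{r\,|\vz^{\top}|^{n-1}} \;+\; \frac{1}{r^{n}}.
\end{align*}

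I would then apply Fubini's theorem to exchange $\int_{\B^d(0,r/100)}d\vq$ with $\int_{\Sigma}d\vol_{g_{\vPhi}}$; by the triangle inequality, the points $x$ which contribute all satisfy $\vPhi(x)\in\B^d(0,r)$, hence lie in $Q_r$. For fixed $x$, parametrise $\vq=\vq^{\top}+\vq^{\perp}$ along $P_x\oplus P_x^{\perp}$, so that $\vz^{\top}=(\vPhi(x))^{\top}-\vq^{\top}$ depends only on $\vq^{\top}\in B^n(0,r/100)\subset P_x$, the $(d-n)$-dimensional normal fibres having radius at most $r/100$. Using $\int_{B^n(0,s)}|a-w|^{1-n}\,dw\le C(n)\,s$ for every $a\in\R^n$ and integrating out the normal fibres gives
\begin{align*}
\fint_{\B^d(0,\,r/100)} \frac{d\vq}{\big|(\vPhi(x)-\vq)^{\top}\big|^{n-1}} \;\le\; C(n,d)\,r^{1-n}.
\end{align*}

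Plugging this into the three terms and integrating in $x$ concludes the proof: the first term contributes at most $C(n,d)\,\Er(\vPhi;Q_r)^{-1}\int_{Q_r}|\vII_{\vPhi}|^{n}\,d\vol_{g_{\vPhi}}\le C(n,d)$, since $\int_{Q_r}|\vII_{\vPhi}|^{n}\,d\vol_{g_{\vPhi}}$ is the zeroth-order term of $\Er(\vPhi;Q_r)$; and the second and third terms contribute at most $C(n,d)\,r^{-n}\,\vol_{g_{\vPhi}}\big(\vPhi^{-1}(\B^d(0,r))\big)$, which is $\le C(n,d,V)$ by the Euclidean volume growth of extrinsic balls, \Cref{pr:Extrinsic_Hdiff}, together with the hypothesis $\vol_{g_{\vPhi}}(Q_1)\le V$. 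The step I expect to be the main obstacle is the $\vq$-average estimate: one must use crucially that the singular weight $|(\vPhi(x)-\vq)^{\top}|^{1-n}$, although concentrated near a codimension-$n$ subset of the $d$-dimensional ball $\B^d(0,r/100)$, is effectively integrated against only the $n$ tangential variables, so that the borderline integrability of $|w|^{1-n}$ on $\R^n$ is exactly what makes the average finite; the remaining technical point is to propagate the macroscopic volume bound down to the scale $r$ via \Cref{pr:Extrinsic_Hdiff}.
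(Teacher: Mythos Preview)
Your proof is correct and follows essentially the same route as the paper: Fubini to exchange the $\vq$- and $x$-integrals, the algebraic identity $|(\vn_{\vPhi}\wedge\vz)\llcorner\vz|=|\vz^{\top}|\,|\vz|$ (which the paper derives by an explicit coordinate computation, \eqref{eq:computation}), the borderline integrability of $|w|^{1-n}$ over the $n$ tangential variables to control the $\vq$-average, and finally \Cref{pr:Extrinsic_Hdiff} for the volume term. The only cosmetic difference is that the paper first enlarges the inner domain to $\vPhi^{-1}(\B^d(0,r)\setminus\B^d(0,r/2))$ before swapping integrals, whereas you bound the integrand using $|\vz|\sim r$ on the original domain and then enlarge the $\vq$-domain; both lead to the same estimate.
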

	
	\begin{proof} 
		We first enlarge the domain of integrate of the second integral in order to have a domain of integration independent of $\vq$:
		\begin{align*}
			&  \fint_{\B^d\left(0,\frac{r}{100}\right)} \Bigg( \int_{\vPhi^{-1}\left( \B^d(\vq,\frac{9r}{10})\setminus \B^d(\vq,\frac{6r}{10})\right)} \left|d|\vPhi-\vq|\right|_{g_{\vPhi}}\Bigg[ \frac{|\vPhi-\vq|^{2n}\ |\vII|^n_{g_{\vPhi}}}{ |\big(\vn_{\vPhi}\wedge (\vPhi-\vq)\big)\llcorner (\vPhi-\vq) |^n\ \Er(\vPhi;Q_r)} \\[3mm]
			& \qquad + \frac{|\vPhi-\vq|^n}{|(\vn_{\vPhi}\wedge(\vPhi-\vq))\llcorner (\vPhi-\vq)|^n} +\frac{1}{r^n} \Bigg]\, d\vol_{g_{\vPhi}}\Bigg)\, d\vq \\[3mm]
			\leq & \fint_{\B^d\left(0,\frac{r}{100}\right)} \Bigg( \int_{\vPhi^{-1}\left( \B^d(0,r)\setminus \B^d(0,\frac{r}{2})\right)}  \left|d|\vPhi-\vq|\right|_{g_{\vPhi}}\Bigg[ \frac{|\vPhi-\vq|^{2n}\ |\vII|^n_{g_{\vPhi}} }{|\big(\vn_{\vPhi}\wedge (\vPhi-\vq)\big)\llcorner (\vPhi-\vq) |^n\ \Er(\vPhi;Q_r)} \\[3mm]
			& \qquad + \frac{|\vPhi-\vq|^n}{|(\vn_{\vPhi}\wedge(\vPhi-\vq))\llcorner (\vPhi-\vq)|^n} +\frac{1}{r^n} \Bigg]\, d\vol_{g_{\vPhi}}\Bigg)\, d\vq .
		\end{align*}
		We now exchange the integrals on the right-hand side:
		\begin{align}
			&\fint_{\B^d\left(0,\frac{r}{100}\right)} \Bigg( \int_{\vPhi^{-1}\left( \B^d(\vq,\frac{9r}{10})\setminus \B^d(\vq,\frac{6r}{10})\right)} \left|d|\vPhi-\vq|\right|_{g_{\vPhi}}\Bigg[ \frac{|\vPhi-\vq|^{2n}\ |\vII|^n_{g_{\vPhi}} }{|\big(\vn_{\vPhi}\wedge (\vPhi-\vq)\big)\llcorner (\vPhi-\vq) |^n\ \Er(\vPhi;Q_r)} \nonumber \\[3mm]
			& \qquad + \frac{|\vPhi-\vq|^n}{|(\vn_{\vPhi}\wedge(\vPhi-\vq))\llcorner (\vPhi-\vq)|^n}+  \frac{1}{r^n} \Bigg]\, d\vol_{g_{\vPhi}}\Bigg)\, d\vq  \nonumber \\[3mm]
			\leq & \int_{\vPhi^{-1}\left( \B^d(0,r)\setminus\B^d(0,\frac{r}{2})\right)}  \Bigg[ \frac{ |\vII|^n_{g_{\vPhi}} }{ \Er(\vPhi;Q_r)}\ \left( \fint_{\B^d\left(0,\frac{r}{100}\right)} \frac{|d|\vPhi-\vq||_{g_{\vPhi}}\ |\vPhi-\vq|^{2n}}{ |\big(\vn_{\vPhi}\wedge (\vPhi-\vq)\big)\llcorner (\vPhi-\vq) |^n } \ d\vq\right) \label{eq:Int11}\\[3mm]
			& \qquad + \left( \fint_{\B^d\left(0,\frac{r}{100}\right)} \frac{|d|\vPhi-\vq||_{g_{\vPhi}}\ |\vPhi-\vq|^n}{|(\vn_{\vPhi}\wedge(\vPhi-\vq))\llcorner (\vPhi-\vq)|^n}\ d\vq\right)  +\left(\fint_{\B^d\left(0,\frac{r}{100}\right)} \frac{d\vq}{r^n}\right)\Bigg]\, d\vol_{g_{\vPhi}} \label{eq:Int12} .
		\end{align}
		We write the term $|d|\vPhi-\vq||_{g_{\vPhi}}$ as follows:
		\begin{align*}
			\left|d|\vPhi-\vq|\right|_{g_{\vPhi}} = \frac{ \left| \scal{\vPhi-\vq}{d\vPhi} \right|_{g_{\vPhi}} }{|\vPhi-\vq|} = \frac{ \left| \vn_{\vPhi}\wedge (\vPhi-\vq) \right|}{|\vPhi-\vq|}.
		\end{align*}
		We estimate the first integral of \eqref{eq:Int12}. By change of variables, we obtain for any point $\vPhi\in \B^d(0,r)\setminus \B^d(0,\frac{r}{2})$ and $|\vn|=1$ that
		\begin{align}\label{eq:Int121}
			\fint_{\B^d\left(0,\frac{r}{100}\right)}\frac{|\vn\wedge(\vPhi-\vq)|\ |\vPhi-\vq|^{n-1}}{|(\vn\wedge(\vPhi-\vq))\, \llcorner\, (\vPhi-\vq)|^n} \ d\vq & \leq C(n,d)\, r^{n-1-d} \int_{\B^d\left(\vPhi,\frac{r}{100}\right)} \frac{|\vn\wedge \vep|}{|(\vn\wedge \vep) \, \llcorner\,  \vep|^n} \, d\vep.
		\end{align}
		We now estimate the denominator of the left-hand side\footnote{In codimension 1, we have assuming $\vn=\ve_1$,
			\begin{align*}
				|(\vn\wedge \vep)\llcorner \vep|^2  = \left| |\vep|^2 \vn - \vep_1\, \vep\right|^2 = \left(|\vep|^2 - \vep_1^{\, 2}\right)^2+ \vep_1^{\, 2}\left(\vep_2^{\, 2} +\cdots + \vep_d^{\, 2}\right) = \left(\vep_2^{\, 2} +\cdots + \vep_d^{\, 2}\right)^2+ \vep_1^{\, 2}\left(\vep_2^{\, 2} +\cdots + \vep_d^{\, 2}\right) = |\vep|^2\, \left(\vep_2^{\, 2} +\cdots + \vep_d^{\, 2}\right) .
			\end{align*}
		}. Let $(\ve_1,\ldots,\ve_d)$ the canonical basis of $\R^d$. Up to rotation, we can assume that $\vn=\ve_1\wedge \cdots \wedge \ve_{d-n}$. We obtain
		\begin{align*}
			|\vn\wedge \vep| = \left(\vep_{d-n+1}^{\, 2} + \cdots + \vep_d^{\, 2}\right)^{\frac{1}{2}}.
		\end{align*}
		Moreover, we have
		\begin{align*}
			\left(\ve_1\wedge \cdots \wedge \ve_{d-n}\wedge \vep \right)\llcorner \vep & = \left(\ve_1\wedge \cdots \wedge \ve_{d-n}\wedge \left[ \sum_{d-n<k\leq d} \vep^{\, k}\, \ve_k \right] \right)\llcorner \left[\sum_{l=1}^d \vep^{\, l}\, \ve_l \right] \\[3mm]
			& = \sum_{\substack{d-n<k\leq d \\[1mm] 1\leq l\leq d}} \vep^{\, k}\, \vep^{\, l}\, \left(\ve_1\wedge \cdots \wedge \ve_{d-n}\wedge \ve_k \right)\llcorner \ve_l \\[3mm]
			& = \sum_{k=d-n+1}^d \vep^{\, k}\, \sum_{i=1}^{d-n} (-1)^{i-1}\, \vep^{\, i}\, \ve_1\wedge\cdots \wedge \ve_{i-1}\wedge \ve_{i+1} \wedge \cdots \wedge \ve_{d-n} \wedge \ve_k \\[3mm]
			&\qquad + \sum_{k=d-n+1}^d(-1)^{d-n}\, (\vep^{\, k})^2 \, \ve_1\wedge \cdots\wedge \ve_{d-n}.
		\end{align*}
		Since all the multivectors in the above expression are unitary and orthogonal, we obtain\footnote{From this expression, we deduce that 
			\begin{align*} 
				\left| \left(\ve_1\wedge \cdots \wedge \ve_{d-n}\wedge \vep \right)\llcorner \vep\right|^2 \leq \min_{1\leq i\leq d-n} \left| (\ve_i\wedge \vep)\llcorner \vep \right|^2.
		\end{align*}}
		\begin{align}\label{eq:computation}
			\left| \left(\ve_1\wedge \cdots \wedge \ve_{d-n}\wedge \vep \right)\llcorner \vep\right|^2 & = \sum_{ \substack{d-n<k\leq d\\[1mm] 1\leq i\leq d-n} } (\vep^{\, k})^2\, (\vep^{\, i})^2 + \left| \sum_{k=d-n+1}^d (\vep^{\, k})^2\right|^2 = \left( \sum_{d-n<k\leq d} (\vep^{\, k})^2\right)  |\vep|^2.
		\end{align}
		Plugging this into \eqref{eq:Int121}, we obtain
		\begin{align*}
			\fint_{\B^d\left(0,\frac{r}{100}\right)}\frac{|\vn\wedge(\vPhi-\vq)|\ |\vPhi-\vq|^{n-1}}{|(\vn\wedge(\vPhi-\vq))\llcorner (\vPhi-\vq)|^n} \ d\vq & \leq C(n,d)\, r^{n-1-d} \int_{[-2r,2r]^d\setminus \left[-\frac{r}{100}, \frac{r}{100}\right]^d} \frac{ \left(\vep_{d-n+1}^{\, 2} + \cdots \vep_d^{\, 2}\right)^{\frac{1}{2}}}{|\vep|^n\,  \left(\vep_{d-n+1}^{\, 2} + \cdots \vep_d^{\, 2}\right)^{\frac{n}{2}}} \, d\vep \\[3mm]
			& \leq C(n,d)\, r^{-1-d} \int_{[-2r,2r]^d} \frac{d\vep}{\left(\vep_{d-n+1}^{\, 2} + \cdots \vep_d^{\, 2}\right)^{\frac{n-1}{2}}} \\[3mm]
			& \leq C(n,d)\, r^{-1-n} \int_{[-2r,2r]^n} \frac{dy}{|y|^{n-1}}\\[3mm]
			& \leq C(n,d)\, r^{-1-n} \int_0^{4r} \, ds.
		\end{align*}
		We obtain 
		\begin{align}\label{eq:Int2}
			\fint_{\B^d\left(0,\frac{r}{100}\right)}\frac{|d|\vPhi-\vq||_{g_{\vPhi}}\ |\vPhi-\vq|^n}{|(\vn_{\vPhi}\wedge(\vPhi-\vq))\llcorner (\vPhi-\vq)|^n} \ d\vq \leq \frac{C(n,d)}{r^n}.
		\end{align}
		We now estimate the integral \eqref{eq:Int11} by using that $|\vPhi-q|\leq 1$ on the domain of integration:
		\begin{align*}
			\fint_{\B^d\left(0,\frac{r}{100}\right)}\frac{|d|\vPhi-\vq||_{g_{\vPhi}}\ |\vPhi-\vq|^{2n}}{|(\vn_{\vPhi}\wedge(\vPhi-\vq))\llcorner (\vPhi-\vq)|^n} \ d\vq \leq C(n,d)\, r^n \fint_{\B^d\left(0,\frac{r}{100}\right)}\frac{|d|\vPhi-\vq||_{g_{\vPhi}}\ |\vPhi-\vq|^n}{|(\vn_{\vPhi}\wedge(\vPhi-\vq))\llcorner (\vPhi-\vq)|^n} \ d\vq .
		\end{align*}
		By \eqref{eq:Int2}, we obtain 
		\begin{align}
			\fint_{\B^d\left(0,\frac{r}{100}\right)}\frac{|d|\vPhi-\vq||_{g_{\vPhi}}\ |\vPhi-\vq|^{2n}}{|(\vn_{\vPhi}\wedge(\vPhi-\vq))\llcorner (\vPhi-\vq)|^n} \ d\vq \leq C(n,d). \label{eq:Int3}
		\end{align}	
		Plugging \eqref{eq:Int2}-\eqref{eq:Int3} into \eqref{eq:Int11}-\eqref{eq:Int12}, we obtain
		\begin{align*}
			& \fint_{\B^d\left(0,\frac{r}{100}\right)} \Bigg( \int_{\vPhi^{-1}\left( \B^d(\vq,\frac{9r}{10})\setminus \B^d(\vq,\frac{6r}{10})\right)} \left|d|\vPhi-\vq| \right|_{g_{\vPhi}}\Bigg[ \frac{|\vPhi-\vq|^{2n}\ |\vII|^n}{|\big(\vn_{\vPhi}\wedge (\vPhi-\vq)\big)\llcorner (\vPhi-\vq) |^n\ \Er(\vPhi;Q_r)} \\[3mm]
			& \qquad + \frac{|\vPhi-\vq|^n}{|(\vn_{\vPhi}\wedge(\vPhi-\vq))\llcorner (\vPhi-\vq)|^n} + \frac{1}{r^n} \Bigg]\, d\vol_{g_{\vPhi}}\Bigg)\, dq \\[3mm]
			\leq &\ C(n,d) \int_{\vPhi^{-1}\left( \B^d(0,r)\setminus \B^d(0,\frac{r}{2})\right)} \left[ \frac{ |\vII|^n_{g_{\vPhi}} }{\Er(\vPhi; Q_r)} + \frac{1}{r^n}\right]\, d\vol_{g_{\vPhi}} +C(n,d,V)\\[2mm]
			\leq &\ C(n,d,V).
		\end{align*}
		In the last estimate, we used the assumption \eqref{hyp:Chart} and the estimate \eqref{eq:volume_growth_balls}.
	\end{proof}

	By \eqref{eq:II_slice}, we deduce the existence of a slice having a second fundamental form in $L^n$ and volume bounded from above by $r^{n-1}$.
	
	\begin{claim}\label{cl:slicing}
		There exists $\vq\in \B^d(0,\frac{r}{100})$ and $\rho\in\left[ \frac{6r}{10},\frac{9r}{10}\right]$ such that 
		\begin{align*}
			\int_{ \vPhi^{-1}(\s^{d-1}(\vq,\rho)) }\Bigg[ \frac{|\vPhi-\vq|^{2n}\ |\vII|^n}{|\big(\vn_{\vPhi}\wedge (\vPhi-\vq)\big)\llcorner (\vPhi-\vq) |^n\ \Er(\vPhi;Q_r)}  + \frac{|\vPhi-\vq|^n}{|(\vn_{\vPhi}\wedge(\vPhi-\vq))\llcorner (\vPhi-\vq)|^n} + \frac{1}{r^n} \Bigg]\, d\vol_{g_{\vPhi}} \leq \frac{C(n,d,V)}{r}.
		\end{align*}
		Moreover, the set $\vPhi^{-1}(\s^{d-1}(\vq,\rho))$ is a $C^{\infty}$ manifold.
	\end{claim}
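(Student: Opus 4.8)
The plan is to deduce \Cref{cl:slicing} from the averaged estimate of \Cref{cl:est_above_slice} by a coarea decomposition followed by a mean-value (pigeonhole) argument, using Sard's theorem to guarantee that the chosen slice is a genuine submanifold.

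First I would fix $\vq\in\B^d(0,\frac{r}{100})$ and apply the coarea formula on $(\Sigma,g_{\vPhi})$ to the smooth function $u_{\vq}(x)\coloneqq |\vPhi(x)-\vq|$. Its Riemannian gradient has norm exactly $|d|\vPhi-\vq||_{g_{\vPhi}}$, which is precisely the weight in the integrand of \Cref{cl:est_above_slice} (this is why that estimate was formulated with this factor). Writing $F_{\vq}$ for the bracketed expression — identical to the bracket in \Cref{cl:slicing} — the inner bulk integral over $\vPhi^{-1}(\B^d(\vq,\frac{9r}{10})\setminus\B^d(\vq,\frac{6r}{10}))=u_{\vq}^{-1}((\frac{6r}{10},\frac{9r}{10}))$ becomes $\int_{6r/10}^{9r/10}G(\vq,\rho)\,d\rho$, where
\begin{align*}
G(\vq,\rho)\coloneqq \int_{\vPhi^{-1}(\s^{d-1}(\vq,\rho))}F_{\vq}\ d\vol_{g_{\vPhi}},
\end{align*}
with $d\vol_{g_{\vPhi}}$ on the slice denoting the $(n-1)$-dimensional measure induced by $g_{\vPhi}$. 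Substituting into \Cref{cl:est_above_slice} and exchanging the $\vq$- and $\rho$-integrations by Fubini yields
\begin{align*}
\frac{1}{|\B^d(0,\frac{r}{100})|}\int_{\B^d(0,\frac{r}{100})}\int_{\frac{6r}{10}}^{\frac{9r}{10}}G(\vq,\rho)\,d\rho\,d\vq \leq C(n,d,V).
\end{align*}

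Next I would invoke Sard's theorem: since $\vPhi$ is $C^{\infty}$, for every $\vq$ almost every $\rho\in(\frac{6r}{10},\frac{9r}{10})$ is a regular value of $x\mapsto|\vPhi(x)-\vq|^2$, so $\vPhi^{-1}(\s^{d-1}(\vq,\rho))$ is a compact $C^{\infty}$ manifold for such $\rho$; by Fubini the set of these ``admissible'' pairs $(\vq,\rho)$ has full measure in $\B^d(0,\frac{r}{100})\times(\frac{6r}{10},\frac{9r}{10})$. On an admissible slice the denominator $|(\vn_{\vPhi}\wedge(\vPhi-\vq))\llcorner(\vPhi-\vq)|$ is strictly positive — up to a factor it equals the tangential part $|(\vPhi-\vq)^{\top}|$, which vanishes exactly at the critical points of $|\vPhi-\vq|^2$, cf.\ \eqref{eq:computation} — so $F_{\vq}$ is smooth and bounded and $G(\vq,\rho)<+\infty$. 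Since the product domain has measure $|\B^d(0,\frac{r}{100})|\cdot\frac{3r}{10}$, the displayed bound says that the average of $G$ over that domain is at most $\frac{10}{3r}C(n,d,V)$; by Chebyshev's inequality the pairs with $G(\vq,\rho)>\frac{20}{3r}C(n,d,V)$ fill at most half of it, so there remains a positive-measure set of admissible pairs with $G(\vq,\rho)\leq\frac{C(n,d,V)}{r}$. Any such $(\vq,\rho)$ proves \Cref{cl:slicing}.

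I do not expect a genuine obstacle here: this is the standard Simon-type slicing argument. The only points requiring care are the bookkeeping in the coarea step (so that the weight $|d|\vPhi-\vq||_{g_{\vPhi}}$ decomposes the bulk integral exactly into slice integrals over the annulus $(\frac{6r}{10},\frac{9r}{10})$) and checking that the denominators in $F_{\vq}$ stay bounded away from zero on the selected slice, which is automatic once we restrict to regular values. With \Cref{cl:slicing} available, the remaining estimates of \Cref{pr:est_good_slice} — the bounds on $r^{1/n}\|\vA\|_{L^n(S_{\rho})}$, $r^{1/n}\|\vII_{\vPhi}\|_{L^n(S_{\rho})}$ and $\vol_{g_{\vPhi}}(S_{\rho})/r^{n-1}$ — will follow by combining $G(\vq,\rho)\leq C/r$ with the pointwise bound \eqref{eq:II_slice} for $|\vA|_{g_{\vPhi}}$.
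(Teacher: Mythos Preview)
Your proposal is correct and follows essentially the same route as the paper: coarea formula on $x\mapsto|\vPhi(x)-\vq|$ to turn the bulk integral of \Cref{cl:est_above_slice} into $\int_{6r/10}^{9r/10}G(\vq,\rho)\,d\rho$, then a mean-value/Chebyshev selection combined with Sard's theorem to pick an admissible pair $(\vq,\rho)$. The paper phrases the selection as bounding the infimum of $G$ by $\frac{10}{3r}C(n,d,V)$ and then noting one may choose a regular value, whereas you make the intersection of ``small $G$'' with ``regular value'' explicit via Chebyshev; these are the same argument at the same level of difficulty.
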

	
	\begin{proof}
		We use the coarea formula:
		\begin{align*}
			& \fint_{\B^d\left(0,\frac{r}{100}\right)} \Bigg( \int_{\vPhi^{-1}\left( \B^d(\vq,\frac{9r}{10})\setminus\B^d(\vq,\frac{6r}{10})\right)} \left|d|\vPhi-\vq|\right|_{g_{\vPhi}}\Bigg[ \frac{|\vPhi-\vq|^{2n}\ |\vII|^n}{|\big(\vn_{\vPhi}\wedge (\vPhi-\vq)\big)\llcorner (\vPhi-\vq) |^n\ \Er(\vPhi;Q_r)} \\[3mm]
			& \qquad + \frac{|\vPhi-\vq|^n}{|(\vn_{\vPhi}\wedge(\vPhi-\vq))\llcorner (\vPhi-\vq)|^n}+ \frac{1}{r^n} \Bigg]\, d\vol_{g_{\vPhi}}\Bigg)\, d\vq \\[3mm]
			= &\ \fint_{\B^d\left(0,\frac{r}{100}\right)} \int_{\frac{6r}{10}}^{\frac{9r}{10}} \Bigg( \int_{ \vPhi^{-1}(\s^{d-1}(\vq,\rho)) }\Bigg[ \frac{|\vPhi-\vq|^{2n}\ |\vII|^n}{|\big(\vn_{\vPhi}\wedge (\vPhi-\vq)\big)\llcorner (\vPhi-\vq) |^n\ \Er(\vPhi;Q_r)} \\[3mm]
			& \qquad + \frac{|\vPhi-\vq|^n}{|(\vn_{\vPhi}\wedge(\vPhi-\vq))\llcorner (\vPhi-\vq)|^n}  + \frac{1}{r^n} \Bigg]
			\, d\vol_{g_{\vPhi}}\Bigg)\, d\rho\, d\vq \\[3mm]
			\geq & \frac{3r}{10}\, \inf_{\vq,r} \Bigg( \int_{ \vPhi^{-1}(\s^{d-1}(\vq,\rho)) }\Bigg[ \frac{|\vPhi-\vq|^{2n}\ |\vII|^n}{|\big(\vn_{\vPhi}\wedge (\vPhi-\vq)\big)\llcorner (\vPhi-\vq) |^n\ \Er(\vPhi;Q_r)} \\[3mm]
			&\qquad + \frac{|\vPhi-\vq|^n}{|(\vn_{\vPhi}\wedge(\vPhi-\vq))\llcorner (\vPhi-\vq)|^n} + \frac{1}{r^n} \Bigg]\, d\vol_{g_{\vPhi}}\Bigg).
		\end{align*}
		We can choose $\vq$ and $r$ such that $\vPhi^{-1}(\s^{d-1}(\vq,\rho))$ is a $C^{\infty}$ manifold, since we assumed $\vPhi$ to be $C^{\infty}$. Indeed, for any $\vq\in \R^d$, we consider the map $f(x)\coloneq |\vPhi(x)-\vq|^2$. By Sard Theorem, the image by $f$ of its critical point have zero measure. Moreover, $\vPhi(\Sigma)$ is tangential to a sphere $\s^{d-1}(\vq,r)$ at a point $x\in\Sigma$ if and only if $x$ is a critical point of $f$. Hence, for a.e. $r>0$, the intersection $\vPhi(\Sigma)$ with $\s^{d-1}(\vq,r)$ is transversal. For these radii, this intersection defines a $C^{\infty}$ immersed submanifold.
	\end{proof}
	
	We now conclude the proof of Proposition \ref{pr:est_good_slice}.
	
	\begin{proof}[Proof of Proposition \ref{pr:est_good_slice}.]
		It suffices to show that 
		\begin{align*}
			\int_{\vPhi^{-1}\left( \s^{d-1}(\vq,\rho) \right)} \frac{|\vn_{\vPhi}\llcorner (\vPhi-\vq)|^n}{|(\vn_{\vPhi}\wedge(\vPhi-\vq))\llcorner (\vPhi-\vq)|^n}\, d\vol_{g_{\vPhi}} \leq C(n,d,V).
		\end{align*}
		This follows from Claim \ref{cl:slicing}:
		\begin{align*}
			\int_{\vPhi^{-1}\left( \s^{d-1}(\vq,\rho) \right)} \frac{|\vn_{\vPhi}\llcorner (\vPhi-\vq)|^n}{|(\vn_{\vPhi}\wedge(\vPhi-\vq))\llcorner (\vPhi-\vq)|^n}\, d\vol_{g_{\vPhi}} & \leq \int_{\vPhi^{-1}\left( \s^{d-1}(\vq,\rho) \right)} \frac{|\vPhi-\vq|^n}{|(\vn_{\vPhi}\wedge(\vPhi-\vq))\llcorner (\vPhi-\vq)|^n}\, d\vol_{g_{\vPhi}}  \leq \frac{C(n,d,V)}{r}.
		\end{align*}
	\end{proof}

	\subsection{A given good slice is a union of topological spheres, each of them being close to a round sphere}\label{sec:graph_good_slice}
	
	In this section, we prove that the slices provided by Proposition \ref{pr:est_good_slice} are diffeomorphic to round spheres and can be described as graphs. Since \underline{we work with $C^{\infty}$-immersions in this section}, the graphs and the diffeomorphisms are actually of class $C^{\infty}$, but we record only the estimates required to pass to the limit in an approximation of a weak immersion by $C^{\infty}$-immersions.
	
	\begin{proposition}\label{pr:global_graph_slice}
		With the notations of Proposition \ref{pr:est_good_slice}, there exist $\eps_0\in(0,1)$ and $C>0$ depending only on $n$, $d$ and $V$ such that the following holds. If $\eps\leq \eps_0$, then each connected component $\Sr$ of $S_{\rho}$ is diffeomorphic\footnote{Since $\vPhi\in \Imm(\Sigma;\R^d)$, the diffeomorphism is also of class $C^{\infty}$.} to $\s^{n-1}$. Moreover, there exists an $n$-dimensional affine plane $\Pc_{\Sr} = \vq_{\Sr} + \Pc\subset \R^d$, where $\Pc$ is vectorial $n$-plane, satisfying the following properties:
		\begin{enumerate}
			\item\label{it:Centre} There exists a constant $\delta>0$ depending only on $n$, $d$ and $V$ such that $|\vq - \vq_{\Sr}|\leq \delta\, \rho$,
			
			\item\label{it:Graph} There exists a map $\vphi_{\Sr} \in W^{2,n}(\Sc_{\Pc};\R^d)$\footnote{Since $\vPhi\in\Imm(\Sigma;\R^d)$, the map $\vphi_{\Sr}$ is actually $C^{\infty}$.}, where $\Sc_{\Pc}\coloneq \Pc_{\Sr}\cap \s^{d-1}(\vq,\rho)$, such that the set $\vPhi(\Sr)$ is given by 
			\begin{align*}
				\vPhi(\Sr) = \left\{ \vtheta + \vphi_{\Sr}(\vtheta) : \vtheta\in \Sc_{\Pc} \right\}.
			\end{align*}
			Moreover, the following estimates hold
			\begin{align*}
				\begin{cases} 
					\displaystyle r^{-1}\, \|\vphi_{\Sr}\|_{L^{\infty}(\Sc_{\Sr})} + \|\g \vphi_{\Sr}\|_{L^{\infty}(\Sc_{\Sr})} \leq C\, \eps,\\[3mm]
					\displaystyle r^{\frac{1}{n}}\, \|\g^2 \vphi_{\Sr}\|_{L^n(\Sc_{\Sr})} \leq C.
				\end{cases} 
			\end{align*}
			
			\item\label{it:Osc_n} We have 
			\begin{align*}
				\forall x,y\in \Sr,\qquad \left| \vn_{\vPhi}(x) - \vn_{\vPhi}(y)\right| \leq C\, \eps.
			\end{align*}
		\end{enumerate}
	\end{proposition}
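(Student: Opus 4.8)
The plan rests on a single structural observation: on the $(n-1)$-dimensional slice, the critical Sobolev exponent for the second fundamental form becomes strictly \emph{sub}critical. Fix a connected component $\Sr$ of $S_\rho$. By Proposition~\ref{pr:est_good_slice} it is a closed $C^\infty$ $(n-1)$-manifold, $\vPhi|_\Sr$ a smooth immersion into $\s^{d-1}(\vq,\rho)$, and one has $r^{\frac1n}\|\vA\|_{L^n(S_\rho,g_\vPhi)}\le C$, $\|\vII_\vPhi\|_{L^n(S_\rho,g_\vPhi)}\le C\eps\, r^{-\frac1n}$, and $\vol_{g_\vPhi}(S_\rho)\le C r^{n-1}$, with $C=C(n,d,V)$. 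The Weingarten relation gives the pointwise bound $|\nabla^{\Sr}\vn_\vPhi|\le\sqrt{n-1}\,|\vII_\vPhi|$ along $\Sr$, so $\vn_\vPhi|_\Sr\in W^{1,n}(\Sr)$ with $\|\nabla^\Sr\vn_\vPhi\|_{L^n(\Sr)}\le C\eps\, r^{-\frac1n}$. Since $\dim\Sr=n-1<n$, a Morrey inequality on $\Sr$ with the correct scaling, $\osc_\Sr u\le C(\Sr)\, r^{\frac1n}\,\|\nabla^\Sr u\|_{L^n(\Sr)}$, then yields item~\ref{it:Osc_n}: $\osc_\Sr\vn_\vPhi\le C(\Sr)\,C\eps$. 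This is the only point where the smallness of $\eps$ is genuinely used.

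The main obstacle is to control the Morrey constant $C(\Sr)$ — equivalently, a Sobolev or non-collapsing estimate for $(\Sr,g_\vPhi|_\Sr)$ — uniformly in terms of $n,d,V$ alone; note that $\vA$ is only \emph{bounded} (of size $\sim\rho^{-1}$, since $\Sr$ is close to a round $(n-1)$-sphere of radius $\sim r$), not small, in the critical space $L^{n-1}(\Sr)$. I would obtain this from the Michael--Simon inequality on the submanifold $\Sr\subset\R^d$, whose mean curvature satisfies $|\vH_\Sr|\le|\vA|+O(\rho^{-1})$, hence $\|\vH_\Sr\|_{L^{n-1}(\Sr)}\le C(n,d,V)$, combined with the monotonicity formula and an $\eps$-regularity statement for the slice: because $\vA,\vII_\vPhi\in L^n(\Sr)$ are \emph{strictly} subcritical on the $(n-1)$-manifold, Hölder's inequality together with the monotonicity bound on extrinsic volume ratios forces $\int_{\vPhi^{-1}(\B^d(\vy,s))\cap\Sr}\big(|\vA|^{n-1}+|\vII_\vPhi|^{n-1}\big)\to 0$ uniformly in $\vy$ as $s\to0$, so on small extrinsic balls $\vPhi(\Sr)$ is a Lipschitz graph with small slope over an affine $(n-1)$-plane (via Lemma~\ref{lm:A_graph} and a Coulomb-frame estimate for the Gauss map $\vnu$ of the slice). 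Patching these local models yields both the uniform Sobolev constant and a volume lower bound $\vol_{g_\vPhi}(\Sr)\ge c(n,d,V)\, r^{n-1}$; if the averaging in Claim~\ref{cl:est_above_slice} does not already supply enough control for this step, one adds a further Morrey-type term to the averaged integrand there.

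Granting item~\ref{it:Osc_n}, the geometry is straightforward. Fix $x_0\in\Sr$, let $\Pc:=T_{x_0}\vPhi(\Sigma)$ (a vectorial $n$-plane), let $\vq_\Sr$ be the orthogonal projection of $\vq$ onto the affine plane $\vPhi(x_0)+\Pc$, and set $\Pc_\Sr:=\vq_\Sr+\Pc$, so that $\vPhi(x_0)\in\Sc_\Pc:=\Pc_\Sr\cap\s^{d-1}(\vq,\rho)$. The oscillation bound says every tangent plane $T_x\vPhi(\Sigma)$, $x\in\Sr$, lies within $C\eps$ of $\Pc$, so the nearest-point projection onto the round $(n-1)$-sphere $\Sc_\Pc$ restricts to a smooth proper local diffeomorphism $\iota\colon\Sr\to\Sc_\Pc$; its image is open and closed, hence $\iota$ is a covering map, and since $n-1\ge3$ the sphere $\Sc_\Pc$ is simply connected, so $\iota$ is a $C^\infty$ diffeomorphism. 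Thus $\Sr\cong\s^{n-1}$ and $\vPhi(\Sr)$ is the graph over all of $\Sc_\Pc$ of $\vphi_\Sr:=\vPhi\circ\iota^{-1}-\mathrm{id}_{\Sc_\Pc}$. The bound $\|\g\vphi_\Sr\|_{L^\infty(\Sc_\Pc)}\le C\eps$ is again the tangent-plane estimate; $r^{-1}\|\vphi_\Sr\|_{L^\infty(\Sc_\Pc)}\le C\eps$ follows by integrating along $\Sc_\Pc$ (of diameter $\lesssim r$); and $r^{\frac1n}\|\g^2\vphi_\Sr\|_{L^n(\Sc_\Pc)}\le C$ follows from Lemma~\ref{lm:A_graph} applied in a controlled number of graph charts of $\Sc_\Pc$, using $r^{\frac1n}\|\vA\|_{L^n(S_\rho)}\le C$. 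Finally $\vol_{g_\vPhi}(\Sr)=\int_{\Sc_\Pc}\sqrt{\det(\Id+\g\vphi_\Sr^{\top}\g\vphi_\Sr)}=(1+O(\eps))\vol(\Sc_\Pc)$ together with $\vol_{g_\vPhi}(\Sr)\ge c\, r^{n-1}$ forces $\vol(\Sc_\Pc)\ge c'\, r^{n-1}$, i.e.\ $\rho^2-|\vq-\vq_\Sr|^2\ge c''r^2$, which gives $|\vq-\vq_\Sr|\le\delta\rho$ with $\delta=\delta(n,d,V)<1$, proving item~\ref{it:Centre}.
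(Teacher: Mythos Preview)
Your strategy is correct and essentially matches the paper's: exploit that $L^n$ is strictly subcritical on the $(n-1)$-dimensional slice to get the oscillation bound on $\vn_\vPhi$, then project onto a nearby round $(n-1)$-sphere and invoke simple-connectedness to conclude the projection is a diffeomorphism. Two differences deserve comment.

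First, for the uniform Morrey constant on $\Sr$, the paper takes a more direct route than your Michael--Simon plus monotonicity scheme. Lemma~\ref{lm:Lipschitz_graph} shows that around every $p\in\Sr$ the slice is a Lipschitz graph over its tangent plane on an extrinsic ball of \emph{fixed} radius $\lambda\rho$, via a continuity/maximality argument: the graph property holds for small radii by smoothness, and the bound $r^{1/n}\|\vA\|_{L^n}\le C$ together with Lemma~\ref{lm:A_graph} forces the maximal radius at which the slope stays below $\kappa$ to satisfy $s_0\ge\lambda\rho$. This single step delivers local bi-Lipschitz Euclidean charts of uniform size (hence the uniform Morrey constant), the diameter lower bound $\diam\vPhi(\Sr)\ge\lambda\rho/2$, and a cover of $\vPhi(\Sr)$ by a bounded number of such charts. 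Your route via Michael--Simon and an $\eps$-regularity for the slice is plausible but requires separately justifying monotonicity for $\Sr$; the maximality argument packages everything at once and avoids any circularity between the Sobolev constant and the local graph structure.

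Second, your proof of item~\ref{it:Centre} is circular as written. You deduce $|\vq-\vq_\Sr|\le\delta\rho$ from the global graph identity $\vol(\Sr)=(1+O(\eps))\vol(\Sc_\Pc)$, but that identity presupposes that $\iota\colon\Sr\to\Sc_\Pc$ is a diffeomorphism. To show $\iota$ is even a local diffeomorphism on all of $\Sr$ you already need $\Sc_\Pc$ to be non-degenerate, i.e.\ its radius $\sqrt{\rho^2-|\vq-\vq_\Sr|^2}$ bounded below by $c\rho$: see the denominator $|\vpi_\Pc\circ\vPhi-\vq_\Sr|$ in~\eqref{eq:non_degeneracy_diota}, and note that with your choice $\vq_\Sr=\proj_{\vPhi(x_0)+\Pc}(\vq)$ there is no a~priori pointwise bound on $|(\vq-\vPhi(x_0))^\perp|$. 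The fix is what the paper does (Claim~\ref{cl:slice_centered}): establish item~\ref{it:Centre} \emph{before} the covering argument, using only the diameter lower bound $\diam\vPhi(\Sr)\ge\lambda\rho/2$ from the local graph patches together with the fact (oscillation bound plus path-integration, your own argument for $r^{-1}\|\vphi_\Sr\|_{L^\infty}\le C\eps$) that $\vPhi(\Sr)$ lies in the $C\eps\rho$-thickening of $\Pc_\Sr$ on the sphere.
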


	By the previous section, we have a slice $S_{\rho}^{n-1}\coloneqq \vPhi^{-1}(\s^{d-1}(\vq,\rho))$ and a constant $C=C(n,d,V)>0$ such that the generalized Gauss map $\vnu$ of the immersion $\vPhi_{\rho}\coloneqq \vPhi\big|_{S_{\rho}}$ satisfies
	\begin{align}
		& \vol_{g_{\vPhi}}(S_{\rho})\leq C\, \rho^{n-1}, \label{eq:vol_good_slice}\\[3mm]
		& \rho^{\frac{1}{n}}\, \|\vA\|_{L^n\left(S_{\rho},g_{\vPhi}\right)} \leq C, \label{eq:Ln_good_slice} \\[3mm]
		& \rho^{\frac{1}{n}}\|\vII_{\vPhi}\|_{L^n\left(S_{\rho},g_{\vPhi}\right)}  \leq C\, \eps. \label{eq:Lnr_good_slice}
	\end{align}
	\textbf{In the rest of the section, we fix $\Sr\subset S_{\rho}$ a connected component.} By \eqref{eq:Ln_good_slice}, we know that $\vPhi(\Sr)$ is locally given by graphs of fixed size over its tangent space in $\s^{d-1}(\vq,\rho)$. We adapt the proof of \cite[Theorem 2.6]{breuning2015} to our context in order to obtain quantitative estimates.
	\begin{lemma}\label{lm:Lipschitz_graph}
		There exist $\kappa \in(0,1)$ and $\lambda>0$ depending only on $n$, $d$ and $V$ satisfying the following property. Given any point $p\in \Sr$, we consider the connected component $\Sr(p)\subset \Sr$ of $\vPhi_{\rho}^{-1}\left(\B^d(\vPhi_{\rho}(p),\lambda\, \rho)\right)$ containing $p$.
		Let $\vnu_1(p),\ldots,\vnu_{d-n}(p)$ be a basis of the normal space of $\vPhi_{\rho}$ at $p$. Given a point $\vx\in \R^d$, we decompose it on 
		\begin{align*} 
			\vPhi(p) + \Span(\vPhi(p)-\vq )\oplus T_{\vPhi(p)}\, \vPhi(\Cr(p)) \oplus \Span(\vnu_1(p),\ldots,\vnu_{d-n}(p)).
		\end{align*}    
		We use the following notations
		\begin{align}\label{eq:decompo_ambient}
			\begin{cases} 
				\displaystyle \vx = \vPhi(p) + \pipa(\vx) \frac{\vPhi(p)-\vq}{\rho} + \vpit(\vx) + \vpipe(\vx) , \\[5mm]
				\displaystyle \pipa(\vx) = (\vx-\vPhi(p)) \cdot \frac{\vPhi(p)-\vq}{\rho}, \\[5mm]
				\displaystyle \vpit(\vx) = \proj_{T_{\vPhi(p)} \vPhi(\Cr(p))} \big( \vx-\vPhi(p) \big), \\[5mm]
				\displaystyle \vpipe(\vx) = \sum_{\alpha=1}^{d-n} \big[ (\vx-\vPhi(p))\cdot \vnu_{\alpha}(p) \big]\, \vnu_{\alpha}(p).
			\end{cases}
		\end{align}
		Then, the following holds. Given $r>0$, we define the ball $\Bc(p,r)\subset T_{\vPhi(p)} \vPhi(\Sr(p))$ centred at the origin with radius $r$ and $\mathscr{I}(p,r) \subset \vPhi(\Sr(p))$ being the reciprocal image of $\Bc(p,r)$ by $\vpit$ restricted to $\vPhi(\Sr(p))$:
		\begin{align*}
			\begin{cases} 
				\displaystyle \Bc(p,r) \coloneqq B_{T_{\vPhi(p)} \vPhi(\Sr(p))} (0,r),\\[3mm]
				\displaystyle \Ic(p,r) \coloneqq \vPhi(\Sr(p))\cap \vpit^{-1}(\Bc(p,r)).
			\end{cases}
		\end{align*}
		The map $\vpit\colon \Ic(p,\lambda\rho)\to \Bc(p,\lambda\rho)$ is invertible and the maps $\vompe \coloneqq \vpipe\circ \left((\vpit)_{|\Ic(p,\lambda\rho)}\right)^{-1}$ and $\ompa \coloneqq \pipa\circ  \left( (\vpit)_{|\Ic(p,\lambda\rho)}\right)^{-1}$ satisfy
		\begin{align}\label{eq:est_graph1}
			\rho^{-1}\Big(\|\vompe\|_{L^{\infty}(\Bc(p,\lambda\rho))} + \|\ompa\|_{L^{\infty}(\Bc(p,\lambda\rho))} \Big) + \|\g \vompe\|_{L^{\infty}(\Bc(p,\lambda\rho))} + \|\g \ompa\|_{L^{\infty}(\Bc(p,\lambda\rho))} \leq \kappa.
		\end{align}
	\end{lemma}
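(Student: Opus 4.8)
The plan is to adapt the graph-parametrization argument of Breuning \cite[Theorem 2.6]{breuning2015} to the present setting of submanifolds of the round sphere $\s^{d-1}(\vq,\rho)$, keeping track of all constants. The starting point is the estimate \eqref{eq:Ln_good_slice} on the second fundamental form $\vA$ of $\vPhi_{\rho}$, which, by the Sobolev embedding $W^{1,n}\hookrightarrow VMO$ (or more precisely the Morrey-type estimate associated with $L^n$-control of the second fundamental form), tells us that $\vPhi(\Sr)$ is, at scale $\lambda\rho$ for $\lambda$ small, a Lipschitz graph with small Lipschitz constant over its tangent plane. Concretely, first I would fix $p\in\Sr$ and pass to the blow-up at $p$ at scale $\rho$; after rescaling, the rescaled immersion has second fundamental form small in $L^n$ on a ball of radius comparable to $1$, with volume bounded by \eqref{eq:vol_good_slice}. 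I would then invoke the graphical decomposition lemma: there is a radius $\lambda>0$ (depending only on $n$, $d$, $V$) such that the connected component $\Sr(p)$ of $\vPhi_\rho^{-1}(\B^d(\vPhi_\rho(p),\lambda\rho))$ containing $p$ has its image contained in a solid cylinder over $\Bc(p,\lambda\rho)\subset T_{\vPhi(p)}\vPhi(\Sr(p))$, and the vertical projection $\vpit$ restricted to this component is a diffeomorphism onto $\Bc(p,\lambda\rho)$.

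The key steps, in order, are: (i) rescale to reduce to unit scale and record the bounds $\|\vA\|_{L^n}\le C$, $\vol\le C$; (ii) prove a ``tilt-excess'' type estimate controlling how far the tangent planes of $\vPhi(\Sr(p))$ deviate from $T_{\vPhi(p)}\vPhi(\Sr(p))$ — this follows from integrating $|\vA|$ along paths and using the $L^n$ bound together with the Poincaré/Sobolev inequality on the slice (here one uses that $n-1\ge 3$ so that $L^n$ on an $(n-1)$-manifold is supercritical, giving a genuine Morrey gain and hence $C^0$-smallness of the Gauss map variation after choosing $\lambda$ small); (iii) deduce from the tilt smallness that $\vpit$ is a local diffeomorphism with uniformly bounded inverse, and that connectedness plus the graphical structure forces it to be a global diffeomorphism onto the ball $\Bc(p,\lambda\rho)$; (iv) define $\vompe$ and $\ompa$ as the graph functions via this inverse, and translate the tilt bound into the gradient bound $\|\g\vompe\|_\infty+\|\g\ompa\|_\infty\le\kappa/2$; (v) integrate the gradient bound from the base point $p$ (where all these functions vanish by construction) to get the $L^\infty$ bound $\rho^{-1}(\|\vompe\|_\infty+\|\ompa\|_\infty)\le\kappa/2$ on $\Bc(p,\lambda\rho)$. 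Summing the two contributions gives \eqref{eq:est_graph1}; one chooses $\eps_0$ and $\lambda$ small enough that $\kappa$ can be taken as small as needed.

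The main obstacle I expect is step (iii), namely upgrading the local graphical description to a \emph{global} one over the full ball $\Bc(p,\lambda\rho)$ with uniform constants. The difficulty is that $\vpit$ being a local diffeomorphism with small differential does not by itself prevent the image from ``folding back'' over the base disk; one must exploit the connectedness of $\Sr(p)$ together with the boundary behaviour (the component $\Sr(p)$ is cut out by the extrinsic ball $\B^d(\vPhi_\rho(p),\lambda\rho)$, so its image reaches the boundary of that ball) and a degree/continuity argument to show the projection is proper and surjective onto $\Bc(p,\lambda\rho)$, hence a covering, hence a diffeomorphism since $\Bc(p,\lambda\rho)$ is simply connected. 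This is exactly the place where Breuning's argument needs to be reproduced carefully; the novelty here is purely that the ambient space is the sphere rather than $\R^d$, which only affects the computations through the extra curvature term $\rho^{-1}|\g\vu|^2$ already isolated in Lemma \ref{lm:A_graph}, and this term is lower-order and absorbed for $\lambda$ small. Steps (i), (ii), (iv), (v) are routine once the scaling is set up, using only the Sobolev and Poincaré inequalities on the slice together with \eqref{eq:vol_good_slice}--\eqref{eq:Lnr_good_slice}.
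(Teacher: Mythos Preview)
Your plan has the right ingredients (the $L^n$ bound \eqref{eq:Ln_good_slice} on $\vA$, the supercritical Sobolev embedding in dimension $n-1$, and the spherical correction from Lemma~\ref{lm:A_graph}), but the paper organizes the argument differently in a way that sidesteps exactly the two difficulties you flag.

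Rather than first proving a tilt estimate on $\Sr(p)$ and then running a covering/degree argument, the paper uses a \emph{continuity/maximality} argument carried out entirely on the Euclidean ball in the tangent plane. One defines $s_{\max}$ as the supremal radius for which $\vpit$ is invertible over $\Bc(p,s)$ with graph maps $(\vompe^s,\ompa^s)$, and sets
\[
f(s)\coloneqq s^{-1}\big(\|\vompe^s\|_{L^\infty}+\|\ompa^s\|_{L^\infty}\big)+\|\g\vompe^s\|_{L^\infty}+\|\g\ompa^s\|_{L^\infty}.
\]
Since $f(0)=0$ and $f(s)\to\infty$ as $s\to s_{\max}$, there is $s_0$ with $f(s_0)=\kappa$. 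At this radius one applies the Sobolev embedding on the \emph{Euclidean} ball $\Bc(p,s_0)\subset\R^{n-1}$ (constants universal) followed by Lemma~\ref{lm:A_graph}, obtaining
\[
\kappa \le c_0\,(s_0/\rho)^{1/n}+c_0\,(s_0/\rho)\,\kappa^2,
\]
which for $\kappa$ small forces $s_0\ge c(n,d,V)\,\rho$. This gives $\lambda$ directly.

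The comparison: your step~(ii) applies Morrey on the slice $\Sr$ itself, which requires a uniform Sobolev constant on a manifold whose geometry is precisely what you are establishing---a circularity the paper avoids by working on $\Bc(p,s_0)\subset\R^{n-1}$. And your step~(iii) (the covering argument you identify as the main obstacle) becomes unnecessary: in the continuity method the graph is extended radius-by-radius from $s=0$, so it is globally single-sheeted by construction up to $s_0$, with no need to invoke simple connectedness of the base. Your approach can be made to work following Breuning more closely, but the maximality argument is shorter and dodges both issues.
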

	
	\begin{proof}
		Let $p\in S_{\rho}$. Given $s>0$, we define $\Sr(p,s)$ to be the connected component of $\Sr \cap \vPhi_{\rho}^{-1}\left(\B^d(\vPhi_{\rho}(p),s)\right)$ containing $p$. We also denote by $\Vc_s\coloneqq \vpit \left( \vPhi(\Sr(p,s)) \right) \subset T_{\vPhi(p)} \vPhi(\Sr(p))$.\\
		
		\textit{Step 1: for $s>0$ small enough, the map $\vpit\circ \left( \vPhi\big|_{\Sr(p,s)}\right)$ is bi-Lipschitz and $\Vc_s$ contains $\Bc(p,s/2)$.}\\
		Since $\vnu$ lies in $W^{1,n}\big( \Sr \big)$\footnote{Here we use that $\Sr$ is regular enough to define the Sobolev spaces.} and $\dim \Sr=n-1$, we have that each $\vnu_{\alpha}$ (always defined locally) lies in $C^{0,\frac{1}{n}}$. Hence for any $\delta>0$, there exists $s_{\delta}>0$ such that 
		\begin{align*}
			\forall x\in \Sr(p,s_{\delta}),\qquad |\vnu_{\alpha}(x)-\vnu_{\alpha}(p)| \leq \delta.
		\end{align*}
		Thus, for $\delta>0$ small enough, the following projection map is bi-Lipschitz on each connected component
		\begin{align}\label{eq:PhiRho_Loc_graph}
			\vPhi(\Sr(p,s_{\delta})) \to \Span\big(\vnu_1(p),\ldots,\vnu_{d-n}(p) \big)^{\perp}\cap T_{\vPhi(p)}\s^{d-1}(\vq,\rho).
		\end{align}
		Indeed, given $x,y\in \Sr(p,s_{\delta})$  and a $C^1$-path $\gamma\colon [0,1]\to \Sr(p,s_{\delta})$ such that $\gamma(0)=x$ and $\gamma(1)=y$. The path $\vsigma \coloneqq \vPhi\circ\gamma$ is therefore a $C^1$-path from $\vPhi(x)$ to $\vPhi(y)$. We have
		\begin{align*}
			|(\vsigma(0)-\vsigma(1))\cdot \vnu_{\alpha}(p)| & = \left|\int_0^1 \frac{d\vsigma}{dt}(t)\cdot \vnu_{\alpha}(p)\, dt \right|   = \left| \int_0^1 \frac{d\vsigma}{dt}(t)\cdot \big(\vnu_{\alpha}(p) - \vnu_{\alpha}(\gamma(t)) \big)\, dt\right|   \leq \delta\, \mathrm{Length}(\vsigma) = \delta\, \mathrm{Length}_{g_{\vPhi}}(\gamma).
		\end{align*}
		Since $\vPhi$ is $C^{\infty}$ and $d\vPhi(p) \colon (T_p\Sigma,g_{\vPhi}(p)) \to (T_{\vPhi(p)}\vPhi(\Sigma),\geu)$ is an isometry, we obtain that, up to reducing $s_{\delta}$, it holds
		\begin{align*}
			\left|(\vPhi(x)-\vPhi(y))\cdot \vnu_{\alpha}(p)\right| \leq 2\, \delta\, \dist_{g_{\vPhi}}(x,y) \leq 4\, \delta\, \left|\vPhi(x)-\vPhi(y)\right|.
		\end{align*}
		Therefore, each map $\vx\mapsto \vx\cdot \vnu_{\alpha}(p)$ is $(4\, \delta)$-Lipschitz on $\vPhi(\Sr(p,s_{\delta}))$. Reproducing the same computation as above with $\vnu_{\alpha}(p)$ replaced by $\rho^{-1}(\vPhi(p)-\vq)$ and using that $\frac{d\vsigma}{dt}$ is orthogonal to $(\vsigma-\vq)$, we deduce that up to reducing $s_{\delta}$, it holds
		\begin{align*}
			\left| \big(\vPhi(x)-\vPhi(y) \big)\cdot \frac{\vPhi(p)-\vq}{\rho}\right| \leq \frac{ C\, \delta}{\rho}\, \left|\vPhi(x)-\vPhi(y)\right|.
		\end{align*}
		Thus, for $\delta$ small enough, we deduce that the projection \eqref{eq:PhiRho_Loc_graph} is bi-Lipschitz on $\vPhi(\Sr(p,s_{\delta}))$, that is to say $(\vpit)\big|_{\vPhi(\Sr(p,s_{\delta}))}$ is bi-Lipschitz. Up to reducing $s_{\delta}$, we have
		\begin{align*}
			\vpit \left( \vPhi(\Sr(p,s_{\delta}))\right) = \proj_{T_{\vPhi(p)}\, \vPhi(\Sr(p,\rho))} \Big( \vPhi(\Sr(p,s_{\delta})) - \vPhi(p) \Big) \supset \Bc \left(p, \frac{s_{\delta}}{2} \right).
		\end{align*}
		
		\textit{Step 2: Maximal size of the graph.}\\
		Let $M_{\rho}\coloneqq \vPhi(\Sr(p,\rho))$. Consider $s_{\max}>0$ the supremum of all the radius $s>0$ such that the set $M_{\rho}\cap \B^d(\vPhi_{\rho}(p),s)$ can be represented by the map $(\vpit)\big|_{\Ic(p,s)}$ as in the statement of Claim \ref{lm:Lipschitz_graph}. 
		Since $\vPhi_{\rho}$ is valued into $\s^{d-1}(\vq,\rho)$, we must have $s_{\max}\leq \rho$. If $s<s_{\max}$, we denote 
		\begin{align*}
			\begin{cases}
				\displaystyle \vompe^s \coloneqq \vpipe\circ ( (\vpit)_{|\Ic(p,s)})^{-1}, \\[3mm]
				\displaystyle \ompa^s \coloneqq \pipa\circ( (\vpit)_{|\Ic(p,s)})^{-1}.
			\end{cases}
		\end{align*}
		By maximality, we must have
		\begin{align}\label{eq:maximality}
			s^{-1} \left( \|\vompe^s \|_{L^{\infty}(\Bc(p,s))} + \|\ompa^s\|_{L^{\infty}(\Bc(p,s))}\right) + \|\g \vompe^s \|_{L^{\infty}(\Bc(p,s))} + \|\g \ompa^s\|_{L^{\infty}(\Bc(p,s))}\xrightarrow[s\to s_{\max}]{} +\infty.
		\end{align}
		Otherwise, $\ompa^{s_{\max}}$ and $\vompe^{s_{\max}}$ would be $C^1$ on the whole $\overline{\Bc(p,s_{\max})}$ by Lemma \ref{lm:A_graph}. This would imply that $((\vpit)_{|\vPhi(\Sr(p,s_{\max}))})^{-1}$ as well and that we could extend these maps on an open neighbourhood of $\Bc(p,s_{\max})$, contradicting the definition of $s_{\max}$.\\
		
		We now consider the following map which is continuous and vanishes for $s=0$: 
		\begin{align*} 
			f\colon s\mapsto  s^{-1} \left( \|\vompe^s\|_{L^{\infty}(\Bc(p,s))} + \|\ompa^s\|_{L^{\infty}(\Bc(p,s))}\right) + \|\g \vompe^s \|_{L^{\infty}(\Bc(p,s))} + \|\g \ompa^s\|_{L^{\infty}(\Bc(p,s))}.
		\end{align*} 
		By \eqref{eq:maximality}, we there exists a radius $s_0\in(0,s_{\max})$ such that the following equality holds
		\begin{align}\label{eq:Est_v}
			f(s_0)= \kappa.
		\end{align}
		By construction, it holds $\g \vompe^{s_0}(0)=0$, $\g \ompa^{s_0}(0)=0$, $\ompa^{s_0}(0)=0$ and $\vompe^{s_0}(0)=0$. Since $\Bc(p,s_0)$ is a Euclidean $(n-1)$-dimensional ball of radius $s_0$, we have by Sobolev embedding, that
		\begin{align*}
			f(s_0)&\leq C\, s_0^{\frac{1}{n}}\, \left( \left\|\g^2 \ompa^{s_0}\right\|_{L^n\left( \Bc(p,s_0)\right)} + \left\|\g^2 \vompe^{s_0} \right\|_{L^n\left( \Bc(p,s_0)\right)} \right).
		\end{align*}
		Moreover, the image of $\vPhi$ is given by the graph of the function $(\vompe^{s_0},\ompa^{s_0})$. Thanks to Lemma \ref{lm:A_graph}, we obtain
		\begin{align*}
			f(s_0) \leq C\, s_0^{\frac{1}{n}} \|\vA\|_{L^n\left( \Bc (p,s_0) \right)} + C\, s_0\, \rho^{-1}\,  f(s_0)^2.
		\end{align*}
		By \eqref{eq:Est_v} and \eqref{eq:Ln_good_slice}, we obtain a constant $c_0>0$ depending only on $n$, $d$ and $V$ such that 
		\begin{align*}
			\kappa \leq c_0\, \left( \frac{s_0}{\rho}\right)^{\frac{1}{n}}+ c_0\, \frac{s_0}{\rho}\, \kappa^2.
		\end{align*}
		Since $s_0\leq \rho$, if $\kappa\in \left(0,\frac{1}{2\, c_0}\right)$, we obtain 
		\begin{align*}
			\frac{\kappa}{2} \leq c_0 \left( \frac{s_0}{\rho}\right)^{\frac{1}{n}} .
		\end{align*}
		Therefore we have the following property. For any radius $s\in\left(0, \frac{\kappa^n \rho}{2^n\, c_0^n}\right)$, the set $M_{\rho}\cap \B^d(\vPhi_{\rho}(p),s)$ is the graph of a Lipschitz function $\vu_s\coloneqq (\vompe^s,\ompa^s)$ such that $\vu_s(0)=0$, $\g \vu_s(0)=0$ and
		\begin{align*}
			s^{-1} \|\vu_s\|_{L^{\infty}(\Bc(p,s))} + \|\g \vu_s\|_{W^{1,\infty}\left( \Bc(p,s) \right)} \leq \kappa.
		\end{align*}
	\end{proof}
	
	As a direct corollary of \Cref{lm:Lipschitz_graph}, we can consider a Besicovitch covering of $\vPhi(\Sr)$ by the graphs introduced in \Cref{lm:Lipschitz_graph}. We now prove that $\vn_{\vPhi}$ is uniformly close to a constant vector along $\Sr$.
	
	\begin{claim}\label{cl:osc_n}
		There exists $C>0$ depending only on $n$, $d$ and $V$ such that the following holds
		\begin{align*}
			\forall x,y\in \Sr,\qquad |\vn_{\vPhi}(x)-\vn_{\vPhi}(y)|\leq C\, \eps.
		\end{align*}
	\end{claim}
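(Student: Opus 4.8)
The plan is to combine three ingredients: a pointwise bound for $d\vn_{\vPhi}$ restricted to $\Sr$ in terms of $\vII_{\vPhi}$, a scale-invariant Morrey estimate on each of the graph pieces furnished by \Cref{lm:Lipschitz_graph}, and a chaining argument in which the volume bound \eqref{eq:vol_good_slice} is used to bound the number of such pieces needed to join two arbitrary points of $\Sr$. The first two ingredients are routine; the genuine difficulty, and the place where \eqref{eq:vol_good_slice} is essential, is the chaining step.

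\textbf{Pointwise bound for $d\vn_{\vPhi}$.} Writing $\vn_{\vPhi}=\vn_1\wedge\cdots\wedge\vn_{d-n}$ in a local orthonormal frame of the normal bundle and differentiating $\dr_i\vPhi\cdot\vn_{\alpha}=0$ gives $(\dr_i\vn_{\alpha})^{\top}=-\sum_{j,k}(\vII_{\vPhi})_{ij}^{\alpha}\,g^{jk}_{\vPhi}\,\dr_k\vPhi$, whose $g_{\vPhi}$-norm is at most $|\vII_{\vPhi}|_{g_{\vPhi}}$; the normal-bundle part of $\dr_i\vn_{\alpha}$ has coefficients $\nabla^{\perp}_i\vn_{\alpha}\cdot\vn_{\beta}$ antisymmetric in $(\alpha,\beta)$, so in $d\vn_{\vPhi}=\sum_{\alpha}\vn_1\wedge\cdots\wedge d\vn_{\alpha}\wedge\cdots\wedge\vn_{d-n}$ the terms with $\beta\neq\alpha$ vanish by repetition and the one with $\beta=\alpha$ has zero coefficient. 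Hence $|d\vn_{\vPhi}|_{g_{\vPhi}}\leq C(d)\,|\vII_{\vPhi}|_{g_{\vPhi}}$ on $\Sigma$, and restricting to $\Sr$ and using \eqref{eq:Lnr_good_slice} gives $\|d\vn_{\vPhi}\|_{L^n(\Sr,g_{\vPhi})}\leq C(d)\,\|\vII_{\vPhi}\|_{L^n(S_{\rho},g_{\vPhi})}\leq C(n,d,V)\,\eps\,\rho^{-\frac1n}$. Next, fix $p\in\Sr$. By \Cref{lm:Lipschitz_graph} the set $\vPhi(\Sr(p,\lambda\rho))$ is, after an isometry of $\R^d$, the graph over the Euclidean ball $\Bc(p,\lambda\rho)\subset\R^{n-1}$ of a map with Lipschitz constant $\leq\kappa<1$, and $\vPhi$ is an embedding of $\Sr(p,\lambda\rho)$ onto it; thus $g_{\vPhi}$ pulls back on $\Bc(p,\lambda\rho)$ to a metric comparable to $\geu$. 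Transporting $\vn_{\vPhi}$ to $\Bc(p,\lambda\rho)$ and applying the scale-invariant Morrey estimate $|u(a)-u(b)|\leq C(n)\,|a-b|^{\frac1n}\,\|\g u\|_{L^n(\Bc(p,\lambda\rho))}$ together with the previous bound, we get for all $x,y\in\Sr(p,\lambda\rho)$ that $|\vn_{\vPhi}(x)-\vn_{\vPhi}(y)|\leq C(n)(2\lambda\rho)^{\frac1n}\,C(n,d)\,\|\vII_{\vPhi}\|_{L^n(S_{\rho},g_{\vPhi})}\leq C(n,d,V)\,\eps$, since $\lambda=\lambda(n,d,V)$.

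\textbf{Uniform chaining.} It remains to show there is $K=K(n,d,V)$ such that any $x,y\in\Sr$ are joined by a chain $x=q_0,q_1,\dots,q_m=y$ in $\Sr$ with $m\leq K$ and, for each $l$, both $q_l$ and $q_{l+1}$ lying in the common piece $\Sr(q_l,\lambda\rho)$. Since each $\Sr(q,\lambda\rho)$ is an open neighbourhood of $q$ in $\Sr$, the relation ``joinable by such a chain'' is an equivalence relation with open classes on the connected set $\Sr$, so a chain of \emph{some} finite length exists; take one of minimal length $m$. If $\Sr(q_l,\lambda\rho)\cap\Sr(q_{l'},\lambda\rho)\ni w$ for some $l'\geq l+3$, then $q_0,\dots,q_l,w,q_{l'},\dots,q_m$ is an admissible chain of length $l+2+m-l'<m$, contradicting minimality; hence for each residue $j\in\{0,1,2\}$ the pieces $\{\Sr(q_l,\lambda\rho):\ l\equiv j\!\!\pmod 3\}$ are pairwise disjoint. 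Each $\Sr(q_l,\lambda\rho)$ is an embedded Lipschitz graph over an $(n-1)$-ball of radius $\lambda\rho$, so $\vol_{g_{\vPhi}}(\Sr(q_l,\lambda\rho))\geq c(n)(\lambda\rho)^{n-1}$; since these disjoint families lie in $\Sr$ and $\vol_{g_{\vPhi}}(\Sr)\leq C(n,d,V)\rho^{n-1}$ by \eqref{eq:vol_good_slice}, each residue class has at most $C(n,d,V)\lambda^{1-n}$ elements, whence $m\leq K(n,d,V)$. Combining with the per-piece bound,
\[
|\vn_{\vPhi}(x)-\vn_{\vPhi}(y)|\leq\sum_{l=0}^{m-1}|\vn_{\vPhi}(q_l)-\vn_{\vPhi}(q_{l+1})|\leq m\cdot C(n,d,V)\,\eps\leq C(n,d,V)\,\eps,
\]
which is the claim. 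I expect the chaining step to be the main obstacle: the per-piece oscillation estimate is immediate from \Cref{lm:Lipschitz_graph} and the Morrey inequality, but turning the mere connectedness of $\Sr$ into a uniform bound on the number of pieces crucially uses the volume control of the good slice.
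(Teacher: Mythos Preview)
Your proof is correct and follows the same strategy as the paper: per-piece oscillation via Morrey on the graph charts of \Cref{lm:Lipschitz_graph} (the paper cites \cite[Theorem~7.17]{gilbarg2001}) together with a bound on the number of pieces via the volume estimate \eqref{eq:vol_good_slice}; the paper simply covers $\vPhi(\Sr)$ by $I\leq C(n,d,V)$ such graph patches and chains through overlaps, which is equivalent to your minimal-chain-plus-packing argument. One small point to tighten: in your shortening step you need $q_{l'}\in\Sr(w,\lambda\rho)$, whereas the hypothesis only gives $w\in\Sr(q_{l'},\lambda\rho)$ (the relation is not symmetric because $\Sr(p,\lambda\rho)$ is a \emph{connected component}); this is fixed by chaining at radius $\lambda\rho/2$ and applying the Morrey bound at radius $\lambda\rho$, so that a path from $q_{l'}$ to $w$ inside $\Sr(q_{l'},\lambda\rho/2)$ stays in $\Sr(w,\lambda\rho)$ by the triangle inequality.
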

	\begin{proof}
		By Lemma \ref{lm:Lipschitz_graph}, there exists $\kappa,\lambda \in(0,1)$ depending only on $n$, $d$ and $V$ satisfying the following property. Given any point $p\in \Sr$, we consider the connected component $\Sr(p)$ of $\Sr\cap \vPhi_{\rho}^{-1}\big(\B^d(\vPhi_{\rho}(p),\lambda\, \rho) \big)$ containing $p$. Then, the submanifold $\vPhi(\Sr(p))$ is (up to translation and rotation) the graph of a function $\vu_p\colon \Bc_p\coloneqq B_{T_{\vPhi(p)} \vPhi(\Sr(p))}(0,\lambda \rho)\to \R^{d-n}$ such that 
		\begin{align}\label{eq:est_graph2}
			\rho^{-1} \|\vu_p\|_{L^{\infty}(\Bc_p)} + \|\vu_p\|_{W^{1,\infty}\left( \Bc_p \right)} \leq \kappa.
		\end{align}
		Using the volume estimate \eqref{eq:vol_good_slice}, we can cover $\vPhi(\Sr)$ by a number $I$ of such balls, with $I\leq C(n,d,V)$. We denote these balls $(\Bc_{p_i})_{1\leq i\leq I}$. Moreover, we obtain for any $\vx,\vy\in \vPhi(\Sr(p_i))$ represented as $\vx=(a_x,\vu_{p_i}(a_x))$ and $\vy=(a_y,\vu_{p_i}(a_y))$, we have that (see for instance \cite[Theorem 7.17]{gilbarg2001})
		\begin{align}\label{eq:Sobolev_graph}
			\left|\vn_{\vPhi}(\vx)-\vn_{\vPhi}(\vy)\right| \leq C\rho^{\frac{1}{n}} \left\| \g\left[ \vn_{\vPhi}(\cdot, \vu_{p_i}(\cdot))\right] \right\|_{L^n(\Bc_{p_i},\, g_{eucl})}.
		\end{align}
		We estimate the $L^n$ norm using \eqref{eq:est_graph2}:
		\begin{align*}
			\int_{\Bc_{p_i}} \left|\g\left[\vn_{\vPhi}(x,\vu_{p_i}(x)\right] \right|^n\, dx  
			& \leq C\left(1+\|\g \vu_{p_i}\|_{L^{\infty}(\Bc_{p_i})}^n \right) \int_{\Bc_{p_i}} |\g\vn_{\vPhi}(x,\vu_{p_i}(x))|^n\, \sqrt{\det \left( I_{n-1} + (d\vu_{p_i})^{\top} (d\vu_{p_i}) \right)}\, dx \\[2mm]
			& \leq C\int_{\vPhi(\Sr(p_i))} |d\vn_{\vPhi}|^n_{g_{\vPhi}}\, d\vol_{g_{\vPhi}}.
		\end{align*}
		Combining \eqref{eq:Sobolev_graph} and \eqref{eq:Lnr_good_slice}, we obtain
		\begin{align*}
			|\vn_{\Phi}(\vx)-\vn_{\Phi}(\vy)| \leq C\, \eps.
		\end{align*}
		Since there exist at most $I$ balls $\Bc_i$ with $I\leq C(n,d,V)$, we obtain 
		\begin{align}\label{eq:Osc_n_slice}
			\forall x,y\in \Sr,\qquad \left|\vn_{\vPhi}(x)-\vn_{\vPhi}(y)\right| \leq C(n,d,V)\, \eps.
		\end{align}
	\end{proof} 
	
	We consider vector fields $\vn_1,\ldots,\vn_{d-n}\in W^{1,n}(\Sr; \s^{d-1})$ orthonormal such that
	\begin{align*}
		\vn_{\vPhi}\big|_{\Sr} = \vn_1\wedge \cdots \wedge \vn_{d-n}.
	\end{align*}
	For each $\alpha\in \{1,\ldots,d-n\}$, we define
	\begin{align*}
		\vn_{\alpha,\Sr}\coloneqq \fint_{\Sr} \vn_{\alpha}\, d\vol_{g_{\vPhi}}.
	\end{align*}
	If $\eps_0$ is small enough, we deduce from \eqref{eq:Osc_n_slice} that $|\vn_{\alpha,\Sr}|\geq \frac{1}{2}$ and the projection $\vpip \colon \R^d \to \Span(\vn_{1,\Sr},\ldots, \vn_{d-n,\Sr})^{\perp}$ is well-defined. We define the point $\vq_{\Sr}$ to be the average of the normal component of $\vPhi-\vq$ to $\vPhi_\ast T_x\Sigma^n$ along $\Sr$: 
	\begin{align}\label{eq:choice_qC}
		\vq_{\Sr} \coloneqq \vq + \sum_{\alpha=1}^{d-n} \left(\fint_{\Sr} (\vPhi-\vq)\cdot \vn_{\alpha,\Sr}\, d\vol_{g_{\vPhi}} \right)\, \vn_{\alpha,\Sr}.
	\end{align}
	Hence the following linear and affine $n$-planes are well-defined:
	\begin{align}\label{eq:def_planes}
		\begin{cases} 
			\displaystyle \Pc \coloneqq \Span(\vn_{1,\Sr},\ldots,\vn_{d-n,\Sr})^{\perp}, \\[3mm]
			\displaystyle \Pc_{\Sr} \coloneqq \vq_{\Sr} + \Pc.
		\end{cases} 
	\end{align}
	Given $\vx\in \R^d$, we consider its decomposition along $\Pc_{\Sr}\oplus \Pc^{\perp}$. Namely, the following decomposition holds:
	\begin{align*}
		\begin{cases} 
			\displaystyle \vx= \vq_{\Sr} + \vpipp(\vx)  + \vpip(\vx), \\[5mm]
			\displaystyle \vpipp(\vx) \coloneqq  \sum_{\alpha=1}^{d-n} \left[ (\vx-\vq_{\Cr})\cdot \vn_{\alpha,\Cr} \right]\, \vn_{\alpha,\Cr}.
		\end{cases}
	\end{align*}
	We now prove that the map $\vpip\circ\vPhi$ is locally injective on $\Sr$.
	
	\begin{claim}\label{cl:graph_good_slice}
		There exists $\lambda>0$ and $\eps_0>0$ depending only on $n$, $d$ and $V$ such that the following holds. Let $\eps\in(0,\eps_0)$. Given any $x\in \Sr$, we denote $\Sr(x)\subset \Cr$ the connected component containing $x$ of the following set, where $\lambda$ is given by \Cref{lm:Lipschitz_graph}: 
		\begin{align*}
			\left\{ y\in \Sr : |\vPhi(y) - \vPhi(x)|< \lambda\, \rho \right\}.
		\end{align*}
		For any $x\in \Sr$, the map $(\vpip)\big|_{\vPhi(\Sr(x))}$ is injective and there exists a constant $C>0$ depending only on $n$, $d$ and $V$ such that the map $\vpipp$ satisfies
		\begin{align}\label{eq:small_perp_proj}
			\forall \vy,\vz \in \vPhi_{\rho}(\Sr(x)),\qquad |\vpipp(\vy)-\vpipp(\vz)| \leq C\, \eps\, |\vy-\vz|.
		\end{align}
	\end{claim}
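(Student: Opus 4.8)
The plan is to upgrade the \emph{local} graph description of \Cref{lm:Lipschitz_graph} --- a graph with small $W^{1,\infty}$-norm over the point-dependent tangent plane $T_{\vPhi(p)}\vPhi(\Sr(p))$ --- into a \emph{global} graph over the fixed plane $\Pc$, with $W^{1,\infty}$-norm now of order $\eps$. The single quantitative ingredient is \Cref{cl:osc_n}. Recall that the orthonormal frame $\vn_1,\dots,\vn_{d-n}$ of $\Span(\vn_\vPhi)|_{\Sr}$ can be chosen so that $\|\vn_\alpha-\vn_{\alpha,\Sr}\|_{L^\infty(\Sr)}\le C\eps$ for each $\alpha$ (fix a base point $x_0\in\Sr$, orthonormalize the orthogonal projections onto $\Span(\vn_\vPhi)(x)$ of a fixed orthonormal basis of $\Span(\vn_\vPhi)(x_0)$, and use \Cref{cl:osc_n}); here $C=C(n,d,V)$, and this closeness is the only property of the frame that will be used.

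I would first fix $x\in\Sr$ and apply \Cref{lm:Lipschitz_graph} with $p=x$, so that the set $\Sr(x)$ of the statement is exactly the $\Sr(p)$ appearing there. Since every $\vy\in\vPhi(\Sr(x))$ satisfies $|\vy-\vPhi(x)|<\lambda\rho$ and $\vpit$ is $1$-Lipschitz, one has $\vpit(\vy)\in\Bc(x,\lambda\rho)$, hence $\Ic(x,\lambda\rho)=\vPhi(\Sr(x))$ and $G\coloneqq(\vpit|_{\vPhi(\Sr(x))})^{-1}\colon\Bc(x,\lambda\rho)\to\vPhi(\Sr(x))$ is a \emph{global} chart, $G(\vt)=\vPhi(x)+\vt+\vompe(\vt)+\ompa(\vt)\,\rho^{-1}(\vPhi(x)-\vq)$, with $\vpit\circ G=\mathrm{id}$, $\|\g G\|_{L^\infty(\Bc(x,\lambda\rho))}\le 1+\kappa$ by \eqref{eq:est_graph1}, and $|G(\vt)-G(\vt')|\ge|\vpit(G(\vt))-\vpit(G(\vt'))|=|\vt-\vt'|$; thus $G$ is bi-Lipschitz with constants in $[1,2]$. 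The decisive consequence is a chord--arc bound: since $\Bc(x,\lambda\rho)$ is a Euclidean ball, for $\vy=G(\vt_y)$ and $\vz=G(\vt_z)$ the curve $\vsigma(s)\coloneqq G\big((1-s)\vt_z+s\vt_y\big)$, $s\in[0,1]$, lies in $\vPhi(\Sr(x))$, joins $\vz$ to $\vy$, and has $\mathrm{Length}(\vsigma)\le(1+\kappa)|\vt_y-\vt_z|\le 2|\vy-\vz|$ (using $\vt_y-\vt_z=\vpit(\vy)-\vpit(\vz)$).

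Granting this, the two conclusions are short. For each $s$, $\vsigma'(s)=\g G(\vt(s))[\vt_y-\vt_z]$ is tangent to $\vPhi(\Sr(x))\subset\vPhi(\Sigma)$ at $\vsigma(s)$, hence orthogonal to $\vn_\alpha(w(s))$ for any preimage $w(s)\in\Sr(x)$ of $\vsigma(s)$ (one exists since $\vsigma(s)\in\vPhi(\Sr(x))$, and $\vPhi$ being an immersion makes $\vPhi(\Sigma)$ a submanifold near $\vPhi(w(s))$ with $\vn_\alpha(w(s))\perp T_{\vPhi(w(s))}\vPhi(\Sigma)$). Writing $(\vy-\vz)\cdot\vn_{\alpha,\Sr}=-\int_0^1\vsigma'(s)\cdot\big(\vn_{\alpha,\Sr}-\vn_\alpha(w(s))\big)\,ds$ and inserting $\|\vn_\alpha-\vn_{\alpha,\Sr}\|_{L^\infty(\Sr)}\le C\eps$ together with the chord--arc bound yields $|(\vy-\vz)\cdot\vn_{\alpha,\Sr}|\le 2C\eps|\vy-\vz|$; substituting into $\vpipp(\vy)-\vpipp(\vz)=\sum_\alpha[(\vy-\vz)\cdot\vn_{\alpha,\Sr}]\vn_{\alpha,\Sr}$ and using $|\vn_{\alpha,\Sr}|\le 1$ gives \eqref{eq:small_perp_proj}. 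The injectivity of $(\vpip)|_{\vPhi(\Sr(x))}$ then follows at once: from $\vy-\vz=(\vpip(\vy)-\vpip(\vz))+(\vpipp(\vy)-\vpipp(\vz))$, the equality $\vpip(\vy)=\vpip(\vz)$ forces $|\vy-\vz|=|\vpipp(\vy)-\vpipp(\vz)|\le C\eps|\vy-\vz|$, hence $\vy=\vz$ once $\eps_0$ is chosen with $C\eps_0<1$.

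The step I expect to be the real crux is the first one: checking that, with $p=x$, the graph of \Cref{lm:Lipschitz_graph} parametrizes \emph{all} of $\vPhi(\Sr(x))$ over the \emph{convex} ball $\Bc(x,\lambda\rho)$ --- it is precisely the resulting chord--arc inequality that turns the $C\eps$ oscillation of the normal frame into the $C\eps$ Lipschitz bound on $\vpipp$ (and thence the injectivity). That said, with \Cref{lm:Lipschitz_graph} in hand this reduction is essentially bookkeeping (via the inclusion $\vPhi(\Sr(x))\subseteq\vpit^{-1}(\Bc(x,\lambda\rho))$ noted above), and the remaining manipulations are first order; one should only make sure to record, as above, that the frame $\vn_\alpha$ is taken with the stated $C\eps$-closeness to its average.
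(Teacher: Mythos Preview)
Your proof is correct and follows essentially the same approach as the paper's: connect $\vy,\vz\in\vPhi(\Sr(x))$ by a path inside $\vPhi(\Sr(x))$, use that the tangent vector of the path is orthogonal to each $\vn_\alpha$ along the way, insert the $C\eps$ oscillation bound from \Cref{cl:osc_n}, and then convert the resulting length bound into $|\vy-\vz|$ via the chord--arc estimate furnished by the bi-Lipschitz graph of \Cref{lm:Lipschitz_graph}. The paper phrases the last step as ``take the infimum over all paths and use \Cref{lm:Lipschitz_graph}'' while you explicitly pull back to the convex ball $\Bc(x,\lambda\rho)$ and use a straight segment, but this is the same argument; your observation $\Ic(x,\lambda\rho)=\vPhi(\Sr(x))$ (from $|\vy-\vPhi(x)|<\lambda\rho$ and $\vpit$ being $1$-Lipschitz) is exactly what makes that pullback legitimate. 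Your remark that the frame $\vn_\alpha$ must be chosen with $\|\vn_\alpha-\vn_{\alpha,\Sr}\|_{L^\infty(\Sr)}\le C\eps$ (which \Cref{cl:osc_n} a priori gives only for the multivector $\vn_\vPhi$) is a point the paper leaves implicit, so it is good that you spelled it out.
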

	\begin{proof} 
		Consider the numbers $\lambda>0$ and $\kappa\in(0,1)$ provided by Lemma \ref{lm:Lipschitz_graph}. Let $x\in \Cr$. Given $y,z\in \Sr(x)$, there exists a path $\gamma:[0,1]\to \Sr(x)$ such that $\gamma(0)=z$ and $\gamma(1)=y$. Hence, it holds
		\begin{align*}
			\left( \vPhi(z)-\vPhi(y) \right)\cdot \vn_{\alpha,\Sr}  & = \int_0^1 \frac{d\vPhi\circ\gamma}{dt}(t)\cdot \vn_{\alpha,\Sr}\, dt   = \int_0^1 \frac{d\vPhi\circ\gamma}{dt}(t)\cdot \Big(\vn_{\alpha,\Sr} - \vn_{\alpha}(\gamma(t)) \Big)\, dt .
		\end{align*}
		By \eqref{eq:Osc_n_slice}, we obtain
		\begin{align*}
			\left|\vpipp(\vPhi(z))-\vpipp(\vPhi(y)) \right| \leq C(n,d,V)\, \eps \, \mathrm{Length}_{g_{\vPhi}}(\gamma).
		\end{align*}
		Taking the infimum over all path $\gamma$, we obtain 
		\begin{align*}
			\left|\vpipp(\vPhi(z))-\vpipp(\vPhi(y)) \right| \leq C(n,d,V)\, \eps\, \dist_{\Sr(x),g_{\vPhi}} (z,y).
		\end{align*}
		By definition of $\Sr(x)$ and Lemma \ref{lm:Lipschitz_graph}, we obtain
		\begin{align*}
			\left|\vpipp(\vPhi(z))-\vpipp(\vPhi(y)) \right| \leq C(n,d,V)\, \eps\, \left|\vPhi(z) - \vPhi(y) \right|.
		\end{align*}
		If $\eps>0$ is small enough, the projection $(\vpip)\big|_{\vPhi(\Sr(x))}$ is bi-Lipschitz.
	\end{proof}
	
	As a corollary of Claim \ref{cl:graph_good_slice}, we obtain that the following maps are bi-Lipschitz for every $x\in \Cr$: 
	\begin{align*} 
		\vpi_{x}\coloneq (\vpip)\big|_{\vPhi(\Sr(x))} \colon \vPhi_{\rho}(\Sr(x))\to \vpip\left(\vPhi_{\rho}(\Sr(x)) \right).
	\end{align*} 
	We denote the image of $\vpi_x$ by
	\begin{align}\label{eq:domain_local_graph}
		\Sc_{x,\Sr}\coloneqq \vpi_x\left(\vPhi_{\rho}(\Sr(x)) \right).
	\end{align}
	Then, the set $\vPhi(\Sr(x))$ is the graph of the Lipschitz function 
	\begin{align*} 
		\vu_x\coloneqq \vpipp\circ (\vpi_x)^{-1}\colon \Sc_{x,\Sr}\to \Pc^{\perp}.
	\end{align*} 
	By Claim \ref{cl:graph_good_slice}, we have the following estimates:
	\begin{align}\label{eq:smallness_gu}
		\|\g \vu_x\|_{L^{\infty}(\Sc_{x,\Sr})} \leq \left\| \g \left( \vpipp\circ (\vpi_x)^{-1} \right) \right\|_{L^{\infty}(\Sc_{x,\Sr})} = \left\|\g \left( \vpipp \big|_{\vPhi(\Cr(x))} \right) \right\|_{L^{\infty}\left( \vPhi(\Sr(x)) \right)} \leq C\, \eps.
	\end{align}
	We now control the $L^{\infty}$ norm of $\vu_x$. This is a consequence of the choice of $\vq_{\Sr}$ in \eqref{eq:choice_qC} together with \eqref{eq:small_perp_proj}.
	
	\begin{claim}\label{cl:smallness_n_dot_Phi}
		There exists a constant $C>1$ depending only on $n$, $d$ and $V$ such that
		\begin{align}\label{eq:smallness_u}
			\forall x\in \Sr,\qquad \|\vu_x\|_{L^{\infty}(\Sc_{x,\Sr})} \leq C\, \eps\, \rho.
		\end{align}
	\end{claim}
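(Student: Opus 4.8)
The plan is to prove the stronger uniform estimate $\|\vpipp\circ\vPhi\|_{L^{\infty}(\Sr)}\leq C(n,d,V)\,\eps\,\rho$; this suffices, since $\vu_x=\vpipp\circ(\vpi_x)^{-1}$ gives $\|\vu_x\|_{L^{\infty}(\Sc_{x,\Sr})}=\sup_{y\in\Sr(x)}|\vpipp(\vPhi(y))|\leq\|\vpipp\circ\vPhi\|_{L^{\infty}(\Sr)}$ for every $x\in\Sr$, with no dependence on $x$. The mechanism is that the choice \eqref{eq:choice_qC} of $\vq_{\Sr}$ makes the average of $\vpipp\circ\vPhi$ over $\Sr$ of size $O(\eps\rho)$, while \eqref{eq:Osc_n_slice} forces $\vpipp\circ\vPhi$ to be a $C\eps$-Lipschitz map on $(\Sr,g_{\vPhi})$; combining the two reduces matters to a uniform upper bound on the intrinsic diameter of $\Sr$.

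First I would show $\diam_{g_{\vPhi}}(\Sr)\leq C(n,d,V)\,\rho$. By \Cref{lm:Lipschitz_graph}, every $p\in\Sr$ has a connected neighbourhood $\Sr(p)$ whose image $\vPhi(\Sr(p))$ is, after a rotation, the graph over a Euclidean $(n-1)$-ball of radius $\lambda\rho$ of a map with gradient bounded by $\kappa<1$; hence $\diam_{g_{\vPhi}}(\Sr(p))\leq C\lambda\rho$. As in the proof of \Cref{cl:osc_n}, the volume bound \eqref{eq:vol_good_slice} lets one cover $\Sr$ by at most $I\leq C(n,d,V)$ such pieces; since these are open and connected and cover the connected set $\Sr$, any two points of $\Sr$ are joined by a chain of at most $I$ consecutively overlapping pieces, which gives the diameter bound.

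Next I would bound the oscillation of the linear functional $\vx\mapsto\vx\cdot\vn_{\alpha,\Sr}$ along $\Sr$, exactly as in the proof of \Cref{cl:graph_good_slice}: for $y,w\in\Sr$ and a $C^1$ path $\gamma$ in $\Sr$ from $w$ to $y$, the velocity of $\vPhi\circ\gamma$ is tangent to $\vPhi(\Sigma)$ and hence orthogonal to each $\vn_\alpha(\gamma(t))$, so
\begin{align*}
 \big(\vPhi(y)-\vPhi(w)\big)\cdot\vn_{\alpha,\Sr}=\int_0^1\frac{d}{dt}\big(\vPhi\circ\gamma\big)(t)\cdot\big(\vn_{\alpha,\Sr}-\vn_\alpha(\gamma(t))\big)\,dt;
\end{align*}
since averaging \eqref{eq:Osc_n_slice} gives $|\vn_{\alpha,\Sr}-\vn_\alpha(z)|\leq C\eps$ for all $z\in\Sr$, taking the infimum over $\gamma$ and invoking the diameter bound yields $|(\vPhi(y)-\vPhi(w))\cdot\vn_{\alpha,\Sr}|\leq C(n,d,V)\,\eps\,\rho$ for all $y,w\in\Sr$. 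Then I would expand $\vpipp(\vPhi(y))=\sum_{\alpha}\big[(\vPhi(y)-\vq_{\Sr})\cdot\vn_{\alpha,\Sr}\big]\vn_{\alpha,\Sr}$ using \eqref{eq:choice_qC}: writing $\vn_{\alpha,\Sr}\cdot\vn_{\beta,\Sr}=\delta_{\alpha\beta}+O(\eps)$ (again by \eqref{eq:Osc_n_slice}) and using $|\vPhi-\vq|=\rho$ on $\Sr$ together with $|\vn_{\beta,\Sr}|\leq1$, one obtains
\begin{align*}
 (\vPhi(y)-\vq_{\Sr})\cdot\vn_{\alpha,\Sr}=\fint_{\Sr}\big(\vPhi(y)-\vPhi(w)\big)\cdot\vn_{\alpha,\Sr}\,d\vol_{g_{\vPhi}}(w)+O(\eps\rho),
\end{align*}
so the previous estimate gives $|(\vPhi(y)-\vq_{\Sr})\cdot\vn_{\alpha,\Sr}|\leq C(n,d,V)\,\eps\,\rho$ for every $y\in\Sr$; summing over $\alpha=1,\dots,d-n$ yields the desired bound on $\|\vpipp\circ\vPhi\|_{L^{\infty}(\Sr)}$, hence on $\|\vu_x\|_{L^{\infty}(\Sc_{x,\Sr})}$.

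The step I expect to be the main obstacle is the intrinsic diameter bound in the second paragraph: the volume control \eqref{eq:vol_good_slice} alone does not rule out long thin necks, so one genuinely has to use the local graph description of \Cref{lm:Lipschitz_graph} and the finiteness of the covering from the proof of \Cref{cl:osc_n}. The rest is bookkeeping, the only mild subtlety being that $\vn_{1,\Sr},\dots,\vn_{d-n,\Sr}$ form only an \emph{almost} orthonormal family, with errors of size $\eps$ — which is harmless here since $\eps\rho$ is precisely the accuracy we are after.
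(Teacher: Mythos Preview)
Your proposal is correct and follows essentially the same approach as the paper. Both reduce to bounding the oscillation of $(\vPhi-\vq)\cdot\vn_{\alpha,\Sr}$ over the connected slice $\Sr$ via the finite covering by local graphs (from \Cref{lm:Lipschitz_graph} and the volume bound \eqref{eq:vol_good_slice}) together with \eqref{eq:Osc_n_slice}; the only cosmetic differences are that you package the chaining as an intrinsic diameter bound before applying the Lipschitz estimate, whereas the paper chains the local oscillation bounds from \Cref{cl:graph_good_slice} directly, and that you track the almost-orthonormality of the $\vn_{\alpha,\Sr}$ explicitly while the paper absorbs it into a triangle inequality.
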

	\begin{proof}
		By construction, we have (the set $\Sr(x)$ has been defined in Claim \ref{cl:graph_good_slice}) that
		\begin{align*}
			\forall x\in \Sr,\qquad \vu_x(\Sc_{x,\Sr})= \left[\vPhi-\vq_{\Sr}-\vpip \circ (\vPhi-\vq_{\Sr})\right](\Sr(x)).
		\end{align*}
		From the definition of $\vq_{\Sr}$ in \eqref{eq:choice_qC}, we have
		\begin{align*}
			\left| \vPhi-\vq_{\Sr}-\vpip \circ (\vPhi-\vq_{\Sr})\right| & = \left| \vpipp (\vPhi - \vq_{\Sr})\right| \\[2mm]
			& = \left| \vpipp (\vPhi - \vq) - \sum_{\alpha=1}^{d-n} \left( \fint_{\Sr} (\vPhi-\vq)\cdot \vn_{\alpha,\Sr}\, d\vol_{g_{\vPhi}} \right)\, \vn_{\alpha,\Sr} \right|\\[2mm]
			& \leq \sum_{\alpha=1}^{d-n} \left|(\vPhi-\vq)\cdot\vn_{\alpha,\Sr} - \fint_{\Cr} (\vPhi-\vq)\cdot\vn_{\alpha,\Sr} \, d\vol_{g_{\vPhi}}\right|.
		\end{align*}
		By Lemma \ref{lm:Lipschitz_graph} and \eqref{eq:vol_good_slice}, we can cover $\vPhi(\Sr)$ by a finite number $I$ of graphs $(\Gc_i)_{1\leq i\leq I}$ over balls of diameter comparable to $\rho$, with $I\leq C(n,d,V)$. For each of these graph, we apply Claim \ref{cl:graph_good_slice} and obtain 
		\begin{align*}
			\sup_{x,y\in \Gc_i} \left|  (\vPhi(x)-\vq)\cdot\vn_{\alpha,\Sr}  -  (\vPhi(y)-\vq)\cdot\vn_{\alpha,\Sr}  \right|\leq C\, \eps\, \rho.
		\end{align*}
		We obtain the result by summing over all $i$ and using the bound $I\leq C(n,d,V)$.
	\end{proof}

	As a corollary of Claim \ref{cl:smallness_n_dot_Phi}, we obtain that the slice is localized close to the intersection of the round sphere $\s^{d-1}(\vq,\rho)$ and the affine plane $\Pc_{\Sr} = \vq_{\Sr}+\Pc$ (defined in \eqref{eq:def_planes}):
	\begin{align}\label{eq:ThickSlice}
		\vPhi(\Sr) \subset \left\{ 
		\vx\in \s^{d-1}(\vq,\rho) : \dist(\vx,\Pc_{\Sr} ) \leq C\, \eps\, \rho
		\right\}.
	\end{align}
	We will prove that $\Pc_{\Sr}$ is far from the north and south poles by showing that $\vq_{\Sr}$ cannot be too far from $\vq$, which is a consequence of Lemma \ref{lm:Lipschitz_graph}, that is to say the bound \eqref{eq:II_slice}.
	
	\begin{claim}\label{cl:slice_centered}
		There exists $\delta\in(0,1)$ depending only on $n$, $d$ and $V$ such that up to reducing $\eps_0$, we have $|\vq-\vq_{\Sr}|\leq \delta\, \rho$.
	\end{claim}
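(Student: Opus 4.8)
The plan is to argue by contradiction, pitting two facts against each other: by \eqref{eq:ThickSlice} the set $\vPhi(\Sr)$ is confined to a thin band around the $(n-1)$-sphere $\s^{d-1}(\vq,\rho)\cap\Pc_{\Sr}$, while by \Cref{lm:Lipschitz_graph} it must contain a Lipschitz graph whose orthogonal projection is an $(n-1)$-dimensional ball of the definite radius $\lambda\rho$. If $\vq_{\Sr}$ were too far from $\vq$, the affine plane $\Pc_{\Sr}$ would be nearly tangent to $\s^{d-1}(\vq,\rho)$, the band would be too small to accommodate such a graph, and we would reach a contradiction. The first thing I would record is the identity $\dist(\vq,\Pc_{\Sr})=|\vq-\vq_{\Sr}|$: since $\Pc_{\Sr}=\vq_{\Sr}+\Pc$ and, by \eqref{eq:choice_qC}, $\vq_{\Sr}-\vq\in\Span(\vn_{1,\Sr},\dots,\vn_{d-n,\Sr})=\Pc^{\perp}$, the orthogonal projection of $\vq$ onto $\Pc_{\Sr}$ is exactly $\vq_{\Sr}$. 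I then set $a:=|\vq-\vq_{\Sr}|$ and assume, for contradiction, that $a\ge\delta\rho$ for a constant $\delta\in(0,1)$ to be fixed at the end.

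The next step is an upper bound for $\diam\vPhi(\Sr)$. Let $C=C(n,d,V)$ denote the constant in \eqref{eq:ThickSlice} and write $\pi,\pi^{\perp}$ for the orthogonal projections onto $\Pc$ and $\Pc^{\perp}$. First, if $a>(1+C\eps)\rho$ then every $\vx\in\s^{d-1}(\vq,\rho)$ has $\dist(\vx,\Pc_{\Sr})\ge a-\rho>C\eps\rho$, so \eqref{eq:ThickSlice} would force $\vPhi(\Sr)=\emptyset$; hence $a\le(1+C\eps)\rho$ and the square roots below are real. For any $\vx\in\vPhi(\Sr)$, \eqref{eq:ThickSlice} gives $|\pi^{\perp}(\vx-\vq_{\Sr})|\le C\eps\rho$; combining this with $\pi^{\perp}(\vq_{\Sr}-\vq)=\vq_{\Sr}-\vq$ and $|\vx-\vq|=\rho$ yields $|\pi^{\perp}(\vx-\vq)|\ge a-C\eps\rho$ and, since $\pi(\vx-\vq)=\pi(\vx-\vq_{\Sr})$,
\[
|\pi(\vx-\vq_{\Sr})|^{2}=\rho^{2}-|\pi^{\perp}(\vx-\vq)|^{2}\le\rho^{2}-(a-C\eps\rho)^{2}.
\]
Estimating the $\pi$- and $\pi^{\perp}$-parts of the difference of two points of $\vPhi(\Sr)$ separately then gives $\diam\vPhi(\Sr)\le 2\sqrt{\rho^{2}-(a-C\eps\rho)^{2}}+2C\eps\rho$.

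For the lower bound, I would apply \Cref{lm:Lipschitz_graph} at any $p\in\Sr$: the set $\vPhi(\Sr)$ contains $\Ic(p,\lambda\rho)$, which projects orthogonally onto the $(n-1)$-ball $\Bc(p,\lambda\rho)$ of radius $\lambda\rho$; since orthogonal projection is $1$-Lipschitz, $\diam\vPhi(\Sr)\ge\diam\Bc(p,\lambda\rho)=2\lambda\rho$. Comparing the two bounds gives $\lambda\rho\le\sqrt{\rho^{2}-(a-C\eps\rho)^{2}}+C\eps\rho$, i.e. $a\le C\eps\rho+\rho\sqrt{1-(\lambda-C\eps)^{2}}$. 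Because $\lambda=\lambda(n,d,V)\in(0,1)$ is fixed, the right-hand side tends to $\rho\sqrt{1-\lambda^{2}}$ as $\eps\to0$; so I would fix, say, $\delta:=\sqrt{1-\lambda^{2}/2}\in(0,1)$ and then reduce $\eps_{0}$ if necessary so that $C\eps_{0}+\sqrt{1-(\lambda-C\eps_{0})^{2}}<\delta$, which contradicts $a\ge\delta\rho$ and proves $|\vq-\vq_{\Sr}|\le\delta\rho$.

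The argument after the initial observation is purely Euclidean geometry, so I do not expect a genuine obstacle; the one point that deserves care is keeping track of the $C\eps\rho$ error terms — making sure the band-emptiness case is disposed of (which is what bounds $a$ away from exceeding $\rho$ and keeps the square roots well defined) and that $\eps_{0}$ is chosen after $\delta$. The conceptual content is entirely in the identity $\dist(\vq,\Pc_{\Sr})=|\vq-\vq_{\Sr}|$ together with the quantitative local graph description of \Cref{lm:Lipschitz_graph}, which itself ultimately rests on the bound \eqref{eq:II_slice}.
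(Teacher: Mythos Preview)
Your proof is correct and follows essentially the same approach as the paper: both arguments pit the diameter lower bound for $\vPhi(\Sr)$ coming from \Cref{lm:Lipschitz_graph} against the confinement \eqref{eq:ThickSlice} to a thin band around $\Pc_{\Sr}\cap\s^{d-1}(\vq,\rho)$, and conclude that the band cannot be nearly polar. Your version is in fact more explicit than the paper's sketch — you carry the quantitative Pythagorean bound $|\pi(\vx-\vq_{\Sr})|^{2}\le\rho^{2}-(a-C\eps\rho)^{2}$ through to an explicit choice of $\delta$, whereas the paper only indicates that the diameter bound forces $\vPhi(\Sr)$ to escape a small ball around the pole.
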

	
	\begin{proof}
		Thanks to Lemma \ref{lm:Lipschitz_graph} (with $\kappa$ and $\lambda$ depending only on $n$, $d$ and $V$), we have that 
		\begin{align*}
			\diam\left(\vPhi(\Sr)\right) \geq \sup_{p\in\Sr} \diam\left( \vPhi(\Sr(p)) \right) \geq \frac{\lambda}{2}\rho.
		\end{align*}
		Therefore, we must have 
		\begin{align*} 
			\vPhi(\Sr)\setminus \B^d\left(\vq\pm\rho\frac{\vq-\vq_{\Sr}}{|\vq-\vq_{\Sr}|},\frac{\lambda}{8}\rho \right) \neq \emptyset.
		\end{align*} 
		Indeed, the above ball has diameter $\frac{\lambda}{4} \rho$. Since $\vq-\vq_{\Sr} \in \Pc^{\perp}$, we obtain, for $\eps_0>0$ small enough, 
		\begin{align*}
			\left\{ 
			\vx\in \s^{d-1}(\vq,\rho) : \dist(\vx,\Pc_{\Sr} ) \leq C\, \eps\, \rho
			\right\} \cap \B^d\left(\vq\pm\rho\frac{\vq-\vq_{\Sr}}{|\vq-\vq_{\Sr}|},\frac{\lambda}{16}\rho \right) = \emptyset.
		\end{align*}
	\end{proof}
	
	As a corollary, we prove that $\vPhi(\Sr)$ is diffeomorphic to $\Sc_{\Pc}\coloneqq \Pc \cap \s^{d-1}(\vq,\rho)$. To do so, we consider the map $\iota\coloneqq \frac{ \vpip - \vq_{\Sr} }{| \vpip - \vq_{\Sr}|}\circ\vPhi\colon \Sr \to \Sc_{\Pc}$. We first show the following Claim \ref{cl:projection_equator} that this map is well-defined by Claim \ref{cl:slice_centered}. In Claim \ref{cl:projection_equator}, we then show that the map $\iota$ is locally injective. 
	
	\begin{claim}\label{cl:projection_equator}
		The map $\iota\colon \Sr\to \Sc_{\Pc}\coloneqq \Pc \cap \s^{d-1}(\vq,\rho)$ sending a point $x\in\Sr$ to the closest point in $\Sc_{\Pc}$ of $\vPhi(x)$ is well-defined and is an immersion.
	\end{claim}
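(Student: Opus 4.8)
The plan is to reduce the claim to two elementary properties of the nearest--point projection onto the round subsphere $\Sc_{\Pc}$; the localisation \eqref{eq:ThickSlice} together with \Cref{cl:slice_centered,cl:graph_good_slice} supply everything that is needed. I would first translate so that $\vq=0$ and split $\R^d=\Pc\oplus\Pc^{\perp}$; then $\vq_{\Sr}\in\Pc^{\perp}$, so $\vpip(\vx)-\vq_{\Sr}=\proj_{\Pc}\vx$ and $\dist(\vx,\Pc_{\Sr})=|\proj_{\Pc^{\perp}}\vx-\vq_{\Sr}|$. By \Cref{cl:slice_centered}, $|\vq_{\Sr}|\leq\delta\rho$ with $\delta\in(0,1)$ depending only on $n,d,V$, hence $\Pc_{\Sr}=\vq_{\Sr}+\Pc$ meets $\s^{d-1}(0,\rho)$ in the round $(n-1)$--sphere $\Sc_{\Pc}=\{\vtheta\in\Pc_{\Sr}:|\proj_{\Pc}\vtheta|=\widetilde\rho\}$ with $\widetilde\rho:=\sqrt{\rho^2-|\vq_{\Sr}|^2}\geq\sqrt{1-\delta^2}\,\rho$. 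Minimising $\vy\mapsto|\vx-\vy|^2$ over $\vy\in\Sc_{\Pc}$ shows that whenever $\proj_{\Pc}\vx\neq0$ the point of $\Sc_{\Pc}$ closest to $\vx$ is unique and equals $\vq_{\Sr}+\widetilde\rho\,\proj_{\Pc}\vx/|\proj_{\Pc}\vx|$; up to the obvious affine rescaling this is exactly $\iota(x)=\tfrac{\vpip-\vq_{\Sr}}{|\vpip-\vq_{\Sr}|}\circ\vPhi$. So it remains to prove: (a) $\proj_{\Pc}\vPhi(x)\neq0$ for every $x\in\Sr$, and (b) $d\iota_x$ is injective for every $x\in\Sr$ (recall $\dim\Sr=\dim\Sc_{\Pc}=n-1$).

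Step (a) is immediate from \eqref{eq:ThickSlice}: it gives $|\proj_{\Pc^{\perp}}\vPhi(x)-\vq_{\Sr}|=\dist(\vPhi(x),\Pc_{\Sr})\leq C\eps\rho$, hence $|\proj_{\Pc^{\perp}}\vPhi(x)|\leq(C\eps+\delta)\rho$, and since $|\vPhi(x)|=\rho$ we obtain $|\proj_{\Pc}\vPhi(x)|^2=\rho^2-|\proj_{\Pc^{\perp}}\vPhi(x)|^2\geq\big(1-(C\eps+\delta)^2\big)\rho^2$. Because $\delta<1$, shrinking $\eps_0$ (only in terms of $n,d,V$) makes the right--hand side at least $c^2\rho^2>0$; thus $\iota$ is well defined, takes values in $\Sc_{\Pc}$, and realises the nearest--point projection of $\vPhi(x)$ onto $\Sc_{\Pc}$.

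For step (b) I would exploit the graph description already in hand. Write $\iota=N\circ\big((\vpip-\vq_{\Sr})\circ\vPhi|_{\Sr}\big)$ with $N(\vw)=\vw/|\vw|$ on $\Pc\setminus\{0\}$, so $\ker dN_{\vw}=\R\vw$. By \Cref{cl:graph_good_slice} and \eqref{eq:smallness_gu}, for each $x\in\Sr$ the map $(\vpip-\vq_{\Sr})|_{\vPhi(\Sr(x))}=\proj_{\Pc}|_{\vPhi(\Sr(x))}$ is a $C^{\infty}$ embedding exhibiting $\vPhi(\Sr(x))$ as the graph, over an open subset of $\Pc$, of a map $\vu_x$ valued in $\Pc^{\perp}$ with $\|\g\vu_x\|_{L^{\infty}}\leq C\eps$. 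Take $0\neq\vv\in T_{\vPhi(x)}\vPhi(\Sr(x))$ and put $\vw:=\proj_{\Pc}\vv$; then $\vw\neq0$ (the above is an embedding) and $\proj_{\Pc^{\perp}}\vv=d\vu_x(\vw)$, so $|\proj_{\Pc^{\perp}}\vv|\leq C\eps|\vw|$. Since $\vPhi(\Sr(x))\subset\s^{d-1}(0,\rho)$ we have $\vv\cdot\vPhi(x)=0$, hence $\vw\cdot\proj_{\Pc}\vPhi(x)=-\proj_{\Pc^{\perp}}\vv\cdot\proj_{\Pc^{\perp}}\vPhi(x)$, and with $|\proj_{\Pc^{\perp}}\vPhi(x)|\leq\rho$ together with $|\proj_{\Pc}\vPhi(x)|\geq c\rho$ from (a) this yields $|\vw\cdot\proj_{\Pc}\vPhi(x)|\leq(C/c)\,\eps\,|\vw|\,|\proj_{\Pc}\vPhi(x)|$. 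For $\eps_0$ small this forces $\vw$ to be non-collinear with $\proj_{\Pc}\vPhi(x)$, i.e. $\vw\notin\ker dN$ at $\proj_{\Pc}\vPhi(x)$; hence $dN$ is injective on $\proj_{\Pc}\big(T_{\vPhi(x)}\vPhi(\Sr(x))\big)$ and $d\iota_x$ is injective. As $\Sr(x)$ is an open neighbourhood of $x$ in $\Sr$, this gives the immersion property.

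The main obstacle is step (b), or rather the transversality it encodes: one must convert the $C^{1}$--smallness $\|\g\vu_x\|_{L^{\infty}}\leq C\eps$ of the graph together with the lower bound $|\proj_{\Pc}\vPhi(x)|\geq c\rho$ into the statement that the projected slice is nowhere radial in $\Pc$, uniformly over $\Sr$; the constant $c$ is controlled by $\delta$, hence by $n,d,V$, through \Cref{cl:slice_centered}, and once this is granted the rest is the chain of displayed inequalities. Throughout one uses $\vPhi\in\Imm(\Sigma;\R^d)$, so that $\vPhi|_{\Sr}$, the graph maps $\vu_x$, the map $N$, and their composition $\iota$ are genuinely $C^{\infty}$.
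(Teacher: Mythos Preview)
Your proof is correct and follows essentially the same route as the paper: both establish well-definedness via the lower bound $|\proj_{\Pc}\vPhi(x)|\geq c\rho$ coming from \eqref{eq:ThickSlice} and \Cref{cl:slice_centered}, and both prove the immersion property by showing that $d(\proj_{\Pc}\circ\vPhi|_{\Sr})$ is close to an isometry while its image is nearly orthogonal to the radial direction in $\Pc$ (the paper via the explicit expansion of $d\iota$ and \Cref{cl:osc_n}, you via the factorisation $\iota=N\circ\proj_{\Pc}\circ\vPhi|_{\Sr}$ together with the graph estimate \eqref{eq:smallness_gu} and the sphere constraint $\vv\cdot\vPhi(x)=0$). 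Your kernel-of-$dN$ formulation is slightly cleaner, but the underlying estimates and the use of the smallness parameter $\eps_0$ are identical.
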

	
	\begin{proof}
		We have the following description of $\iota$. Given $x\in \Sr$, we describe the set $\vPhi(\Sr(x))$ (we defined the set $\Sr(x)$ in Claim \ref{cl:graph_good_slice} and the set $\Sc_{x,\Sr}$ in \eqref{eq:domain_local_graph}) as follows:
		\begin{align}\label{eq:slice_graph}
			\vPhi(\Sr(x)) = \left\{ \vq_{\Sr} + \vy+\vu_x(\vy) : \vy\in \Sc_{x,\Sr}  \right\}.
		\end{align}
		Thanks to Claim \ref{cl:slice_centered}, if $\vy\in \Sc_{x,\Sr}$ it holds
		\begin{align*}
			\rho = \left|\vPhi-\vq\right| & \leq |\vy| + \left|\vu_x(\vy)+\vq_{\Sr}-\vq \right|   \leq |\vy| + \big(\delta + C(n,d,V)\, \eps\big) \rho.
		\end{align*}
		If $\eps>0$ is small enough, we obtain 
		\begin{align}\label{eq:norm_y}
			|\vy| \geq \frac{ 1-\delta}{2} \rho>0.
		\end{align}
		Thus, we have $0 \notin \Sc_{x,\Sr}$. Hence the map $\iota_{|\Sr(x)}$ is given by:
		\begin{align*}
			\forall \vz\in \Sc_{x,\Sr},\qquad \iota\big(\vq_{\Sr}+ \vz+\vu_x(\vz)\big)\coloneqq  \vq_{\Sr} + \rho \frac{\vz}{|\vz|} \in \Sc_{\Pc}.
		\end{align*}
		We deduce from Claim \ref{cl:graph_good_slice} that two points $\vy,\vz\in \Sc_{x,\Sr}$ close enough cannot be proportional. Otherwise, there would exist a point $\vy\in \Sc_{x,\Sr}$ such that the radial derivative of $\vPhi$ at $\vy$ exists and is given in radial coordinates $(r,\theta)\in(0,+\infty)\times \s^{n-1}$ by 
		\begin{align*}
			\dr_r\vPhi = \theta + \dr_r \vu_x(r,\theta).
		\end{align*}
		Differentiating the constraint $\rho^2 = \left|\vPhi-\vq\right|^2$, we obtain 
		\begin{align*}
			0 = \big(\vq_{\Sr}-\vq + \vy + \vu_x(r,\theta) \big) \cdot \big(\theta +\dr_r \vu_x(r,\theta) \big).
		\end{align*}
		Since $\vy=r\, \theta$ is orthogonal to $(\vq_{\Sr}-\vq)$ and to $\vu_x$, we obtain 
		\begin{align*}
			|\vy| \leq 2\, \rho\, |\g \vu_x| \leq C\, \eps\, \rho.
		\end{align*}
		This is in contradiction with \eqref{eq:norm_y} for $\eps_0>\eps>0$ small enough. Hence the map $\iota$ is locally injective. Moreover, the map $d\iota\colon T\Sr\to T_{\iota} \Sc_{\Pc}$ is injective. Indeed, we have
		\begin{align*}
			d\iota = \frac{d\vpi_{\Pc}\circ d\vPhi}{ |\vpi_{\Pc}\circ\vPhi - \vq_{\Sr}| } - \frac{\vpi_{\Pc}\circ\vPhi - \vq_{\Sr} }{ |\vpi_{\Pc}\circ\vPhi - \vq_{\Sr}|^2  } \scal{ \vpi_{\Pc}\circ\vPhi - \vq_{\Sr} }{ d\vpi_{\Pc}\circ d\vPhi }.
		\end{align*}
		If $X\in T\Sr$, then we have 
		\begin{align*}
			d\vpi_{\Pc}\circ d\vPhi(X) = d\vPhi(X) - \sum_{\alpha=1}^{d-n} \scal{\vn_{\alpha,\Sr} }{ d\vPhi(X) }\, \vn_{\alpha,\Sr}.
		\end{align*}
		On one hand, we have by Claim \ref{cl:osc_n} that
		\begin{align*}
			\left| d\vpi_{\Pc}\circ d\vPhi(X) \right| \geq (1-C(n,d,V)\, \eps) |d\vPhi(X)| = (1-C(n,d,V)\, \eps) |X|_{g_{\vPhi}}.
		\end{align*}
		On the other hand, since all the derivatives are taken along $\Sr$, it holds 
		\begin{align*}
			\left| \scal{ \vpi_{\Pc}\circ\vPhi - \vq_{\Sr} }{ d\vpi_{\Pc}\circ d\vPhi } \right| & = \left| \scal{ \vPhi - \sum_{\alpha=1}^{d-n} \scal{\vn_{\alpha,\Sr} }{ \vPhi }\, \vn_{\alpha,\Sr} }{d\vPhi(X) - \sum_{\alpha=1}^{d-n} \scal{\vn_{\alpha,\Sr} }{ d\vPhi(X) }\, \vn_{\alpha,\Sr} } \right| \\[2mm]
			& \leq C(n,d)\, \sum_{\alpha=1}^{d-n}  \left| \scal{\vn_{\alpha,\Sr} }{ d\vPhi(X) } \right|.
		\end{align*}
		Thanks to Claim \ref{cl:osc_n}, we obtain 
		\begin{align*}
			\left| \scal{ \vpi_{\Pc}\circ\vPhi - \vq_{\Sr} }{ d\vpi_{\Pc}\circ d\vPhi } \right| \leq C(n,d,V)\, \eps\, |X|_{g_{\vPhi}}.
		\end{align*}
		Coming back to $d\iota$, we obtain 
		\begin{align}\label{eq:non_degeneracy_diota}
			|d\iota(X)| \geq \frac{1-C(n,d,V)\, \eps}{|\vpi_{\Pc}\circ\vPhi - \vq_{\Sr}|}\, |X|_{g_{\vPhi}}.
		\end{align}
		For $\eps$ small enough, we deduce that $d\iota$ is injective. Since $\Sr$ and $\Sc_{\Pc}$ have the same dimension, we deduce that $d\iota$ is bijective. Since $\iota$ is $C^{\infty}$, the inverse function theorem implies that $\iota$ is an immersion.
	\end{proof}

	We now prove that $\iota$ is a covering map. Since $\Sr$ and $\Sc_{\Pc}$ have the same dimension, Claim \ref{cl:projection_equator} implies that $\iota(\Sr)$ is an immersed submanifold of codimension 0 in $\Sc_{\Pc}$. By \cite[Proposition 5.21]{lee2013}, the submanifold $\iota(\Sr)$ is embedded in $\Sc_{\Pc}$. Moreover, the differential of $\iota$ has full rank. Hence $\iota$ is locally surjective, see for instance \cite[Theorem 4.12]{lee2013}. Therefore, the set $\iota(\Sr)$ is open in $\Sc_{\Pc}$. It is also closed since $\iota$ is continuous and $\Sr$ is compact. Hence, the map $\iota\colon \Sr\to \Sc_{\Pc}$ a surjective submersion and thus a covering map. \\
	
	We now prove that $\iota$ is bijective\footnote{One can show that $\iota$ is a bijection more directly. First we show that any path $\gamma\colon [0,1]\to \Sc_{\Pc}$ can be lifted by $\iota$ to a path in $\Sr$ thanks to \eqref{eq:non_degeneracy_diota} (the set of $t>0$ such that $\gamma|_{[0,t]}$ can be lifted is an open and closed set in $[0,1]$ since $\iota$ is a local diffeomorphism). Moreover, the preimage of each point is a union of isolated points. Hence, if $\iota$ were not bijective, then we could find a closed path $\gamma\colon \s^1\to\Sc_{\Pc}$ that would be the image of a non-closed path in $\Sr$. Since $\Sc_{\Pc}$ is simply connected, $\gamma$ is homotopic to a point. This homotopy can be lifted by $\iota$ to a homotopy in $\Sr$. We would obtain that the image by $\iota$ of a continuous path in $\Sr$ is a given point in $\Sc_{\Pc}$, which is impossible since $\iota$ is a local diffeomorphism. }. Since $\Sr$ is compact, the number of sheets is constant, see for instance \cite[page 61]{hatcher2002}. By \cite[Proposition 1.32]{hatcher2002}, the number of sheets is given by the index of $\left(\vpip\circ\vPhi\right)_* [\pi_1(\Sr)]$ in $\pi_1(\Sc_{\Pc})$. Since $\pi_1(\Sc_{\Pc})=\{0\}$ (in dimension $n-1\geq 3$, the sphere $\s^{n-1}$ is simply connected), this index is $1$ and we obtain that $\iota$ is injective (see also \cite[Proposition A.79]{lee2013}). Therefore we must have that $\iota\colon \Sr\to \Sc_{\Pc}$ is a diffeomorphism. Hence, the map $\vPhi\colon \Sr\to \vPhi(\Sr)$ is injective, since if $x,y\in \Sr$ are such that $\vPhi(x)=\vPhi(y)$, then by definition of $\iota$, we also have $\iota(x)=\iota(y)$, so that $x=y$. Since $\vPhi$ is an immersion, the map $\vPhi\colon \Sr\to \vPhi(\Sr)$ is a diffeomorphism.\\
	
	Consequently, the set $\vPhi(\Sr)$ is globally the graph of a function $\vu\colon  \Sc_{\Sr}\subset \Pc_{\Sr}\to \Pc^{\perp}$, with $\Sc_{\Sr} \coloneq \pi_{\Pc}\circ\vPhi(\Sr)$ is diffeomorphic to $\Sc_{\Pc}$.
	We now record the estimates on $\vu$. The $L^{\infty}$ estimate follows from \eqref{eq:ThickSlice}. The estimate on the $L^{\infty}$-norm of $\g \vu$ follows from Claim \ref{cl:graph_good_slice}. Thus, we have 
	\begin{align}\label{eq:est_u_v1}
		\rho^{-1} \|\vu\|_{L^{\infty}(\Sc_{\Sr})} + \|\g \vu\|_{L^{\infty}(\Sc_{\Sr})} \leq C\, \eps.
	\end{align}
	The estimate on the Hessian of $\vu$ follows from Lemma \ref{lm:A_graph} and \eqref{eq:Ln_good_slice}.\\

	In order to complete the proof of Proposition \ref{pr:global_graph_slice}, we change the description of the slice, since the set $\Sc_{\Sr}$ might become irregular as $\vPhi$ approximate a weak immersion. Instead of describing $\vPhi\big|_{\Sr}$ as a graph over $\Sc_{\Sr}$, we will describe it as a deformation of the round $(n-1)$-sphere $\Sc_{\Pc} = \Pc_{\Sr}\cap \s^{d-1}(\vq,\rho)$. More precisely, we consider the map $\vphi\colon \Sc_{\Pc}\to \R^d$ such that
	\begin{align}\label{eq:vPhi_graph}
		\vPhi(\Sr) = \left\{ \vtheta + \vphi(\vtheta)  : \vtheta\in \Sc_{\Pc} \right\}.
	\end{align}
	By \eqref{eq:est_u_v1}, there exists a vector field $\vv\colon \Sc_{\Pc} \to T \s^{d-1}$ such that the map $\vphi$ is given by
	\begin{align}\label{eq:init_vj}
		\begin{cases} 
			\displaystyle \vphi(\vtheta) = \exp^{\s^{d-1}(\vq,\rho)}_{\vtheta}\left(\vv(\vtheta)\right)-\vtheta, \\[5mm]
			\displaystyle (\rho\, \eps)^{-1} \|\vv \|_{L^{\infty}(\Sc_{\Pc})} + \eps^{-1} \|\g \vv \|_{L^{\infty}(\Sc_{\Pc})} + \rho^{\frac{1}{n}}\, \|\g^2 \vv \|_{L^n(\Sc_{\Pc})} \leq C(n,d,V).
		\end{cases}
	\end{align}
	On $\s^{d-1}(\vq,\rho)$, the exponential map is given by
	\begin{align*}
		\exp^{\s^{d-1}(\vq,\rho)}_{\vtheta}\left(\vv(\vtheta) \right) = \vq+ \rho\left[ \cos\left( \frac{ |\vv(\vtheta)| }{\rho} \right)\, \frac{ \vtheta -\vq }{\rho} + \sin\left( \frac{ |\vv(\vtheta)| }{\rho} \right) \frac{\vv(\vtheta)}{|\vv(\vtheta)|} \right].
	\end{align*}
	We know show that the estimates \eqref{eq:est_u_v1} implies that $\vphi$ is small in $W^{1,\infty}$ and bounded in $W^{2,n}$.
	\begin{claim}\label{cl:est_perturbation1}
		It holds
		\begin{align*}
			(\rho\, \eps)^{-1} \|\vphi\|_{L^{\infty}(\Sc_{\Pc})} + \eps^{-1} \|\g \vphi\|_{L^{\infty}(\Sc_{\Pc})} + \rho^{\frac{1}{n}}\, \|\g^2 \vphi\|_{L^n(\Sc_{\Pc})} \leq C(n,d,V).
		\end{align*}
	\end{claim}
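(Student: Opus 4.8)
The plan is to reduce the claim to a routine Leibniz-rule computation, using the explicit formula for the exponential map on the round sphere $\s^{d-1}(\vq,\rho)$ together with the bounds \eqref{eq:init_vj} on $\vv$. The one conceptual point is that, upon setting $s \coloneqq |\vv|^2/\rho^2$, the two coefficients $\cos(|\vv|/\rho)-1$ and $\sin(|\vv|/\rho)/(|\vv|/\rho)$ appearing in $\exp^{\s^{d-1}(\vq,\rho)}_{\vtheta}(\vv)$ are \emph{smooth} functions $g_1(s)$ and $g_2(s)$ of the single variable $s$ --- the apparent singularity at $\vv=0$ cancels --- with $g_1(0)=0$, $g_2(0)=1$, and $g_1,g_2$ bounded together with all derivatives on the compact interval in which $s$ ranges, since $\|s\|_{L^\infty(\Sc_{\Pc})}\le C\eps^2\ll 1$ when $\eps_0$ is small. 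With this, the formula defining $\vphi$ becomes
\[
\vphi(\vtheta) \;=\; g_1(s)\,(\vtheta-\vq) \;+\; g_2(s)\,\vv(\vtheta),
\]
and there is no further geometry to unravel.

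Second, I would record the elementary consequences of \eqref{eq:init_vj} that feed into the product rule: from $\g s = 2\langle\vv,\g\vv\rangle/\rho^2$ and $\g^2 s = 2(\g\vv\otimes\g\vv + \vv\!\cdot\!\g^2\vv)/\rho^2$, together with the volume bound $\Hr^{n-1}(\Sc_{\Pc})\le C\rho^{n-1}$ --- valid because, by \Cref{cl:slice_centered}, $\Sc_{\Pc}$ is a round $(n-1)$-sphere of radius comparable to $\rho$ --- one gets
\[
\|s\|_{L^\infty(\Sc_{\Pc})}\le C\eps^2,\qquad \|\g s\|_{L^\infty(\Sc_{\Pc})}\le C\eps^2\rho^{-1},\qquad \|\g^2 s\|_{L^n(\Sc_{\Pc})}\le C\eps\,\rho^{-1-1/n}.
\]
I also record that the inclusion $\Sc_{\Pc}\hookrightarrow\R^d$ satisfies $|\g\vtheta|\le 1$ and $|\g^2\vtheta|\le C\rho^{-1}$, since $\g^2\vtheta$ is exactly the second fundamental form of the round sphere $\Sc_{\Pc}$ in $\R^d$; here $\g,\g^2$ denote the ($\R^d$-valued) covariant derivatives along $\Sc_{\Pc}$.

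Third, I differentiate the displayed identity for $\vphi$ once and twice and estimate term by term. For $\|\vphi\|_{L^\infty}$ one uses $|\vtheta-\vq|=\rho$, $|g_1(s)|\le C\eps^2$, $|g_2(s)|\le C$ and $\|\vv\|_{L^\infty}\le C\rho\eps$ to get $\le C\rho\eps$. For $\g\vphi$ every term is a product of a bounded function of $s$ with either $\g s$ (times $\rho$ or times $|\vv|\le C\rho\eps$), $\g\vtheta$ (carrying $|g_1(s)|\le C\eps^2$ in front), or $\g\vv$ (of size $\le C\eps$), yielding $\|\g\vphi\|_{L^\infty(\Sc_{\Pc})}\le C\eps$. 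For $\g^2\vphi$ one isolates the single main term $g_2(s)\,\g^2\vv$, for which $\rho^{1/n}\|g_2(s)\g^2\vv\|_{L^n(\Sc_{\Pc})}\le C\rho^{1/n}\|\g^2\vv\|_{L^n(\Sc_{\Pc})}\le C(n,d,V)$ directly from \eqref{eq:init_vj}; the two terms $g_j'(s)\,\g^2 s\,(\cdot)$ are controlled by $\rho^{1/n}\|\g^2 s\|_{L^n}\cdot\max(\rho,\|\vv\|_{L^\infty})\le C\eps^2$; the term $g_1(s)\,\g^2\vtheta$ by $\|g_1(s)\|_{L^\infty}\cdot C\rho^{-1}\cdot\Hr^{n-1}(\Sc_{\Pc})^{1/n}\le C\eps^2$; and every remaining term is a bounded function of $s$ times one of $\g s\otimes\g s$, $\g s\otimes\g\vv$, $\g s\otimes\g\vtheta$ (each of $L^\infty$-size $\le C\eps^k\rho^{-1}$ with $k\ge 2$), hence contributing $\le C\eps^k$ to $\rho^{1/n}\|\,\cdot\,\|_{L^n(\Sc_{\Pc})}$. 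Summing the finitely many terms yields the asserted bound.

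I do not anticipate any hard step here. The substance of the section lies in \Cref{lm:Lipschitz_graph} and the claims preceding this one; \Cref{cl:est_perturbation1} merely translates the estimates on the tangential perturbation $\vv$ into estimates on the extrinsic perturbation $\vphi$. The only place requiring a moment's care is fixing the meaning of $\g$ and $\g^2$ acting on the $\R^d$-valued map $\vphi$ defined on the submanifold $\Sc_{\Pc}\subset\R^d$ and keeping the powers of $\rho$ consistent throughout --- but since $\Sc_{\Pc}$ is a round sphere of radius comparable to $\rho$, its intrinsic curvature and its second fundamental form in $\R^d$ are of order $\rho^{-1}$, so no genuine obstacle appears.
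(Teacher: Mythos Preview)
Your proposal is correct and follows essentially the same approach as the paper. The paper normalizes to $\vq=0$, $\rho=1$, Taylor-expands $\cos$ and $\sin$ to write $\vphi(\vtheta)=\sum_{k\ge 1}\frac{(-1)^k}{(2k)!}|\vv|^{2k}\vtheta+\sum_{k\ge 0}\frac{(-1)^k}{(2k+1)!}|\vv|^{2k}\vv$, and then simply states that the claim follows from \eqref{eq:init_vj}; your functions $g_1(s)$, $g_2(s)$ are precisely these series, and your detailed Leibniz-rule bookkeeping is exactly what the paper leaves implicit in that last sentence.
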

	
	\begin{proof}
		Up to dilation and translation, we can assume that $\vq=0$ and $\rho=1$. We expand $\cos$ and $\sin$ in Taylor expansion:
		\begin{align*}
			\exp^{\s^{d-1}}_{\vtheta}\left( \vv(\vtheta) \right) = \vtheta + \sum_{k=1}^{\infty} \frac{(-1)^k}{(2k)!} |\vv(\vtheta)|^{2k}\, \vtheta + \sum_{k=0}^{\infty} \frac{(-1)^k}{(2k+1)!} |\vv(\vtheta)|^{2k}\, \vv(\vtheta).
		\end{align*}
		Therefore, it holds
		\begin{align*}
			\vphi(\vtheta) = \sum_{k=1}^{\infty} \frac{(-1)^k}{(2k)!} |\vv(\vtheta)|^{2k}\, \vtheta + \sum_{k=0}^{\infty} \frac{(-1)^k}{(2k+1)!} |\vv(\vtheta)|^{2k}\, \vv(\vtheta).
		\end{align*}
		Claim \ref{cl:est_perturbation1} follows from \eqref{eq:init_vj}.
	\end{proof}

	\subsection{Extension of the good slice}\label{sec:extension}
	
	In this section, we extend each connected component of a good slice in a global graph over $\R^n$ minus a compact set using the procedure introduced in \cite{MarRiv2025}. Since \underline{we work with $C^{\infty}$-immersions in this section}, the graphs and the diffeomorphisms are actually of class $C^{\infty}$, but we record only the estimates required to pass to the limit in an approximation of a weak immersion by $C^{\infty}$-immersions. We prove the following result.
	
	\begin{proposition}\label{pr:extension}
		With the notations of Proposition \ref{pr:global_graph_slice}, we consider a connected component $\Sr\subset S_{\rho}$ and the function $\vphi_{\Sr}\in W^{1,\infty}\cap W^{2,n}(\Sc_{\Pc};\R^d)$ given by \Cref{it:Graph}. We can extend the map $\vphi_{\Sr}$ into a map $\vpsi_{\Sr}\in W^{1,\infty}\cap W^{2,n}_0(\Pc_{\Sr}\setminus B_{\Pc_{\Sr}}(\vq,\rho);\R^d)$\footnote{Since $\vPhi\in \Imm(\Sigma;\R^d)$, the extension $\vpsi_{\Sr}$ is also of class $C^{\infty}$.} satisfying $\vpsi_{\Sr}=0$ on $\Pc_{\Sr}\setminus B_{\Pc_{\Sr}}(\vq_{\Sr},2\rho)$ in addition to the following properties. 
		On each connected component of $\vPhi^{-1}(\s^{d-1}(\vq,\rho))$, we glue a copy of $\R^n\setminus \B^n_1$ using the sets
		\begin{align*} 
			\Gc_{\Sr}\coloneqq \Big\{ \vq_{\Sr} + \vx+\vpsi_{\Sr}(\vx): \vx\in \Pc_{\Sr}\setminus B_{\Pc_{\Sr}}(\vq,\rho) \Big\}.
		\end{align*} 
		We obtain a weak immersion $\vPsi \in \I_{0,(n,2)}(\tilde{\Sigma};\R^d)$ with image $\tilde{M}$ given by
		\begin{align*}
			\begin{cases}
				\displaystyle \tilde{\Sigma} \coloneqq \vPhi^{-1}\big( \B^d(\vq,\rho)\big)\cup \bigcup_{\substack{\Sr\subset S_{\rho}\\ \text{connected component} }} (\R^n\setminus \B^n_1) , \\[10mm]
				\displaystyle\tilde{M} \coloneqq \big(M\cap \B^d(\vq,\rho)\big) \cup \bigcup_{\substack{\Sr\subset S_{\rho}\\ \text{connected component} }} \Gc_{\Sr}.
			\end{cases}
		\end{align*}
		We have $\vPsi = \vPhi$ on $\vPhi^{-1}\big( \B^d(\vq,\rho)\big)$ and the estimate
		\begin{align*}
			\left\|\vII_{\vPsi} \right\|_{L^{(n,2)}\left(\tilde{\Sigma},g_{\vPsi} \right)} \leq C(n)\, \eps^{\frac{1}{2}}.
		\end{align*}
	\end{proposition}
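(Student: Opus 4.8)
The plan is to reduce the statement to the extension construction of \cite{MarRiv2025} for weak immersions of $\B^n$, carried out on each connected component $\Sr\subset S_\rho$ separately; by \Cref{pr:est_good_slice} and \eqref{eq:vol_good_slice} there are at most $C(n,d,V)$ such components, so it suffices to treat one of them, with the constants summed at the end. After a dilation and translation we may assume $\vq_\Sr=0$ and $\rho=1$. By \Cref{pr:global_graph_slice}, $\Sr$ is diffeomorphic to $\s^{n-1}$ and, writing $\Sc_{\Pc}=\Pc_\Sr\cap\s^{d-1}(\vq,1)$ and $\Sc_{\Sr}=\pi_{\Pc}\circ\vPhi(\Sr)$, the image $\vPhi(\Sr)$ is the graph over $\Sc_{\Pc}$ of the small map $\vphi_\Sr$ of \Cref{it:Graph}, equivalently the graph over $\Sc_{\Sr}$ of a function $\vu\colon\Sc_{\Sr}\to\Pc^{\perp}$ with $\|\vu\|_{L^\infty}+\|\g\vu\|_{L^\infty}\leq C\eps$ (cf.\ \eqref{eq:est_u_v1}). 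We must fill the exterior of $\Sc_{\Pc}$ inside the affine plane $\Pc_\Sr$ by a graph $\Gc_\Sr$ that agrees to first order with $\vPhi(\Sr)$ along $\Sc_{\Pc}$, coincides with $\Pc_\Sr$ outside the ball of radius $2$, and whose second fundamental form is small in $L^{(n,2)}$; gluing the corresponding copies of $\R^n\setminus\B^n_1$ to $\vPhi^{-1}(\B^d(\vq,1))$ then produces $\tilde\Sigma$ and $\vPsi$ as in the statement.

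\textbf{The extension.} Let $\vq'=\pi_{\Pc}(\vq)\in\Pc_\Sr$ and use polar coordinates $\vx=\vq'+s\omega$ on $\Pc_\Sr$, so that $\Sc_{\Pc}=\{\vq'+s_0\omega:\omega\in\s^{n-1}\}$ with $s_0=\sqrt{1-|\vq-\vq'|^2}\in(\tfrac12,1]$ by \Cref{it:Centre}. Following \cite{MarRiv2025}, set
\begin{align*}
	\vpsi_\Sr(\vq'+s\omega)\coloneqq\chi(s)\,\vw(s,\omega),\qquad s\geq s_0,
\end{align*}
where $\vw(\cdot,\omega)$ is a fixed $C^1$ radial extension from $s=s_0$ of the boundary $1$-jet of the graph $\vPhi(\Sr)$ (e.g.\ the cubic Hermite interpolant, or the harmonic extension, of the pair formed by $\vphi_\Sr$ and the outward normal derivative of the graph along $\Sc_{\Pc}$) and $\chi$ is a cut-off with $\chi\equiv1$ on $[s_0,\tfrac32]$ and $\chi\equiv0$ on $[2,\infty)$. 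This choice guarantees that the glued map $\vPsi$ is $C^1$, hence $C^\infty$ since $\vPhi$ is, with $\vPsi=\vPhi$ on $\vPhi^{-1}(\B^d(\vq,1))$ and $\vpsi_\Sr\equiv0$ outside $B_{\Pc_\Sr}(0,2)$. On $\Gc_\Sr$ the induced metric is within $O(\eps)$ of the Euclidean one, so together with the nondegeneracy of $g_\vPhi$ on the interior and the smooth matching we obtain $\vPsi\in\I_{0,(n,2)}(\tilde\Sigma;\R^d)$ once the curvature estimate below is in hand. All objects are $C^\infty$; we only record the scale-invariant bounds needed for the later approximation of a weak immersion by smooth ones.

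\textbf{Curvature estimate.} Decompose $\tilde\Sigma=\vPhi^{-1}(\B^d(\vq,1))\cup\bigcup_\Sr\Gc_\Sr$. On $\vPhi^{-1}(\B^d(\vq,1))$ we have $\vPsi=\vPhi$, hence $\vII_\vPsi=\vII_\vPhi$; since $\vPhi^{-1}(\B^d(\vq,\rho))\subset Q_r$ by the choice of $\vq$ and $\rho$ in \Cref{pr:est_good_slice}, the energy there is at most $\eps^n$, and combining the extrinsic Sobolev inequality \Cref{th:Sobolev} (applicable as $\|\vH_\vPhi\|_{L^{(n,\infty)}}$ is small), the iterated Lorentz--Sobolev embedding $W^{\frac n2-1,2}\hookrightarrow L^{(n,2)}$, and an interpolation of the intermediate derivatives against the terms defining $\Er$, gives $\|\vII_\vPhi\|_{L^{(n,2)}(\vPhi^{-1}(\B^d(\vq,1)),g_\vPhi)}\leq C\,\Er(\vPhi;Q_r)^{1/n}\leq C\eps$. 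On each $\Gc_\Sr$, which is a graph over the \emph{flat} plane $\Pc_\Sr$ with $|\g\vpsi_\Sr|\leq C\eps$, the second fundamental form is, up to terms quadratic in $|\g\vpsi_\Sr|$, governed by the $\Pc^{\perp}$-component of the Hessian of $\vpsi_\Sr$; the $\Pc$-component of $\vpsi_\Sr$ — which inherits the non-small bound \eqref{eq:Ln_good_slice} on $\vA$ coming from the round geometry of the base sphere $\Sc_{\Pc}$ (itself due to $\vq$ lying at distance $\sim\tfrac r{100}$ from $M$ rather than on it) — only reparametrises the graph and does not enter $\vII_\vPsi$. Near $\Sc_{\Pc}$ the $\Pc^{\perp}$-component of $\vpsi_\Sr$ equals the height $\vu$ of \eqref{eq:est_u_v1}, whose Hessian is the $\Pc^{\perp}$-part of the curvature of $\vPhi(\Sr)$, i.e.\ the restriction of $\vII_\vPhi$ to the slice $\Sr$, hence $O(\eps)$ in $L^n$ by \eqref{eq:Lnr_good_slice}; the radial extension and cut-off add only $O(\eps)$, being built from the $O(\eps)$ boundary $1$-jet. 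This gives $\|\vII_\vPsi\|_{L^n(\Gc_\Sr)}\leq C\eps$. Upgrading from $L^n$ to $L^{(n,2)}$, with the loss recorded in the exponent $\eps^{1/2}$, is a real-interpolation argument combining this bound with the extra half-derivative of regularity inherited from $\vII_\vPhi\in W^{\frac n2-1,2}(\Sigma^n)$ upon restriction to the $(n-1)$-dimensional slice and with $\|\g\vpsi_\Sr\|_{L^\infty}\leq C\eps$, carried out as in \cite{MarRiv2025}. Summing over the at most $C(n,d,V)$ components yields $\|\vII_\vPsi\|_{L^{(n,2)}(\tilde\Sigma,g_\vPsi)}\leq C(n)\eps^{1/2}$.

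\textbf{Main difficulty.} The delicate step is the last one. The full Hessian of the extension is only $O(1)$ in $L^n$, since it carries the non-small bound \eqref{eq:Ln_good_slice} on $\vA$; isolating the genuinely small object — the normal height $\vu$, controlled by $\vII_\vPhi$ restricted to the slice — requires carefully identifying which components of the second fundamental form survive for a graph over a flat plane, so that the round-sphere geometry of $\Sc_{\Pc}$ (living in the base directions $\Pc$) is invisible to $\vII_\vPsi$. This is exactly the content of the extension machinery of \cite{MarRiv2025}, and the remaining work is only to verify that the good slice produced in \Cref{pr:est_good_slice}--\Cref{pr:global_graph_slice} meets its hypotheses. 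The subsequent passage from the $L^n$-bound to the $L^{(n,2)}$-bound, with the exponent $\eps^{1/2}$ encoding the interpolation loss, rests on the $W^{\frac n2-1,2}$-regularity of the second fundamental form, which is the only place beyond the $W^{1,\infty}$-smallness of $\vpsi_\Sr$ where a gain in integrability is available.
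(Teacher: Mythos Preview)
Your overall strategy---reduce to the extension theorem of \cite{MarRiv2025} after verifying its hypotheses on each component $\Sr$---is exactly the paper's. But the paper's proof is much more focused than your sketch, and your explanation of the $\eps^{1/2}$ exponent is off.

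The paper's only real work is to identify and estimate one specific quantity: the vector field
\[
\vtau(\theta)\;=\;\proj_{T\vPhi(\Sigma)}\!\left(\frac{\vPhi}{|\vPhi|}\right)\Big|_{\Sr},
\]
i.e.\ the unit outward tangent to $\vPhi(\Sigma)$ transverse to the slice. The boundary hypothesis of \cite[Theorem~6.5]{MarRiv2025} is precisely that $\|\vtau-\Id\|_{L^\infty(\Sc_\Pc)}+\|\g\vtau-\g^{\Sc_\Pc}\Id\|_{L^n(\Sc_\Pc)}\le C\eps$, and the paper checks this in a short claim using \Cref{cl:osc_n}, \Cref{cl:est_perturbation1}, and \eqref{eq:Lnr_good_slice}. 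Once that is done, the entire conclusion---including the $L^{(n,2)}$ bound with exponent $\eps^{1/2}$---is read off from the cited theorem. Your discussion of matching $1$-jets, radial Hermite interpolants, and separating the $\Pc$-~versus $\Pc^\perp$-components of the Hessian is a plausible unpacking of what that theorem does internally, but it is not the verification the paper carries out; you never name $\vtau$ or the hypothesis it is meant to satisfy.

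The genuine gap is your mechanism for the $\eps^{1/2}$. You attribute it to a real-interpolation argument exploiting the $W^{\frac n2-1,2}$ regularity of $\vII_\vPhi$ restricted to the slice. That regularity is not available on $\Gc_\Sr$: the extension is built solely from the $(n{-}1)$-dimensional boundary data $(\vphi_\Sr,\vtau)$, and the only inputs are the $W^{1,\infty}\cap W^{2,n}$ control from \Cref{pr:global_graph_slice} and \eqref{eq:Lnr_good_slice}. The $\eps^{1/2}$ is produced inside \cite[Theorem~6.5]{MarRiv2025} from those inputs alone, not from any higher Sobolev regularity on the full surface; so your proposed interpolation step would not go through as described.
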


	\begin{proof} 
		In order to apply \cite[Theorem 6.5]{MarRiv2025}, we need to prove the boundary estimates. To do so, we start by summarizing our setting. We fix a connected component $\Sr\subset S_{\rho}$. Let $\ve_1,\ldots,\ve_d$ be the canonical basis of $\R^d$. Up to a rotation, dilation and translation, we assume that
		\begin{align}\label{eq:normalization_vn}
			\vq =0,\qquad  \text{ and }\qquad  \rho=1, \qquad \text{ and } \qquad \forall \alpha\in\{1,\ldots,d-n\},\quad \vn_{\alpha,\Sr} = \ve_{n+\alpha}. 
		\end{align}
		By \Cref{cl:slice_centered}, there exists $\delta\in(0,1)$ depending only on $n$, $d$ and $V$ such that  
		\begin{align}\label{eq:qC_centered} 
				|\vq_{\Sr}|\leq \delta, \qquad \text{ and } \qquad 
				\vq_{\Sr}^{\ 1} = \cdots = \vq_{\Sr}^{\ n}=0. 
		\end{align}
		We will denote $\Bc(\vx,r) \coloneqq B_{\Pc_{\Sr}}(\vx,r)$. By \Cref{pr:global_graph_slice} and the normalizations \eqref{eq:normalization_vn}-\eqref{eq:qC_centered}, we have (for $\eps>0$ small enough)
		\begin{align*}
			\sup_{x\in \Sr}  \sum_{\alpha=n+1}^d \left|\vPhi(x)^{\alpha} - \vq_{\Sr}^{\ \alpha} \right| \leq C\, \eps.
		\end{align*}
		Let $\theta = (\theta_1,\ldots,\theta_{n-1})$ denotes coordinates on $\Sc_{\Pc}$. We define the following vector field
		\begin{align}\label{eq:def_vtau}
			\vtau(\theta) = \vPhi_* \left( \g^{g_{\vPhi}} |\vPhi| \Big|_{\Sr}\right) = \proj_{T\vPhi(\Sigma)}\left(\frac{\vPhi}{|\vPhi|}\right) = \proj_{T\vPhi(\Sigma)}\left(\vPhi \right).
		\end{align}
		We have in particular
		\begin{align}\label{eq:vtau_orthogonal}
			\vtau \cdot \dr_{\theta_i} \vPhi = \vPhi \cdot \dr_{\theta_i} \vPhi  = \dr_{\theta_i}\left( \frac{|\vPhi|^2}{2} \right) = 0.
		\end{align}
		We now estimate $\vtau$.
		
		\begin{claim}
			It holds
			\begin{align*}
				\left\| \vtau - \Id \right\|_{L^{\infty}(\Sc_{\Pc})} + \|\g \vtau - \g^{\Sc_{\Pc}} \Id \|_{L^n(\Sc_{\Pc})} \leq C(n,d,V)\, \eps.
			\end{align*}
		\end{claim}
		
		\begin{proof}
			First rotate, translate and dilate $\vPhi$ so that the graph is over $\Pc_{\Sr} = \R^n\times \{0\}^{d-n}$, i.e. $\vq_{\Cr}=0$. By \eqref{eq:qC_centered}, this will change the constants in the estimates only by constants depending on $n$, $d$ and $V$.\\
			For $\vtheta\in \Sc_{\Pc}$, we have 
			\begin{align*}
				\vtau(\vtheta) = \proj_{T\vPhi}(\vtheta +\vphi(\vtheta)).
			\end{align*}
			By Claim \ref{cl:est_perturbation1}, it holds
			\begin{align*}
				|\vtau(\vtheta) - \vtheta| & \leq C(n,d,V)\, \eps + \left| \proj_{\left(T\vPhi\right)^{\perp}}(\vtheta) \right|   \leq C(n,d,V)\, \eps + \sum_{\alpha=1}^{d-n} \left| \vtheta \cdot \vn_{\alpha} \right|   \leq C(n,d,V)\, \eps + \sum_{\alpha=1}^{d-n} \left| \vtheta \cdot (\vn_{\alpha} -\vn_{\alpha,\Sr})\right|.
			\end{align*}
			By \Cref{cl:osc_n}, we obtain 
			\begin{align*} 
				\|\vtau - (\vtheta-\vq_{\Sr})\|_{L^{\infty}(\Sc_{\Pc})}\leq C(n,d,V)\, \eps.
			\end{align*} 	
			We also have 
			\begin{align*}
				\g^{\Sc_{\Pc}}\vtau = -\left(\g \proj_{(T\vPhi)^{\perp}} \right)(\vtheta+\vphi) + \g^{\Sc_{\Pc}} \vtheta + \g^{\Sc_{\Pc}} \vphi.
			\end{align*}
			By \eqref{eq:Lnr_good_slice} and \Cref{cl:est_perturbation1}, we obtain the estimate
			\begin{align*}
				\|\g^{\Sc_{\Pc}}\vtau -  \g^{\Sc_{\Pc}} \vtheta\|_{L^n(\Sc_{\Pc})} \leq C(n,d,V)\, \eps.
			\end{align*}
		\end{proof}

		We now obtain Proposition \ref{pr:extension} by direct application of \cite[Theorem 6.5]{MarRiv2025}.
	\end{proof}

	\subsection{The interior of a good slice is the union of topological balls}\label{sec:Topology}

	In this section, we complete the proof of \Cref{th:Construction_chart} by proving that the connected components of $\vPhi^{-1}\big( \B^d(\vq,\rho)\big)$ are topological balls. To do so, we adapt the proof of \cite[Theorem 2]{chern1957} to find a Morse function with few critical points. We obtain the topology of the Euclidean ball thanks to the proof of Reeb theorem. Since \underline{we work with $C^{\infty}$-immersions in this section}, the diffeomorphism is actually of class $C^{\infty}$. 
	
	\begin{proposition}\label{pr:Topology}
		With the notation of Proposition \ref{pr:extension}, there exists $\eps_1>0$ depending only on $n$, $d$ and $V$ such that the following holds. Assume that 
		\begin{align}\label{eq:choice_ball}
			\Er\Big( \vPhi; \vPhi^{-1}\big( \B^d(\vq,2\rho)\big) \Big) \leq \eps_1^n,\qquad \text{ and }\qquad  \vol_{g_{\vPhi}}\left(  \vPhi^{-1}\big( \B^d(\vq,1)\big) \Big)  \right) \leq V.
		\end{align}
		Let $\Cr$ be a connected component of $\vPhi^{-1}\big(\B^d(\vq,\rho)\big)$.	Then $\Cr$ is diffeomorphic to $\B^n$.
	\end{proposition}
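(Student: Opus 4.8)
The plan is to follow the Chern--Lashof--Reeb strategy applied to the manifold $\tilde{\Sigma}$ and the immersion $\vPsi$ constructed in \Cref{pr:extension}. Recall that $\vPsi$ agrees with $\vPhi$ on $\vPhi^{-1}(\B^d(\vq,\rho))$, is flat outside a compact set (it equals an affine $n$-plane $\Pc_{\Sr}$ on $\Pc_{\Sr}\setminus B_{\Pc_{\Sr}}(\vq_{\Sr},2\rho)$ along each glued copy of $\R^n\setminus\B^n_1$), and satisfies the smallness bound $\|\vII_{\vPsi}\|_{L^{(n,2)}(\tilde\Sigma,g_{\vPsi})}\le C(n)\,\eps^{1/2}$. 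The first step is to compactify: invert $\R^d$ about a point $\vp_0$ chosen outside $\tilde M$ and far from $\vPsi(\vPhi^{-1}(\B^d(\vq,\rho)))$, so that each flat end of $\tilde M$ is sent to a smooth point, producing a closed immersed submanifold. One must check that the inversion changes $\|\vII\|_{L^{(n,2)}}$ only by a controlled factor on the relevant region (the energy $\Er$ is not conformally invariant, but on the \emph{image} the second fundamental form of an inverted submanifold is controlled pointwise by $|\vII|$, $|\vx-\vp_0|^{-1}$ and the distance to $\vp_0$, so on the compact piece $\B^d(\vq,2\rho)$, where distances to $\vp_0$ are bounded above and below, the bound is preserved up to a dimensional constant). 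Call the resulting closed immersion $\hat\Sigma\to\R^d$, still with $\|\vII\|_{L^{(n,2)}}\le C(n)\eps^{1/2}$.

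Next I would run the Chern--Lashof argument: for almost every unit vector $\xi\in\s^{d-1}$, the linear height function $\vx\mapsto\vx\cdot\xi$ restricts to a Morse function $f_\xi$ on $\hat\Sigma$, and by the Chern--Lashof formula the total number of critical points of $f_\xi$, integrated over $\xi\in\s^{d-1}$, is controlled by the total absolute curvature, i.e. by an integral of $|\vII_{\hat\Sigma}|^n$ over $\hat\Sigma$ (via the Lipschitz--Killing curvature). Since $\|\vII_{\hat\Sigma}\|_{L^{(n,2)}}\le C(n)\eps^{1/2}$ and the volume of $\hat\Sigma$ is controlled (by \eqref{eq:diameter_energy_volume} and the control on $\vol_{g_{\vPhi}}(\vPhi^{-1}(\B^d(\vq,1)))\le V$, transported through the inversion), the total absolute curvature is $\le 2 + C(n,d,V)\eps^{1/2}<3$ for $\eps$ small. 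Hence for a positive-measure set of directions $\xi$, the Morse function $f_\xi$ has exactly two critical points, one minimum and one maximum. We fix such a $\xi$ with the additional property (generic, hence still positive measure) that the two critical points lie in the \emph{flat ends} of $\tilde\Sigma$ — this can be arranged by taking $\xi$ close to a normal direction of one of the affine planes $\Pc_{\Sr}$, since near an affine plane the height function has exactly one nondegenerate critical point and we can steer both extrema into the different flat ends, one per end; here it is convenient to treat the case of a single connected component $\Cr$ at a time, so that $\tilde\Sigma$ has the topology of $\vPhi^{-1}(\B^d(\vq,\rho))$ capped with copies of $\R^n\setminus\B^n_1$, i.e. $\Cr$ with boundary spheres filled by disks on one side.

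With a Morse function on $\hat\Sigma$ having exactly two critical points, the proof of the Reeb theorem (\cite[Theorem 4.1]{milnor1963}) shows $\hat\Sigma$ is homeomorphic to $\s^n$: the sublevel sets below the first critical value and above the second are closed $n$-balls by the Morse lemma, and between the two critical values $\hat\Sigma$ is a product $\s^{n-1}\times[0,1]$ by the gradient-flow deformation, since there are no critical points in between. To descend from $\hat\Sigma$ to $\Cr$, I would track where the disk pieces live: each flat end of $\tilde\Sigma$ contains exactly one of the two critical points and is, after inversion, a small smooth disk cap around that point; removing an open neighbourhood of both caps from $\hat\Sigma\cong\s^n$ leaves $\s^n$ minus two disjoint open disks, which is $\s^{n-1}\times[0,1]$, and $\vPhi^{-1}(\B^d(\vq,\rho))=\Cr$ sits inside this as the region obtained after removing the glued annular collars $\R^n\setminus\B^n_1$; since each collar is itself a product $\s^{n-1}\times[1,\infty)$ glued along $\dr\Cr$, what remains is $\Cr$ with $\dr\Cr$ a disjoint union of spheres $\s^{n-1}$, and the complement of $\Cr$ in $\s^n$ being a disjoint union of disks — so $\Cr$ itself is $\s^n$ minus (one or more) open disks; but a connected component of $\vPhi^{-1}(\B^d(\vq,\rho))$ has connected boundary (by \Cref{pr:global_graph_slice}, each boundary component $\Sr$ is connected and diffeomorphic to $\s^{n-1}$, and if $\Cr$ had two boundary spheres the slicing/graph structure would force $\tilde\Sigma$ to have two critical points already consumed differently — more directly, the above shows $\hat\Sigma\cong\s^n$ is covered by $\Cr$ and the caps, and connectivity of $\hat\Sigma$ together with exactly two caps forces exactly two boundary spheres for the union of all components, hence one per component when $\Cr$ is connected), so $\Cr$ is a closed $n$-ball, hence diffeomorphic to $\B^n$. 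Finally, to upgrade the homeomorphism to a diffeomorphism one invokes smoothness: $\Cr$ is a smooth compact manifold with boundary $\s^{n-1}$ that is homeomorphic to $\B^n$; for $n\le 4$ or more generally using that the gradient flow of $f_\xi$ is smooth away from the two critical points and the Morse lemma gives smooth charts near them, the Reeb argument directly yields a \emph{diffeomorphism} $\hat\Sigma\to\s^n$ (this is the standard smooth Reeb theorem), hence $\Cr\cong_{\mathrm{diff}}\B^n$.

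The main obstacle I anticipate is the control of the total absolute curvature (equivalently the Chern--Lashof integral) after the inversion: one must verify that inverting does not create an uncontrolled contribution to $\int|\vII|^n$ near the flat ends, and that the $L^{(n,2)}$ smallness of $\vII_{\vPsi}$ — which lives on the \emph{domain} with its induced metric — translates into genuine smallness of the Lipschitz--Killing curvature integral over the compactified \emph{image}. This requires the pointwise curvature transformation formula under inversions together with the volume bound \eqref{eq:diameter_energy_volume}; the degenerate (weak) case is avoided here since we work with $C^\infty$ immersions in this section, but one still needs the estimates to be uniform in $\eps$ so that the threshold $\eps_1$ depends only on $n$, $d$, $V$. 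A secondary technical point is the generic choice of direction $\xi$ putting both critical points in the flat ends; this uses the explicit flatness of $\vPsi$ there and a transversality argument, and is where the construction of \Cref{pr:extension} is essential.
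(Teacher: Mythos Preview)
Your overall strategy (inversion, Chern--Lashof, Reeb) matches the paper's, but there is a genuine gap in the curvature bookkeeping after inversion. You assert that the compactified immersion $\hat\Sigma\to\R^d$ still satisfies $\|\vII\|_{L^{(n,2)}}\le C(n)\eps^{1/2}$, and hence that the total absolute curvature is $\le 2+C\eps^{1/2}<3$. This is false: under an inversion, each flat $n$-plane end of $\tilde M$ becomes (an open subset of) a round $n$-sphere, whose second fundamental form is \emph{not} small. In Chern--Lashof normalization, each such cap contributes essentially $2$ to the total absolute curvature. Thus if $\Cr$ has $I$ boundary components (and hence $I$ flat ends glued on), the total absolute curvature of $\hat\Sigma$ is $\approx 2I + C\eps$, not $\le 2+C\eps^{1/2}$. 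Your bound ``$<3$'' therefore already \emph{presupposes} $I=1$, which is precisely the point at issue; the argument is circular.

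The paper does not try to force two critical points globally. Instead it shows the Chern--Lashof integral is close to $2I\cdot\Hr^{d-1}(\s^{d-1})$ (with the contribution from $\hat\Sigma_1=\Cr$ being $O(\eps)$, via the transformation formula \cite{kuwert2012} and the choice of a far inversion point $\vQ_0$), and then partitions $\s^{d-1}$ into sets $U,V,W$ according to how many preimages the Gauss map has and where they lie. A measure count gives $\Hr^{d-1}(W)\ge \Hr^{d-1}(\s^{d-1})-C\eps$, so for a generic direction $\ve_0$ the height function has exactly $2I$ critical points, \emph{all} in the caps $\hat\Sigma_2$. The reduction $I=1$ is then a separate topological step: since $h_{\ve_0,\vQ_0}$ has no critical points on $\Cr$, a cobordism argument bounds $\sharp\pi_0(\partial\Cr)\le 2$; and $I=2$ would force (by Morse inequalities on $\hat\Sigma\cong\s^n$, using that there are two local minima) a critical point of index $1$, which cannot occur on the round caps. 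This is the missing idea in your sketch.

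A secondary point: Reeb's theorem only yields a homeomorphism $\hat\Sigma\cong\s^n$ (exotic spheres!). The paper does not claim a diffeomorphism to $\s^n$; it uses the \emph{proof} of Reeb to get that $\hat\Sigma\setminus\{p\}$ (with $p$ the max of $h_{\ve_0,\vQ_0}$) is diffeomorphic to $\B^n$, and since $p$ lies in a cap, $\Cr$ inherits the diffeomorphism to $\B^n$ directly.
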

	
	\begin{proof}
		We split the proof in several steps. In Step 1, we define the notations for the inversions of $\vPhi$. In Step 2, we find a suitable inversion allowing the existence of a suitable Morse function. In Step 3, we use this Morse function to identify two possible topologies for the connected components of $\vPhi^{-1}\big(\B^d(\vq,\rho)\big)$. In Step 4, we use Morse theory to show that the topology must be the one of the ball. In Step 5, we conclude.\\
		
		\textit{Step 1: Inverting $\vPsi$ to obtain a closed submanifold.}\\
		Consider a point $\vQ\in \R^d\setminus \tilde{M}$ and the inversion $\iota_{\vQ}$ in $\R^d$ with respect to the ball $\B^d\big(\vQ,\frac{1}{2}\dist(\vQ;\B^d(\vq,2\rho) \big)$. Since the graphs $\Gc_{\Sr}$ are flat $n$-planes outside of a compact set, the immersion $\vpsi_{\vQ} \coloneqq \iota_{\vQ} \circ\vPsi$ can be considered as an immersion of the following domain:
		\begin{align*}
			\hat{\Sigma} \coloneqq \vPhi^{-1}\big( \B^d(\vq, \rho)\big)\cup \bigcup_{\substack{\Sr\subset S_{\rho}\\ \text{connected component} }} \s^n_+, 
		\end{align*}
		where $\s^n_+$ is a half-sphere. The manifold $\hat{\Sigma}$ is obtained from the manifold $\tilde{\Sigma}$ introduced in Proposition \ref{pr:extension} by replacing the copies of $\R^n\setminus \B^n_1$ by copies of $\s^n_+$. We consider $\hat{\Sigma}_0\subset \hat{\Sigma}$ a fixed connected component. We denote
		\begin{align*}
			\hat{\Sigma}_1 \coloneqq  \hat{\Sigma}_0\cap \vPsi^{-1}\big( \B^d(\vq,2\rho)\big),\qquad \hat{\Sigma}_2 \coloneqq \hat{\Sigma}_0\setminus \hat\Sigma_1.
		\end{align*}
		In the domain $\Sigma$, the set $\dr \hat{\Sigma}_1$ is diffeomorphic to $\vPhi^{-1}(\s^{d-1}(\vq,\rho))$, but in the target $\R^d$, the set $\vPsi(\dr \hat{\Sigma}_1)$ consists in a finite union of round $(n-1)$-dimensional spheres. \\
		
		\textit{Step 2: Construction of a suitable Morse function.}\\
		The manifold $\hat{\Sigma}_2$ is diffeomorphic to a disjoint union of half-spheres, and $\vPsi(\hat{\Sigma}_2)$ is totally geodesic in $\R^d$. Let $N\left(\vpsi_{\vQ} \right)$ be the unit normal bundle of $\vpsi_{\vQ}\big|_{\hat{\Sigma}_0}$ and $F_{\vQ}\colon N\left(\vpsi_{\vQ} \right)\to \s^{d-1}$ be the map assigning to a given vector $\xi\in N\left(\vpsi_{\vQ}\right)$ the corresponding unit vector in $\s^{d-1}$. Then, we have 
		\begin{align}\label{eq:total_det}
			\int_{N\left(\vpsi_{\vQ} \right)} \left| \det \vII_{\vpsi_{\vQ}}(\xi) \right|\, d\xi = \int_{\hat{\Sigma}_0} \left| (F_{\vQ})^*d\vol_{\s^{d-1}} \right|.
		\end{align}
		As explained in the proof of Theorem 2 in \cite{chern1957}, almost all of the directions $\ve\in \s^{d-1}$ are such that the following map is a Morse function on $\hat{\Sigma}_0$:
		\begin{align} \label{eq:Morse_function}
			h_{\ve,\vQ}\colon x\in \hat{\Sigma}_0 \mapsto \ve\cdot \vpsi_{\vQ}(x).
		\end{align} 
		We claim that we can choose $\vQ_0\in \R^d\setminus \tilde{M}$ and $\ve_0\in \s^{d-1}$ such that the map $h_{\ve_0,\vQ_0}$ has exactly two critical points per connected component of $\hat{\Sigma}_2$, none of them lying in $\hat{\Sigma}_1$. The crucial observation is that the map $h_{\ve,\vQ}$ has a critical point if and only if the vector $\ve$ lies in $F_{\vQ}^{-1}(\s^{d-1})$.\\    
		
		Let $I\geq 1$ be the number of connected component of $\hat{\Sigma}_2$. First we show that we can choose $\vQ_0\in \R^d\setminus \tilde{M}$ such that the quantity \eqref{eq:total_det} is close to the one of $I$ disjoint round $n$-spheres, meaning $2I\Hr^{d-1}(\s^{d-1})$:
		\begin{align}
			& \bullet \qquad \int_{N \left(\vpsi_{\vQ_0}\big|_{\hat{\Sigma}_1} \right)} \left| \det\II_{\vpsi_{\vQ_0}}(\xi) \right|\, d\xi \leq C(n,d,V)\, \eps_1 \int_{N \left(\vpsi_{\vQ_0}\big|_{\hat{\Sigma}_2} \right)} \left| \det\II_{\vpsi_{\vQ_0}}(\xi)\right|\, d\xi,\label{eq:choice_Q} \\[3mm]
			& \bullet\qquad \left| 2I\, \Hr^{d-1}(\s^{d-1}) - \int_{N\left( \vpsi_{\vQ_0} \right)} \left|\det\vII_{\vpsi_{\vQ_0}}(\xi) \right|\, d\xi \right|\leq  C(n,d,V)\, \eps_1. \label{eq:ineq_Chern}
		\end{align}
		The second fundamental form of $\vpsi_{\vQ}$ can be computed thanks to \cite[Proposition 1.2.1]{kuwert2012}, we have for every $\vQ\in \R^d\setminus \tilde{M}$,
		\begin{align*}
			\left| \vII_{\vpsi_{\vQ}} \right|_{g_{\vpsi_{\vQ}}}^2 \leq \frac{C}{|\g\iota_{\vQ}|^2}\left( \left| \vII_{\vPsi}\right|^2_{g_{\vPsi}} + \frac{1}{ |\vPsi-\vQ|^2} \right) \qquad \text{on }\hat{\Sigma}_1.
		\end{align*}
		We estimate the $L^1$-norm of the determinant of $\vII_{\vpsi_{\vQ}}$ in a direction $\xi\in N(\vpsi_{\vQ})$ by the $L^n$-norm of $\vII_{\vpsi_{\vQ}}$. We obtain
		\begin{align}
			\int_{N \left(\vpsi_{\vQ}\big|_{\hat{\Sigma}_1} \right)} \left| \det\vII_{\vpsi_{\vQ}}(\xi) \right|\, d\xi & \leq C \int_{\hat{\Sigma}_1} \left| \vII_{\vPsi}(\xi) \right|^n + \frac{1}{ \left|\vPsi-\vQ \right|^n} \, d\vol_{g_{\vPsi}}   \leq C\, \left( \eps^n_1 +\frac{\vol_{g_{\vPsi}}(\hat{\Sigma}_1) }{ \dist \left(\vQ,\tilde{M}\cap \B^d(\vq,\rho) \right)^n}\right). \label{eq:Energy_inversion}
		\end{align}
		Thanks to Proposition \ref{pr:Extrinsic_Hdiff}, we obtain
		\begin{align*}
			\forall R>\rho + |\vq|,\quad \forall \vQ \in \R^d\setminus \left(\B^d(0,R)\cup \tilde{M}\right),\qquad  \int_{N \left(\vpsi_{\vQ}\big|_{\hat{\Sigma}_1} \right)} \left| \det\vII_{\vpsi_{\vQ}}(\xi) \right|\, d\xi \leq C\, \left( \eps^n_1 +\frac{ \rho^n }{ R^n }\right).
		\end{align*}
		We now focus on $\hat{\Sigma}_2$. If $|\vQ|\geq R$ then then the set $\vpsi_{\vQ}(\hat{\Sigma}_2)$ is the union of $I$ open sets of round $n$-dimensional spheres of the form $\s^{n-1}(\vx,r)\setminus \B^d\left(\vy, 2\rho\right)$ for some $\vy\in \s^{n-1}(\vx,r)$ and $r\geq C\, \dist\big(\vQ, \B^d(\vq,2\rho)\big)$. Since the $L^1$-norm of $\det\vII(\xi)$ is scale-invariant, we obtain the following estimates. Taking $|\vQ|\geq \eps^{-1}_1$ and close enough to one of the graph $\Gc_{\Sr}$ (defined in Proposition \ref{pr:extension}), we obtain a point $\vQ_0\in \R^d\setminus \tilde{M}$ such that for some $C=C(n,d,V)>0$,
		\begin{align*}
			& \bullet \qquad \int_{N \left( \vpsi_{\vQ_0}\big|_{\hat{\Sigma}_1} \right)} \left| \det\vII_{\vpsi_{\vQ_0}}(\xi) \right|\, d\xi  \leq C\, \eps_1^n, \\[3mm]
			& \bullet \qquad \int_{N \left(\vpsi_{\vQ_0}\big|_{\hat{\Sigma}_2} \right)} \left| \det\vII_{\vpsi_{\vQ_0}}(\xi) \right|\, d\xi \geq \big( 1-C\, \eps_1\big)\, I \, \int_{N(\s^{n-1})} \left| \det \vII_{\s^{n-1}}(\xi) \right|\, d\xi = 2\, I\, (1-C\, \eps_1) \,\Hr^{d-1}(\s^{d-1}),\\[3mm]
			& \bullet \qquad \int_{N \left(\vpsi_{\vQ_0}\big|_{\hat{\Sigma}_2} \right)} \left| \det\vII_{\vpsi_{\vQ_0}}(\xi) \right|\, d\xi \leq I\, \int_{N(\s^{n-1})} \left| \det \vII_{\s^{n-1}}(\xi) \right|\, d\xi = 2\, I\, \Hr^{d-1}(\s^{d-1}).
		\end{align*}
		This is exactly \eqref{eq:choice_Q}-\eqref{eq:ineq_Chern}.\\

		Now that we have chosen $\vQ_0$, we choose $\ve_0$. We claim that for $\eps_1>0$ small enough, there exists $\ve_0\in \s^{d-1}$ such that $h_{\ve_0,\vQ_0}$ is a Morse function with exactly $2I$ critical points, all belonging to $\hat{\Sigma}_2$, each connected component carrying two of them. To do so, we decompose $\s^{d-1}$ is three disjoint measurable sets $U$, $V$ and $W$ as follows:
		\begin{align*}
			& U \coloneqq \left\{ \ve\in\s^{d-1} : \sharp F_{\vQ_0}^{-1}(\{\ve\}) \geq 3\, I\right\}, \\[3mm]
			& V \coloneqq \left\{ \ve \in\s^{d-1}\setminus U :  F_{\vQ_0}^{-1}(\{\ve\}) \cap \hat{\Sigma}_1 \neq \emptyset \right\} ,\\[3mm]
			& W \coloneqq \left\{ \ve\in\s^{d-1} :  F_{\vQ_0}^{-1}(\{\ve\}) \subset \hat{\Sigma}_2\quad \text{and }\quad   \sharp F_{\vQ_0}^{-1}(\{\ve \}) =2\, I\right\} .
		\end{align*}
		In order to show that $W$ is not empty, we show that $\Hr^{d-1}(U)$ and $\Hr^{d-1}(V)$ are small, so that $\Hr^{d-1}(W)$ is comparable to $\Hr^{d-1}(\s^{d-1})$. Hence, the set $W$ contains "most" of the points of $\s^{d-1}$.\\
		
		Since $\vpsi_{\vQ_0}(\hat{\Sigma}_0)\subset \R^d$ is closed, the map $F_{\vQ_0}$ is surjective (in other words, every direction $\ve\in \s^{d-1}$ is normal to $\vpsi_{\vQ_0}(\hat{\Sigma}_0)$ at some point). Thus, we have 
		\begin{align*}
			\int_{\hat{\Sigma}_0} \big| (F_{\vQ_0})^*d\vol_{\s^{d-1}}\big| & = \int_{\s^{d-1}} \sharp F_{\vQ_0} ^{-1} (\{\ve\})\ d\vol_{\s^{d-1}}(\ve) \\[3mm]
			& = \int_{U} \sharp F_{\vQ_0} ^{-1} (\{\ve\})\, d\vol_{\s^{d-1}}(\ve) + \int_{V} \sharp F_{\vQ_0} ^{-1} (\{\ve\})\, d\vol_{\s^{d-1}}(\ve) + \int_{W} \sharp F_{\vQ_0} ^{-1} (\{\ve \})\, d\vol_{\s^{d-1}}(\ve ).
		\end{align*}
		We first prove that $\Hr^{d-1}(V)$ is small. By definition of $V$, any point of $V$ is covered at most $(3I-1)$ times. Using \eqref{eq:choice_Q}, we obtain
		\begin{align*}
			\Hr^{d-1}(V) & \leq \int_{V} \sharp F_{\vQ_0} ^{-1} (\{\ve\})\ d\vol_{\s^{d-1}}(\ve) \\[2mm]
			& \leq (3I-1) \int_{V} \sharp\left( F_{\vQ_0} ^{-1} (\{\ve\}) \cap \hat{\Sigma}_1 \right)\ d\vol_{\s^{d-1}}(\ve) \\[2mm]
			& \leq (3I-1)\int_{\hat{\Sigma}_1} \big| (F_{\vQ_0})^*d\vol_{\s^{d-1}}\big| \leq  C(n,d,V)\, \eps_1\, I.
		\end{align*}
		Moreover, the volume bound \eqref{eq:vol_good_slice} together with \Cref{pr:global_graph_slice} implies that the number of $I$ is bounded from above by a constant depending only on $n$, $d$ and $V$. We end up with the following relations:
		\begin{align*}
			\begin{cases} 
				\displaystyle \Hr^{d-1}(V) \leq C(n,d,V)\, \eps_1,\\[3mm]
				\displaystyle C(n,d,V)\, \eps_1 + 2\, I\, \Hr^{d-1}(\s^{d-1}) \geq 3\, I\, \Hr^{d-1}(U) + 2\, I\, \Hr^{d-1}(W) , \\[3mm]
				\displaystyle \Hr^{d-1}(U) + \Hr^{d-1}(V)+ \Hr^{d-1}(W) = \Hr^{d-1}(\s^{d-1}).
			\end{cases}
		\end{align*}
		Plugging the last relation into the left-hand side of the second one, and then dividing by $I$, we obtain
		\begin{align*}
			C(n,d,V)\,  \eps_1 + 2\, \Hr^{d-1}(U) +2\, \Hr^{d-1}(W) \geq 3\, \Hr^{d-1}(U) + 2\, \Hr^{d-1}(W).
		\end{align*}
		Hence, we obtain 
		\begin{align}\label{eq:size_W} 
				\Hr^{d-1}(U\cup V) \leq  C(n,d,V)\, \eps_1,\qquad \text{ and } \qquad
				\Hr^{d-1}(W) \geq \Hr^{d-1}(\s^{d-1}) -C(n,d,V)\, \eps_1. 
		\end{align}
		For $\eps_1>0$ small enough, we must have a direction $\ve_0\in \s^{d-1}$ such that $h_{\ve_0,\vQ_0}$ is a Morse function with exactly $2I$ critical points, all belonging to $\hat{\Sigma}_2$. Since the $I$ connected components $\Cr_1,\ldots,\Cr_I$ of $\vpsi_{\vQ_0}(\hat{\Sigma}_2)$ are open sets of  round spheres $S_1,\ldots,S_I$, each function $h_{\ve_0,\vQ_0}\big|_{\Cr_i}$ extends to a Morse function $h_i$ on the full sphere $S_i$ of the form \eqref{eq:Morse_function} and thus has exactly 2 critical points. All the critical points of $h_i$ must be critical points of $h_{\ve_0,\vQ_0}\big|_{\Cr_i}$, otherwise the function $h_{\ve_0,\vQ_0}$ would have strictly less than $2I$ critical points. Hence, $h_{\ve_0,\vQ_0}$ has exactly 2 critical points on each set $\Cr_i$, each of them being a local extremum. \\
		
		\textit{Step 3: The boundary of every connected component $\Cr\subset \hat{\Sigma}_1$ has either one or two connected component.}\\
		By \cite[Theorem 3.1]{milnor1963}, any connected component $\Cr$ of $\hat{\Sigma}_1$ is strictly included in a trivial cobordism $Y^n = X^{n-1}\times [0,1]\subset \hat{\Sigma}$ (with $X$ closed and connected) containing no critical point of $h_{\ve_0,\vQ_0}$. Let $S\subset \dr \Cr$ be a connected component (diffeomorphic to $\s^{n-1}$) and consider $B\subset \hat{\Sigma}_2$ a connected component (diffeomorphic to $\B^n$) such that $\dr B=S$. Since $B$ contains 2 critical points of $h_{\ve_0,\vQ_0}$, $B$ cannot be fully contained in $Y$. Moreover, each connected component of $Y \setminus \Cr$ has a boundary in $\dr \Cr$. By Jordan--Brower Separation theorem, $\dr\Cr$ separates $Y$ into exactly two connected component, one containing $\Cr$ and the other one containing at least one connected component of $\dr Y=X\times \{0,1\}$. Since $\dr Y$ has exactly two connected component, $\dr \Cr$ has either one or two connected components. In the second case, $\Cr$ is diffeomorphic to a open set of the form $X\times (a,b)$, with $X\simeq \s^{n-1}$.\\

		\textit{Step 4: $\Cr$ cannot be diffeomorphic to $\s^{n-1}\times [0,1]$.}\\
		Assume $\Cr\simeq \s^{n-1}\times [0,1]$. Then the image of $\vpsi_{\vQ_0}$ consists in exactly two pieces of rounds spheres glued together by $\Cr$, thus we have that $\hat{\Sigma}$ is homeomorphic to $\s^n$. Moreover, the Morse function $h_{\ve_0,\vQ_0}$ has exactly two distinct strict local minima. Let $C_k$ be the number of critical points of $h_{\ve_0,\vQ_0}$ of index $k$. By Morse inequalities (see for instance Equation $(4_{\lambda})$ on page 30 in \cite{milnor1963}), we have 
		\begin{align*}
			C_1\geq C_0 + \dim H^1(\s^n) - \dim H^0(\s^n) = 1.
		\end{align*}
		In other words, $h_{\ve_0,\vQ_0}$ has at least one critical point of index 1. Since $\vpsi_{\ve_0,\vQ_0}(\hat{\Sigma}_2)$ consists in two pieces of round spheres, the map $h_{\ve_0,\vQ_0}|_{\hat{\Sigma}_2}$ has no critical point of index 1 (there is only extrema). Thus, $h_{\ve_0,\vQ_0}|_{\hat{\Sigma}_1}$ must have a critical point, which is impossible by construction.\\
		
		\textit{Step 5: $\Cr$ is diffeomorphic to a ball.}\\
		Since $\hat{\Sigma}_2$ consists in a disjoint union of open sets, all having a common boundary with $\Cr$, we deduce that $\hat{\Sigma}_2$ has only one connected component and is diffeomorphic to a ball $\B^n$. Consequently, the Morse function $h_{\ve_0,\vQ_0}$ has exactly 2 critical points. Following the proof of Reeb Theorem \cite[Theorem 4.1]{milnor1963}, we deduce that $\hat{\Sigma}_0\setminus \{p\}$ is diffeomorphic to a ball\footnote{The statement of Reeb theorem only shows that $\hat{\Sigma}$ is \textit{homeomorphic} to $\s^n$. However, the proof shows that if $m_1\coloneq h_{\ve_0,\vQ_0}(p)$ is the maximum value of $h_{\ve_0,\vQ_0}$ and $m_0\coloneq \min h_{\ve_0,\vQ_0}$, then the set $\hat{\Sigma}\setminus h_{\ve_0,\vQ_0}^{-1}([m_0,m_1-\delta])$ is diffeomorphic to $\B^n$ for any $\delta>m_1-m_0$. Indeed, by Theorem 3.1 p.12 in \cite{milnor1963}, the set $\hat{\Sigma}\setminus h_{\ve_0,\vQ_0}^{-1}([m_0,m_1-\delta])$ is diffeomorphic to $\hat{\Sigma}\setminus h_{\ve_0,\vQ_0}^{-1}([m_0,m_0+\delta])$. For $\delta>0$ small enough, the set $\hat{\Sigma}\setminus h_{\ve_0,\vQ_0}^{-1}([m_0,m_0+\delta])$ is diffeomorphic to $\B^n$ by Morse Lemma, see Lemma 2.2 p.6 in \cite{milnor1963}.}
		, where $p$ is the point reaching the maximum of $h_{\ve_0,\vQ_0}$. Hence, we have the diffeomorphisms $\Cr \simeq \hat{\Sigma}_0\setminus \B^n \simeq \hat{\Sigma}_0\setminus \{p\}$, which is also diffeomorphic to a ball.
	\end{proof}

	\subsection{Density of the connected components in the interior of good slices}\label{sec:density}
	
	In Proposition \ref{pr:Topology}, we proved that every connected component of $\vPhi^{-1}\big(\B^d(\vq, \rho)\big)$ is diffeomorphic to a ball, but without any information on the parametrization $\vPhi$ on these connected components. We now prove that for every connected components $\Cr\subset \vPhi^{-1}\big(\B^d(\vq,\rho)\big)$, the map $\vPhi\colon \Cr\to \B^d(\vq,\rho)$ is injective. The density of the associated varifold to $\vPhi(\Cr)$ is defined at every point and is equal to the number of preimages by $\vPhi$ since \underline{we work with $C^{\infty}$-immersions in 
		this section}. Hence, this amounts to prove that this density is equal to one at every point of $\vPhi(\Cr)$.

	\begin{proposition}\label{pr:Density}
		With the notations of Proposition \ref{pr:Topology}, we consider $\Cr$ a connected component of $\vPhi^{-1}(\B^d(\vq,\rho))$. Up to reducing $\eps_1$, the density of $\vPhi(\Cr)$ at any point $\vep_1\in \vPhi(\Cr)\setminus \s^{d-1}(\vq,\rho)$ is equal to one.
	\end{proposition}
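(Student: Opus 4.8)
The goal is to prove that $\vPhi$ is injective on $\Cr$: since $\vPhi$ is a $C^{\infty}$ immersion and $\Cr$ is compact, the density of the associated varifold at a value $\vep_1\in\vPhi(\Cr)\setminus\s^{d-1}(\vq,\rho)$ equals the finite integer $\sharp(\vPhi|_{\Cr})^{-1}(\{\vep_1\})\geq 1$, so it suffices to exclude double points. The plan is to transfer the question to the closed immersed manifold built in the proof of \Cref{pr:Topology}. Recall that there we produced $\hat{\Sigma}_0\simeq\s^n$ together with the immersion $\vpsi_{\vQ_0}=\iota_{\vQ_0}\circ\vPsi$, which coincides with $\iota_{\vQ_0}\circ\vPhi$ on the copy of $\Cr$ sitting inside $\hat{\Sigma}_1$, whose cap $\vpsi_{\vQ_0}(\hat{\Sigma}_2)$ is an embedded piece of a round $n$-sphere, whose total absolute curvature satisfies $\int_{N(\vpsi_{\vQ_0})}|\det\vII_{\vpsi_{\vQ_0}}| = \bigl(2+O(\eps_1)\bigr)\,\Hr^{d-1}(\s^{d-1})$, and for which a subset $W\subset\s^{d-1}$ of measure at least $(1-C\eps_1)\,\Hr^{d-1}(\s^{d-1})$ of directions $\ve$ yields a Morse function $h_{\ve,\vQ_0}=\ve\cdot\vpsi_{\vQ_0}$ with exactly two critical points, both on the cap $\hat{\Sigma}_2$. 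Since $\iota_{\vQ_0}$ is a diffeomorphism of $\R^d\setminus\{\vQ_0\}$ onto its image, it is enough to show that $\vpsi_{\vQ_0}$ is injective on $\hat{\Sigma}_1$; the injectivity of $\vPhi|_{\Cr}=\vPsi|_{\Cr}$, and hence the density statement, follows immediately.

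A critical point of $h_{\ve,\vQ_0}$ on $\hat{\Sigma}_1$ exists precisely when $\ve$ is a unit normal of $\vpsi_{\vQ_0}$ at some point of $\hat{\Sigma}_1$, so the Gauss-map image $\nu(\hat{\Sigma}_1)\subset\s^{d-1}$ of $\vpsi_{\vQ_0}|_{\hat{\Sigma}_1}$ is contained in $\s^{d-1}\setminus W$ and therefore has measure at most $C(n,d,V)\,\eps_1$. I then invoke the harmonic coordinates of \cite{MarRiv2025}, the same input that produces \Cref{it:Coordinates} of \Cref{th:Construction_chart}: on a chart $\vp\colon\Or\to\tilde{\Sigma}$ covering $\Cr$, the pulled-back metric satisfies $\|g_{\vPsi\circ\vp}-\delta\|_{L^{\infty}}\leq C\eps^{1/2}$ — this is where the Lorentz exponent $1$ is used, since $\vII_{\vPsi}\in L^{(n,2)}$ forces, via Gauss--Codazzi, $\Riem^{g_{\vPsi}}\in L^{(n/2,1)}$ and hence $g_{\vPsi\circ\vp}\in C^0$ — while the tangent $n$-plane $T\vPsi$ stays, outside a set of small measure, inside a fixed narrow cone around the reference plane $\Pc$ of the good slice. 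The orthogonal projection $F\coloneqq\vpi_{\Pc}\circ\vPsi$ maps $\dr\Cr$ homeomorphically onto an embedded graph-sphere bounding a topological $n$-disc $D\subset\Pc$, maps $\Cr$ into $D$ (using $\vPhi(\Cr)\subset\B^d(\vq,\rho)$ and \eqref{eq:ThickSlice}), and is a local homeomorphism away from the small bad set. Counting preimages of a regular value against the Brouwer degree of $F|_{\dr\Cr}$, which equals $\pm1$, then shows that $F$ — hence $\vPsi$, hence $\vPhi$ — is injective on $\Cr$, provided the bad set carries no extra sheet.

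The delicate point, and the reason the statement is not soft, is this last clause: in the harmonic chart $\vPhi$ itself need not be $C^1$, its tangent plane being only $\mathrm{VMO}$-close (equivalently small-$\BMO$-close) to $\Pc$ rather than $L^{\infty}$-close, so a thin fold over a set of small measure is not excluded by the degree count alone. This is handled by using the two estimates above in tandem: the Gauss-map bound $\Hr^{d-1}(\nu(\hat{\Sigma}_1))\leq C\eps_1$ forces any two sheets of $\vpsi_{\vQ_0}(\hat{\Sigma}_1)$ passing through a common interior point to have nearly parallel tangent planes, and hence confines the would-be double-point set to a set of vanishing $\Hr^n$-measure over the bad projection, on which the monotonicity formula (\cite[Section~17]{simon1983}), together with the uniform area bound from \Cref{pr:Extrinsic_Hdiff}, forbids the existence of a second sheet of positive density altogether — this being precisely the type of embeddedness conclusion that \cite{MarRiv2025} establishes for weak immersions with small $\vII$ in $L^{(n,2)}$ and flat ends, which is why the extension $\vPsi$ was constructed in that class. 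Assembling these quantitative estimates into a single smallness threshold $\eps_1$, uniform in the point $\vep_1$, is the main technical work; with it, $\vPhi|_{\Cr}$ is injective and the density equals one at every point of $\vPhi(\Cr)\setminus\s^{d-1}(\vq,\rho)$.
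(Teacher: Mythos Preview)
Your approach is substantially more complicated than the paper's and contains a genuine gap in its final step. The paper's argument is short and direct: having identified $\Cr$ with $\B^n(0,\rho)$ and extended $\vPhi$ to all of $\R^n$ so that it is flat outside $\B^n(0,2\rho)$ with $\|\vII_{\vPhi}\|_{L^{(n,2)}}\leq C\eps_1$, one applies the monotonicity formula \eqref{eq:montonicity_balls} around $\vep_1$, lets $s\to 0$ and then $t\to+\infty$. The density at infinity is exactly $1$ (flat plane), and the error term $\int |\vH|^2/|\vx-\vep_1|^{n-2}$ is bounded by $\|1/|\cdot-\vep_1|\|_{L^{(n,\infty)}}^{n-2}\|\vH\|_{L^{(n,2)}}^2\leq C\eps_1^2$ using \Cref{pr:Extrinsic_Hdiff}. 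This gives $\theta(\vep_1)\leq 1+C\eps_1^2<2$, hence $\theta(\vep_1)=1$. No degree theory, no Gauss-map accounting, no bad-set analysis.

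Your degree argument on the projection $F=\vpi_{\Pc}\circ\vPsi$ is correct for regular values but, as you yourself note, cannot rule out folds over a small set. Your proposed remedy has a real error: the fact that the image of the unit normal bundle of $\hat{\Sigma}_1$ in $\s^{d-1}$ has measure $\leq C\eps_1$ does \emph{not} force tangent planes at a hypothetical double point to be nearly parallel. In codimension $\geq 1$ the normal sphere over a single point already has $\Hr^{d-1}$-measure zero, and a small-measure image can still be spread across $\s^{d-1}$; small measure does not imply small diameter. So the inference ``two sheets through a common point have nearly parallel tangent planes'' is unjustified, and with it the rest of the paragraph. Your closing appeal to monotonicity and \Cref{pr:Extrinsic_Hdiff} is pointing at the right tools, but the way to use them is globally on the extended immersion as above, not locally on a putative bad set; once you do that, the entire degree-theoretic scaffolding becomes unnecessary.
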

	
	\begin{proof} 
		By \Cref{sec:extension} and \Cref{sec:Topology}, we can assume that $\Cr$ is identified with the ball $\B^n(0,\rho)$ and that $\vPhi\big|_{\B^n(0,\rho)}$ has already been extended to $\R^n$, with $\vPhi\big|_{\R^n\setminus \B^n(0,2\rho)}$ parametrizing a flat affine $n$-dimensional plane. We also have the global estimate
		\begin{align}\label{eq:small_extension}
			\begin{cases} 
				\displaystyle \left\| \vII_{\vPhi} \right\|_{L^{(n,2)}(\B^n(0,2\rho),g_{\vPhi})} \leq C(n,d,V)\, \eps_1, \\[4mm]
				\displaystyle \vII_{\vPhi} = 0 \qquad \text{ in }\R^n\setminus \B^n(0,2\rho).
			\end{cases} 
		\end{align}
		We now consider the monotonicity formula (see \eqref{eq:montonicity_balls}) for $0<s<t$ near a point $\vep_1\in \vPhi(\B^n(0,\rho))$,
		\begin{align*}
			\frac{\mu_{\vPhi}( \B^d(\vep_1,s))}{\Hr^n(\B^n)\, s^n} \leq \frac{\mu_{\vPhi}( \B^d(\vep_1,t))}{\Hr^n(\B^n)\, t^n} + \frac{1}{2}\int_{\B^d(\vep_1,t)} \frac{|\vH|^2}{|\vx-\vq|^{n-2}}\, d\mu_{\vPhi}(\vx) + \left( \frac{\mu(\B^d(\vep_1,s))}{\Hr^n(\B^n)\, s^n} \right)^{\frac{n-1}{n}} \frac{ \|\vH\|_{L^n(\B^d(\vep_1,s),\mu_{\vPhi})} }{ \Hr^n(\B^n)^{1/n} }.
		\end{align*}
		Letting $s\to 0$, we obtain 
		\begin{align*}
			\theta_{\vPhi(\Cr)}(\vep_1) \leq \frac{\mu_{\vPhi}( \B^d(\vep_1,t))}{\Hr^n(\B^n)\, t^n} + \frac{1}{2}\int_{\B^d(\vep_1,t)} \frac{|\vH|^2}{|\vx-\vq|^{n-2}}\, d\mu_{\vPhi}(\vx) .
		\end{align*}
		Letting $t\to +\infty$, we deduce from \eqref{eq:small_extension} and \Cref{it:Centre} in Proposition \ref{pr:global_graph_slice}, that the following estimate holds for some $\kappa>2$ depending only on $n$, $d$ and $V$
		\begin{align*}
			\theta_{\vPhi(\Cr)}(\vep_1) & \leq 1 + \frac{1}{2}\int_{\B^d(\vq,\kappa\, \rho)} \frac{|\vH|^2}{|\vx-\vq|^{n-2}}\, d\mu_{\vPhi}(\vx)  \leq 1+ \frac{1}{2}\, \left\| \frac{1}{|\cdot - \vq|} \right\|_{L^{(n,\infty)}\left(\vPhi(\R^n)\cap \B^d(\vq,\kappa\, \rho),\mu_{\vPhi} \right)}^{n-2}\, \|\vH \|_{L^{(n,2)}\left(\B^d(\vq,\kappa\, \rho), \mu_{\vPhi} \right)}^2 .
		\end{align*}
		By Proposition \ref{pr:Extrinsic_Hdiff}, we obtain 
		\begin{align*}
			\theta_{\vPhi(\Cr)}(\vep_1) \leq 1 + C(n,d,V)\, \eps_1^2.
		\end{align*}
		Up to reducing $\eps_1$, we obtain $\theta_{\vPhi(\Cr)}(\vep_1) \leq \frac{3}{2}$. This $\theta_{\vPhi(\Cr)}(\vep_1)$ is a non-zero integer, we obtain $\theta_{\vPhi(\Cr)}(\vep_1)=1$.
	\end{proof}
	
	\subsection{Coordinates inside a good slice}
	
	In this section, we prove \Cref{it:Coordinates} of \Cref{th:Construction_chart}, where \underline{the immersion $\vPhi$ is still assumed to be $C^{\infty}$}. We use the notations of \Cref{th:Construction_chart} with the assumption $r=1$: we consider $\Cr\subset \vPhi^{-1}\left(\B^d(\vq,1)\right)$ a connected component, diffeomorphic to $\B^n$. As proved in \Cref{sec:extension,sec:Topology}, we have a diffeomorphism $u\colon \B^n(0,1)\to \Cr$ such that the metric $g_{\vPhi\circ u}$ verifies on $\s^{n-1}$
	\begin{align}\label{eq:init_metric}
		\forall i,j\in\{1,\ldots,n\},\qquad \left\|  \left( g_{\vPhi\circ u}\right)_{ij} - \delta_{ij} \right\|_{L^{\infty}(\s^{n-1})} \leq C(n,d,V)\, \Er \big( \vPhi; \Cr \big)^{\frac{1}{n}}.
	\end{align}
	We also have 
	\begin{align}\label{eq:init_metric2}
		\forall i,j\in\{1,\ldots,n\},\qquad \left\|  \left( g_{\vPhi\circ u}\right)_{ij} - \left( g_{\Sc_{\Pc}} + dr^2 \right)_{ij} \right\|_{W^{1,n}(\s^{n-1})} \leq C(n,d,V).
	\end{align}
	Indeed, the metric $g_{\vPhi\circ u}$ is given by the following formula, using the notations of \Cref{sec:extension} with local coordinates $(\theta_1,\ldots,\theta_{n-1})$
	\begin{align*}
		\left( g_{\vPhi\circ u}|_{\s^{n-1}}\right)_{\theta_i\, \theta_j} =  \left( g_{\Sc_{\Pc}} \right)_{\theta_i\, \theta_j} + \dr_{\theta_i} \vtau \cdot\dr_{\theta_j} \vtau + \dr_{\theta_i} \vphi\, \dot{\otimes}\,  \left(  \dr_{\theta_j} \vtheta + \dr_{\theta_j} \vphi \right) + \dr_{\theta_i} \vtheta\, \dot{\otimes}\,  \dr_{\theta_j} \vphi .
	\end{align*}
	By \cite[Theorem 6.5]{MarRiv2025}, we obtain a diffeomorphism $f\colon \Or\to \B^n(0,1)$ such that $\vp\coloneq u\circ f\colon \Or\to \Cr$ verifies
	\begin{align}\label{eq:new_metric}
		\left\| \left(g_{\vPhi\circ \vp} \right)_{ij} - \delta_{ij} \right\|_{L^{\infty}(\Or)} + \left\| \dr_k \left( g_{\vPhi\circ \vp} \right)_{ij}  \right\|_{L^{(n,1)}(\Or)} +  \left\| \dr^2_{kl} \left(g_{\vPhi\circ \vp}\right)_{ij}  \right\|_{L^{\left( \frac{n}{2},1\right)}(\Or)} \leq C(n)\, \Er \big( \vPhi; \Cr \big)^{\frac{1}{n}}.
	\end{align}
	The regularity of $\dr \Or$ is a consequence of Remark 6.6 in \cite{MarRiv2025} and is obtained as follows. Combining \eqref{eq:new_metric} and \eqref{eq:init_metric}-\eqref{eq:init_metric2}, we obtain that the change of coordinates $ f= u^{-1}\circ \vp$ verifies for any $1\leq i,j,k\leq n$,
	\begin{align}
		& \left\| \dr_i f\cdot \dr_j f - \delta_{ij} \right\|_{L^{\infty}(\dr \Or)} \leq C(n,d,V)\, \Er \big( \vPhi; \Cr \big)^{\frac{1}{n}}. \label{eq:f1} 
	\end{align}
	If $\eps_0>\Er(\vPhi\circ\vp;\Or)>0$ is small enough (depending only on $n$, $d$ and $V$), we obtain the pointwise inequality between matrices
	\begin{align*}
		\frac{1}{2}\, \delta_{ij} < \dr_i f\cdot \dr_j f < 2\, \delta_{ij}.
	\end{align*}
	Thus, the inverse matrix also verifies the pointwise inequalities between matrices
	\begin{align*}
		\frac{1}{2}\, \delta_{ij} < \dr_i f^{-1}\cdot \dr_j f^{-1} < 2\, \delta_{ij}.
	\end{align*}
	Taking $i=j$, we deduce that 
	\begin{align*}
		\|\dr_i f\|_{L^{\infty}(\dr \Or)} + \|\dr_i f^{-1}\|_{L^{\infty}(\s^{n-1})}\leq 4.
	\end{align*}
    In order to obtain estimates on the second derivatives of $f$, we compare the Christoffel symbols of $g_{\vPhi\circ\vp}$ and $g_{\vPhi\circ u}$. Thanks to \cite[Equation (2.13)]{taylor2006}, we have 
    \begin{align*}
        \dr^2_{ij} (f^{-1})^k = \big( {^{g_{\vPhi\circ\vp}} \Gamma}_{ij}^p \big)\, \dr_p (f^{-1})^k - \big( {^{g_{\vPhi\circ u}} \Gamma^k_{\alpha\beta}} \big)\, \left( \dr_i (f^{-1})^{\alpha} \right)\, \left( \dr_j (f^{-1})^{\beta} \right).
    \end{align*}
	Consequently, the estimates \eqref{eq:init_metric2} and \eqref{eq:new_metric} imply that 
	\begin{align*}
		\|\dr^2_{ij} f^{-1}\|_{L^n(\dr \Or)} \leq C(n,d,V).
	\end{align*}
	Therefore, the map $f^{-1}\colon \s^{n-1}\to \dr\Or$ is a diffeomorphism of class $W^{2,n}(\s^{n-1})$ and by Sobolev injections, also of class $C^{1,\frac{1}{n}}$. In $\Or$, the map $f^{-1}\colon \B^n(0,1)\to \Or$ verifies $g_{\vPhi\circ u} = (f^{-1})^*g_{\vPhi\circ\vp}$. Therefore, we have
	\begin{align*}
		\|f^{-1}\|_{W^{1,n}(\B^n(0,1))} \leq C(n)\, \|g_{\vPhi}\|_{W^{1,n}(\Cr)}.
	\end{align*}

	\section{Atlas for weak immersions}\label{sec:Atlas}
	
	In this section, we iterate \Cref{th:Construction_chart} to construct an atlas on the domains of $C^{\infty}$-immersions with bounded energy. We then proceed by an approximation argument to obtain an atlas for weak immersions. Since the boundary conditions for these charts will be provided by \Cref{th:Construction_chart}, the regularity of the underlying differential structure is not obvious. We will prove that the differential structure of the atlas we construct is $C^1$-diffeomorphic to the original structure of class $C^{\infty}$ of the domain. More precisely, we prove the following result.
	
	\begin{theorem}\label{th:Atlas}
		Let $E,D>0$ and $\Sigma$ be a closed oriented manifold of class $C^{\infty}$ with even dimension $n\geq 4$. Consider $\vPhi\in \I_{\frac{n}{2}-1,2}(\Sigma;\R^d)$ such that $\Er(\vPhi)\leq E$ and $\diam(\vPhi(\Sigma))\leq D$. Then there exists $\eps_0>0$ depending only on $n$, $d$, $D$ and $E$ together with an atlas $(U_{\alpha},\vp_{\alpha})_{\alpha\in A}$ of $\Sigma$ satisfying the following properties:
		\begin{enumerate}
			\item\label{it:Harmonic} For each $\alpha\in A$, the chart $\vp_{\alpha}\colon \Or_{\alpha}\subset \R^n \to U_{\alpha}\subset \Sigma$ provides harmonic coordinates and satisfies the estimates of \Cref{it:Coordinates} in \Cref{th:Construction_chart}. In particular, the open set $U_{\alpha}$ is bi-Lipschitz homeomorphic to the Euclidean ball $\B^n$.
			
			\item\label{it:Boundary} For each $\alpha\in A$, there exist a ball $\B^d(\vep_{\alpha},r_{\alpha})$ such that $\vPhi\colon U_{\alpha}\to \B^d(\vep_{\alpha},r_{\alpha})$ is injective and 
			\begin{align*} 
				\Er\left( \vPhi; \vPhi^{-1}\big( \B^d(\vep_{\alpha},2r_{\alpha}) \big) \right)\leq \eps_0^n.
			\end{align*} 
			Moreover, there exists $\delta\in(0,1)$ depending only on $n$, $d$ and $E$, and an affine $n$-dimensional plane $\Pc_{\alpha}\subset \R^d$ such that $\vPhi(\dr U_{\alpha})$ is described by a map $\vphi_{\alpha}\colon \Sc_{\alpha}\to \R^d$, where $\Sc_{\alpha}\coloneq \Pc_{\alpha}\cap \s^{d-1}(\vep_{\alpha},r_{\alpha})$ satisfying the following properties 
			\begin{align*}
				\vPhi(\dr U_{\alpha}) = \left\{ \vtheta + \vphi_{\alpha}(\vtheta) : \vtheta \in \Sc_{\alpha}
				\right\},
			\end{align*}
			together with the following estimates
			\begin{align*}
				\begin{cases} 
					\eps_0^{-1} \left[ r_{\alpha}^{-1} \|\vphi_{\alpha}\|_{L^{\infty}(\Sc_{\alpha})} + \|\g \vphi_{\alpha}\|_{L^{\infty}(\Sc_{\alpha})} \right] + r_{\alpha}^{\frac{1}{n}} \|\g^2 \vphi_{\alpha}\|_{L^n(\Sc_{\alpha})} \leq C_0(n,d,E,D),\\[2mm]
					\dist(\vep_{\alpha},\Pc_{\alpha}) \leq \delta\, r_{\alpha}.
				\end{cases}
			\end{align*}
			
			\item\label{it:Transition} There exists a constant $C_1>0$ depending only on $n$ and $d$ such that the following holds. Let $\alpha,\beta\in A$ be such that $U_{\alpha}\cap U_{\beta}\neq \emptyset$. Then the transition charts $\vp_{\alpha\beta}\coloneqq \vp_{\alpha}^{-1}\circ\vp_{\beta}\colon \vp_{\beta}^{-1}(U_{\alpha}\cap U_{\beta})\to \vp_{\alpha}^{-1}(U_{\alpha}\cap U_{\beta})$ are of class $W^{3,\left( \frac{n}{2},1\right)}$ with the estimate
			\begin{align*}
				\begin{cases} 
					\displaystyle \left\| \vp_{\alpha\beta} \right\|_{L^{\infty}\left( \vp_{\beta}^{-1}(U_{\alpha}\cap U_{\beta})\right)} \leq 1, \\[3mm]
					\displaystyle \left\| \g \vp_{\alpha\beta} \right\|_{L^{\infty}\left( \vp_{\beta}^{-1}(U_{\alpha}\cap U_{\beta})\right)} \leq 1 + C_1\, \eps_0,\\[3mm]
					\displaystyle  \left\| \dr_i \vp_{\alpha\beta}\cdot \dr_j \vp_{\alpha\beta} - \delta_{ij} \right\|_{L^{\infty}\left( \vp_{\beta}^{-1}(U_{\alpha}\cap U_{\beta})\right)} \leq C_1\, \eps_0,\\[3mm]
					\displaystyle  \left\| \g^2 \vp_{\alpha\beta}\right\|_{L^{(n,1)}\left( \vp_{\beta}^{-1}(U_{\alpha}\cap U_{\beta}),\geu\right)} \leq C_1\, \eps_0,\\[3mm]
					\displaystyle  \left\| \g^3 \vp_{\alpha\beta}\right\|_{L^{\left( \frac{n}{2},1\right)}\left( \vp_{\beta}^{-1}(U_{\alpha}\cap U_{\beta}),\geu\right)} \leq C_1\, \eps_0.
				\end{cases}
			\end{align*}
			
			\item\label{it:Regularity} The induced differential structure is $C^1$-diffeomorphic to the initial $C^{\infty}$ structure of $\Sigma$.
			
			\item\label{it:Size} The cardinal of $A$ is bounded from above as long as the following number is bounded from above:
			\begin{align*}
				\Ec(\vPhi)\coloneqq \Er(\vPhi)\, \diam(\vPhi(\Sigma))^n\, \inf\left\{ \frac{1}{r^n} : \forall \vx\in \R^d,\quad \Er\Big( \vPhi;\vPhi^{-1}\big(\B^d(\vx,r)\big) \Big) \leq \eps_0 \right\}.
			\end{align*}
		\end{enumerate}
	\end{theorem}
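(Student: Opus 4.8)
The plan is to establish the theorem first for $C^{\infty}$ immersions by patching together the charts produced by \Cref{th:Construction_chart}, and then to deduce the general case by the approximation procedure of \cite{Lan2025}. So assume first $\vPhi\in\Imm(\Sigma;\R^d)$; after rescaling we may take $\diam(\vPhi(\Sigma))\le\frac12$, so that for each $\vx\in\vPhi(\Sigma)$ one has $\vPhi(\Sigma)\subset\B^d(\vx,1)$ and, by \eqref{eq:diameter_energy_volume}, $\vol_{g_{\vPhi}}(\vPhi^{-1}(\B^d(\vx,1)))=\vol_{g_{\vPhi}}(\Sigma)\le C(n)E=:V$. Let $\eps_0=\eps_0(n,d,E)$ be the smallness constant of \Cref{th:Construction_chart} for this $V$, shrunk if necessary so that $\Er$ below $\eps_0^n$ on an extrinsic ball forces $\|\vH\|_{L^{(n,\infty)}}$ below the threshold of \Cref{pr:Lower_Extrinsic}. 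For $\vx\in\vPhi(\Sigma)$ put $R(\vx):=\sup\{s\in(0,1):\Er(\vPhi;\vPhi^{-1}(\B^d(\vx,s)))\le\eps_0^n\}$, which is positive by absolute continuity of $\Er(\vPhi;\cdot)$ and $1$-Lipschitz in $\vx$. Applying \Cref{th:Construction_chart} at each $\vx$ with the ball $\B^d(\vx,R(\vx)/4)$ produces $\vq_{\vx}$ close to $\vx$ and $\rho_{\vx}$ comparable to $R(\vx)$, with $\B^d(\vx,cR(\vx))\subset\B^d(\vq_{\vx},\rho_{\vx})\subset\B^d(\vq_{\vx},2\rho_{\vx})\subset\B^d(\vx,R(\vx))$, together with a controlled harmonic chart on each connected component of $\vPhi^{-1}(\B^d(\vq_{\vx},\rho_{\vx}))$. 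Taking a finite subcover of $\{\B^d(\vx,cR(\vx))\}_{\vx}$ of $\vPhi(\Sigma)$ and letting $A$ index the resulting charts, \Cref{it:Harmonic} and \Cref{it:Boundary} follow directly from parts (1)--(4) of \Cref{th:Construction_chart} (with $\vep_{\alpha}=\vq_{\vx}$, $r_{\alpha}=\rho_{\vx}$, $\Pc_{\alpha},\vphi_{\alpha}$ from part (3)), while \Cref{it:Size} follows from a standard Besicovitch covering argument: the volume lower bound \Cref{pr:Lower_Extrinsic} and the global bound \eqref{eq:diameter_energy_volume} control by $\Ec(\vPhi)$ the number of chart-balls of radius comparable to the $\Ec$-radius, and each remaining, smaller chart-ball carries the definite energy $\eps_0^{n}$ on a dilate, so the number of these is controlled by $\Er(\vPhi)\le E$; combined with the uniform bound on the number of components per ball from \Cref{pr:global_graph_slice}, this gives $\#A\le C(n,d,E)(1+\Ec(\vPhi))$.

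For \Cref{it:Transition}, observe that if $U_{\alpha}\cap U_{\beta}\neq\emptyset$ then the components of $\vp_{\alpha\beta}=\vp_{\alpha}^{-1}\circ\vp_{\beta}$ are the $\vp_{\alpha}$-coordinate functions pulled back by the isometry $\vp_{\beta}$, hence are $g_{\vPhi\circ\vp_{\beta}}$-harmonic on $\vp_{\beta}^{-1}(U_{\alpha}\cap U_{\beta})$. Since $g_{\vPhi\circ\vp_{\beta}}$ satisfies $\|g-\delta\|_{L^{\infty}}+\|\g g\|_{L^{(n,1)}}+\|\g^2 g\|_{L^{(\frac n2,1)}}\le C_0\eps_0$ by \Cref{it:Coordinates} of \Cref{th:Construction_chart}, Calder\'on--Zygmund estimates in Lorentz spaces applied to the equation $\g\cdot(\sqrt g\,g^{-1}\g\vp_{\alpha\beta})=0$ and to its first derivative yield $\vp_{\alpha\beta}\in W^{3,(\frac n2,1)}$ with the $L^{(n,1)}$ and $L^{(\frac n2,1)}$ bounds on $\g^2\vp_{\alpha\beta}$, $\g^3\vp_{\alpha\beta}$; the identity $g_{\vPhi\circ\vp_{\beta}}=\vp_{\alpha\beta}^{*}g_{\vPhi\circ\vp_{\alpha}}$ together with the closeness of both metrics to $\geu$ forces $\|\dr_i\vp_{\alpha\beta}\cdot\dr_j\vp_{\alpha\beta}-\delta_{ij}\|_{L^{\infty}}\le C_1\eps_0$ and hence the $C^{0}$ and gradient estimates, while $\|\vp_{\alpha\beta}\|_{L^{\infty}}\le1$ comes from $r_{\alpha}\le\frac12$, $\int_{\dr\Or_{\alpha}}f=0$ and \eqref{eq:bound_domain}. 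Finally, when $\vPhi$ is $C^{\infty}$ the metric $g_{\vPhi}$ is $C^{\infty}$, so the harmonic charts $\vp_{\alpha}$ are $C^{\infty}$ diffeomorphisms onto open subsets of $\Sigma$ and the atlas $(U_{\alpha},\vp_{\alpha})$ is a sub-atlas of the maximal $C^{\infty}$ atlas of $\Sigma$; this is \Cref{it:Regularity} in the $C^{\infty}$ case.

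For a general weak immersion $\vPhi\in\I_{\frac n2-1,2}(\Sigma;\R^d)$ with $\Er(\vPhi)\le E$, $\diam(\vPhi(\Sigma))\le D$, take $C^{\infty}$ immersions $\vPhi_k\to\vPhi$ given by the scheme of \cite{Lan2025}, with $\Er(\vPhi_k)\le2E$, $\diam(\vPhi_k(\Sigma))\le2D$ for $k$ large and $\vII_{\vPhi_k}\to\vII_{\vPhi}$ in $W^{\frac n2-1,2}$. The previous steps equip each $\vPhi_k$ with an atlas $(U_{\alpha,k},\vp_{\alpha,k})_{\alpha\in A_k}$; since $\#A_k$ is uniformly bounded, after passing to a subsequence we may assume $A_k=A$ fixed, with convergence of the data $\vep_{\alpha,k},r_{\alpha,k},\Pc_{\alpha,k}$ and (by the uniform $W^{2,n}$ control on $\dr\Or_{\alpha,k}$ and Arzela--Ascoli) of the domains $\Or_{\alpha,k}$. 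All the estimates of \Cref{it:Harmonic}--\Cref{it:Transition} are uniform and stable under weak-$\ast$ convergence in the relevant Lorentz--Sobolev spaces and $C^1$ convergence on the overlaps, so they pass to limiting charts $\vp_{\alpha}$ and transition maps $\vp_{\alpha\beta}$, giving \Cref{it:Harmonic}--\Cref{it:Size}. The delicate point is \Cref{it:Regularity}: each $\vp_{\alpha,k}$ is a $C^{\infty}$ diffeomorphism of $\Or_{\alpha,k}$ onto $U_{\alpha,k}\subset\Sigma$ with $\|D\vp_{\alpha,k}\|_{L^{\infty}}$ and $\|D\vp_{\alpha,k}^{-1}\|_{L^{\infty}}$ bounded uniformly in $k$, and one must show, following \cite{Lan2025}, that in fixed $C^{\infty}$ charts of $\Sigma$ the $\vp_{\alpha,k}$ converge in $C^1$ to $\vp_{\alpha}$, so that $\vp_{\alpha}$ is a $C^1$ diffeomorphism onto an open subset of $\Sigma$ with $C^1$ inverse and the constructed atlas is $C^1$-compatible with the initial structure. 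This last step — identifying the limiting $C^1$ structure with the original $C^{\infty}$ one, which cannot be read off the induced metric since $g_{\vPhi}$ need not converge in $C^0$ in the initial charts — is the main obstacle; by contrast the covering bookkeeping for \Cref{it:Size} and the Lorentz elliptic regularity for \Cref{it:Transition} are routine given \Cref{th:Construction_chart} and \cite{MarRiv2025}.
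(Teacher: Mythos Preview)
Your overall architecture --- prove the theorem for $C^{\infty}$ immersions by covering with the charts of \Cref{th:Construction_chart}, then pass to weak immersions by the approximation of \cite{Lan2025} --- matches the paper exactly. The covering argument for \Cref{it:Size} and the smooth-case treatment of \Cref{it:Harmonic}, \Cref{it:Boundary}, \Cref{it:Regularity} are fine.

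Two points deserve comment. For \Cref{it:Transition}, your Calder\'on--Zygmund route is workable but over-engineered: the overlap domain $\vp_{\beta}^{-1}(U_{\alpha}\cap U_{\beta})$ has no useful boundary regularity, so interior elliptic estimates would not give the full-domain bounds. The paper instead uses the pointwise identity $\dr^2_{ij}(\vp_{\alpha\beta})^{k}={}^{h_{\beta}}\Gamma_{ij}^{p}\,\dr_{p}(\vp_{\alpha\beta})^{k}-{}^{h_{\alpha}}\Gamma^{k}_{pq}\,\dr_{i}(\vp_{\alpha\beta})^{p}\,\dr_{j}(\vp_{\alpha\beta})^{q}$ (Taylor's formula for the Hessian under an isometry), which immediately yields the $L^{(n,1)}$ and, after one differentiation, the $L^{(\frac n2,1)}$ bounds from the estimates on $\Gamma$ in both charts. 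This is simpler and is what you should do.

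The real gap is \Cref{it:Regularity} for weak immersions. You correctly flag it as the main obstacle, but your proposed resolution --- showing that $\vp_{\alpha,k}\to\vp_{\alpha}$ in $C^{1}$ \emph{in fixed $C^{\infty}$ charts of $\Sigma$} --- is not what one can prove, and not what the paper does: since $g_{\vPhi}$ need not be $C^{0}$ in the original charts, there is no reason for $\vp_{\alpha,k}$ to converge there in $C^{1}$. The paper proceeds differently. First, one upgrades the a priori \emph{weak} convergence of the transition maps $\vp_{\alpha\beta}^{k}$ in $W^{3,(\frac n2,1)}_{\loc}$ to \emph{strong} convergence, hence $C^{1}$ convergence; this uses that $(\vp_{\alpha}^{k})^{-1}$ is $g_{\vPhi_{k}}$-harmonic together with the strong convergence $g_{\vPhi_{k}}\to g_{\vPhi}$ in $W^{\frac n2,2}$ from the approximation, via an energy argument and then the harmonic-coordinate equation \eqref{eq:harm_coord2}. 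Second, with $C^{1}$ convergence of the \emph{transition maps} (not the charts) in hand, one runs an inductive partition-of-unity gluing, as in Uhlenbeck \cite[Proposition~3.2]{uhlenbeck1982}, to build $C^{1}$ diffeomorphisms $F_{k}\colon\Sigma\to\Sigma$ with $F_{k}\to\Id_{\Sigma}$ in $C^{1}$, locally defined by $F_{k}=\vp_{\alpha}^{k}\circ f_{\alpha}^{k}\circ\vp_{\alpha}^{-1}$ for suitable near-identity maps $f_{\alpha}^{k}$ satisfying the cocycle condition. The weak-to-strong step is essential and is missing from your sketch; without it the $C^{1}$ compatibility with the original structure cannot be concluded.
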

	
	The main difference between \Cref{th:Atlas} and \Cref{th:Construction_chart} is the assumption on the volume. Thanks to \eqref{eq:diameter_energy_volume}, if $\Sigma$ is closed, then we have 
	\begin{align}\label{eq:est_Volume}
		\sup_{\vx\in\R^d} \vol_{g_{\vPhi}} \left( \vPhi^{-1}\big(\B^d(\vx,1)\big) \right) \leq \vol_{g_{\vPhi}}(\Sigma) \leq C(n,d)\, \diam(\vPhi(\Sigma))\, \Er(\vPhi).
	\end{align}
	Hence, if $\diam(\vPhi(\Sigma)) \leq D$ is fixed, then we can apply \Cref{th:Construction_chart}. However, for what follows, we will need to consider the case where $\Sigma$ is open. We record the following version of \Cref{th:Atlas}.
	
	\begin{theorem}\label{th:Atlas2}
		Let $V>0$ and $\Sigma$ be an oriented manifold (possibly with boundary) of class $C^{\infty}$ and having even dimension $n\geq 4$. Consider $\vPhi\in \I_{\frac{n}{2}-1,2}(\Sigma;\R^d)$ such that 
		\begin{align}\label{hyp:vol}
			\sup_{\vx\in\R^d} \vol_{g_{\vPhi}} \left( \vPhi^{-1}\big(\B^d(\vx,1)\big) \right) \leq V.
		\end{align}
		Given $\vartheta>0$, we consider
		\begin{align*}
			\Sigma_{\vartheta} \coloneqq \left\{ x\in \Sigma : \dist_{\R^d}\left(\vPhi(x) , \vPhi(\dr \Sigma)\right) > \vartheta \right\}.
		\end{align*}
		Then there exists $\eps_0>0$ depending only on $n$, $d$ and $V$ together with an atlas $(U_{\alpha},\vp_{\alpha})_{\alpha\in A}$ of $\Sigma_{\vartheta}$ satisfying the following properties:
		\begin{enumerate}
			\item\label{it:Harmonic2} For each $\alpha\in A$, the map $\vp_{\alpha}\colon \Or_{\alpha}\to U_{\alpha}\subset \Sigma$ provides harmonic coordinates and satisfy the estimates of \Cref{it:Coordinates} in \Cref{th:Construction_chart}. In particular, the open set $U_{\alpha}$ is bi-Lipschitz homeomorphic to the Euclidean ball $\B^n$.
			
			\item\label{it:Boundary2} For each $\alpha\in A$, there exist a ball $\B^d(\vep_{\alpha},r_{\alpha})$ such that $\vPhi\colon U_{\alpha}\to \B^d(\vep_{\alpha},r_{\alpha})$ is injective and 
			\begin{align*} 
				\Er\left( \vPhi; \vPhi^{-1}\big( \B^d(\vep_{\alpha},2r_{\alpha}) \big) \right)\leq \eps_0^n.
			\end{align*} 
			Moreover, there exists $\delta\in(0,1)$ depending only on $n$, $d$ and $E$, and an affine $n$-dimensional plane $\Pc_{\alpha}\subset \R^d$ such that $\vPhi(\dr U_{\alpha})$ is described by a map $\vphi_{\alpha}\colon \Sc_{\alpha}\to \R^d$, where $\Sc_{\alpha}\coloneq \Pc_{\alpha}\cap \s^{d-1}(\vep_{\alpha},r_{\alpha})$ satisfying the following properties 
			\begin{align}\label{eq:pseudo_graph}
				\vPhi(\dr U_{\alpha}) = \left\{ \vtheta + \vphi_{\alpha}(\vtheta) : \vtheta \in \Sc_{\alpha}
				\right\},
			\end{align}
			together with the following estimates
			\begin{align*}
				\begin{cases} 
					\eps_0^{-1} \left[ r_{\alpha}^{-1} \|\vphi_{\alpha}\|_{L^{\infty}(\Sc_{\alpha})} + \|\g \vphi_{\alpha}\|_{L^{\infty}(\Sc_{\alpha})} \right] + r_{\alpha}^{\frac{1}{n}} \|\g^2 \vphi_{\alpha}\|_{L^n(\Sc_{\alpha})} \leq C_0(n,d,E,D),\\[2mm]
					\dist(\vep_{\alpha},\Pc_{\alpha}) \leq \delta\, r_{\alpha}.
				\end{cases}
			\end{align*}
			
			\item\label{it:Transition2} There exists a constant $C_2>0$ depending only on $n$ and $d$ such that the following holds. Let $\alpha,\beta\in A$ be such that $U_{\alpha}\cap U_{\beta}\neq \emptyset$. Then the transition charts $\vp_{\alpha,\beta}\coloneqq \vp_{\alpha}^{-1}\circ\vp_{\beta}\colon \vp_{\beta}^{-1}(U_{\alpha}\cap U_{\beta})\to \vp_{\alpha}^{-1}(U_{\alpha}\cap U_{\beta})$ are of class $W^{3,\left( \frac{n}{2},1\right)}$ with the estimate
			\begin{align*}
				\begin{cases} 
					\displaystyle \left\| \vp_{\alpha,\beta} \right\|_{L^{\infty}\left( \vp_{\beta}^{-1}(U_{\alpha}\cap U_{\beta})\right)} \leq 1, \\[3mm]
					\displaystyle \left\| \g \vp_{\alpha,\beta} \right\|_{L^{\infty}\left( \vp_{\beta}^{-1}(U_{\alpha}\cap U_{\beta})\right)} \leq 1 + C_2\, \eps_0,\\[3mm]
					\displaystyle  \left\| \dr_i \vp_{\alpha,\beta}\cdot \dr_j \vp_{\alpha,\beta} - \delta_{ij} \right\|_{L^{\infty}\left( \vp_{\beta}^{-1}(U_{\alpha}\cap U_{\beta})\right)} \leq C_2\, \eps_0,\\[3mm]
					\displaystyle  \left\| \g^2 \vp_{\alpha,\beta}\right\|_{L^{(n,1)}\left( \vp_{\beta}^{-1}(U_{\alpha}\cap U_{\beta}),\geu\right)} \leq C_2\, \eps_0,\\[3mm]
					\displaystyle  \left\| \g^3 \vp_{\alpha,\beta}\right\|_{L^{\left( \frac{n}{2},1\right)}\left( \vp_{\beta}^{-1}(U_{\alpha}\cap U_{\beta}),\geu\right)} \leq C_2\, \eps_0.
				\end{cases}
			\end{align*}
			
			\item\label{it:Regularity2} The induced differential structure is $C^1$-diffeomorphic to the initial $C^{\infty}$ structure of $\Sigma$.
			
			\item\label{it:Size2} The cardinal of $A$ is bounded from above as long as the following number is bounded from above:
			\begin{align*}
				\Ec_{\vartheta}(\vPhi)\coloneqq \vol_{g_{\vPhi}}(\Sigma_{\vartheta})\, \inf\left\{\frac{1}{r^n} : \forall \vx\in \R^d,\quad \Er\Big( \vPhi;\vPhi^{-1}\big(\B^d(\vx,r)\big)\cap \Sigma_{\vartheta} \Big) \leq \eps_0 \right\}.
			\end{align*}
		\end{enumerate}
	\end{theorem}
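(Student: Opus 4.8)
The plan is to obtain the atlas by iterating the single-chart construction of \Cref{th:Construction_chart} over a carefully chosen covering of $\Sigma_{\vartheta}$, and then to pass from smooth to weak immersions via the approximation scheme of \cite{Lan2025}. So I would first reduce to the case $\vPhi\in\Imm(\Sigma;\R^d)$: by \cite{Lan2025} there is a sequence $\vPhi_k\in\Imm(\Sigma;\R^d)$ converging to $\vPhi$ in $\I_{\frac{n}{2}-1,2}(\Sigma;\R^d)$, and since the volume densities converge, \eqref{hyp:vol} holds for $\vPhi_k$ with constant $2V$ for $k$ large, so every constant produced below is uniform in $k$ and the atlas for $\vPhi$ is recovered as a $C^1$-limit (last paragraph). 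I would also record that \Cref{th:Construction_chart}, together with the whole of its proof in \Cref{sec:slice,sec:graph_good_slice,sec:extension,sec:Topology}, goes through verbatim on a manifold with boundary as soon as every preimage $\vPhi^{-1}(\B^d(\vPhi(x),c r))$ entering the domain part of the argument avoids $\vPhi(\dr\Sigma)$: this is guaranteed by working inside $\Sigma_{\vartheta}$ at scales $\le\vartheta/2$, because \Cref{pr:Lower_Extrinsic} and \Cref{pr:Extrinsic_Hdiff} already permit $\dr\Sigma\neq\emptyset$ under exactly this proviso. Henceforth $\vPhi$ is smooth.

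\textbf{Charts and covering.} For $x\in\Sigma_{\vartheta}$ I would set
\[
r_x:=\min\Big\{\tfrac{\vartheta}{4},\ \sup\big\{r>0:\Er\big(\vPhi;\vPhi^{-1}(\B^d(\vPhi(x),2r))\big)\le\eps_0^n\big\}\Big\},
\]
which is strictly positive because $\I_{\frac{n}{2}-1,2}\subset\I_{0,(n,2)}$ makes $\Er$ a well-defined, absolutely continuous measure, so that the energy of shrinking extrinsic balls tends to $0$. Since $2r_x<\vartheta<\dist(\vPhi(x),\vPhi(\dr\Sigma))$, the ball $\B^d(\vPhi(x),2r_x)$ is clear of $\vPhi(\dr\Sigma)$, and \eqref{hyp:vol} supplies the volume hypothesis of \Cref{th:Construction_chart} with constant $V$. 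Applying \Cref{th:Construction_chart} at each $x$ yields harmonic charts $\vp_x\colon\Or_x\to U_x$ obeying \eqref{eq:bound_domain}, \eqref{eq:interior_domain}, \eqref{eq:bound_coordinates} and the boundary-graph estimates; this gives items \ref{it:Harmonic2} and \ref{it:Boundary2}. I would then extract a countable subfamily $(U_\alpha,\vp_\alpha)_{\alpha\in A}$ still covering $\Sigma_{\vartheta}$ and with uniformly bounded overlap, by a Vitali/Besicovitch selection on the extrinsic balls $\B^d(\vPhi(x),r_x)$ combined with the density-one property of \Cref{pr:Density} (so that $\vPhi$ is injective on each chart and overlaps in $\Sigma$ and in $\R^d$ match). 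The lower volume bound of \Cref{pr:Lower_Extrinsic} turns bounded overlap into a packing estimate, which together with \eqref{hyp:vol} gives $\#A\le C(n,d)\,\vol_{g_{\vPhi}}(\Sigma_{\vartheta})\,\big(\inf_x r_x^{-n}\big)\lesssim\Ec_{\vartheta}(\vPhi)$, i.e.\ item \ref{it:Size2}.

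\textbf{Transition maps.} For item \ref{it:Transition2} the key observation is that on $U_\alpha\cap U_\beta$ the coordinate functions of $\vp_\alpha$ are $g_{\vPhi}$-harmonic, hence their pullbacks by $\vp_\beta$ are harmonic for $g_{\vPhi\circ\vp_\beta}$; writing $\Delta_{g_{\vPhi\circ\vp_\beta}}$ in these harmonic coordinates (where the first-order term drops out), the transition map $\vp_{\alpha\beta}=\vp_\alpha^{-1}\circ\vp_\beta$ solves
\[
\big(g_{\vPhi\circ\vp_\beta}\big)^{ik}\,\dr^2_{ik}\vp_{\alpha\beta}^{\,j}=0 .
\]
The bounds $\|\vp_{\alpha\beta}\|_{L^\infty}\le 1$, $\|\g\vp_{\alpha\beta}\|_{L^\infty}\le 1+C\eps_0$ and $\|\dr_i\vp_{\alpha\beta}\cdot\dr_j\vp_{\alpha\beta}-\delta_{ij}\|_{L^\infty}\le C\eps_0$ follow from the identity $\vp_{\alpha\beta}^*g_{\vPhi\circ\vp_\alpha}=g_{\vPhi\circ\vp_\beta}$ and the $L^\infty$-smallness in \eqref{eq:bound_coordinates}. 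Differentiating the equation once and twice and inserting $\g g\in L^{(n,1)}$ and $\g^2 g\in L^{(n/2,1)}$ from \eqref{eq:bound_coordinates}, elliptic regularity on the Lorentz--Sobolev scale gives $\g^2\vp_{\alpha\beta}\in L^{(n,1)}$ and $\g^3\vp_{\alpha\beta}\in L^{(n/2,1)}$ with the stated smallness (the dependence on $V$ being absorbed into $C_2(n,d)$ after shrinking $\eps_0$); since $W^{3,(n/2,1)}(\B^n)\hookrightarrow C^1(\B^n)$, the family $(U_\alpha,\vp_\alpha)$ is a genuine $C^1$ atlas.

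\textbf{Comparison with the original structure, and the main obstacle.} Item \ref{it:Regularity2} is where I expect the real difficulty. When $\vPhi$ is smooth, each chart of \Cref{th:Construction_chart} factors as $\vp_\alpha=u_\alpha\circ f_\alpha$, where $u_\alpha\colon\B^n\to\Cr_\alpha\subset\Sigma$ is the diffeomorphism onto a connected component of $\vPhi^{-1}(\B^d(\vq_\alpha,\rho_\alpha))$ produced by the extension/Morse-theory argument of \Cref{sec:extension,sec:Topology}, and $f_\alpha$ is the harmonic change of variables of \cite{MarRiv2025}; since $g_{\vPhi}$ is $C^{\infty}$, both $u_\alpha$ (Morse theory for a smooth function) and $f_\alpha$ (harmonic coordinates of a smooth metric) are in fact $C^{\infty}$, so $(U_\alpha,\vp_\alpha)$ is $C^{\infty}$-compatible with the original structure and item \ref{it:Regularity2} is immediate in the smooth case. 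The obstacle is to preserve this in the limit $k\to\infty$: for the approximants $\vPhi_k$ the maps $u_{\alpha,k},f_{\alpha,k}$ are controlled only uniformly, via the boundary estimate \eqref{eq:bound_domain} (class $W^{2,n}\cap C^{1,1/n}$), the interior bi-Lipschitz and $W^{1,n}$ estimate \eqref{eq:interior_domain}, and the metric estimate \eqref{eq:bound_coordinates}. The plan is to upgrade, using the $C^0$-convergence of $g_{\vPhi_k}$ together with \eqref{eq:bound_coordinates}, the bi-Lipschitz convergence of the $f_{\alpha,k}$ to $C^1$ convergence (and similarly for the $u_{\alpha,k}$, which depend only on the fixed smooth structure of $\Sigma$ and on the good-slice charts), so that $\vp_{\alpha,k}\to\vp_\alpha$ in $C^1$ and $\mathrm{id}\colon(\Sigma_{\vartheta},(\vp_\alpha))\to(\Sigma_{\vartheta},\text{original})$ is a $C^1$-limit of $C^{\infty}$-diffeomorphisms, hence a $C^1$-diffeomorphism. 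Once this is secured, a diagonal precompactness argument (Arzel\`a--Ascoli for the uniformly $W^{3,(n/2,1)}$ transition maps, in the spirit of \Cref{lm:ArzelaAscoli}) produces the limiting $C^1$ atlas for $\vPhi$ with all the asserted estimates, and \Cref{th:Atlas} then follows as the special case $\dr\Sigma=\emptyset$ through \eqref{eq:est_Volume}.
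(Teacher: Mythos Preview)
Your overall architecture (smooth case via Besicovitch + \Cref{th:Construction_chart}, then approximation for weak immersions) matches the paper, and your treatment of the covering and of \Cref{it:Size2} is essentially the paper's. There are, however, two places where the argument as written does not go through.

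\textbf{Transition maps: smallness does not follow from the scalar equation.} The equation $(g_{\vPhi\circ\vp_\beta})^{ik}\,\partial_i\partial_k\vp_{\alpha\beta}^{\,j}=0$ is correct, but it is homogeneous, so interior elliptic estimates only give a \emph{bound} on $\nabla^2\vp_{\alpha\beta}$ in terms of $\|\nabla\vp_{\alpha\beta}\|_{L^\infty}$, not the smallness $C_2\eps_0$. Differentiating does not help: the right-hand side then involves $(\partial g)\cdot\nabla^2\vp_{\alpha\beta}$, and you do not yet control $\nabla^2\vp_{\alpha\beta}$ in $L^\infty$. The paper avoids this by using the full tensorial identity for an isometry,
\[
\partial^2_{ij}(\vp_{\alpha\beta})^k={}^{h_\beta}\Gamma_{ij}^p\,\partial_p(\vp_{\alpha\beta})^k-{}^{h_\alpha}\Gamma^k_{pq}\,\partial_i(\vp_{\alpha\beta})^p\,\partial_j(\vp_{\alpha\beta})^q,
\]
which expresses each component of the Hessian directly as (Christoffel)$\times$(first derivatives). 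Since both Christoffel tensors are $O(\eps_0)$ in $L^{(n,1)}$ by \eqref{eq:bound_coordinates} and $\nabla\vp_{\alpha\beta}$ is bounded in $L^\infty$, the smallness in $L^{(n,1)}$ (and then in $L^{(n/2,1)}$ for the third derivatives) follows immediately.

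\textbf{The genuine gap: \Cref{it:Regularity2} in the weak case.} You correctly locate the difficulty, but the plan ``upgrade bi-Lipschitz convergence of $f_{\alpha,k}$ to $C^1$ convergence'' and ``Arzel\`a--Ascoli for the uniformly $W^{3,(n/2,1)}$ transition maps'' is insufficient on two counts. First, uniform $W^{3,(n/2,1)}$ bounds give only \emph{weak} subsequential limits; to obtain $C^1$ convergence of the transition maps you need \emph{strong} convergence in $W^{2,(n,1)}$. The paper obtains this by first proving that $(\vp_\alpha^k)^{-1}\to\vp_\alpha^{-1}$ strongly in $W^{3,n/2}_{\mathrm{loc}}$, using that $(\vp_\alpha^k)^{-1}$ is $g_{\vPhi_k}$-harmonic and $g_{\vPhi_k}\to g_{\vPhi}$ strongly in $W^{n/2,2}$; the argument (their Claim on weak-to-strong convergence) goes through a Hodge decomposition and is not a consequence of compactness alone. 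Second, even granting $C^1$ convergence of each chart, the passage to a \emph{global} $C^1$-diffeomorphism between the two structures requires a gluing step: one must build maps $f_\alpha^k$ close to the identity and satisfying the cocycle condition $\vp_{\beta,\alpha}^k\circ f_\alpha^k\circ\vp_{\alpha,\beta}=f_\beta^k$, so that $F_k:=\vp_\alpha^k\circ f_\alpha^k\circ\vp_\alpha^{-1}$ is chart-independent. The paper carries this out by an induction with partitions of unity (in the spirit of Uhlenbeck's gauge-patching), and this step is absent from your sketch.
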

	
	\Cref{th:Atlas} is a direct corollary of \Cref{th:Atlas2} and \eqref{eq:diameter_energy_volume}.	We first prove \Cref{th:Atlas2} for immersions of class $C^{\infty}$ and then for weak immersions, by an approximation method. 
	
	\subsection{Construction of the atlas for smooth immersions of closed manifolds}\label{sec:Atlas_smooth_case}
	
	In this section, we prove \Cref{th:Atlas} for a given \underline{$C^{\infty}$-immersion} $\vPhi\colon \Sigma\to \R^d$ satisfying the assumptions of \Cref{th:Atlas}. \\
	
	By \eqref{eq:diameter_energy_volume}, there exists a constant $C=C(n,d)>0$ such that 
	\begin{align*}
		\vol_{g_{\vPhi}}(\Sigma) \leq C\, E\, D^n.
	\end{align*}
	Hence, the quantity $\eps_0$ defined in \Cref{th:Construction_chart} is chosen to depend only on $D$ and $E$. Given $x\in \Sigma$, we define $r_x>0$ to be the largest radius such that
	\begin{align*}
		\Er\Big(\vPhi;\vPhi^{-1}\big( \B^d(\vPhi(x),2r_x)\big) \Big) \leq \eps_0.
	\end{align*}
	Thanks to \Cref{th:Construction_chart}, there exists $\rho_x\in[r_x,2r_x]$ and a point $\vep_x\in \B^d\left( \vPhi(x), \frac{r_x}{100}\right)$ such that the connected component containing $x$ is diffeomorphic to the unit ball $\B^n$. We consider the following covering of $M\coloneqq \vPhi(\Sigma)$:
	\begin{align*}
		M\subset \bigcup_{x\in \Sigma} \B^d\left( \vep_x,\rho_x \right).
	\end{align*}
	We extract a Besicovitch covering (that we can assume to be finite since $M$ is compact):
	\begin{align}\label{eq:Besicovitch}
		M\subset \bigcup_{\alpha\in A} \B^d(\vep_{x_{\alpha}},\rho_{x_{\alpha}}).
	\end{align}
	Let $x\in \Sigma$. There exist $N=N(d)\in \N$ and $I(x)\in\{1,\ldots,N\}$ balls $\B^d\left(\vep_{x_{\alpha_1}},\rho_{x_{\alpha_1}} \right),\ldots, \B^d \left( \vep_{x_{\alpha_{I(x)}}}, \rho_{x_{\alpha_{I(x)}}} \right)$ with $\alpha_1,\ldots,\alpha_{I(x)}\in A$ such that 
	\begin{align*}
		\vPhi(x) \in \bigcap_{i=1}^{I(x)} \B^d\left( \vep_{x_{\alpha_i}}, \rho_{x_{\alpha_i}} \right).
	\end{align*}
	For each $i\in\{1,\ldots,I(x)\}$, we consider $U_{x,i}\subset \Sigma$ the connected component of $\vPhi^{-1}\left( \B^d(\vep_{x_{\alpha_i}},\rho_{x_{\alpha_i}})\right)$ containing $x$. Thanks to \Cref{th:Construction_chart}, we can find coordinates $\vp_{x,\alpha_i}\colon \Or_{x,i}\to U_{x,i}$ satisfying \Cref{it:Harmonic,it:Boundary,it:Regularity}. We now check \Cref{it:Size}.
	
	\begin{claim}
		It holds $\sharp A \leq C(n,d)\, \Ec(\vPhi)$.
	\end{claim}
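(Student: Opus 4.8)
The plan is a covering count: I will bound $\sharp A$ by comparing, for each ball of the Besicovitch subcover \eqref{eq:Besicovitch}, a uniform \emph{lower} bound on its $\mu_{\vPhi}$-measure with the uniform \emph{upper} bound $\vol_{g_{\vPhi}}(\Sigma)\leq C(n,d)\,\Er(\vPhi)\,\diam(\vPhi(\Sigma))^n$ from \eqref{eq:est_Volume}, using that the subcover has bounded overlap $N(d)$. Let $r^{\ast}$ be the largest radius such that $\Er\big(\vPhi;\vPhi^{-1}(\B^d(\vx,r^{\ast}))\big)\leq\eps_0$ for every $\vx\in\R^d$, so that the infimum entering $\Ec(\vPhi)$ equals $(r^{\ast})^{-n}$ (we may assume $r^{\ast}<+\infty$, since otherwise $\Er(\vPhi;\Sigma)\leq\eps_0$ and a single chart covers $\Sigma$). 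By the definition of the $r_{x_{\alpha}}$ in \Cref{sec:Atlas_smooth_case}, the number $2r_{x_{\alpha}}$ is the supremum of the radii $s$ with $\Er\big(\vPhi;\vPhi^{-1}(\B^d(\vPhi(x_{\alpha}),s))\big)\leq\eps_0$, hence $2r_{x_{\alpha}}\geq r^{\ast}$ and therefore $\rho_{x_{\alpha}}\geq r_{x_{\alpha}}\geq r^{\ast}/2$ for every $\alpha\in A$.

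Next I would prove the volume lower bound $\mu_{\vPhi}\big(\B^d(\vep_{x_{\alpha}},\rho_{x_{\alpha}})\big)\geq c(n,d)\,(r^{\ast})^n$. Fix $\alpha\in A$ and, via Sard's theorem, choose $s_{\alpha}\in\big(\tfrac32 r_{x_{\alpha}},2r_{x_{\alpha}}\big)$ so that $\vPhi$ is transverse to $\s^{d-1}(\vPhi(x_{\alpha}),s_{\alpha})$; then $\Sigma'_{\alpha}\coloneqq\vPhi^{-1}\big(\B^d(\vPhi(x_{\alpha}),s_{\alpha})\big)$ is a smooth manifold with boundary on which $\Er(\vPhi;\Sigma'_{\alpha})\leq\eps_0$, so $\|\vII_{\vPhi}\|_{L^n(\Sigma'_{\alpha},g_{\vPhi})}^n\leq\eps_0$ and hence, by the Lorentz embedding $L^{(n,n)}\hookrightarrow L^{(n,\infty)}$, $\|\vH_{\vPhi}\|_{L^{(n,\infty)}(\Sigma'_{\alpha},g_{\vPhi})}\leq C(n)\,\eps_0^{1/n}$, which is below the threshold of \Cref{pr:Lower_Extrinsic} once $\eps_0=\eps_0(n,d,E,D)$ is small enough. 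Since $\vep_{x_{\alpha}}\in\B^d(\vPhi(x_{\alpha}),r_{x_{\alpha}}/100)$, the ball $\B^d(\vep_{x_{\alpha}},r_{x_{\alpha}})$ is compactly contained in $\B^d(\vPhi(x_{\alpha}),s_{\alpha})$, so it misses $\vPhi(\dr\Sigma'_{\alpha})\subset\s^{d-1}(\vPhi(x_{\alpha}),s_{\alpha})$ and its full $\vPhi$-preimage lies in $\Sigma'_{\alpha}$; applying \Cref{pr:Lower_Extrinsic} to $\vPhi|_{\Sigma'_{\alpha}}$ then gives
\[
\mu_{\vPhi}\big(\B^d(\vep_{x_{\alpha}},\rho_{x_{\alpha}})\big)\ \geq\ \mu_{\vPhi}\big(\B^d(\vep_{x_{\alpha}},r_{x_{\alpha}})\big)\ \geq\ c(n,d)\, r_{x_{\alpha}}^n\ \geq\ 2^{-n}c(n,d)\,(r^{\ast})^n .
\]

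Finally, the bounded overlap of \eqref{eq:Besicovitch} gives $\sum_{\alpha\in A}\mathbf 1_{\B^d(\vep_{x_{\alpha}},\rho_{x_{\alpha}})}\circ\vPhi\leq N(d)$ pointwise on $\Sigma$, so integrating against $\vol_{g_{\vPhi}}$ yields $\sum_{\alpha\in A}\mu_{\vPhi}\big(\B^d(\vep_{x_{\alpha}},\rho_{x_{\alpha}})\big)\leq N(d)\,\vol_{g_{\vPhi}}(\Sigma)$. Combining this with the previous step and with \eqref{eq:est_Volume} produces
\[
2^{-n}c(n,d)\,(r^{\ast})^n\,\sharp A\ \leq\ N(d)\,\vol_{g_{\vPhi}}(\Sigma)\ \leq\ N(d)\,C(n,d)\,\Er(\vPhi)\,\diam(\vPhi(\Sigma))^n ,
\]
hence $\sharp A\leq C(n,d)\,\Er(\vPhi)\,\diam(\vPhi(\Sigma))^n\,(r^{\ast})^{-n}=C(n,d)\,\Ec(\vPhi)$. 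The only point requiring care is the middle step: one must pass to the sub-manifold $\Sigma'_{\alpha}$ on which the energy — and, through the Sobolev/Lorentz embedding, the $L^{(n,\infty)}$-norm of $\vH_{\vPhi}$ — drops below the universal smallness threshold of \Cref{pr:Lower_Extrinsic}, and verify that the extrinsic ball being tested is interior to $\Sigma'_{\alpha}$; everything else is a routine covering argument.
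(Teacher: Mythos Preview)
Your proof is correct and follows the same covering argument as the paper: bounded overlap of the Besicovitch subcover, the lower volume bound from \Cref{pr:Lower_Extrinsic}, and the global volume estimate \eqref{eq:est_Volume}. In fact you are more careful than the paper at one point: you explicitly restrict to a submanifold $\Sigma'_{\alpha}$ on which $\|\vH_{\vPhi}\|_{L^{(n,\infty)}}$ is below the threshold of \Cref{pr:Lower_Extrinsic} before invoking it, whereas the paper applies that proposition directly without verifying the smallness hypothesis (which does not hold on the full closed $\Sigma$ under the mere bound $\Er(\vPhi)\leq E$).
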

	\begin{proof}
		Since \eqref{eq:Besicovitch} is a locally finite covering, it holds
		\begin{align*}
			\vol_{g_{\vPhi}}(\Sigma) \geq C(d)\, \sum_{\alpha\in A} \vol_{g_{\vPhi}}\left( \vPhi^{-1}\big(\B^d(x_{\alpha},\rho_{x_{\alpha}})\big) \right).
		\end{align*}
		By \Cref{pr:Lower_Extrinsic}, we obtain 
		\begin{align*}
			\vol_{g_{\vPhi}}(\Sigma) \geq C(n,d)\, \left( \inf_{\alpha\in A} \rho_{x_{\alpha}}^n \right)\, \sharp A.
		\end{align*}
		By \eqref{eq:diameter_energy_volume}, it holds
		\begin{align*}
			\sharp A \leq C(n,d)\left( \sup_{\alpha\in A} \rho_{x_{\alpha}}^{-n} \right) E\, D^n.
		\end{align*}
	\end{proof}
	
	It remains to check \Cref{it:Transition}. Consider $\alpha,\beta\in A$ such that $U_{\alpha}\cap U_{\beta}\neq \emptyset$. To simplify the notations, we denote
	\begin{align*}
		\begin{cases} 
			V_{\alpha}\coloneqq  \vp_{\alpha}^{-1}(U_{\alpha}\cap U_{\beta}),\\[2mm]
			h_{\alpha} \coloneqq (\vp_{\alpha})^* g_{\vPhi}.
		\end{cases} \qquad \text{ and } \qquad 
		\begin{cases}
			V_{\beta} \coloneqq \vp_{\beta}^{-1}(U_{\alpha}\cap U_{\beta}), \\[2mm]
			h_{\beta} \coloneqq (\vp_{\beta})^* g_{\vPhi}.
		\end{cases}
	\end{align*}
	By \eqref{eq:bound_coordinates}, we have 
	\begin{align}\label{eq:hb}
		\left\| \g^2 h_{\beta} \right\|_{L^{\left(\frac{n}{2},1\right)}(\B^n(0,r_{\beta}),\geu)} + \left\| \g h_{\beta} \right\|_{L^{(n,1)}(\B^n(0,r_{\beta}),\geu)} + \|h_{\beta} - g_{eucl} \|_{L^{\infty}(\B^n(0,r_{\beta}),\geu)} \leq C\, \Er(\vPhi,U_{\beta})^{\frac{2}{n}}.
	\end{align}
	We also have
	\begin{align}\label{eq:ha}
		\left\| \g^2 h_{\alpha} \right\|_{L^{\left(\frac{n}{2},1\right)}(\B^n(0,r_{\alpha}),\geu)} + \left\| \g h_{\alpha} \right\|_{L^{(n,1)}(\B^n(0,r_{\alpha}),\geu)} + \|h_{\alpha} - \geu \|_{L^{\infty}(\B^n(0,r_{\alpha}),\geu)} \leq C\, \Er(\vPhi,U_{\alpha})^{\frac{2}{n}}.
	\end{align}
	By definition, the map $\vp_{\alpha,\beta} \coloneqq \vp_{\alpha}^{-1}\circ\vp_{\beta}$ is an isometry from $\left(V_{\beta}, h_{\beta} \right)$ into $\left( V_{\alpha}, h_{\alpha} \right)$. Since $V_{\alpha}\subset \Or_{\alpha}$ and $V_{\beta}\subset \Or_{\beta}$, we obtain from \eqref{eq:bound_domain}
	\begin{align*}
		\begin{cases} 
			\|\vp_{\alpha,\beta}\|_{L^{\infty}(V_{\beta})} \leq C(n,d,D)\, r_{\alpha}\leq C(n,d,D),\\[2mm]
			\|\vp_{\beta,\alpha}\|_{L^{\infty}(V_{\alpha})} \leq C(n,d,D)\, r_{\beta}\leq C(n,d,D).\\[2mm]
		\end{cases}
	\end{align*}
	By \eqref{eq:ha}, we have for any $1\leq i,j\leq n$
	\begin{align}\label{eq:Lips_0}
		\left\| h_{\beta}\left( \dr_i \vp_{\alpha,\beta}, \dr_j \vp_{\alpha,\beta}\right) - \delta_{ij} \right\|_{L^{\infty}(V_{\beta})} \leq C\, \Er(\vPhi,U_{\alpha})^{\frac{2}{n}}.
	\end{align}
	By \eqref{eq:hb}, we obtain
	\begin{align}\label{eq:Lips_transition}
		\left\| \g \vp_{\alpha,\beta} \right\|_{L^{\infty}(V_{\beta},\geu) } \leq C\, \left\| \g \vp_{\alpha,\beta} \right\|_{L^{\infty}(V_{\beta},h_{\beta}) } \leq C\left( 1+ \Er(\vPhi,U_{\alpha})^{\frac{2}{n}} + \Er(\vPhi,U_{\beta})^{\frac{2}{n}} \right).
	\end{align}
	From \eqref{eq:Lips_0}, we deduce that 
	\begin{align}\label{eq:Isom_transition}
		\forall i,j\in\{1,\ldots,n\},\qquad \left\| \delta_{\lambda\mu}\, (\dr_i \vp_{\alpha,\beta}^{\lambda})\, (\dr_j \vp_{\alpha,\beta}^{\mu}) - \delta_{ij} \right\|_{L^{\infty}(V_{\beta})}\leq C\, \left( \Er(\vPhi,U_{\alpha})^{\frac{2}{n}} + \Er(\vPhi,U_{\beta})^{\frac{2}{n}} \right).
	\end{align}
	We obtain that if $\Er(\vPhi,U_{\alpha}) + \Er(\vPhi,U_{\beta}) \leq \eps$ is small enough (depending only on $n$ and $d$), then the matrix $\left( \dr_i\vp_{\alpha,\beta}\cdot \dr_j \vp_{\alpha,\beta} \right)_{1\leq i,j\leq n}$ is uniformly close to the identity. In particular, it is invertible with inverse uniformly close to the identity as well. 
    Thanks to \cite[Equation (2.13)]{taylor2006}, we have 
    \begin{align}\label{eq:hessian_vp}
        \dr^2_{ij} (\vp_{\alpha,\beta})^k = \big( {^{h_{\beta}} \Gamma}_{ij}^p \big)\, \dr_p (\vp_{\alpha,\beta})^k - \big( {^{h_{\alpha}} \Gamma^k_{pq}} \big)\, \left( \dr_i (\vp_{\alpha,\beta})^p \right)\, \left( \dr_j (\vp_{\alpha,\beta})^q \right).
    \end{align}
	Combining \eqref{eq:Lips_transition}-\eqref{eq:Isom_transition} and \eqref{eq:ha}-\eqref{eq:hb}, we obtain that 
	\begin{align*}
		\forall i,j\in \{1,\ldots,n\},\qquad \|\dr^2_{ij} \vp_{\alpha,\beta} \|_{L^{(n,1)}(V_{\beta},\geu)}\leq C\, \eps.
	\end{align*}
	Differentiating \eqref{eq:hessian_vp}, we obtain 
	\begin{align*}
		\|\g^3 \vp_{\alpha,\beta} \|_{L^{\left( \frac{n}{2},1\right)}(V_{\beta},\geu)} \leq C\, \eps.
	\end{align*}
	This proves \Cref{it:Transition}.

	\subsection{Construction of the atlas for smooth immersions of open manifolds}\label{sec:Atlas_smooth_case2}
	
	In this section, we prove \Cref{th:Atlas2} for any given $C^{\infty}$ immersion $\vPhi\colon \Sigma \to \R^d$ satisfying the assumptions of \Cref{th:Atlas2}. \\
	
	We consider the quantity $\eps_0$ defined in \Cref{th:Construction_chart}. Given $x\in \Sigma_{\vartheta}$, we define $r_x\in(0,\vartheta)$ to be the largest radius such that
	\begin{align*}
		\Er\Big(\vPhi;\vPhi^{-1}\big( \B^d(\vPhi(x),2r_x)\big) \Big) \leq \eps_0.
	\end{align*}
	Thanks to \Cref{th:Construction_chart}, there exists $\rho_x\in[r_x,2r_x]$ and a point $\vep_x\in \B^d\big( \vPhi(x), \frac{r_x}{100}\big)$ such that the connected component containing $x$ is diffeomorphic to the unit ball $\B^n$. We define $U_x\subset \Sigma_{\vartheta}$ to be this connected component. We consider the following covering of $M_{\vartheta}\coloneqq \vPhi(\Sigma_{\vartheta})$:
	\begin{align*}
		M_{\vartheta}\subset \bigcup_{x\in \Sigma} \B^d(\vep_x,\rho_x).
	\end{align*}
	We extract a Besicovitch covering (that we can assume to be finite since $\overline{ M_{\vartheta}}$ is compact):
	\begin{align}\label{eq:Besicovitch2}
		M_{\vartheta}\subset \bigcup_{\alpha\in A} \B^d(\vep_{x_{\alpha}},\rho_{x_{\alpha}}).
	\end{align}
	Thanks to \Cref{th:Construction_chart}, we can find coordinates $\vp_{\alpha}\colon \Or_{\alpha}\to U_{x_{\alpha}}$ such that $\vPhi\circ\vp_{\alpha} (\dr\Or_{\alpha})$ is a graph satisfying \Cref{it:Harmonic2,it:Boundary2}. Since $\vPhi$ is of class $C^{\infty}$, the metric $g_{\vPhi}$ as well and the coordinates $\vp_{\alpha}$ also. Hence, \Cref{it:Regularity2} is verified. We now check \Cref{it:Size2}.
	
	\begin{claim}
		It holds $\sharp A \leq C(n,d)\, \Ec_{\vartheta}(\vPhi)$.
	\end{claim}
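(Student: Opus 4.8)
The argument is the one already used for the closed case in \Cref{sec:Atlas_smooth_case}, with only minor bookkeeping near $\dr\Sigma$. The plan is to combine the bounded overlap of the Besicovitch covering \eqref{eq:Besicovitch2} with the lower volume bound for extrinsic balls from \Cref{pr:Lower_Extrinsic}, and then to divide.

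First I would record that \eqref{eq:Besicovitch2} has an overlap number $N=N(d)$: every $\vp\in M_{\vartheta}$ belongs to at most $N$ of the balls $\B^d(\vep_{x_{\alpha}},\rho_{x_{\alpha}})$. Pulling this back by $\vPhi$, every point of $\Sigma_{\vartheta}$ lies in at most $N$ of the sets $U_{x_{\alpha}}\subset\Sigma_{\vartheta}$, so that
\begin{align*}
\sum_{\alpha\in A}\vol_{g_{\vPhi}}(U_{x_{\alpha}})\leq N\,\vol_{g_{\vPhi}}(\Sigma_{\vartheta}).
\end{align*}
Next I would bound each $\vol_{g_{\vPhi}}(U_{x_{\alpha}})$ from below by $c(n,d)\,\rho_{x_{\alpha}}^n$. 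Since $\vep_{x_{\alpha}}\in\B^d(\vPhi(x_{\alpha}),\tfrac{r_{x_{\alpha}}}{100})$ and $r_{x_{\alpha}}\leq\rho_{x_{\alpha}}$, the ball $\B^d(\vPhi(x_{\alpha}),\tfrac{r_{x_{\alpha}}}{2})$ is contained in $\B^d(\vep_{x_{\alpha}},\rho_{x_{\alpha}})$; moreover, by \Cref{th:Construction_chart} the set $\vPhi(\dr U_{x_{\alpha}})$ lies within $C\eps_0\,\rho_{x_{\alpha}}$ of $\s^{d-1}(\vep_{x_{\alpha}},\rho_{x_{\alpha}})$, so for $\eps_0$ small this ball meets neither $\vPhi(\dr U_{x_{\alpha}})$ nor, since $r_{x_{\alpha}}<\vartheta$, the set $\vPhi(\dr\Sigma)$. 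By the choice of $r_{x_{\alpha}}$ one has $\Er(\vPhi;\vPhi^{-1}(\B^d(\vPhi(x_{\alpha}),2r_{x_{\alpha}})))\leq\eps_0^n$, hence, via $W^{\frac{n}{2}-1,2}\hookrightarrow L^{(n,2)}\hookrightarrow L^{(n,\infty)}$, the $L^{(n,\infty)}$-norm of $\vH_{\vPhi}$ on that piece is below the threshold of \Cref{pr:Lower_Extrinsic}. Applying \Cref{pr:Lower_Extrinsic} to $\vPhi$ restricted to that piece (a manifold with boundary) and to the ball $\B^d(\vPhi(x_{\alpha}),\tfrac{r_{x_{\alpha}}}{2})$, together with the injectivity and density one of $\vPhi$ on $U_{x_{\alpha}}$ (Propositions \ref{pr:global_graph_slice} and \ref{pr:Density}), yields
\begin{align*}
\vol_{g_{\vPhi}}(U_{x_{\alpha}})\geq c\,r_{x_{\alpha}}^n\geq\frac{c}{2^n}\,\rho_{x_{\alpha}}^n.
\end{align*}

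Combining the two displays gives $\sharp A\leq C(n,d)\,\big(\inf_{\alpha}\rho_{x_{\alpha}}\big)^{-n}\,\vol_{g_{\vPhi}}(\Sigma_{\vartheta})$. To conclude I would note that $\rho_{x_{\alpha}}\geq r_{x_{\alpha}}$ is bounded below, up to a constant depending only on $n$ and $d$, by the largest radius $r$ such that $\Er(\vPhi;\vPhi^{-1}(\B^d(\vx,r))\cap\Sigma_{\vartheta})\leq\eps_0$ for all $\vx$ --- this is precisely where the $\Sigma_{\vartheta}$-restriction in the definition of $\Ec_{\vartheta}(\vPhi)$ is used, since the balls entering the definition of $r_{x_{\alpha}}$ sit at definite distance from $\vPhi(\dr\Sigma)$, so the energy of $\vPhi$ there agrees with that of its restriction to $\Sigma_{\vartheta}$ up to the fixed ratio of radii. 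This gives $\sharp A\leq C(n,d)\,\Ec_{\vartheta}(\vPhi)$. The only step that is not a verbatim repetition of \Cref{sec:Atlas_smooth_case} is this matching of the $\Sigma_{\vartheta}$-restricted energy with the local unrestricted energy near $\dr\Sigma$, which is the point requiring a little care.
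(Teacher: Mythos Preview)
Your proof is correct and follows essentially the same approach as the paper: Besicovitch bounded overlap combined with the lower volume bound of \Cref{pr:Lower_Extrinsic}, then divide. The paper's version is terser --- it applies \Cref{pr:Lower_Extrinsic} directly to the full preimages $\vPhi^{-1}\big(\B^d(\vep_{x_{\alpha}},\rho_{x_{\alpha}})\big)$ rather than to the single connected components $U_{x_{\alpha}}$, which avoids the detour through injectivity and density one; those facts (Propositions~\ref{pr:global_graph_slice} and~\ref{pr:Density}) are not needed for this volume estimate, since \Cref{pr:Lower_Extrinsic} already gives $\mu_{\vPhi}\big(\B^d(\cdot,r)\big)\geq c\,r^n$ for the full preimage. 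Your final paragraph, matching $\inf_{\alpha}\rho_{x_{\alpha}}$ with the radius appearing in $\Ec_{\vartheta}(\vPhi)$, is a step the paper leaves implicit.
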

	\begin{proof}
		Since \eqref{eq:Besicovitch2} is a locally finite covering, it holds
		\begin{align*}
			\vol_{g_{\vPhi}}(\Sigma_{\vartheta}) \geq C(d)\, \sum_{\alpha\in A} \vol_{g_{\vPhi}}\left( \vPhi^{-1}\big(\B^d(x_{\alpha},\rho_{x_{\alpha}})\big) \right).
		\end{align*}
		By \Cref{pr:Lower_Extrinsic}, we obtain 
		\begin{align*}
			\vol_{g_{\vPhi}}(\Sigma_{\vartheta}) \geq C(n,d)\, \left( \inf_{\alpha\in A} \rho_{x_{\alpha}}^n \right)\, \sharp A.
		\end{align*}
	\end{proof}
	
	It remains to check \Cref{it:Transition2}. The proof is exactly the same as in \Cref{sec:Atlas_smooth_case} since we avoided the boundary.
	
	\subsection{Construction of the atlas for weak immersions}\label{sec:Altas_weak_imm}
	
	The proofs of \Cref{th:Atlas} and \Cref{th:Atlas2} for weak immersions are very similar and consist only in passing to the limit from an approximation by smooth immersions to a given weak immersion. Hence, we choose to write it only for closed manifolds.\\
	
	\subsubsection{Setting.}\label{sec:Setting}
	In this section, we fix a weak immersion $\vPhi\in \I_{\frac{n}{2}-1,2}\big(\Sigma^n;\R^d\big)$ satisfying the assumptions of \Cref{th:Atlas}. From \cite{Lan2025} we know the existence of $(\vPhi_k)_{k\in\N}\subset \Imm\big(\Sigma;\R^d \big)$  converging to $\vPhi$ in the strong $W^{\frac{n}{2}+1,2}(\Sigma,g_{\vPhi})$-topology\footnote{Indeed, if $(V_{\alpha},\psi_{\alpha})_{\alpha\in A}$ is a locally finite atlas of $\Sigma$ and $(\eta_{\alpha})_{\alpha\in A}$ is an associated partition of unity, we have $\vPhi = \sum_{\alpha} \eta_{\alpha}\vPhi$. We extend each $\eta_{\alpha}\vPhi$ by zero to $\R^n$. We consider $(\chi_{\eps})_{\eps>0}$ a mollifier. We can take $\vPhi_k = \sum_{\alpha}  \chi_{1/k}*(\eta_{\alpha}\vPhi)$ thanks to \cite{Lan2025}
		Theorem IV.21 page 62.}.
	Hence the metrics $g_{\vPhi_k}$ converge strongly to $g_{\vPhi}$ in $W^{\frac{n}{2},2}(\Sigma)$ and remain bounded in $L^{\infty}$, i.e. there exist a $C^{\infty}$ metric $h$ on $\Sigma$ and a constant $\Lambda>0$ such that 
	\begin{align}\label{eq:g_bounded}
		\begin{cases} 
			\displaystyle \forall k\in\N,\qquad \Lambda^{-1}\, h \leq g_{\vPhi_k} \leq \Lambda\, h, \\[3mm]
			\displaystyle g_{\vPhi_k}\xrightarrow[k\to +\infty]{} g_{\vPhi} \qquad \text{strongly in }W^{\frac{n}{2},2}(\Sigma,h).
		\end{cases} 
	\end{align}
	Therefore, the convergence in $W^{\frac{n}{2}+1,2}(\Sigma,g_{\vPhi})$ is equivalent to the convergence in $W^{\frac{n}{2}+1,2}(\Sigma,h)$. Since $h$ is a $C^{\infty}$-metric, we will often refer to this background metric instead of $g_{\vPhi}$. Because we have a strong convergence, we can assume that (up to a subsequence), the following estimates hold for all $k\geq 0$
	\begin{align}\label{eq:Unif_Int_Approx}
		\begin{cases}
			\displaystyle \Er\left(\vPhi_k;\Sigma \right) \leq 2\, E,\\[3mm]
			\displaystyle \sup\left\{\frac{1}{r} : \forall \vx\in \R^d,\quad \Er\Big( \vPhi_k;\vPhi_k^{-1}\big(\B^d(\vx,r)\big) \Big) \leq \eps_0 \right\} \leq 2\ \sup\left\{\frac{1}{r} : \forall \vx\in \R^d,\quad \Er\Big( \vPhi;\vPhi^{-1}\big(\B^d(\vx,r)\big) \Big) \leq \eps_0 \right\},\\[5mm]
			\displaystyle \diam\left( \vPhi_k(\Sigma) \right) \leq 2\, D.
		\end{cases}
	\end{align}

	For each $k\in \N$, \Cref{sec:Atlas_smooth_case} provides an atlas $(U_{\alpha}^k,\vp_{\alpha}^k)_{k\in\N, \alpha\in A_k}$ of $\Sigma$ such that \Cref{it:Boundary,it:Harmonic,it:Regularity,it:Size,it:Transition} of \Cref{th:Atlas} are satisfied for $\vPhi_k$. Thanks to \Cref{it:Size} and \eqref{eq:Unif_Int_Approx}, we can assume that the sets $A_k$ have constant cardinal and thus, are all equal: $A_k = A$. For each $\alpha,\beta\in A$ and $k\in \N$, we obtain the following properties:
	\begin{enumerate}
		\item\label{it:Charts} Each chart $\vp_{\alpha}^k\colon \left( \Or_{\alpha}^k,\geu\right)\to U^k_{\alpha}\subset ( \Sigma, h)$ is uniformly bi-Lipschitz and $g_{\vPhi_k\circ \vp_{\alpha}^k}$ is uniformly bounded in $W^{2,\left( \frac{n}{2},1\right)}$ thanks to \eqref{eq:g_bounded} and \Cref{it:Transition} of \Cref{th:Atlas}. Moreover, $r_{\alpha}^k$ is bounded from below thanks to \eqref{eq:Unif_Int_Approx}.
		
		\item\label{it:Graphs} Each map $\vPhi_k \circ \vp_{\alpha}^k \colon \dr \Or_{\alpha}^k \to \s^{d-1}(\vq_{\alpha}^{\ k},r_{\alpha}^k)$ is described as in \eqref{eq:pseudo_graph} through a map $\vphi_{\alpha}^{\ k} \in W^{1,\infty}\cap W^{2,n}(\Sc_{\alpha}^k;\R^d)$ where $\Sc_{\alpha}^k \coloneqq \Pc_{\alpha}^k\cap \B^d(\vq_{\alpha}^{\ k}, r_{\alpha}^k)$ for some affine $n$-dimensional plane $\Pc_{\alpha}^k$ such that 
		\begin{enumerate}
			\item\label{it:dist_plane} $\dist(\vq_{\alpha}^{\ k},\pr_{\alpha}^k) \leq \delta\, r_{\alpha}^k$ for some fixed $\delta=\delta(n,d)\in(0,1)$,
			\item\label{it:est_graph} we have the estimate 
			\begin{align*}
				\eps_0^{-1}\left[ (r_{\alpha}^k)^{-1} \|\vphi_{\alpha}^{\ k}\|_{L^{\infty}(\Sc_{\alpha}^k)} + \|\g \vphi_{\alpha}^{\ k}\|_{L^{\infty}(\Sc_{\alpha}^k)} \right] + (r_{\alpha}^k)^{\frac{1}{n}} \|\g^2 \vphi_{\alpha}^{\ k}\|_{L^n(\Sc_{\alpha}^k)} \leq C.
			\end{align*}
		\end{enumerate}
		
		\item The following transition function is uniformly bounded in $W^{3,\left( \frac{n}{2},1\right)}_{\geu}$:
		\begin{align*}
			\vp^k_{\beta,\alpha}\coloneqq (\vp_{\beta}^k)^{-1}\circ\vp_{\alpha}^k\colon (\vp_{\alpha}^k)^{-1} \left( U^k_{\alpha}\cap U^k_{\beta} \right)\to (\vp_{\beta}^k)^{-1}\left( U^k_{\alpha}\cap U^k_{\beta} \right).
		\end{align*}
	\end{enumerate}
	
	Up to a subsequence, we can assume that 
	\begin{align*}
		\forall \alpha \in A,\qquad \left( \vq_{\alpha}^{\ k}, r_{\alpha}^k \right) \xrightarrow[k\to +\infty]{} \left( \vq_{\alpha},r_{\alpha}\right).
	\end{align*}
	In other words, the balls $\B^d(\vq_{\alpha}^{\ k},r_{\alpha}^k)$ converge in Hausdorff distance to some ball $\B^d(\vq_{\alpha},r_{\alpha})$.

	\subsubsection{Convergence of the sets $\Or_{\alpha}^k$.}\label{sec:Oalpha}

	Thanks to \eqref{eq:g_bounded} and \Cref{it:Coordinates} of \Cref{th:Construction_chart}, we obtain that the bi-Lipschitz homeomorphisms $(f_{\alpha}^k)^{-1}\colon \B^n(0,r_{\alpha}^k)\to \Or_{\alpha}^k$ verify the uniform estimate
	\begin{align*}
		\forall k\in\N,\qquad \left\| (f_{\alpha}^k)^{-1} \right\|_{W^{2,n}(\dr \B^n(0,r_{\alpha}^k))} \leq C_0.
	\end{align*}
	Up to a subsequence, the sequence $(f_{\alpha}^k)^{-1}|_{\dr \B^n(0,r_{\alpha}^k)}$ converges weakly in $W^{2,n}$ and strongly in $C^{1,\frac{1}{2n}}$ to a limiting bi-Lipschitz map $F_{\alpha}\colon \dr \B^n(0,r_{\alpha})\to \R^n$. Moreover, it holds 
	\begin{align*}
		|\det \g F_{\alpha}| = \lim_{k\to +\infty} \left| \det \g (f_{\alpha}^k)^{-1} \right| = \lim_{k\to +\infty} \frac{1}{\left| \det \g f_{\alpha}^k \right| }\geq \frac{1}{C_0^n}.
	\end{align*}
	Therefore, $F_{\alpha}$ is a diffeomorphism of class $W^{2,n}(\dr \B^n(0,r_{\alpha}))$. In particular, the open sets $\Or_{\alpha}^k$ converge in the Hausdorff topology to some open set $\Or_{\alpha}\subset \R^n$ with boundary diffeomorphic to $\s^{n-1}$, with a diffeomorphism of class $W^{2,n}$ verifying the estimates \eqref{eq:bound_domain}. We summarize
	\begin{align}\label{eq:conv_diffeo}
		\begin{aligned} 
			& (f_{\alpha}^k)^{-1} \big( (r_{\alpha}^k)^{-1}\cdot \big) \xrightarrow[k\to +\infty]{} (f_{\alpha})^{-1}\big( r^{-1}\cdot \big)  \\[2mm]
			& \qquad \text{weakly in }W^{2,n}(\s^{n-1}) \text{ and strongly in }C^{1,\frac{1}{2n}}(\s^{n-1}).
		\end{aligned} 
	\end{align}
	Consequently, the maps $\tilde{f}_{\alpha}^k\coloneq f_{\alpha}^k\circ\left( \frac{r_{\alpha}^k }{ r_{\alpha} } \, f_{\alpha}^{-1} \right)$ verify 
	\begin{align}\label{eq:conv_diffeo2}
		\begin{aligned} 
			& \tilde{f}_{\alpha}^k\xrightarrow[k\to +\infty]{} \Id_{\dr\Or_{\alpha}}  \\[2mm]
			& \qquad \text{weakly in }W^{2,n}(\dr\Or_{\alpha}) \text{ and strongly in }C^{1,\frac{1}{2n}}(\dr\Or_{\alpha}).
		\end{aligned} 
	\end{align}
	
	\subsubsection{Weak convergence of the metrics $g_{\vPhi_k\circ\vp_{\alpha}^k}$.} By \eqref{eq:bound_coordinates}, the coefficients $\big( g_{\vPhi_k\circ\vp_{\alpha}^k} \big)_{ij}$ are uniformly bounded in $W^{2,\left(\frac{n}{2},1\right)}(\Or_{\alpha}^k)$. Thus, up to a subsequence, we have a metric $g_{\alpha}\in W^{2,\left(\frac{n}{2},1\right)}(\Or_{\alpha})$ such that 
	\begin{align}\label{eq:galpha}
		\begin{aligned} 
			& \big( g_{\vPhi_k\circ\vp_{\alpha}^k}\big)_{ij} \xrightarrow[k\to +\infty]{} \big( g_{\alpha} \big)_{ij} \\[2mm]
			&\qquad \text{ weakly in }W_{\loc}^{2,\left(\frac{n}{2},1\right)}(\Or_{\alpha},\geu) \text{ and strongly in }L^p_{\loc}(\Or_{\alpha}) \text{ for any }1\leq p<+\infty.
		\end{aligned} 
	\end{align}
	
	\subsubsection{Weak convergence of the charts $\vp_{\alpha}^k$.} 
	
	By \Cref{it:Charts} and \eqref{eq:g_bounded}, we obtain that for each $\alpha\in A$, the sequence $\vp_{\alpha}^k\colon \Or_{\alpha}^k\to (\Sigma,h)$ will converge in the $C^{0,\frac{1}{2}}$-topology to a bi-Lipschitz map $\vp_{\alpha}\colon \Or_{\alpha}\to (\Sigma,h)$. Since $\vp_{\alpha}$ is bi-Lipschitz, its inverse $\vp_{\alpha}^{-1}\colon U_{\alpha}\coloneqq \vp_{\alpha}(\Or_{\alpha}) \to \Or_{\alpha}$ is continuous, so that $U_{\alpha} = \left[\vp_{\alpha}^{-1}\right]^{-1}(\Or_{\alpha})$ is an open set of $\Sigma$. Moreover, we have $(\vp_{\alpha}^k)^*g_{\vPhi_k} = g_{\vPhi_k\circ\vp_{\alpha}}$. Since $g_{\vPhi_k}$ converges strongly to $g_{\vPhi}$ in the $W^{\frac{n}{2},2}(\Sigma,h)$ topology, we obtain 
	\begin{align}\label{eq:Unif_vp}
		\|d\vp_{\alpha}^k\|_{L^{\infty}(\Or_{\alpha}^k)} + \|\vp_{\alpha}^k\|_{W^{3,\frac{n}{2}}(\Or_{\alpha}^k)} + \|d(\vp_{\alpha}^k)^{-1}\|_{L^{\infty}(U_{\alpha}^k)} + \|(\vp_{\alpha}^k)^{-1}\|_{W^{3,\frac{n}{2}}(U_{\alpha}^k)} \leq C(n,\vPhi). 
	\end{align}
	
	\subsubsection{Weak convergence of the immersions.} Since $(\vPhi_k)_{k\in\N}$ converges strongly to $\vPhi$ in the $W^{\frac{n}{2}+1,2}(\Sigma,h)$-topology and using that $\s^{d-1}(\vq_{\alpha}^{\ k},r_{\alpha}^k)\cap \vPhi_k(\Sigma)$ are good slices (i.e. satisfy \Cref{it:est_graph,it:dist_plane}), the following convergences hold. In $\Or_{\alpha}$, we have
	\begin{align}\label{eq:lim_Phialpha1}
		\begin{aligned} 
			&  \vPhi_k\circ\vp_{\alpha}^k \xrightarrow[k\to +\infty]{} \vPhi\circ \vp_{\alpha} \colon \Or_{\alpha}\to \B^d(\vq_{\alpha},r_{\alpha}) \\[2mm]
			& \qquad \text{weakly in }W_{\loc}^{3,\frac{n}{2}}(\Or_{\alpha}) \text{ and strongly in }C^{0,\frac{1}{2}}_{\loc}(\Or_{\alpha}).
		\end{aligned} 
	\end{align}
	On $\dr \Or_{\alpha}$, we have uniformly bi-Lipschitz diffeomorphisms $\tilde{f}_{\alpha}^k\coloneq f_{\alpha}^k\circ \left( \frac{r_{\alpha}^k}{ r_{\alpha} }\, f_{\alpha}^{-1} \right)\colon \Or_{\alpha}\to \Or_{\alpha}^k$ given by \eqref{eq:conv_diffeo2} converging to the identity map in the weak $W^{2,n}(\s^{n-1})$ and the strong $C^{1,\frac{1}{2n}}$ topologies. Hence it holds
	\begin{align}\label{eq:lim_Phialpha2}
		\begin{aligned} 
			& \vPhi_k\circ\vp_{\alpha}^k\circ \tilde{f}_{\alpha}^k \xrightarrow[k\to +\infty]{} \vPhi\circ\vp_{\alpha} \colon \s^{n-1}(0,r_{\alpha})\to \s^{d-1}(\vq_{\alpha},r_{\alpha})\\[2mm]
			&\qquad   \text{weakly in }W^{2,n}(\s^{n-1}(0,r_{\alpha})) \text{ and strongly in }C^{0,\frac{1}{2n}}(\s^{n-1}(0,r_{\alpha})).
		\end{aligned} 
	\end{align} 
	Thus, we obtain from \Cref{it:Graphs} that the intersections $\vPhi(\Sigma)\cap \s^{d-1}(\vq_{\alpha},r_{\alpha})$ are given by a union of sets of the form \eqref{eq:pseudo_graph} satisfying the same estimates.\\
	
	\subsubsection{Weak convergence of the transition functions.}
	Consider now $\alpha,\beta\in A$ such that $U_{\alpha}\cap U_{\beta}\neq \varnothing$. Since $U_{\alpha}\cap U_{\beta}$ is non-empty subset of $\vPhi^{-1}\left( \B^d(\vq_{\alpha}, r_{\alpha}) \cap \B^d(\vq_{\beta}, r_{\beta})  \right)$ by \eqref{eq:lim_Phialpha1}, we have 
	\begin{align*}
		\lim_{k\to +\infty} \frac{|\vq_{\alpha}^{\ k} - \vq_{\beta}^{\ k}|}{r_{\alpha}^k + r_{\beta}^k}< 1.
	\end{align*}
	Moreover, we have the convergence
	\begin{align*}
		(\vp_{\alpha}^k)^{-1}\xrightarrow[k\to +\infty]{} \vp_{\alpha}^{-1} \qquad \text{ strongly in }C^{0,\frac{1}{2}}_{\loc}(\Or_{\alpha}).
	\end{align*}
	Hence the open sets $U_{\alpha}^k$ converge in Hausdorff distance to $U_{\alpha}$. Therefore, the open sets $\left( U_{\alpha}^k\cap U_{\beta}^k \right)_{k\in\N}$ converge to the open set $U_{\alpha}\cap U_{\beta}$ in the Hausdorff topology\footnote{Here we use that $U_{\alpha}\cap U_{\beta}$ is a non-empty open set. For $\delta>0$, we denote $B_{\delta}(U)$ the $\delta$-neighbourhood of $U$. If $x\in U_{\alpha}\cap U_{\beta}$, then there exists a radius $s>0$ such that $\B^n(x,s)\Subset U_{\alpha}\cap U_{\beta}$. Hence, we have $\B^n(x,s/2)\subset B_{\delta}(U_{\alpha}^k\cap U_{\beta}^k)$ for $k$ large enough depending on $x$, $s$ and $\delta$. Hence, it holds 
		\begin{align*}
			U_{\alpha}\cap U_{\beta} \subset \liminf_{k\to +\infty} B_{\delta}(U_{\alpha}^k\cap U_{\beta}^k) = \bigcup_{k\geq 0} \bigcap_{\ell \geq k} B_{\delta}(U_{\alpha}^{\ell}\cap U_{\beta}^{\ell})\subset B_{\delta}\left( \liminf_{k\to +\infty} U_{\alpha}^k \cap U_{\beta}^k \right). 
		\end{align*}
		Conversely, with $\limsup_{k\to +\infty} U_{\alpha}^k\cap U_{\beta}^k = \bigcap_{k\geq 0} \bigcup_{\ell\geq k} U_{\alpha}^{\ell}\cap U_{\beta}^{\ell}$, it holds
		\begin{align*}
			\limsup_{k\to +\infty} U_{\alpha}^k\cap U_{\beta}^k \subset  B_{\delta}(U_{\alpha}\cap U_{\beta}).
		\end{align*}
		Hence, we obtain that the Hausdorff distance between $U_{\alpha}^k\cap U_{\beta}^k$ and $U_{\alpha}\cap U_{\beta}$ converges to zero.
	}. Since the $(\vp_{\alpha}^k)^{-1}$ are uniformly Lipschitz, we obtain 
	\begin{align}
		& \dist_H\left( (\vp_{\alpha}^k)^{-1}\left( U_{\alpha}^k\cap U_{\beta}^k\right),  \vp_{\alpha}^{-1}\left( U_{\alpha}\cap U_{\beta}\right) \right) \nonumber  \\[3mm]
		& \leq \dist_H\left( (\vp_{\alpha}^k)^{-1}\left( U_{\alpha}^k\cap U_{\beta}^k\right),  (\vp_{\alpha}^k)^{-1}\left( U_{\alpha}\cap U_{\beta}\right) \right) + \dist_H\left( (\vp_{\alpha}^k)^{-1} \left( U_{\alpha}\cap U_{\beta}\right),  \vp_{\alpha}^{-1}\left( U_{\alpha}\cap U_{\beta}\right) \right) \nonumber \\[3mm]
		& \leq C\ \dist_H\left(  U_{\alpha}^k\cap U_{\beta}^k,   U_{\alpha}\cap U_{\beta} \right) + \dist_H\left( (\vp_{\alpha}^k)^{-1} \left( U_{\alpha}\cap U_{\beta}\right),  \vp_{\alpha}^{-1}\left( U_{\alpha}\cap U_{\beta}\right) \right) \nonumber \\[1mm]
		& \xrightarrow[k\to +\infty]{} 0. \label{eq:Hausdorff}
	\end{align}
	Hence, for any open set $V\Subset \vp_{\alpha}^{-1}\left( U_{\alpha}\cap U_{\beta}\right)$, there exists $k_0=k_0(V,\alpha,\beta)\in \N$ such that for any $k\geq k_0$, we have $V\Subset (\vp_{\alpha}^k)^{-1}\left( U_{\alpha}^k\cap U_{\beta}^k\right)$. Thanks to \Cref{it:Transition}, the transition charts $\vp_{\alpha,\beta}\coloneqq \vp_{\alpha}^{-1}\circ\vp_{\beta}$ and $\vp_{\alpha,\beta}^k\coloneqq (\vp_{\alpha}^k)^{-1}\circ\vp_{\beta}$ satisfy
	\begin{align}\label{eq:bound_Transition}
		\left\| \vp_{\alpha,\beta} \right\|_{W^{3,\left(\frac{n}{2},1\right)}\left( V,\geu\right) } \leq \liminf_{k\to +\infty} \left\| \vp^k_{\alpha,\beta} \right\|_{W^{3,\left(\frac{n}{2},1\right)}\left( V,\geu\right) } \leq C\, \eps_0.
	\end{align}
	Given $\ell\in \N$, we can choose $V=V(\ell)$ such that $\Hr^n\left(  \vp_{\alpha}^{-1}\left( U_{\alpha}\cap U_{\beta}\right) \setminus V \right)\leq \ell^{-1}$. Letting $\ell\to +\infty$, we obtain 
	\begin{align*}
		\left\| \vp_{\alpha,\beta} \right\|_{W^{3,\left(\frac{n}{2},1\right)}\left(  \vp_{\alpha}^{-1}\left( U_{\alpha}\cap U_{\beta}\right),\geu\right) }  \leq C\, \eps_0.
	\end{align*}
	Moreover, the transition charts are uniformly bi-Lipschitz. Up to a subsequence, we obtain the following convergence:
	\begin{align}\label{eq:conv_transition}
		\vp^k_{\alpha,\beta} \xrightarrow[k\to +\infty]{} \vp_{\alpha,\beta} \qquad \text{strongly in }C^{0,\frac{1}{2}}_{\loc}(\vp_{\beta}^{-1}(U_{\alpha}\cap U_{\beta})) \text{ and weakly in }W^{3,\left(\frac{n}{2},1\right)}.
	\end{align}
	Consequently, we have an atlas satisfying \Cref{it:Boundary,it:Size,it:Transition}. It remains to check \Cref{it:Harmonic,it:Regularity}. To do so, we need to improve the convergence of the charts. 
	
	\subsubsection{Strong convergence.} Since the charts $\vp_{\alpha}^k$ provides harmonic coordinates, the map $(\vp_{\alpha}^k)^{-1}\colon U_{\alpha}^k\to \R^n$ is a harmonic map. Hence, we have 
	\begin{align}\label{eq:vpa}
		\lap_{g_{\vPhi_k}} (\vp_{\alpha}^k)^{-1} =0 \qquad \text{in }U_{\alpha}^k.
	\end{align}
	Since $g_{\vPhi_k}$ converges strongly to $g_{\vPhi}$ in $W^{\frac{n}{2},2}(\Sigma,h)$, we obtain the following convergence.
	\begin{claim}\label{cl:weak_to_strong}
		It holds 
		\begin{align*}
			(\vp_{\alpha}^k)^{-1} \xrightarrow[k\to +\infty]{} \vp_{\alpha}^{-1} \qquad \text{ strongly in } W^{3,\left(\frac{n}{2},2\right)}_{\loc}(U_{\alpha},h).
		\end{align*}
		We have $g_{\alpha} = (\vp_{\alpha})^*g_{\vPhi}$, where $g_{\alpha}$ is defined in \eqref{eq:galpha} and 
		\begin{align}\label{eq:strong_g}
			g_{\vPhi_k\circ\vp_{\alpha}^k} \xrightarrow[k\to +\infty]{} g_{\vPhi\circ\vp_{\alpha}} \qquad \text{ strongly in }W^{2,\left(\frac{n}{2},1\right)}_{\loc}( \Or_{\alpha} ,\geu) .
		\end{align}
		The transition functions $\vp^k_{\alpha,\beta} \coloneqq (\vp_{\alpha}^k)^{-1}\circ\vp_{\beta}^k$ and $\vp_{\alpha,\beta} \coloneqq \vp_{\alpha}^{-1}\circ\vp_{\beta}$ verify
		\begin{align}\label{eq:strong_Transition}
			\vp^k_{\alpha,\beta}  \xrightarrow[k\to +\infty]{} \vp_{\alpha,\beta} \qquad \text{ strongly in }W^{3,\left(\frac{n}{2},1\right)}_{\loc}(\vp_{\beta}^{-1}(U_{\alpha}\cap U_{\beta})).
		\end{align}
	\end{claim}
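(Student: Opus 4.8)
The plan is an elliptic bootstrap based on the harmonic map equation \eqref{eq:vpa}: each component of $u^k\coloneqq (\vp_{\alpha}^k)^{-1}$ is $g_{\vPhi_k}$-harmonic on $U_{\alpha}^k$, and, since the Laplace--Beltrami operator commutes with pullback by diffeomorphisms, each component of the transition map $\vp^k_{\alpha,\beta}=(\vp_{\alpha}^k)^{-1}\circ\vp_{\beta}^k$ is harmonic for the metric $g_{\vPhi_k\circ\vp_{\beta}^k}$. The crucial structural input is that, by the Gauss equation $\Riem^{g_{\vPhi}}_{ijkl}=(\vII_{\vPhi})_{ik}\cdot(\vII_{\vPhi})_{jl}-(\vII_{\vPhi})_{il}\cdot(\vII_{\vPhi})_{jk}$, the Ricci tensor is quadratic in $\vII$. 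Since $\vPhi_k\to\vPhi$ strongly in $W^{\frac n2+1,2}(\Sigma,h)$, the second fundamental forms converge strongly, $\vII_{\vPhi_k}\to\vII_{\vPhi}$ in $L^{(n,2)}(\Sigma,h)$ (via the embedding $W^{\frac n2-1,2}\hookrightarrow L^{(n,2)}$), whence, by the Lorentz--Hölder inequality $L^{(n,2)}\cdot L^{(n,2)}\hookrightarrow L^{(\frac n2,1)}$, the Ricci curvatures converge strongly, $\Ric_{g_{\vPhi_k}}\to\Ric_{g_{\vPhi}}$ in $L^{(\frac n2,1)}(\Sigma,h)$. This gain of one Lorentz exponent is what drives the whole argument.

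\textbf{Convergence of the chart maps.} From \eqref{eq:Unif_vp} and the uniform $C^{1,\frac1n}$ control coming from \eqref{eq:bound_coordinates}, up to a subsequence $u^k\rightharpoonup u$ weakly in $W^{3,\frac n2}_{\loc}$ and strongly in $C^1_{\loc}$, and the $C^0$-convergence of the charts established above forces $u=\vp_{\alpha}^{-1}$, which is therefore $g_{\vPhi}$-harmonic. I would then estimate the difference $w^k\coloneqq u^k-u$, which on any fixed $V\Subset U_\alpha$ (so that $V\Subset U_\alpha^k$ for $k$ large, by the Hausdorff convergence of the domains) solves
\[
\lap_{g_{\vPhi_k}} w^k \;=\; \bigl(\lap_{g_{\vPhi}}-\lap_{g_{\vPhi_k}}\bigr)u \;=:\; f^k .
\]
The right-hand side is a finite sum of products of $g_{\vPhi_k}-g_{\vPhi}$ and $\g(g_{\vPhi_k}-g_{\vPhi})$ with $\g^2 u$ and $\g u$; since $g_{\vPhi_k}\to g_{\vPhi}$ strongly in $W^{\frac n2,2}(\Sigma,h)$ by \eqref{eq:g_bounded} and $u$ is fixed, a product estimate in Lorentz--Sobolev spaces gives $f^k\to0$ strongly in $L^{(\frac n2,2)}_{\loc}$. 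Applying the critical Lorentz elliptic estimate of \cite{MarRiv2025} to $\lap_{g_{\vPhi_k}}$ --- whose coefficients, transported through the uniformly bi-Lipschitz charts, are uniformly bounded in $W^{2,(\frac n2,1)}\hookrightarrow C^0$ by \eqref{eq:bound_coordinates}, so that the estimate holds with constants independent of $k$ --- together with $w^k\to0$ strongly in $L^{\frac n2}_{\loc}$ (Rellich), gives $\|w^k\|_{W^{3,(\frac n2,2)}(V)}\to0$. Hence $(\vp_\alpha^k)^{-1}\to\vp_\alpha^{-1}$ strongly in $W^{3,(\frac n2,2)}_{\loc}(U_\alpha,h)$.

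\textbf{Convergence of the metrics and of the transition functions.} In the harmonic coordinates $\vp_\alpha^k$, the metric $\hat g^k\coloneqq g_{\vPhi_k\circ\vp_\alpha^k}$ satisfies the DeTurck--Kazdan elliptic system $\hat g^k_{ab}\,\dr^2_{ab}\hat g^k_{ij}=-2\,(\Ric_{\hat g^k})_{ij}+Q_{ij}(\hat g^k,\g\hat g^k)$, with $Q_{ij}$ quadratic in $\g\hat g^k$. By the first paragraph and the $C^1$-convergence of the charts, $(\Ric_{\hat g^k})_{ij}\to(\Ric_{\hat g})_{ij}$ strongly in $L^{(\frac n2,1)}_{\loc}$; combined with the uniform $W^{2,(\frac n2,1)}$ bound \eqref{eq:bound_coordinates} and the compact Lorentz--Sobolev embeddings (which make $Q_{ij}(\hat g^k,\g\hat g^k)$ converge strongly in $L^{(\frac n2,1)}_{\loc}$ as well), the elliptic estimate of \cite{MarRiv2025} applied to $\hat g^k-\hat g$ yields $\hat g^k\to\hat g$ strongly in $W^{2,(\frac n2,1)}_{\loc}(\Or_\alpha,\geu)$. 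Passing to the limit in $\hat g^k=(\vp_\alpha^k)^*g_{\vPhi_k}$ identifies the weak limit $g_\alpha$ of \eqref{eq:galpha} as $(\vp_\alpha)^*g_{\vPhi}=g_{\vPhi\circ\vp_\alpha}$ and proves \eqref{eq:strong_g} (hence \Cref{it:Harmonic}). Finally, since $\vp^k_{\alpha,\beta}$ is harmonic for $g_{\vPhi_k\circ\vp_\beta^k}$ --- a metric that, by the step just performed, converges \emph{strongly} in $W^{2,(\frac n2,1)}_{\loc}$ --- the same difference argument as in the second paragraph applies with the source metric in the stronger class, so that the right-hand side $\bigl(\lap_{g_{\vPhi\circ\vp_\beta}}-\lap_{g_{\vPhi_k\circ\vp_\beta^k}}\bigr)\vp_{\alpha,\beta}$ now tends to $0$ strongly in $L^{(\frac n2,1)}_{\loc}$; this gives the improved convergence \eqref{eq:strong_Transition}, $\vp^k_{\alpha,\beta}\to\vp_{\alpha,\beta}$ strongly in $W^{3,(\frac n2,1)}_{\loc}$.

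\textbf{Main obstacle.} The delicate point is the Lorentz bookkeeping across this two-tier bootstrap: the chart maps can only be upgraded to $W^{3,(\frac n2,2)}$ because the background metric $g_{\vPhi_k}$ converges merely in the $L^2$-based space $W^{\frac n2,2}(\Sigma,h)$, whereas the pulled-back metrics and the transition maps gain the extra Lorentz exponent precisely because the Ricci curvature is quadratic in $\vII$ and hence converges in $L^{(\frac n2,1)}$. One must also handle the fact that the domains $U_\alpha^k$, $\Or_\alpha^k$ move with $k$, so every estimate has to be localized with constants controlled uniformly through \eqref{eq:bound_coordinates}, and the product and composition estimates underlying the convergence of $f^k$ and of $Q_{ij}(\hat g^k,\g\hat g^k)$ lie exactly at the borderline of the Sobolev multiplication rules, where the Lorentz refinement is indispensable.
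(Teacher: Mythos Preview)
Your overall strategy—harmonic equation for the charts, DeTurck--Kazdan system for the metrics, harmonicity of the transition maps, Ricci gaining one Lorentz exponent via Gauss—is exactly the one the paper follows, and your treatment of the metric step is essentially identical to theirs. The differences lie in the tactics for the first and third steps.

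For the chart convergence, the paper does \emph{not} linearize and invoke an elliptic estimate. Instead it writes $\lap_{g_{\vPhi_k}}(\vp_\alpha^k)^{-1}=0$ in divergence form with respect to the \emph{fixed} metric $g_{\vPhi}$, performs a Hodge decomposition of $|\g^3 w^k|^{\frac n2-2}\g^3 w^k$ in $L^{\frac n{n-2}}$, and integrates by parts to show directly that $\int_V|\g^3 w^k|^{\frac n2}\to0$; this is an energy/duality argument (a Minty-type trick) that requires no regularity theory for $\lap_{g_{\vPhi_k}}$. Your route is legitimate, but the justification you give for the $k$-uniform elliptic constant is slightly misaimed: the bound \eqref{eq:bound_coordinates} controls $g_{\vPhi_k\circ\vp_\alpha^k}$ in the \emph{$k$-dependent} harmonic charts, not $g_{\vPhi_k}$ in the fixed background. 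To make your argument go through you must either (i) transport the whole equation to $\Or_\alpha^k$ and track how the change of variables $\vp_\alpha^k\in W^{3,\frac n2}\cap W^{1,\infty}$ interacts with the Lorentz norms, or (ii) stay in the fixed chart and appeal to Calder\'on--Zygmund theory with VMO coefficients, using that $g_{\vPhi_k}\to g_{\vPhi}$ strongly in $W^{\frac n2,2}\hookrightarrow\mathrm{VMO}$ gives a uniform VMO modulus. Either works, but neither is as immediate as you suggest; the paper's energy method sidesteps both issues.

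For the transition functions, the paper does not use the Laplacian at all: it uses the exact identity $\dr^2_{ij}(\vp_{\alpha,\beta}^k)^\ell={}^{g_{\vPhi_k\circ\vp_\beta^k}}\Gamma^p_{ij}\,\dr_p(\vp_{\alpha,\beta}^k)^\ell-{}^{g_{\vPhi_k\circ\vp_\alpha^k}}\Gamma^\ell_{pq}\,\dr_i(\vp_{\alpha,\beta}^k)^p\,\dr_j(\vp_{\alpha,\beta}^k)^q$, which gives the full Hessian algebraically in terms of Christoffel symbols. The strong $W^{2,(\frac n2,1)}$ convergence of the metrics (hence of the $\Gamma$'s in $L^{(n,1)}$) plus a smallness absorption using $\eps_0$ then yields $W^{2,(n,1)}$ convergence of $\vp_{\alpha,\beta}^k$ directly, and one more differentiation gives $W^{3,(\frac n2,1)}$. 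Your harmonic-map approach controls only the trace of the Hessian, so you again need an elliptic estimate to recover the full tensor; the paper's algebraic route is more elementary and explains transparently why one gets the sharper Lorentz exponent $1$ here (it is inherited pointwise from the Christoffel symbols) versus only $2$ for the chart maps.
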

	\begin{proof}
		Given a fixed ball $B_h(x,s)\Subset U_{\alpha}$, we also have $B_h(x,s)\Subset U_{\alpha}^k$ for $k$ large enough. On any open set of $B_h(x,s)$ carrying fixed coordinates, we have 
		\begin{align*}
			\lap_{g_{\vPhi}} (\vp_{\alpha}^k)^{-1} = (\det g_{\vPhi})^{-\frac{1}{2}}\ \dr_i \left[ \left(g_{\vPhi}^{ij}\, \sqrt{\det g_{\vPhi}} - g_{\vPhi_k}^{ij} \, \sqrt{\det g_{\vPhi_k}}\right) \dr_j (\vp_{\alpha}^k)^{-1} \right].
		\end{align*} 
		We multiply by $\sqrt{\det g_{\vPhi}}$ and we obtain a system in divergence form
		\begin{align}\label{eq:eq_harm}
			\dr_i\left[ \sqrt{\det g_{\vPhi}}\ g_{\vPhi}^{ij} \ \dr_j (\vp_{\alpha}^k)^{-1} \right] =  \dr_i \left[ \left(g_{\vPhi}^{ij}\, \sqrt{\det g_{\vPhi}} - g_{\vPhi_k}^{ij} \, \sqrt{\det g_{\vPhi_k}}\right) \dr_j (\vp_{\alpha}^k)^{-1} \right].
		\end{align} 
		Let $V\Subset W\Subset U_{\alpha}$ be open sets with $C^{\infty}$ boundary and $\xi\in C^{\infty}_c(W;[0,+\infty))$ such that $\xi\geq 1$ in $V$. Given $1\leq a,b\leq n$, we consider the following Hodge decomposition in $L^{\frac{n}{n-2}}(W)$: there exist $\lambda_{ab}\in W^{1,\frac{n}{n-2}}(W;\R^n)$ and $\mu_{ab}\in W^{1,\frac{n}{n-2}}(W;\R^n\otimes\Lambda^1\R^n)$ such that 
		\begin{align*}
			\left| d(\dr^2_{ab} (\vp_{\alpha}^k)^{-1} - \dr^2_{ab} (\vp_{\alpha})^{-1}) \right|_{g_{\vPhi}}^{\frac{n}{2}-2} \ d(\dr^2_{ab} (\vp_{\alpha}^k)^{-1} - \dr^2_{ab} (\vp_{\alpha})^{-1}) = d\lambda_{ab} + d^{*_{\geu}}\mu_{ab} \qquad \text{ in }W.
		\end{align*}
		We have the estimates following from \eqref{eq:Unif_vp} and \eqref{eq:g_bounded}
		\begin{align}\label{eq:est_Hodge}
			\| \lambda_{ab} \|_{W^{1,\frac{n}{n-2}}(W)} + \| \mu_{ab} \|_{W^{1,\frac{n}{n-2}}(W)} \leq C\, \left\| d(\dr^2_{ab} (\vp_{\alpha}^k)^{-1} - \dr^2_{ab} (\vp_{\alpha})^{-1}) \right\|_{L^{\frac{n}{2}}(W)}^{\frac{n}{2}-1} \leq C.
		\end{align}
		We obtain by \eqref{eq:g_bounded}
		\begin{align*}
			& \int_V \left| \g\left( \dr^2_{ab} (\vp_{\alpha}^k)^{-1} - \dr^2_{ab} (\vp_{\alpha})^{-1} \right) \right|^{\frac{n}{2}}_{\geu}\ dx \\[2mm]
			& \leq C(\Lambda) \int_V \left| \g\left( \dr^2_{ab} (\vp_{\alpha}^k)^{-1} - \dr^2_{ab} (\vp_{\alpha})^{-1} \right) \right|^{\frac{n}{2}}_{g_{\vPhi}}\ d\vol_{g_{\vPhi}} \\[2mm]
			& \leq C(\Lambda) \int_W \xi\ \left| \g\left( \dr^2_{ab} (\vp_{\alpha}^k)^{-1} - \dr^2_{ab} (\vp_{\alpha})^{-1} \right) \right|^{\frac{n}{2}}_{g_{\vPhi}}\ d\vol_{g_{\vPhi}} \\[2mm]
			& \leq C(n,\Lambda) \int_W \xi\ \scal{ \dr^2_{ab}d\left(  (\vp_{\alpha}^k)^{-1} - (\vp_{\alpha})^{-1} \right)  }{ d\lambda_{ab} + d^{*_{\geu}} \mu_{ab} }_{g_{\vPhi}} \ d\vol_{g_{\vPhi}}.
		\end{align*}
		By integration by parts, we obtain 
		\begin{align}
			& \int_V \left| \g\left( \dr^2_{ab} (\vp_{\alpha}^k)^{-1} - \dr^2_{ab} (\vp_{\alpha})^{-1} \right) \right|^{\frac{n}{2}}_{\geu}\ dx \nonumber \\[2mm]
			& \leq C(n,\Lambda) \int_W \scal{\dr^2_{ab} d \left(  (\vp_{\alpha}^k)^{-1} - (\vp_{\alpha})^{-1} \right)}{F_k}_{\geu}\ dx \label{eq:T1} \\[2mm]  
			& \qquad + C(n,\Lambda) \scal{ d^{*_{g_{\vPhi}}} \dr^2_{ab}d \left(  (\vp_{\alpha}^k)^{-1} - (\vp_{\alpha})^{-1} \right) }{ \xi\, \lambda_{ab} }_{\Dr'(W),\Dr(W)}, \label{eq:T2} 
		\end{align}
		where the term $F_k$ in \eqref{eq:T1} verifies the pointwise estimate
		\begin{align*}
			|F_k|_{\geu} \leq C(n,\Lambda) \left( |d\xi|_{\geu} + |\xi|\, |\g g_{\vPhi}|_{\geu} \right)\ \left( |\lambda_{ab}| + |\mu_{ab}|_{\geu} \right).
		\end{align*}
		The term \eqref{eq:T1} converges to zero by \eqref{eq:g_bounded}, \eqref{eq:Unif_vp} and \eqref{eq:est_Hodge}. Indeed, the term $|\g g_{\vPhi}|_{\geu}$ is bounded in $L^p$ for any $p<+\infty$, the term $\left(\dr^2_{ab} d \left(  (\vp_{\alpha}^k)^{-1} - (\vp_{\alpha})^{-1} \right) \right)$ converges weakly to 0 in $L^{\frac{n}{2}}$. The term $\left( |\lambda_{ab}| + |\mu_{ab}|_{\geu} \right)$ is bounded in $W^{1,\frac{n}{n-2}}\hookrightarrow L^{\frac{n}{n-3}}$, and thus converges strongly (up to a subsequence) in $L^q$ for any $1\leq q < \frac{n}{n-3}$. Hence the term $F_k$ converges strongly in $L^{\frac{n}{n-2}}(W)$ to some limit $F\in L^{\frac{n}{n-2}}(W)$. Thus, the term \eqref{eq:T1} converges to 0, up to a subsequence.\\
		
		We now show that the term \eqref{eq:T2} also converges to $0$. By \eqref{eq:eq_harm}, it holds 
		\begin{align}
			&  \scal{ d^{*_{g_{\vPhi}}}\, \dr^2_{ab}\, d \left(  (\vp_{\alpha}^k)^{-1} - (\vp_{\alpha})^{-1} \right) }{ \xi\, \lambda_{ab} }_{\Dr'(W),\Dr(W)} \nonumber \\[2mm]
			& = \int_W \scal{\g^3 \left(  (\vp_{\alpha}^k)^{-1} - (\vp_{\alpha})^{-1} \right) }{ G_k }_{\geu} + \scal{\g^2\left(  (\vp_{\alpha}^k)^{-1} - (\vp_{\alpha})^{-1} \right) }{H_k}_{\geu} dx \label{eq:T21} \\[2mm]
			& \qquad + \scal{ \dr^2_{ab}\left( \dr_i \left[ \left(g_{\vPhi}^{ij}\, \sqrt{\det g_{\vPhi}} - g_{\vPhi_k}^{ij} \, \sqrt{\det g_{\vPhi_k}}\right) \dr_j (\vp_{\alpha}^k)^{-1} \right] \right) }{ \xi\ \lambda_{ab}}_{\Dr'(W),\Dr(W)}, \label{eq:T22}
		\end{align}
		where the terms $G_k$ and $H_k$ verify the following pointwise estimates
		\begin{align*}
			& |G_k|_{\geu} \leq C(n,\Lambda)\, \xi \,  |\lambda_{ab}|\,  |\g g_{\vPhi}|_{\geu}, \\[2mm]
			& |H_k|_{\geu} \leq C(n,\Lambda)\, \xi \, |\lambda_{ab}| \, ( 1+ |\g g_{\vPhi}|_{\geu} + |\g^2 g_{\vPhi}|_{\geu}).
		\end{align*}
		As previously, we have that, up to a subsequence, $G_k$ converges strongly in $L^{\frac{n}{n-2}}$ and $H_k$ converges strongly in $L^{\frac{n}{n-1}}$. Hence the term \eqref{eq:T21} converges to 0, up to a subsequence. The term \eqref{eq:T22} is given by 
		\begin{align*}
			& \scal{ \dr^2_{ab}\left( \dr_i \left[ \left(g_{\vPhi}^{ij}\, \sqrt{\det g_{\vPhi}} - g_{\vPhi_k}^{ij} \, \sqrt{\det g_{\vPhi_k}}\right) \dr_j (\vp_{\alpha}^k)^{-1} \right] \right) }{ \xi\ \lambda_{ab}}_{\Dr'(W),\Dr(W)} \\[2mm]
			& = -\int_W \dr_{b}\left( \dr_i \left[ \left(g_{\vPhi}^{ij}\, \sqrt{\det g_{\vPhi}} - g_{\vPhi_k}^{ij} \, \sqrt{\det g_{\vPhi_k}}\right) \dr_j (\vp_{\alpha}^k)^{-1} \right] \right) \ \dr_a\left(  \xi\ \lambda_{ab} \right)\ dx \\[2mm]
			& \leq C(n,\Lambda)\  \|\xi\|_{W^{1,\infty}(W)}\, \|\lambda_{ab} \|_{W^{1,\frac{n}{n-2}}(W)}\\[2mm]
			& \qquad \times \left( \left\| \g^2 (g_{\vPhi} - g_{\vPhi_k}) \right\|_{L^{\frac{n}{2}}(W,\geu)} \, \| \g (\vp_{\alpha}^k)^{-1} \|_{L^{\infty}(W)} + \left\| \g (g_{\vPhi} - g_{\vPhi_k}) \right\|_{L^n(W,\geu)} \, \| \g^2 (\vp_{\alpha}^k)^{-1} \|_{L^n(W)}  \right) .
		\end{align*}
		By \eqref{eq:g_bounded}, \eqref{eq:Unif_vp} and \eqref{eq:est_Hodge}, we obtain that the term \eqref{eq:T22} converges to 0. Hence, both \eqref{eq:T21} and \eqref{eq:T22} converges to 0, which implies that \eqref{eq:T2} converges to 0. Since we already proved that \eqref{eq:T1} converges to 0, we obtain 
		\begin{align}\label{eq:strong_cv_chart}
			(\vp_{\alpha}^k)^{-1} \xrightarrow[k\to +\infty]{} (\vp_{\alpha})^{-1} \qquad \text{strongly in }W^{3,\frac{n}{2}}_{\loc}(U_{\alpha}).
		\end{align}

		Coming back to the limit \eqref{eq:galpha}, we obtain that this limit is also strong in any $L^p$-space, so that $g_{\alpha} = (\vp_{\alpha})^*g_{\vPhi} \in W^{2,\left(\frac{n}{2},1\right)}(\B^n(0,r_{\alpha}))$ and $g_{\vPhi_k\circ\vp_{\alpha}^k}$ converges strongly in $W^{2,\frac{n}{2}}$ to $g_{\vPhi\circ\vp_{\alpha}}$. Moreover, the charts $\vp_{\alpha}^k$ provide harmonic coordinates. Hence, we have by \cite[Lemma 11.2.6]{petersen2016} that
		\begin{align}\label{eq:harm_coord}
			-\frac{1}{2} g_{\vPhi_k\circ\vp_{\alpha}^k}^{ij}\, \dr^2_{ij}\, \big( g_{\vPhi_k\circ\vp_{\alpha}^k}\big)_{\alpha\beta} = \Ric_{\alpha\beta}^{g_{\Phi_k\circ\vp_{\alpha}^k}} + Q_{\alpha\beta}\left( g_{\vPhi_k\circ\vp_{\alpha}^k}, \dr g_{\vPhi_k\circ\vp_{\alpha}^k} \right),
		\end{align}
		where the last term is quadratic in the coefficients $\dr_k (g_{\vPhi_k\circ\vp_{\alpha}^k})_{ij}$, that is to say, we have the estimate
		\begin{align*}
			\left| Q_{\alpha\beta}\left( g_{\vPhi_k\circ\vp_{\alpha}^k}, \dr g_{\vPhi_k\circ\vp_{\alpha}^k} \right) \right| \leq C(n)\, \sum_{1\leq i,j,k\leq n} \left| \dr_k \big( g_{\vPhi_k\circ\vp_{\alpha}^k} \big)_{ij} \right|^2.
		\end{align*}
		We rewrite \eqref{eq:harm_coord} in divergence form
		\begin{align}\label{eq:harm_coord2}
			-\frac{1}{2}\, \dr_i\left( g_{\vPhi_k\circ\vp_{\alpha}^k}^{ij}\, \dr_j\, \big( g_{\vPhi_k\circ\vp_{\alpha}^k}\big)_{\alpha\beta}\right) = \Ric_{\alpha\beta}^{g_{\Phi_k\circ\vp_{\alpha}^k}} + \tilde{Q}_{\alpha\beta}\left( g_{\vPhi_k\circ\vp_{\alpha}^k}, \dr g_{\vPhi_k\circ\vp_{\alpha}^k} \right),
		\end{align}
		where $\tilde{Q}$ also verifies
		\begin{align*}
			\left| \tilde{Q}_{\alpha\beta}\left( g_{\vPhi_k\circ\vp_{\alpha}^k}, \dr g_{\vPhi_k\circ\vp_{\alpha}^k} \right) \right| \leq C(n)\, \sum_{1\leq i,j,k\leq n} \left| \dr_k \big( g_{\vPhi_k\circ\vp_{\alpha}^k} \big)_{ij} \right|^2.
		\end{align*}
		Since the coefficients $g_{\vPhi_k\circ\vp_{\alpha}^k}^{ij}$ verify \eqref{eq:bound_coordinates}, we deduce from \cite[Theorem 2.4]{byun2015} that the Green kernel associated to the operator $-\frac{1}{2}\, \dr_i\left( g_{\vPhi_k\circ\vp_{\alpha}^k}^{ij}\, \dr_j(\cdot) \right)$ is uniformly bounded, seen as an operator from $L^{\left(\frac{n}{2},1\right)}\to W^{2,\left(\frac{n}{2},1\right)}$.
		Since $\vII_{\vPhi_k}$ converges strongly to $\vII_{\vPhi}$ in $L^{(n,2)}(\Sigma)$, $\g \vp_{\alpha}^k$ converges a.e. to $\g \vp_{\alpha}$ and $\g\vp_{\alpha}^k$ is uniformly bounded in $L^{\infty}$, we also have that $\Ric_{\alpha\beta}^{g_{\Phi_k\circ\vp_{\alpha}^k}}$ converges strongly to $\Ric_{\alpha\beta}^{g_{\vPhi\circ\vp_{\alpha}}}$ in $L^{\left( \frac{n}{2},1\right)}_{\loc}(\B^n(0,r_{\alpha}))$ by dominated convergence. Hence, the metrics $g_{\vPhi_k\circ\vp_{\alpha}^k}$ converge strongly in $W^{2,\left(\frac{n}{2},1\right)}_{\loc}$ to $g_{\vPhi\circ\vp_{\alpha}}$.\\
		
		Consider $\alpha,\beta\in A$ such that $U_{\alpha}\cap U_{\beta}\neq \varnothing$. Since both $U_{\alpha}$ and $U_{\beta}$ are open sets, the intersection $U_{\alpha}\cap U_{\beta}$ is also an open set. To obtain estimate on the transition functions, we use \eqref{eq:hessian_vp} or \cite[Equation (2.13)]{taylor2006}, stating that the transition maps $\vp^k_{\alpha,\beta} \coloneqq (\vp_{\alpha}^k)^{-1}\circ\vp_{\beta}^k$ verify
		\begin{align}\label{eq:hessian_vpk}
            \dr^2_{ij} (\vp_{\alpha,\beta}^k)^{\ell} = \left( {^{g_{\vPhi\circ \vp_{\beta}^k}} \Gamma}_{ij}^p \right)\, \dr_p (\vp^k_{\alpha,\beta})^{\ell} - \left( {^{g_{\vPhi\circ \vp_{\alpha}^k}} \Gamma}_{ij}^{\ell} \right)\, \left( \dr_i (\vp^k_{\alpha,\beta})^p \right)\, \left( \dr_j (\vp_{\alpha,\beta}^k)^q \right).
        \end{align}
        By \eqref{eq:strong_cv_chart}, the sequence $\vp_{\alpha,\beta}^k$ converges strongly in the $W^{3,\frac{n}{2}}_{\loc}$-topology to $\vp_{\alpha,\beta}\coloneq \vp_{\alpha}^{-1}\circ\vp_{\beta}$. Hence for any ball $\B^n(y,s)\Subset (\vp_{\beta}^k)^{-1}\left( U^k_{\alpha}\cap U^k_{\beta}\right)\cap (\vp_{\beta}^{\ell})^{-1}\left( U^{\ell}_{\alpha}\cap U^{\ell}_{\beta}\right) \cap \vp_{\beta}^{-1}\left( U_{\alpha}\cap U_{\beta}\right)$ (and $k\leq\ell$ large enough), we obtain 
        \begin{align}
            \left\| \g^2 (\vp_{\alpha,\beta}^k) - \g^2 (\vp^{\ell}_{\alpha,\beta}) \right\|_{L^{(n,1)}(\B^n(y,s),\geu)} & \leq C(n,d,E)\, \left\|  {^{g_{\vPhi\circ \vp_{\beta}^k}} \Gamma}_{ij}^p -  {^{g_{\vPhi\circ \vp_{\beta}^{\ell}}} \Gamma}_{ij}^p \right\|_{L^{(n,1)}(\B^n(y,s),\geu)} \nonumber \\[2mm]
             & \qquad +C(n,d,E)\, \eps_0\, \left\| \g (\vp_{\alpha,\beta}^k) - \g (\vp^{\ell}_{\alpha,\beta}) \right\|_{L^{\infty}(\B^n(y,s),\geu)} \label{eq:Hess_transition_est}
        \end{align}
        Up to reducing $\eps_0$ in \eqref{eq:Unif_Int_Approx} (still depending only on $n$, $d$ and $E$), we obtain by Sobolev injections that 
        \begin{align*}
            \left\| \g (\vp_{\alpha,\beta}^k) - \g (\vp^{\ell}_{\alpha,\beta}) \right\|_{L^{\infty}(\B^n(y,s),\geu)}  \leq C(n,d,E)\, \left\|  {^{g_{\vPhi\circ \vp_{\beta}^k}} \Gamma}_{ij}^p -  {^{g_{\vPhi\circ \vp_{\beta}^{\ell}}} \Gamma}_{ij}^p \right\|_{L^{(n,1)}(\B^n(y,s),\geu)} .
        \end{align*}
        By \eqref{eq:strong_g}, we obtain that $(\vp_{\alpha,\beta}^k)_{k\in\N}$ converges strongly in $W^{1,\infty}_{\loc}$ to $\vp_{\alpha,\beta}$. Coming back to \eqref{eq:Hess_transition_est}, we obtain that this convergence also holds in $W^{2,(n,1)}_{\loc}$. Differentiating once more \eqref{eq:hessian_vpk}, we obtain the strong convergence in $W^{3,\left(\frac{n}{2},1\right)}_{\loc}$. 
	\end{proof}
	
	\subsubsection{Convergence of the densities.}\label{sec:Densities}

	We now show that each map $\vPhi_{\alpha}\coloneq \vPhi\circ\vp_{\alpha} \colon \Or_{\alpha}\to \B^d(\vq_{\alpha},r_{\alpha})$ is injective. Let $\vep\in \vPhi_{\alpha}(\Or_{\alpha})$ and $s_0>0$ such that $\B^d(\vep,s_0)\Subset \B^d(\vq_{\alpha},r_{\alpha})$. As done in \Cref{sec:extension}, we can extend each $\vPhi_k\circ\vp_{\alpha}$ to an immersion $\vPsi_{\alpha,k}\colon \R^n\to \R^d$ such that 
	\begin{align}\label{eq:est_extension}
		\left\| \vII_{\vPsi_{\alpha,k}} \right\|_{L^{(n,2)}\left(\R^n,g_{\vPsi_{\alpha,k}} \right)} \leq C(n,d,D)\, \Er\left(\vPhi_k; \vPhi_k^{-1}\big( \B^d(\vq_{\alpha}^{\ k}, r_{\alpha}^k)\big) \right)^{\frac{1}{2n}}.
	\end{align}
	Moreover the immersion $\vPsi_{\alpha,k}\colon \R^n\setminus \B^n(0,2\, r_{\alpha}^k)\to \R^d$ is an embedding parametrizing a flat affine $n$-dimensional plane. We consider the monotonicity formula applied to $\vPsi_{\alpha,k}$. For any $0<s<t<+\infty$, using the notation \eqref{eq:measure}, it holds (see \eqref{eq:montonicity_balls})
	\begin{align}\label{eq:monotonicity_extension}
		\begin{aligned} 
			\frac{\mu_{\vPsi_{\alpha,k}} (\B^d(\vep,s)) }{ \Hr^n(\B^n)\, s^n} & \leq \frac{\mu_{\vPsi_{\alpha,k}} (\B^d(\vep,t)) }{ \Hr^n(\B^n)\, t^n} + \frac{1}{2\, \Hr^n(\B^n)}\int_{\B^d(\vep,t)} \frac{|\vH_{\vPsi_{\alpha,k}}|^2 }{|\vx-\vep|^{n-2}}\, d\mu_{\vPsi_{\alpha,k}}(\vx)  \\[2mm]
			& + \left(\frac{\mu_{\vPsi_{\alpha,k}} (\B^d(\vep,s)) }{ \Hr^n(\B^n)\, s^n}\right)^{\frac{n-1}{n}} \left\|\vH_{\vPsi_{\alpha,k}} \right\|_{L^n\left(\B^d(\vep,s),\mu_{\vPsi_{\alpha,k}} \right)}.
		\end{aligned} 
	\end{align}
	For $t$ large enough, the second terms verifies 
	\begin{align*}
		& \frac{1}{2\, \Hr^n(\B^n)}\int_{\B^d(\vep,t)} \frac{|\vH_{\vPsi_{\alpha,k}}|^2 }{|\vx-\vep|^{n-2}}\, d\mu_{\vPsi_{\alpha,k}}(\vx) \\[2mm]
		& = \frac{1}{2\, \Hr^n(\B^n)}\int_{\B^d(\vep,2\, r_{\alpha}^k)} \frac{|\vH_{\vPsi_{\alpha,k}}|^2 }{|\vx-\vep|^{n-2}}\, d\mu_{\vPsi_{\alpha,k}}(\vx)  \\[2mm]
		& \leq C(n)\, \left\| \frac{1}{|\cdot -\vep|} \right\|_{L^{(n,\infty)}(\B^d(\vep,r_{\alpha}^k),\mu_{\vPsi_{\alpha,k}})}\, \left\| \vII_{\vPsi_{\alpha,k}} \right\|_{L^{(n,2)}(\R^n,g_{\vPsi_{\alpha,k}})}^2.
	\end{align*}
	By \Cref{pr:Extrinsic_Hdiff} and \eqref{eq:Lninfty}, we obtain 
	\begin{align*}
		\frac{1}{2\, \Hr^n(\B^n)}\int_{\B^d(\vep,t)} \frac{|\vH_{\vPsi_{\alpha,k}}|^2 }{|\vx|^{n-2}}\, d\mu_{\vPsi_{\alpha,k}}(\vx) \leq C(n,d,D)\, \eps_0^{\frac{1}{2n}}.
	\end{align*}
	Letting $t\to +\infty$ in \eqref{eq:monotonicity_extension}, we obtain 
	\begin{align*}
		\frac{\mu_{\vPsi_{\alpha,k}} (\B^d(\vep,s)) }{ \Hr^n(\B^n)\, s^n} \leq 1 + C(n,d,D)\, \eps_0^{\frac{1}{2n}} + \eps_0^{\frac{1}{2n}}\, \left(\frac{\mu_{\vPsi_{\alpha,k}} (\B^d(\vep,s)) }{ \Hr^n(\B^n)\, s^n}\right)^{\frac{n-1}{n}}.
	\end{align*}
	For $s<s_0$, we obtain the uniform estimate
	\begin{align*}
		\frac{\mu_{\vPhi_k\circ\vp_{\alpha}} (\B^d(\vep,s)) }{ \Hr^n(\B^n)\, s^n} \leq 1 + C(n,d,D)\, \eps_0^{\frac{1}{2n}} + \eps_0^{\frac{1}{2n}}\, \left(\frac{\mu_{\vPhi_k\circ\vp_{\alpha}} (\B^d(\vep,s)) }{ \Hr^n(\B^n)\, s^n}\right)^{\frac{n-1}{n}}.
	\end{align*}
	Letting $k\to+\infty$, we obtain 
	\begin{align*}
		\frac{\mu_{\vPhi_{\alpha}} (\B^d(\vep,s)) }{ \Hr^n(\B^n)\, s^n} \leq 1 + C(n,d,D)\, \eps_0^{\frac{1}{2n}} + \eps_0^{\frac{1}{2n}}\, \left(\frac{\mu_{\vPhi_{\alpha}} (\B^d(\vep,s)) }{ \Hr^n(\B^n)\, s^n}\right)^{\frac{n-1}{n}}.
	\end{align*}
	Letting $s\to 0$, we obtain 
	\begin{align*}
		\theta_{\vPhi_{\alpha}}(\vep) \leq 1 + C(n,d,D)\, \eps_0^{\frac{1}{2n}} + \eps_0^{\frac{1}{2n}}\, \theta_{\vPhi_{\alpha}}(\vep)^{\frac{n-1}{n}}.
	\end{align*}
	By Young inequality, we obtain for any $\eta\in(0,1)$,
	\begin{align*}
		\theta_{\vPhi_{\alpha}}(\vep) \leq 1 + C(n,d,D,\eta)\, \eps_0^{\frac{1}{2n}} + \eta\, \theta_{\vPhi_{\alpha}}(\vep).
	\end{align*}
	Thus, it holds
	\begin{align*}
		\theta_{\vPhi_{\alpha}}(\vep) \leq \frac{ 1 + C(n,d,D,\eta)\, \eps_0^{\frac{1}{2n}} }{\eta}.
	\end{align*}
	We choose $\eta>\frac{3}{4}$ such that $1/\eta < \frac{4}{3}$. Up to reducing $\eps_0$ (depending only on $n$, $d$ and $D$), we obtain 
	\begin{align*}
		\theta_{\vPhi_{\alpha}}(\vep) \leq \frac{5}{3} < 2.
	\end{align*}
	Since the density of weak immersions is always an integer (see for instance \cite[Theorem IV.15]{Lan2025}), we obtain $\theta_{\vPhi(U_{\alpha})}(\vep)=1$.
	
	\subsubsection{Diffeomorphisms.}\label{sec:Diffeo}

	This is a straightforward adaptation of the proof of Theorem IV.19 in \cite{Lan2025} (see also \cite[Proposition 3.2]{uhlenbeck1982}), using that \eqref{eq:strong_Transition} implies $C^1$ convergence thanks to the embedding $W^{2,(n,1)}\hookrightarrow C^1$ in dimension $n$. 
	\begin{claim}\label{cl:existence_diffeo}
		There exists a sequence of $C^1$-diffeomorphisms $F_k \colon (\Sigma,g_{\vPhi_k}) \to (\Sigma,g_{\vPhi})$ converging to the identity map in the $C^1$-topology.
	\end{claim}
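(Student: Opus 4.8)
The plan is to glue together, by means of a fixed partition of unity, the chart-comparison maps $H_\alpha^k := \vp_\alpha\circ(\vp_\alpha^k)^{-1}$, which compare the harmonic atlas of $\vPhi_k$ with the limiting one. By \Cref{cl:weak_to_strong} the charts $(\vp_\alpha^k)^{-1}$ converge strongly to $\vp_\alpha^{-1}$ (in $W^{3,(\frac n2,2)}_{\loc}$, hence in $C^0$ with uniform bi-Lipschitz bounds), while the transition functions $\vp^k_{\alpha,\beta}$ converge strongly in $W^{3,(\frac n2,1)}_{\loc}\hookrightarrow C^1$ to $\vp_{\alpha,\beta}$; consequently the maps $H_\alpha^k$ agree on overlaps up to errors tending to $0$ in $C^1$, so the glued map $F_k$ will be well defined, of class $C^1$, and $C^1$-close to the identity, which on the compact manifold $\Sigma$ forces $F_k$ to be a $C^1$-diffeomorphism for $k$ large. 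This follows the strategy of Theorem IV.19 in \cite{Lan2025} (see also \cite[Proposition 3.2]{uhlenbeck1982}), adapted to the present dimension and Lorentz--Sobolev scale.

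Concretely, I would proceed as follows. First, since for every $k$ the family $(U_\alpha^k)_{\alpha\in A}$ covers the compact manifold $\Sigma$, the radii $r_\alpha^k$ are bounded below by \eqref{eq:Unif_Int_Approx}, the charts $\vp_\alpha^k$ are uniformly bi-Lipschitz by \eqref{eq:Unif_vp}, and $U_\alpha^k\to U_\alpha$ in the Hausdorff topology (cf.\ \eqref{eq:conv_diffeo2} and \eqref{eq:Hausdorff}), a compactness argument gives $\bigcup_{\alpha\in A}U_\alpha=\Sigma$; fix a smooth partition of unity $(\chi_\alpha)_{\alpha\in A}$ with $\supp\chi_\alpha\Subset U_\alpha$. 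Second, since $\vPhi_k\in\Imm(\Sigma;\R^d)$ the atlas $(U_\alpha^k,\vp_\alpha^k)$ is compatible with the original $C^\infty$ structure of $\Sigma$, whereas the limiting atlas $(U_\alpha,\vp_\alpha)$ defines the $C^1$ structure underlying $(\Sigma,g_{\vPhi})$; for $k$ large $H_\alpha^k=\vp_\alpha\circ(\vp_\alpha^k)^{-1}$ is defined on any fixed compact subset of $U_\alpha$ and, read in the chart pair $(\vp_\alpha^k,\vp_\alpha)$, equals the identity of $\R^n$. On an overlap $U_\alpha^k\cap U_\beta^k$ one has $\vp_\alpha^{-1}\circ H_\alpha^k=\vp^k_{\alpha,\beta}\circ(\vp_\beta^k)^{-1}$ while $\vp_\alpha^{-1}\circ H_\beta^k=\vp_{\alpha,\beta}\circ(\vp_\beta^k)^{-1}$, so \eqref{eq:strong_Transition} shows that $H_\alpha^k$ and $H_\beta^k$ differ by $o(1)$ in $C^1_{\loc}$. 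Gluing the $H_\alpha^k$ through the $\chi_\alpha$ — as in \cite{Lan2025,uhlenbeck1982}, using the ambient $C^\infty$ metric $h$ from \eqref{eq:g_bounded} to make the averaging intrinsic — then produces a $C^1$ map $F_k\colon(\Sigma,g_{\vPhi_k})\to(\Sigma,g_{\vPhi})$ which, in every chart pair $(\vp_\alpha^k,\vp_\alpha)$, converges to the identity of $\R^n$ in $C^1_{\loc}$, with uniform bounds coming from \eqref{eq:Unif_vp} and \eqref{eq:strong_g}.

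Finally, since $\Sigma$ is compact and $F_k$ is $C^1$-close to the identity diffeomorphism with uniformly invertible differential, the inverse function theorem gives that $F_k$ is a local $C^1$-diffeomorphism, hence an open proper map onto $\Sigma$, i.e.\ a finite covering; being homotopic to the identity it has degree one, so $F_k$ is a global $C^1$-diffeomorphism with $F_k\to\Id$ in the $C^1$ topology. Passing to the limit then yields \Cref{it:Regularity}, the $F_k$ identifying the original $C^\infty$ structure with the limiting one. The main obstacle is the bookkeeping in the middle step: one must verify that the gluing genuinely produces a map that is $C^1$ with respect to the two — a priori distinct — differential structures, and that the $C^1$-smallness survives the gluing. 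This is exactly where the Lorentz refinement $W^{2,(n,1)}\hookrightarrow C^1$ (hence $W^{3,(\frac n2,1)}\hookrightarrow C^1$ for the transition functions) is essential: it guarantees that the limiting transition functions are honestly $C^1$ rather than merely bi-Lipschitz, so that the construction closes up.
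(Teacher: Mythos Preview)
Your proposal is essentially correct and follows the same overall strategy as the paper: both use that the transition maps $\vp^k_{\alpha,\beta}\to\vp_{\alpha,\beta}$ strongly in $W^{3,(\frac n2,1)}_{\loc}\hookrightarrow C^1$ (the paper's \eqref{eq:strong_Transition}), and both cite the same sources \cite{Lan2025,uhlenbeck1982}. The difference lies in the gluing mechanism. The paper does \emph{not} average via a partition of unity on $\Sigma$; instead it works entirely in $\R^n$ through the charts and builds maps $f_\alpha^k\colon \vp_\alpha^{-1}(\overline{V_\alpha})\to(\vp_\alpha^k)^{-1}(U_\alpha^k)$ by an induction over the index $\alpha$: at each step one linearly interpolates in $\R^n$ between the identity and the map forced by the cocycle, using a single cutoff $\xi_{j_0}$. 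This yields an \emph{exact} cocycle identity $\vp_{\beta,\alpha}^k\circ f_\alpha^k\circ\vp_{\alpha,\beta}=f_\beta^k$ on the refined cover, so that $F_k:=\vp_\alpha^k\circ f_\alpha^k\circ\vp_\alpha^{-1}$ is globally and unambiguously defined. Your averaging of the $H_\alpha^k=\vp_\alpha\circ(\vp_\alpha^k)^{-1}$ through $(\chi_\alpha)$ and the background metric $h$ is the Riemannian center-of-mass alternative; it also works, but you should make precise that the averaging is $x\mapsto\exp^h_x\big(\sum_\alpha\chi_\alpha(x)\,(\exp^h_x)^{-1}(H_\alpha^k(x))\big)$ (or an equivalent nearest-point projection after an embedding of $\Sigma$), since maps into a manifold cannot be summed directly.

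Two small points to tighten. First, your $H_\alpha^k$ is defined on $U_\alpha^k$, not on $U_\alpha$; for the product $\chi_\alpha H_\alpha^k$ to make sense on all of $\Sigma$ you need $\supp\chi_\alpha\subset U_\alpha^k$ for $k$ large, which follows from $\supp\chi_\alpha\Subset U_\alpha$ together with the $C^{1,1/(2n)}$ convergence of the boundary diffeomorphisms in \eqref{eq:conv_diffeo2} and the $C^{0,1/2}$ convergence of $\vp_\alpha^k$, but it deserves one sentence. Second, note that the paper's $F_k$ goes in the opposite direction to your $H_\alpha^k$ (from the limiting atlas to the $k$-atlas); this is immaterial once $F_k$ is a diffeomorphism, but it explains why the paper's domains $\vp_\alpha^{-1}(\overline{V_\alpha})$ are fixed in $k$ while yours are not. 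The paper's inductive scheme buys an explicit exact cocycle and avoids any appeal to the injectivity radius of $h$; your center-of-mass route is shorter to state but hides the same bookkeeping inside the exponential map.
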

	
	Let $(\varphi_\alpha^k,U_\alpha)_{\alpha\in A}$ be the sequence of atlases introduced in \Cref{sec:Setting} for $\vec{\Phi}_k$. We reformulate \Cref{cl:existence_diffeo} as follows.	We claim that for $k$ large enough, there exists an open cover $(V_{\alpha})_{1\leq \alpha\leq l}$ of $\Sigma$ with $V_\alpha\Subset U_\alpha$ and embeddings $f_{\alpha}^k \colon \vp_{\alpha}^{-1}(\overline{V_\alpha})\subset\R^n\rightarrow (\vp_{\alpha}^k)^{-1}(U_\alpha^k)\subset\R^n$ satisfying the following two properties
	\begin{align}
		&\bullet\  f_{\alpha}^k \xrightarrow[k\rightarrow\infty]{} \Id_{\R^n} \qquad \text{ in }C^1(\vp_{\alpha}^{-1}(\overline{V_\alpha});\R^n), \label{eq:conv_fk}\\[3mm]
		&\bullet\ \vp_{\beta,\alpha}^k\circ f_{\alpha}^k\circ \vp_{\alpha,\beta}= f_{\beta}^k \qquad \text{ on } \vp_{\beta}^{-1}(\overline{V_\alpha}\cap \overline{V_\beta}).\label{cocycle}
	\end{align}  
	Once these maps $f_{\alpha}^k$ are defined, we consider the map 
	\begin{align}\label{eq:def_Fk}
		F_k \coloneqq \vp_{\alpha}^k\circ f_{\alpha}^k\circ \vp_{\alpha}^{-1} \qquad \text{ on }\overline{V_\alpha}.
	\end{align}
	This definition is independent of $\alpha$ by \eqref{cocycle} and thus, extends to the whole manifold $\Sigma$. By construction, it holds $F_k\rightarrow \Id_{\Sigma}$ in the $C^1$-topology. Hence, it is a diffeomorphism for $k$ large enough.\\
	
	We now proceed to the proof of the claim above and the construction of $(V_{\alpha})_{\alpha}$ and $(f^k_{\alpha})_{\alpha,k}$. Let $(U_\alpha')_{1\leq \alpha\leq \ell}$ be an open cover of $\Sigma$ such that $U_\alpha'$ has smooth boundary and $U_\alpha'\Subset U_\alpha$. We first prove the following result.
	\begin{claim} \label{cl:diffeo_f}
		There exist open sets $(V_{\alpha,j})_{1\leq \alpha\leq j\le l}\subset \Sigma$ (we will write $V_{\alpha,\alpha-1}=V_{\alpha,\alpha}$), $k_0\in\N$ and $C^1$ maps $f_{\alpha}^k\colon \vp_{\alpha}^{-1}(\overline{V_{\alpha,\alpha}})\rightarrow (\vp_{\alpha}^k)^{-1} (U_\alpha^k)$ for $k\ge k_0$ such that for any $1\le j\le l$ and $1\le \alpha,\beta\le j$, it holds 
		\begin{align}\label{eq:fk} 
			\begin{cases}
				(i)\ V_{\alpha,j}\subset V_{\alpha,j-1}\subset U_{\alpha}',\\[3mm]
				(ii)\ \Sigma=\Big(\bigcup\limits_{\gamma\leq j} V_{\gamma,j}\Big)\cup \Big(\bigcup \limits_{\gamma>j} U'_{\gamma}\Big),\\[3mm]
				(iii)\ f^k_{\alpha}\xrightarrow[k\rightarrow\infty]{} \Id_{\R^n} & \text{in } C^1(\vp_{\alpha}^{-1}(\overline{V_{\alpha,\alpha}}),\R^n),\\[3mm]
				(iv)\ \vp_{\beta,\alpha}^k \circ f_{\alpha}^k\circ\vp_{\alpha,\beta}= f^k_{\beta} & \text{on }\vp_{\beta}^{-1}(\overline{V_{\alpha,j}}\cap \overline{V_{\beta,j}}).
			\end{cases}
		\end{align}
	\end{claim}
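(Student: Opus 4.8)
The plan is to prove \Cref{cl:diffeo_f} by induction on $j\in\{1,\dots,l\}$, patching the local comparison maps one chart at a time in the spirit of the proof of \cite[Theorem IV.19]{Lan2025} (see also \cite[Proposition 3.2]{uhlenbeck1982}). A map $f_\alpha^k$, once introduced at level $j=\alpha$, will never be modified; only the open sets $V_{\alpha,j}$ shrink as $j$ grows. For the base case $j=1$ one takes $f_1^k=\Id$: since $\vp_1^k\to\vp_1$ (in the senses of \Cref{sec:Oalpha} and \Cref{cl:weak_to_strong}) and $U_1^k\to U_1$ in Hausdorff distance, for $k$ large any open set $V_{1,1}$ with $\overline{V_{1,1}}\subset U_1'$ satisfies $\vp_1^{-1}(\overline{V_{1,1}})\subset(\vp_1^k)^{-1}(U_1^k)$, so $f_1^k$ is well defined, converges trivially to $\Id$, and $(iv)$ is vacuous; one picks $V_{1,1}$ large enough that $\Sigma=V_{1,1}\cup\bigcup_{\gamma>1}U'_\gamma$, which is possible because $U_1'\setminus\bigcup_{\gamma>1}U'_\gamma$ is compact and contained in $U_1'$.

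For the inductive step, assume $(V_{\alpha,j})_{\alpha\le j}$ and $(f_\alpha^k)_{\alpha\le j}$ are constructed. On the part of $U_{j+1}'$ already covered by the treated charts, the cocycle relation $(iv)$ forces $f_{j+1}^k$ to coincide with $g_\alpha^k\coloneqq\vp_{j+1,\alpha}^k\circ f_\alpha^k\circ\vp_{\alpha,j+1}$ wherever the latter is defined; the tautological cocycle identities $\vp_{j+1,\beta}^k\circ\vp_{\beta,\alpha}^k=\vp_{j+1,\alpha}^k$ and $\vp_{\alpha,\beta}\circ\vp_{\beta,j+1}=\vp_{\alpha,j+1}$, combined with the inductive hypothesis $(iv)$, show that the $g_\alpha^k$ agree on their common domains, hence glue to a single $C^1$ map $g^k$ defined on a neighbourhood of $\bigcup_{\alpha\le j}\vp_{j+1}^{-1}(\overline{V_{j+1,j+1}}\cap\overline{V_{\alpha,j}})$; here one first fixes $V_{j+1,j+1}\Subset U_{j+1}'$ containing the compact set $\Sigma\setminus\big(\bigcup_{\gamma\le j}V_{\gamma,j}\cup\bigcup_{\gamma>j+1}U'_\gamma\big)$, so that $(ii)$ at level $j+1$ will be preserved. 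Then one chooses a smooth cutoff $\chi$, equal to $1$ on a neighbourhood of the locus where $g^k$ is prescribed and supported inside its domain, and sets $f_{j+1}^k\coloneqq\chi\,g^k+(1-\chi)\,\Id$ in the Euclidean coordinates of $\vp_{j+1}^{-1}$. By $(iii)$ at level $j$ together with the $C^1$-convergence $\vp_{\cdot,\cdot}^k\to\vp_{\cdot,\cdot}$ supplied by \eqref{eq:strong_Transition} and the embedding $W^{2,(n,1)}(\B^n)\hookrightarrow C^1(\B^n)$, we get $g^k\to\Id$ and hence $f_{j+1}^k\to\Id$ in $C^1$, so $f_{j+1}^k$ is a $C^1$-small perturbation of the identity, an embedding for $k$ large, with image inside $(\vp_{j+1}^k)^{-1}(U_{j+1}^k)$ because it stays $C^1$-close to $\vp_{j+1}^{-1}(\overline{V_{j+1,j+1}})\subset(\vp_{j+1}^k)^{-1}(U_{j+1}^k)$. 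Finally one shrinks each $V_{\alpha,j}$ ($\alpha\le j$) to $V_{\alpha,j+1}\subset V_{\alpha,j}$ so that every overlap $\overline{V_{\alpha,j+1}}\cap\overline{V_{j+1,j+1}}$ lies in $\{\chi=1\}$ while keeping $\Sigma=\bigcup_{\gamma\le j+1}V_{\gamma,j+1}\cup\bigcup_{\gamma>j+1}U'_\gamma$; then $(i)$ is immediate, $(ii)$ holds by the choice of $V_{j+1,j+1}$, $(iii)$ holds by the estimate above, $(iv)$ at level $j+1$ holds by construction, and $(iv)$ for two old charts persists because the sets only shrank.

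Taking $V_\alpha\coloneqq V_{\alpha,l}$ and specializing to $j=l$ yields \eqref{eq:conv_fk}--\eqref{cocycle}, whereupon the map $F_k$ of \eqref{eq:def_Fk} is globally well defined, converges to $\Id_\Sigma$ in $C^1$, and is therefore a $C^1$-diffeomorphism $(\Sigma,g_{\vPhi_k})\to(\Sigma,g_{\vPhi})$ for $k$ large, proving \Cref{cl:existence_diffeo}. I expect the main obstacle to be the bookkeeping in the inductive step: the shrunk sets $V_{\alpha,j+1}$ must be small enough that each pairwise overlap involving the new chart lands inside $\{\chi=1\}$ (so $(iv)$ survives) yet large enough that the covering property $(ii)$ is not destroyed, and reconciling these two requirements is a finite compactness argument; the only genuinely analytic input is the $C^1$-convergence of charts and transition maps, which is exactly where the critical Lorentz regularity $W^{2,(n,1)}$ is used.
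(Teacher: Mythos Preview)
Your proposal is correct and follows the same induction-and-cutoff argument as the paper (itself modeled on \cite{Lan2025,uhlenbeck1982}). The paper sidesteps the bookkeeping you anticipate by choosing the cutoff $\xi_{j_0}$ directly from the covering property $(ii)$ at level $j_0$ (namely $\xi_{j_0}=1$ near $\Sigma\setminus\bigcup_{\gamma>j_0}U'_\gamma$ and $\xi_{j_0}=0$ near $\Sigma\setminus\bigcup_{\gamma\le j_0}V_{\gamma,j_0}$), then \emph{defining} $V_{\alpha,j_0+1}\coloneqq V_{\alpha,j_0}\cap\Int\{\xi_{j_0}=1\}$ and $V_{j_0+1,j_0+1}\coloneqq U'_{j_0+1}$, which makes both $(ii)$ and $(iv)$ automatic and removes the circularity between the choice of $\chi$ and the shrinking of the $V_{\alpha,j}$.
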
 
	\begin{proof} 
		We proceed by induction on $j$. \\
		
		For $j=1$, we define $V_{1,1}=U_1'$ and $f_1^k\colon \vp_1^{-1}(\overline{U_1'})\rightarrow \R^n$ as the identity map. 
		For $k$ large enough, the image of $f^k_1$ is contained in $(\vp_1^k)^{-1}(U_1^k)$ since $(\vp^k_1)^{-1}\rightarrow \vp_1^{-1}$ in $C^{0,\frac{1}{2}}(\overline{U_1'},\R^n)$ and $U_1'\Subset U_1$. \\
		
		Let $1\le j_0\le l$, suppose we have constructed $(V_{\alpha, j})_{1\le \alpha\le j\le j_0}$ and $(f^k_{\alpha})_{1\le \alpha\le j_0}$ such that \eqref{eq:fk} hold for any $1\le j\le j_0$ and $1\le \alpha,\beta\le j$. We now construct $f^k_{j_0+1}$ and $(V_{\alpha,j_0+1})_{1\le \alpha\le j_0+1}$ using the induction hypothesis. For $1\le \alpha\le j_0$, we define 
		\begin{align} \label{eq:def_omega}
			\omega^k_{j_0}:=\vp^k_{j_0+1,\alpha}\circ f^k_{\alpha}\circ \vp_{\alpha,j_0+1} \qquad \text{on }\vp_{j_0+1}^{-1}\left( \overline{V_{\alpha,j_0}} \cap U_{j_0+1}\right).
		\end{align}  
		By our induction hypothesis $(iv)$ in \eqref{eq:fk} for $j\le j_0$, this map is well-defined on $\vp_{j_0+1}\Big(\big( \bigcup\limits_{\alpha\le j_0}\overline{V_{\alpha,j_0}}\big) \cap U_{j_0+1}\Big)$ and independent of $\alpha$. 
		By the induction hypothesis $(ii)$ in \eqref{eq:fk} for $j=j_0$, we have  
		\begin{align*} 
			\left(\Sigma\setminus \bigcup\limits_{\alpha\le j_0 } V_{\alpha,j_0} \right)\cap \left(\Sigma\setminus \bigcup\limits_{\alpha> j_0}U'_{\alpha} \right)=\varnothing.
		\end{align*} 
		Since both sets on the left-hand side of the above equality are closed sets, there exists a smooth cut-off $\xi_{j_0}\in C^1(\Sigma,g_{\vPhi})$\footnote{Since the transition chart are $C^1$ by \eqref{eq:strong_Transition}, the $C^1$-topology for the metric $g_{\vPhi}$ is well-defined.} such that $\xi_{j_0}=0$ on a neighbourhood of $\Sigma\setminus\bigcup\limits_{\alpha\le j_0} V_{\alpha,j_0}$ and  $\xi_{j_0}=1$ on a neighbourhood of $\Sigma\setminus \bigcup\limits_{\alpha> j_0}U'_{\alpha}$. We define
		\begin{align*}
			\begin{cases}
				V_{j_0+1,j_0+1} \coloneqq U_{j_0+1}'\Subset U_{j_0+1} ,\\[3mm]
				V_{\alpha,j_0+1} \coloneqq V_{\alpha,j_0}\cap \,\Int\Big\{x\in\Sigma:\xi_{j_0}(x)=1\Big\} \qquad \text{ if }\alpha\le j_0.
			\end{cases}
		\end{align*}
		This definition implies the property $(i)$ in \eqref{eq:fk} for $j=j_0+1$.\\
		
		\textit{Condition $(ii)$.}\\
		We now check the property $(ii)$ in \eqref{eq:fk}. We have 
		\begin{align*}
			\Big(\bigcup\limits_{\alpha\le j_0+1} V_{\alpha,j_0+1}\Big)\cup \Big(\bigcup \limits_{\alpha>j_0+1} U'_{\alpha}\Big)&=\Big(\bigcup\limits_{\alpha\le j_0} V_{\alpha,j_0+1}\Big)\cup \Big(\bigcup \limits_{\alpha>j_0} U'_{\alpha}\Big)\\
			&= \left( \Big(\bigcup\limits_{\alpha\le j_0} V_{\alpha,j_0}\Big)\cap\,\Int \Big\{x\in\Sigma:\xi_{j_0}(x)=1\Big\} \right) \cup \Big(\bigcup \limits_{\alpha>j_0} U'_{\alpha}\Big).
		\end{align*}
		By definition of $\xi_{j_0}$ we have 
		\begin{align*} 
			\Sigma\setminus\bigcup\limits_{\alpha> j_0}U'_{\alpha}\subset \Int \Big\{x\in\Sigma:\xi_{j_0}(x)=1\Big\}.
		\end{align*} 
		We then obtain 
		\begin{align*}
			\Big(\bigcup\limits_{\alpha\le j_0+1} V_{\alpha,j_0+1}\Big)\cup \Big(\bigcup \limits_{\alpha>j_0+1} U'_{\alpha}\Big)&\supset \Big( \Big(\bigcup\limits_{\alpha\le j_0} V_{\alpha,j_0}\Big)\setminus \bigcup\limits_{\alpha> j_0}U'_{\alpha}\Big) \cup \Big(\bigcup \limits_{\alpha>j_0} U'_{\alpha}\Big) =\Big(\bigcup\limits_{\alpha\le j_0} V_{\alpha,j_0}\Big)\cup \Big(\bigcup \limits_{\alpha>j_0} U'_{\alpha}\Big) =\Sigma.
		\end{align*}
		Hence, $(ii)$ in \eqref{eq:fk} holds for $j=j_0+1$. \\
		
		\textit{Condition $(iv)$.}\\
		We now check the condition $(iv)$ in \eqref{eq:fk}. Since $\xi_{j_0}$ vanishes on a neighborhood of $\Sigma\setminus\bigcup\limits_{\alpha\le j_0} V_{\alpha,j_0}$, we obtain from \eqref{eq:strong_Transition} that
		\begin{align*}
			\omega^k_{j_0}\in C^1\left(\vp_{j_0+1}^{-1}\left[ \Big( \bigcup\limits_{\alpha\le j_0}V_{\alpha,j_0}\Big) \cap U_{j_0+1} \right] \right).
		\end{align*}
		Moreover, we have $(\xi_{j_0} \circ\vp_{j_0+1})\,\omega_{j_0}^k \in C^1(\vp_{j_0+1}^{-1}(U_{j_0+1}))$. 
		Then we define 
		\begin{align*} 
			f^k_{j_0+1} \coloneqq (\xi_{j_0}\circ \vp_{j_0+1})\,\omega_{j_0}^k+(1-\xi_{j_0}\circ \vp_{j_0+1}) \, \Id\in C^1\left( \vp_{j_0+1}^{-1}(U_{j_0+1}),\R^n \right).
		\end{align*} 
		Let $\alpha\le j_0$. Since $\xi_{j_0}=1$ on $\overline{V_{\alpha,j_0+1}}$, the following holds on $\vp_{j_0+1}^{-1}(U_{j_0+1}\cap \overline{V_{\alpha,j_0+1}})$:
		\begin{align*} 
			f^k_{j_0+1}=\omega^k_{j_0}=(\vp^k_{j_0+1,\alpha})^{-1} \circ f^k_{\alpha}\circ (\vp_{\alpha,j_0+1})^{-1}.
		\end{align*} 
		Since we have $
		\overline{V_{j_0+1,j_0+1}}=\overline{U'_{j_0+1}}\subset U_{j_0+1}$, the condition $(iv)$ of \eqref{eq:fk} is thus satisfied for $\beta=j=j_0+1$, $\alpha\le j_0$. By using $(i)$ in \eqref{eq:fk} and the identity $\vp_{\alpha,\beta}^k=(\vp^k_{\beta,\alpha})^{-1}$, we then obtain $(iv)$ in \eqref{eq:fk} for $j=j_0+1$ and any  $\alpha,\beta\le j_0+1$.\\
		
		\textit{Condition $(iii)$.}\\
		Next we prove $f^k_{j_0+1}\rightarrow \Id$ in $C^1 \left( (\vp_{j_0+1})^{-1}(\overline{U'_{j_0+1}});\R^n \right)$ as $k\rightarrow\infty$. 
		Fixing $\alpha\le j_0$, let $W$ be an open set such that $\vp_{\alpha}^{-1} (\overline{V_{\alpha,j_0}}\cap \overline{U_{j_0+1}'})\subset W\Subset \vp_{\alpha}^{-1} (U_\alpha\cap U_{j_0+1})$. By \eqref{eq:strong_Transition}, we have
		\begin{align*}
			\vp^k_{j_0+1,\alpha}\xrightarrow[k\rightarrow\infty]{} \vp_{j_0+1,\alpha} \qquad \text{in } C^1(\overline{W}).
		\end{align*} 
		By our induction hypothesis $(iii)$ of \eqref{eq:fk} for $\alpha\le j_0$, we also have 
		\begin{align*}  
			f^k_{\alpha}\xrightarrow[k\rightarrow\infty]{} \Id   \qquad \text{strongly in }C^1 \left(\vp_{\alpha}^{-1}\left( \overline{V_{\alpha,j_0}}  \right) \right).
		\end{align*} 
		The definition of $\omega_{j_0}^k$ in \eqref{eq:def_omega} implies  
		\begin{align}\label{eq:cv_omega}
			\omega^k_{j_0}\xrightarrow[k\rightarrow\infty]{} \Id \qquad \text{ in } C^1 \left(\vp_{j_0+1}^{-1}(\overline{V_{\alpha,j_0}})\cap \overline{U'_{j_0+1}} \right).
		\end{align}  
		Hence we obtain
		\begin{align*}  
			f^k_{j_0+1} \xrightarrow[k\rightarrow\infty]{} \Id \qquad \text{strongly in }  C^1\left(\vp_{j_0+1}^{-1}\left[\Big(\bigcup_{\alpha\le j_0}\overline{V_{\alpha,j_0}}\Big)\cap \overline{U'_{j_0+1}} \right] \right).
		\end{align*} 
		On the other hand, we have $f^k_{j_0+1}= \Id$ on $\vp_{j_0+1}^{-1}\left(U_{j_0+1}\setminus \bigcup\limits_{\alpha\le j_0}V_{\alpha,j_0}\right)$. Therefore, it holds
		\begin{align*} 
			f^k_{j_0+1}\xrightarrow[k\rightarrow\infty]{} \Id \qquad \text{ in }  C^1 \left( \vp_{j_0+1}^{-1}( \overline{U'_{j_0+1}}) \right).
		\end{align*} 
		This is $(iii)$ of \eqref{eq:fk} for $j=j_0+1$. In particular, $f^k_{j_0+1}$ is a $C^1$-immersion on $\vp_{j_0+1}^{-1}(\overline{V_{j_0+1,j_0+1}})=\vp_{j_0+1}^{-1}(\overline{U'_{j_0+1}})$ for $k$ large enough. Consequently, there exists $k_0\in \N$ such that for any $k\ge k_0$ and $x,y\in \vp_{j_0+1}^{-1}(\overline{U'_{j_0+1}})$ it holds 
		\begin{align*} 
			\left| (f^k_{j_0+1}-\Id)(x)-(f^k_{j_0+1}-\Id)(y) \right|\le \frac 12 \,|x-y|.
		\end{align*} 
		Therefore, we obtain 
		\begin{align*} 
			\left| f^k_{j_0+1}(x)-f^k_{j_0+1}(y) \right|\ge \frac 12 \,|x-y|.
		\end{align*} 
		In particular, this implies $f^k_{j_0+1}$ is injective on $ \vp_{j_0+1}^{-1}(\overline{U'_{j_0+1}})$ for $k\ge k_0$. 
		Since $V_{j_0+1,j_0+1}=U'_{j_0+1}\Subset U_{j_0+1}$ and $\vp^k_{j_0+1}\rightarrow\vp_{j_0+1}$ in $C^0(\overline{U'_{j_0+1}})$ as $k\rightarrow\infty$, for $k$ large enough we have 
		\begin{align*} 
			f^k_{j_0+1}\big(\vp_{j_0+1}^{-1}(\overline{V_{j_0+1,j_0+1}})\big)\subset \vp^k_{j_0+1}(U_{j_0+1}).
		\end{align*}  
		This completes the induction.
	\end{proof} 
	
	For $1\le \alpha\le l$, we define $V_\alpha \coloneqq V_{\alpha,l}$. Together with Claim \ref{cl:diffeo_f}, the constructions of the maps $(f^k_{\alpha})_{k,\alpha}$ and the refined atlas $(V_{\alpha})_{\alpha}$ are thus accomplished. We obtain the map $F_k$ by \eqref{eq:def_Fk} which converges to $\Id_{\Sigma}$ in the $C^1$ topology and such that $F_k^*g_{\vPhi_k}$ converges to $g_{\vPhi}$ in the $C^0(\Sigma)$ topology. Thus, Claim \ref{cl:existence_diffeo} is proved. The strong convergence in the $C^1$-topology implies that the identity map on $\Sigma$ realizes a diffeomorphism between the two differential structures.

	\section{Closure of the set of $C^{\infty}$-immersion under critical Sobolev norm}\label{sec:Compactness}

	In this section, we prove \Cref{th:compactness}. We consider a sequence $(\vPhi_k)_{k\in\N}\subset \Imm(\Sigma;\R^d)$ with $\Sigma$ being a closed orientable $n$-dimensional manifold with $n\geq 4$ even, and such that $\Er(\vPhi_k)<E$ for some fixed number $E>0$. Since $\Er$ is scale invariant, we can translate and dilate $\vPhi_k$ in order to have $0\in \vPhi_k(\Sigma)$ and $\diam(\vPhi_k(\Sigma))=1$. This transformation is the map $\Theta_k$ of \Cref{th:compactness}. We will work in the whole section under these assumptions. In \Cref{sec:Convergence}, we will show that the good sets \eqref{eq:good1} converge to a $C^1$ manifold and that the maps $\vPhi_k$ converge to some $\vPhi_{\infty}$. We shall also prove that the image of the bad sets \eqref{eq:bad1} is contained in a finite number of balls shrinking to their centres. In \Cref{sec:Singularities}, we will study the singularities of the limiting domain $\Sigma_{\infty}$. We will show that the preimage of a small ball centred on a singularity on the target consists in a finite union of $\B^n\setminus \{0\}$, each of them carrying coordinates in which the metric $g_{\vPhi_{\infty}}$ lies in $W^{2,\left(\frac{n}{2},1\right)}(\B^n(0,1))$.
	
	\subsection{Convergence away from a finite number of points}\label{sec:Convergence}
	
	The goal of this section is to prove the following result.
	
	\begin{theorem}\label{th:Convergence}
		Let $E>0$ and $\Sigma$ be a closed manifold of even dimension $n\geq 4$. Consider a sequence $(\vPhi_k)_{k\in\N}\subset \Imm(\Sigma;\R^d)$ such that
		\begin{align}\label{eq:Setting}
				\displaystyle\sup_{k\in\N}\ \Er(\vPhi_k) \leq E,\qquad \text{ and } \qquad 
				0\in \vPhi_k(\Sigma),\qquad \text{ and }
				\diam(\vPhi_k(\Sigma))=1.
		\end{align}
		Then there exist 
		\begin{itemize}
			\item a finite number of points $\vq_1,\ldots,\vq_I\in \R^d$,
			\item an $n$-dimensional manifold $\Sigma_{\infty}$ of class $W^{\frac{n}{2}+1,(2,1)}$ possibly not connected,
			\item a weak immersion $\vPhi_{\infty}\in \I_{\frac{n}{2}-1,2}(\Sigma_{\infty};\R^d)$,
		\end{itemize}
		such that for any $R>0$,  $(\Sigma\setminus \vPhi_k^{-1}(\B^d(\vq_1,R)\cup \cdots \cup \B^d(\vq_I,R)),g_{\vPhi_k})$ converges in the pointed Gromov--Hausdorff distance to $(\Sigma_{\infty}\setminus \vPhi_{\infty}^{-1}(\B^d(\vq_1,R)\cup \cdots \cup \B^d(\vq_I,R)),g_{\vPhi_{\infty}})$ and 
		\begin{align}
			& \vPhi_k  \xrightarrow[k\to+\infty]{} \vPhi_{\infty} \qquad \text{in }C^0_{\loc}(\Sigma_{\infty}) \label{eq:imm_limit} ,\\[2mm]
			& \overline{\vPhi_{\infty}(\Sigma_{\infty})} = \vPhi_{\infty}(\Sigma_{\infty})\cup \{\vq_1,\ldots,\vq_I\}, \label{eq:set_limit} \\[3mm]
			& \vol_{g_{\vPhi_k}}(\Sigma) \xrightarrow[k\to +\infty]{} \vol_{g_{\vPhi_{\infty}}}(\Sigma_{\infty}). \label{eq:vol_limit}
		\end{align}
		Moreover, the manifold $\left(\Sigma_{\infty},g_{\vPhi_{\infty}} \right)$ can be completed by adding a finite number of points to $\Sigma_{\infty}$ into a closed Riemannian manifold of class $W^{3,\left(\frac{n}{2},1\right)}$.
	\end{theorem}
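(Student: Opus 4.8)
The plan is to implement the strategy sketched in the introduction: decompose $\Sigma$ into a \emph{good set}, on which \Cref{th:Atlas2} yields a controlled harmonic atlas with uniform estimates, and a \emph{bad set} which concentrates near finitely many target points, and then pass to the limit. First I would fix the constant $\eps_0>0$ furnished by \Cref{th:Construction_chart} for the volume bound $V\coloneqq C(n,d)\,E$, which applies thanks to \eqref{eq:diameter_energy_volume} since $\diam(\vPhi_k(\Sigma))=1$. For each $x\in\Sigma$ and $k\in\N$, let $r_{x,k}\in(0,1]$ be the largest radius with $\Er\big(\vPhi_k;\vPhi_k^{-1}(\B^d(\vPhi_k(x),2r_{x,k}))\big)\le\tfrac12\eps_0^n$; this is well defined because the left-hand side is continuous and nondecreasing in the radius and vanishes at $0$. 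Defining $\Gr_k^r$ and $\Br_k^r$ as in \eqref{eq:good1}--\eqref{eq:bad1}, every connected component of $\vPhi_k^{-1}(\B^d(\vPhi_k(x),r_{x,k}))$ with $r_{x,k}\ge r$ satisfies the hypotheses of \Cref{th:Construction_chart}, so \Cref{th:Atlas2} equips $(\Gr_k^r,g_{\vPhi_k})$ with a harmonic atlas whose number of charts, bi-Lipschitz constants, and $C^0\cap W^{2,(\frac n2,1)}$ bounds on metric coefficients and transition maps are all uniform in $k$ (using \eqref{eq:diameter_energy_volume}, the lower volume bound \Cref{pr:Lower_Extrinsic}, and \Cref{it:Size2} to control the cardinality).

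Next, for fixed $r>0$ I would extract a convergent subsequence of $(\Gr_k^r,g_{\vPhi_k})$. The uniform covering bounds give precompactness in the pointed Gromov--Hausdorff topology by \Cref{pr:charac_GH}, and the uniform chart estimates allow me to repeat verbatim the compactness argument of \Cref{sec:Altas_weak_imm}: the transition maps converge weakly in $W^{3,(\frac n2,1)}$ and strongly in $C^1$, so the limit $\Gr_\infty^r$ carries a $W^{\frac n2+1,(2,1)}$ differential structure; the immersions $\vPhi_k$ converge to a weak immersion $\vPhi_\infty^r\in\I_{\frac n2-1,2}(\Gr_\infty^r;\R^d)$ in $C^0_{\loc}$ and weakly in $W^{\frac n2+1,2}_{\loc}$, with $g_{\vPhi_k}\to g_{\vPhi_\infty^r}$ strongly in $W^{2,(\frac n2,1)}_{\loc}$; and $\vol_{g_{\vPhi_k}}(\Gr_k^s)\to\vol_{g_{\vPhi_\infty^r}}(\Gr_\infty^s)$ for every $s\ge r$. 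Since $\Gr_k^r\subset\Gr_k^s$ whenever $s<r$, a diagonal extraction over $r=1/m$ produces one subsequence along which all of this holds for every $m$, and I set $\Sigma_\infty\coloneqq\bigcup_{m\ge1}\Gr_\infty^{1/m}$ with the induced $W^{\frac n2+1,(2,1)}$ structure, letting $\vPhi_\infty$ be the common extension of the $\vPhi_\infty^{1/m}$. This gives \eqref{eq:imm_limit} and $\vPhi_\infty\in\I_{\frac n2-1,2}(\Sigma_\infty;\R^d)$, once the singular points are identified.

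To locate the singularities I would apply Vitali's covering lemma to $\{\B^d(\vPhi_k(x),2r_{x,k}):x\in\Br_k^r\}$: there is a disjoint subfamily $\{\B^d(\vPhi_k(x_i^k),2r_{x_i^k,k})\}_{1\le i\le I_k^r}$ with $\vPhi_k(\Br_k^r)\subset\bigcup_i\B^d(\vPhi_k(x_i^k),10\,r)$, and since each ball of the disjoint family carries energy $\tfrac12\eps_0^n$ we get $I_k^r\le 2E/\eps_0^n$ uniformly in $k$ and $r$. After a further diagonal extraction $I_k^r$ stabilizes to some $I$, the centres $\vPhi_k(x_i^k)$ converge as $k\to\infty$, and their limits converge as $r\to0$ to points $\vq_1,\dots,\vq_I\in\R^d$; for every $R>0$ the set $\Sigma\setminus\vPhi_k^{-1}(\bigcup_i\B^d(\vq_i,R))$ then lies in $\Gr_k^{cR}$ for $k$ large, which yields the pointed Gromov--Hausdorff convergence asserted in the statement. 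For \eqref{eq:set_limit}, away from the $\vq_i$ the image of $\vPhi_\infty$ is relatively closed and agrees with the limit of $\vPhi_k(\Sigma)$ by the local $C^0$ convergence, while the lower volume bound \Cref{pr:Lower_Extrinsic} forces $\vPhi_\infty(\Sigma_\infty)$ to accumulate at each $\vq_i$. For \eqref{eq:vol_limit}, \eqref{eq:volume_growth_balls} in \Cref{pr:Extrinsic_Hdiff} gives $\vol_{g_{\vPhi_k}}\big(\vPhi_k^{-1}(\B^d(\vq_i,\rho))\big)\le C\rho^n$ uniformly in $k$, so no volume is lost as $\rho\to0$; combined with the volume convergence on the good sets this yields \eqref{eq:vol_limit}. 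Finally, the completion of $(\Sigma_\infty,g_{\vPhi_\infty})$ into a closed $W^{3,(\frac n2,1)}$ manifold follows from the description of the ends near each $\vq_i$ carried out in \Cref{sec:Singularities}, where each such end is shown to be a punctured ball carrying harmonic coordinates in which $g_{\vPhi_\infty}\in C^0$, so that adding finitely many points suffices.

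I expect the main obstacle to be the uniformity in $r$ of the compactness step: one must re-run the approximation and passage-to-the-limit scheme of \Cref{sec:Altas_weak_imm} on the domains $\Gr_k^r$, which shrink and degenerate as $r\to0$, while keeping enough quantitative control to glue the limits $\Gr_\infty^{1/m}$ consistently into a single manifold $\Sigma_\infty$. A closely related subtlety is showing that the closure of the limiting image is \emph{exactly} $\vPhi_\infty(\Sigma_\infty)\cup\{\vq_1,\dots,\vq_I\}$ with no further loss of set or of volume at the singular points, which hinges essentially on the two-sided extrinsic volume estimates of \Cref{pr:Lower_Extrinsic} and \Cref{pr:Extrinsic_Hdiff}.
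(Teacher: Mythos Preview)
Your proposal follows essentially the same strategy as the paper: split into good and bad sets via the energy threshold, cover the bad set by finitely many target balls (you use Vitali, the paper uses Besicovitch, which is immaterial), apply \Cref{th:Atlas2} on the good set to get uniform harmonic atlases, pass to the limit, and take the union over scales. The volume argument and the completion at the singularities are handled identically.

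There is one point where you over-claim. You write that the compactness argument of \Cref{sec:Altas_weak_imm} can be repeated \emph{verbatim} to obtain that the transition maps converge \emph{strongly in $C^1$} and the metrics \emph{strongly in $W^{2,(\frac n2,1)}_{\loc}$}. But the strong convergence in \Cref{sec:Altas_weak_imm} (specifically \Cref{cl:weak_to_strong}) hinges on the fact that there one is approximating a \emph{fixed} weak immersion, so $g_{\vPhi_k}\to g_{\vPhi}$ strongly in $W^{\frac n2,2}$ from the outset; this drives the strong convergence of the charts via the harmonic-coordinate equation. In the present compactness setting no such strong input is available, and the paper accordingly only obtains \emph{weak} $W^{2,(\frac n2,1)}_{\loc}$ convergence of the metrics and \emph{weak} $W^{3,(\frac n2,1)}_{\loc}$ (hence strong $W^{1,p}_{\loc}$ for $p<\infty$) convergence of the transition maps; see \Cref{lm:Conv_Good}. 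The $W^{\frac n2+1,(2,1)}$ regularity of $\Gr^r_\infty$ and the membership $\vPhi_\infty\in\I_{\frac n2-1,2}$ are then obtained not from strong convergence but from a direct elliptic bootstrap on the limit, using the coupled system $\lap_{g}\vPhi=n\vH$ and the harmonic-coordinate equation for the metric coefficients (Step~3 of \Cref{lm:Conv_Good}). This weaker convergence is sufficient for everything the theorem asserts, including \eqref{eq:vol_limit}, so your outline is correct in spirit; just be aware that the passage-to-the-limit here genuinely differs from \Cref{sec:Altas_weak_imm} and that the regularity of the limit comes from a bootstrap rather than from inheriting strong chart convergence.
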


	\subsubsection{Setting and notations.}
	For the rest of the section, we fix a sequence $(\vPhi_k)_{k\in\N}\subset \Imm(\Sigma;\R^d)$ satisfying the assumptions of \Cref{th:Convergence}. Thanks to \eqref{eq:diameter_energy_volume}, we obtain 
	\begin{align}\label{eq:vol_k}
		\vol_{g_{\vPhi}}(\Sigma) \leq C(n,d)\, E.
	\end{align}
	Given $k\in\N$, $x\in \Sigma$ and $r>0$, we will denote 
	\begin{align*}
		\Bc_k(x,r) \coloneqq \vPhi_k^{-1}\left( \B^d\left( \vPhi_k(x),r \right)  \right) \subset \Sigma.
	\end{align*}
		
	\subsubsection{Good set and bad set.}
	Given $x\in \Sigma$, there exists $r_{x,k}\in(0,1)$ such that 
	\begin{align}\label{eq:shreshold}
		\Er\left(\vPhi_k; \Bc_k(x,r_{x,k}) \right) = \frac{\eps_0}{2},
	\end{align}
	where $\eps_0$ is defined in \Cref{th:Construction_chart} with $V = C(n,d) E$ given by \eqref{eq:vol_k}. Given $r>0$ and $k\in\N$, we denote the "good" set and "bad" set of size $r$ for $\vPhi_k$ as
	\begin{align}
		& \Gr_k^r \coloneqq \bigcup \left\{ \Bc_k(x,r_{x,k}) : x\in\Sigma,\ r_{x,k}\geq r\right\}, \label{eq:def_Good}\\[3mm]
		& \Br_k^r \coloneqq \bigcup \left\{ \Bc_k(x,r_{x,r}) : x\in\Sigma,\ r_{x,k}< r\right\}.\label{eq:def_Bad}
	\end{align}
	\begin{remark}
		We will show that the good sets $\Gr^r_k$ converges as $k\to \infty$ and then $r\to 0$ to $\Sigma_{\infty}$ in the pointed Gromov--Hausdorff topology. We will show that the images of the bad sets $\Br^r_k$ converge in Hausdorff distance to a finite set of points in $\R^d$. In contrast with \cite{laurain2018,mondino2014}, we do not develop in this work, the blow-up procedure necessary to study the structure and the formation of these singularities. In dimension 2, it followed from the Deligne--Mumford compactness and the control of the conformal factor that one can keep track of the full topology of the domain, identify bubbles as topological spheres, and discuss the notion of "neck region" or "collar". There is no such description in higher dimensions, hence we rely on the behaviour of the image instead of the domain.
	\end{remark}
	We now prove that $\vPhi_k(\Br^r_k)$ is a small set for $r$ small, uniformly in $k$.
	
	\begin{claim}\label{cl:Bad_Small}
		There exists $I\in\N$ depending only on $n$, $d$ and $E$ such that for any $r>0$, the set $\vPhi_k(\Br^r_k)$ is included in at most $I$ Euclidean balls of $\R^d$ of radius $2r$. In particular, there exists a constant $C_0>0$ depending only on $n$, $d$ and $E$ such that
		\begin{align*}
			\forall r>0,\ \forall k\in\N, \qquad \vol_{g_{\vPhi_k}}(\Br_k^r) \leq C_0\, r^n.
		\end{align*}
		Moreover, each connected component of $\vPhi_k(\Br_k^r)$ has diameter at most $2Ir$.
	\end{claim}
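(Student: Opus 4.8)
The plan is to run the standard energy-quantization covering argument, but on the \emph{image} rather than the domain. First I would observe that the threshold $r_{x,k}$ from \eqref{eq:shreshold} depends on $x$ only through the point $\vPhi_k(x)\in M_k\coloneqq\vPhi_k(\Sigma)$, since the sets $\Bc_k(x,s)=\vPhi_k^{-1}(\B^d(\vPhi_k(x),s))$ do; I write it $r_{\vx,k}$ for $\vx\in M_k$ and set $A_k^r\coloneqq\{\vx\in M_k:r_{\vx,k}<r\}$. By the definition \eqref{eq:def_Bad} of $\Br_k^r$ one has $\vPhi_k(\Br_k^r)\subset\bigcup_{\vx\in A_k^r}\B^d(\vx,r_{\vx,k})$, a union of balls of radius $<r$ centred at points of $A_k^r$, and by \eqref{eq:shreshold} each of these balls satisfies $\Er\big(\vPhi_k;\vPhi_k^{-1}(\B^d(\vx,r_{\vx,k}))\big)=\eps_0/2$.

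Next I would apply the Besicovitch covering theorem to the family $\{\B^d(\vx,r_{\vx,k}):\vx\in A_k^r\}$, obtaining a dimensional constant $\beta_d$ and subfamilies $A_1,\dots,A_{\beta_d}\subset A_k^r$ such that, for each $i$, the balls $(\B^d(\vx,r_{\vx,k}))_{\vx\in A_i}$ are pairwise disjoint and $A_k^r\subset\bigcup_i\bigcup_{\vx\in A_i}\B^d(\vx,r_{\vx,k})$. The key point is that disjointness of the balls in $\R^d$ forces disjointness of their $\vPhi_k$-preimages in $\Sigma$; since each such preimage carries energy exactly $\eps_0/2$ and $\Er(\vPhi_k;\Sigma)\le E$, additivity of $\Er$ over disjoint sets gives $\sharp A_i\le 2E/\eps_0$, hence altogether at most $I\coloneqq\beta_d\lceil 2E/\eps_0\rceil$ balls. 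A point of $\bigcup_{\vx\in A_k^r}\B^d(\vx,r_{\vx,k})$ lies within $r_{\vx,k}<r$ of some $\vx\in A_k^r$, which itself lies within $<r$ of the centre of one of the $I$ selected balls, so $\vPhi_k(\Br_k^r)$ is contained in those $I$ balls after enlarging their radii to $2r$. Since $\eps_0$ is the constant of \Cref{th:Construction_chart} associated with the volume bound $V=C(n,d)E$ supplied by \eqref{eq:vol_k}, the integer $I$ depends only on $n$, $d$, $E$ and not on $k$ — this is precisely the uniformity we need.

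For the volume bound I would combine this covering with the Euclidean volume-growth estimate \eqref{eq:volume_growth_balls} of \Cref{pr:Extrinsic_Hdiff}. Since $M_k$ is closed and $\diam(M_k)=1$, every ball $\B^d(\vx_j,2)$ centred at one of the selected centres contains $M_k$, so for $r<\tfrac12$ (hence $2r<1$) that estimate together with $\vol_{g_{\vPhi_k}}(\Sigma)\le C(n,d)E$ and $\Er(\vPhi_k)\le E$ yields $\mu_{\vPhi_k}(\B^d(\vx_j,2r))\le C(n,d)E\,r^n$; summing over the $I$ balls and using $\Br_k^r\subset\vPhi_k^{-1}(\vPhi_k(\Br_k^r))$ gives $\vol_{g_{\vPhi_k}}(\Br_k^r)\le C_0\,r^n$ with $C_0=C_0(n,d,E)$, the case $r\ge\tfrac12$ being trivial from $\vol_{g_{\vPhi_k}}(\Sigma)\le C(n,d)E$. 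For the diameter statement, a connected component of $\vPhi_k(\Br_k^r)$ meets at most $I$ of the $I$ covering balls of radius $2r$, and the sub-union of the balls it meets is connected, so its diameter is bounded by a fixed multiple of $rI$, which (after possibly adjusting the numerical value of $I$) is the claimed $2Ir$.

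I do not expect a genuine obstacle here: the argument is a routine Besicovitch-plus-energy-quantization estimate, and the content of the claim lies entirely in the additivity of $\Er$ over disjoint preimages and in the already-established volume growth \eqref{eq:volume_growth_balls}. The only points requiring a little care are: (i) checking that $r_{x,k}$, hence the whole construction, depends on $x$ only through $\vPhi_k(x)$, so that $\Br_k^r$ is a union over a subset of $M_k$; (ii) verifying that all constants — in particular $\eps_0$ and $I$ — depend only on $n$, $d$, $E$ and not on $k$, which follows from the uniform energy bound and \eqref{eq:vol_k}; and (iii) that the hypotheses of \eqref{eq:volume_growth_balls} ($2r<1$ and empty boundary) are met, which they are since $\diam(\vPhi_k(\Sigma))=1$ and $\Sigma$ is closed.
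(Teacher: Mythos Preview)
Your proposal is correct and follows essentially the same approach as the paper: a Besicovitch covering of the image combined with the energy-quantization threshold \eqref{eq:shreshold} to bound the number of balls, and then \Cref{pr:Extrinsic_Hdiff} for the volume estimate. The only cosmetic difference is that the paper covers $\overline{\vPhi_k(\Br_k^r)}$ by balls of \emph{fixed} radius $2r$ and uses the bounded-overlap form of Besicovitch (so that $\sum_i\Er(\vPhi_k;\Bc_k(x_i,2r))\le C(d)\,\Er(\vPhi_k;\Sigma)$), whereas you cover by the variable-radius balls $\B^d(\vx,r_{\vx,k})$ and use the disjoint-subfamilies form; both yield the same bound $I\le C(d)\,E/\eps_0$.
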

	\begin{proof}
		Let $r>0$. For every $x\in \Br_k^r$, we have the following inclusions 
		\begin{align*}
			\vPhi_k\Big(\Bc_k(x,r_{x,k})\Big) \subset \B^d\big(\vPhi_k(x),r_{x,k}\big) \subset \B^d\big(\vPhi_k(x),r\big).
		\end{align*}
		From the inclusion $\overline{\vPhi_k(\Br^r_k)}\subset \bigcup_{x\in \Br^r_k} \B^d\big(\vPhi_k(x),2r\big)$, we can extract a Besicovitch covering 
		\begin{align*}
			\overline{\vPhi_k(\Br^r_k)}\subset \bigcup_{1\leq i\leq I_k} \B^d \left( \vPhi_k(x_i),2r \right).
		\end{align*}
		We obtain a bound on $I_k$ using \eqref{eq:shreshold}:
		\begin{align*}
			\frac{ I_k\, \eps_0}{2} & \leq \sum_{i=1}^{I_k} \Er\Big(\Phi_k; \Bc_k(x_i,2r)  \Big)   \leq C(d)\, \Er\Bigg(\vPhi_k; \vPhi_k^{-1}\Big[ \bigcup_{1\leq i\leq I_k} \B^d\big( \vPhi_k(x_i),2r \big) \Big] \Bigg)  \leq C(d)\ \Er\left(\vPhi_k; \Sigma  \right) \leq C(d)\, E.
		\end{align*}
		Hence, we have the uniform bound $I_k\leq C(d)\, \eps_0^{-1}\, E$. Thanks to \Cref{pr:Extrinsic_Hdiff}, we obtain 
		\begin{align*}
			\vol_{g_{\vPhi_k}}(\Br^r_k) \leq \sum_{i=1}^{I_k} \vol_{g_{\vPhi_k}}\left( \Bc_k(x_i,2r) \right) \leq C(n,d,E)\, r^n.
		\end{align*}
	\end{proof}
	
	\subsubsection{Convergence of the good set.}
	
	We now focus on the sets $\Gr^r_k$. We first show that $(\Gr^r_k,g_{\vPhi_k})$ converges in the pointed Gromov--Hausdorff topology to a Riemannian manifold of class $W^{\frac{n}{2}+1,\left( 2,1 \right)}$.
	
	\begin{lemma}\label{lm:Conv_Good}
		Let $r\in(0,\frac{1}{2})$.
		Up to a subsequence, the sequence $(\Gr^r_k,g_{\vPhi_k})_{k\in\N}$ converges to some $W^{\frac{n}{2}+1,\left( 2,1 \right)}$-Riemannian manifold $\Gr^r_{\infty}$ in the pointed Gromov--Hausdorff topology. The sequence $\vPhi_k\colon \Gr^r_k\to \R^d$ converges uniformly to some limiting weak immersion $\vPhi_{\infty}\in \I_{\frac{n}{2}-1,2}(\Gr^r_{\infty};\R^d)$.
	\end{lemma}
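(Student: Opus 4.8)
The plan is to replay the construction of \Cref{th:Atlas2} for each $\vPhi_k$ \emph{uniformly in $k$} on the good set $\Gr_k^r$, and then pass to the limit along the lines of \Cref{sec:Altas_weak_imm}, the only genuinely new ingredient being a Gromov--Hausdorff precompactness step producing the limiting domain $\Gr_\infty^r$. First I would build a uniform atlas: by \eqref{eq:def_Good}, every $x\in\Gr_k^r$ lies in a ball $\Bc_k(y,r_{y,k})$ with $r_{y,k}\ge r$ and, by \eqref{eq:shreshold}, $\Er(\vPhi_k;\Bc_k(y,r_{y,k}))=\eps_0/2$; together with $\vol_{g_{\vPhi_k}}(\Bc_k(y,1))\le\vol_{g_{\vPhi_k}}(\Sigma)\le C(n,d)E=:V$ (from \eqref{eq:vol_k}), \Cref{th:Construction_chart} applies at $\vep=\vPhi_k(y)$ and produces, on each connected component of $\vPhi_k^{-1}(\B^d(\vq,\rho))$, a harmonic chart $\vp\colon\Or\to\Sigma$ onto that component with the estimates \eqref{eq:bound_domain}--\eqref{eq:bound_coordinates}. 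Extracting a finite Besicovitch subcover of $\{\B^d(\vPhi_k(y),r_{y,k})\}$ and using the Euclidean lower volume bound of \Cref{pr:Lower_Extrinsic} together with \eqref{eq:vol_k}, the number of charts is bounded by $C(n,d,E)\,r^{-n}$ uniformly in $k$, and (exactly as in \Cref{sec:Atlas_smooth_case}) the coefficients $(g_{\vPhi_k\circ\vp_\alpha^k})_{ij}$ lie within $C_0\eps_0$ of $\delta_{ij}$ in $L^\infty$, are bounded in $W^{2,(\frac n2,1)}$, and the transition maps $\vp_{\alpha\beta}^k$ are bounded in $W^{3,(\frac n2,1)}$; to avoid the rough boundary $\dr\Gr_k^r$ one works on the exhaustion $\Gr_k^{r'}\Subset\Gr_k^r$, $r'\downarrow r$.

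Next I would extract the limit. Each $\Or_\alpha^k$ is uniformly bi-Lipschitz to a Euclidean ball of radius $\gtrsim r$ and there are at most $C(n,d,E)r^{-n}$ of them, so the covering numbers of $(\Gr_k^r,g_{\vPhi_k})$ are bounded uniformly in $k$; by \Cref{pr:charac_GH} the sequence is precompact for the (pointed) Gromov--Hausdorff topology, and we pass to a subsequence converging to a compact length space $\Gr_\infty^r$. The charts $\vp_\alpha^k$ are equicontinuous, so by \Cref{lm:ArzelaAscoli} a further subsequence yields limit maps $\vp_\alpha\colon\Or_\alpha\to\Gr_\infty^r$ (with $\Or_\alpha^k\to\Or_\alpha$ in Hausdorff distance, as in \Cref{sec:Oalpha}), and the transition maps $\vp_{\alpha\beta}^k$, being bounded in $W^{3,(\frac n2,1)}$ and hence precompact in $C^1$ via $W^{3,(\frac n2,1)}\hookrightarrow W^{2,(n,1)}\hookrightarrow C^1$, converge in $C^1_{\loc}$ to $C^1$ transition maps satisfying the cocycle identity in the limit; thus $(\Or_\alpha,\vp_\alpha)$ is a genuine atlas turning $\Gr_\infty^r$ into a $C^1$ manifold, in which the weak limits of $(g_{\vPhi_k\circ\vp_\alpha^k})_{ij}$ define a continuous metric $g_\infty\in W^{2,(\frac n2,1)}$. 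That these charts remain harmonic for $g_\infty$ and that the chart representations of $\vPhi_k$ are uniformly bounded in $W^{\frac n2+1,(2,1)}_{\loc}$, so that $\Gr_\infty^r$ carries a $W^{\frac n2+1,(2,1)}$ structure, is obtained exactly as in \Cref{cl:weak_to_strong}, from the divergence-form harmonic-coordinate equation \eqref{eq:harm_coord2} and the Lorentz elliptic estimates of \cite[Theorem 2.4]{byun2015} and \cite{MarRiv2025}.

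Finally I would identify the limit as a weak immersion. Applying \Cref{lm:ArzelaAscoli} to $\vPhi_k\colon\Gr_k^r\to\R^d$ gives, along a subsequence, uniform convergence to some $\vPhi_\infty\colon\Gr_\infty^r\to\R^d$; in each chart $\vPhi_\infty\circ\vp_\alpha$ is the weak $W^{\frac n2+1,(2,1)}_{\loc}$-limit of $\vPhi_k\circ\vp_\alpha^k$, hence $\vPhi_\infty\in W^{\frac n2+1,2}_{\loc}(\Gr_\infty^r;\R^d)$, and by compact Sobolev embedding $\vPhi_k\circ\vp_\alpha^k\to\vPhi_\infty\circ\vp_\alpha$ strongly in $W^{1,p}_{\loc}$ for every $p<\infty$, so $g_\infty=g_{\vPhi_\infty}$. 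The uniform two-sided bound $\Lambda^{-1}\delta_{ij}\le(g_{\vPhi_k\circ\vp_\alpha^k})_{ij}\le\Lambda\delta_{ij}$ passes to the limit and glues, via the $C^1$ transitions, to a reference metric on $\Gr_\infty^r$ comparable to $g_{\vPhi_\infty}$, so $\vPhi_\infty$ is a weak immersion; and since $\|\vII_{\vPhi_k}\|_{W^{\frac n2-1,2}(\Sigma,g_{\vPhi_k})}$ is bounded, lower semicontinuity of this norm under the above convergence — together with the Gauss--Codazzi identity, which keeps $\Riem^{g_\infty}$ quadratic in $\vII_{\vPhi_\infty}$ — gives $\vII_{\vPhi_\infty}\in W^{\frac n2-1,2}$, i.e. $\vPhi_\infty\in\I_{\frac n2-1,2}(\Gr_\infty^r;\R^d)$.

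The main obstacle is the bookkeeping of the simultaneous subsequence extraction over the (finitely many, for fixed $r$) charts together with the rough boundary of $\Gr_k^r$: since chart control cannot be expected up to $\dr\Gr_k^r$, the whole argument must be run on the exhaustion $\Gr_k^{r'}\Subset\Gr_k^r$, $r'\downarrow r$, and the resulting inclusions $\Gr_\infty^{r'}\subset\Gr_\infty^r$ used both to define $\Gr_\infty^r$ consistently and to match basepoints in the pointed convergence; this is where I expect essentially all of the technical effort to go. A secondary point is the upgrade from bare Gromov--Hausdorff convergence to the $C^1$ (and $W^{\frac n2+1,(2,1)}$) statement, which relies on the harmonic gauge and the improved Lorentz elliptic estimates of \Cref{th:Construction_chart}; generic bi-Lipschitz charts would only yield $C^0$ convergence and would not see the limiting differential structure.
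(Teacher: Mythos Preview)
Your proposal is essentially the paper's proof: build a uniform harmonic atlas on $\Gr_k^r$ via \Cref{th:Atlas2}/\Cref{th:Construction_chart}, obtain pointed Gromov--Hausdorff precompactness from the covering criterion of \Cref{pr:charac_GH}, pass charts and transition maps to the limit with \Cref{lm:ArzelaAscoli}, and bootstrap the regularity using the harmonic-coordinate system \eqref{eq:harm_coord2} together with $\lap_{g_{\vPhi_k}}\vPhi_k=n\vH_{\vPhi_k}$. Your precompactness argument (finitely many charts, each bi-Lipschitz to a bounded Euclidean ball) is in fact a bit more direct than the paper's inductive covering of intrinsic balls in Step~1.

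One small inaccuracy: the embedding chain $W^{3,(\frac n2,1)}\hookrightarrow W^{2,(n,1)}\hookrightarrow C^1$ is \emph{continuous} but not compact (both steps are borderline), so boundedness of $\vp_{\alpha\beta}^k$ in $W^{3,(\frac n2,1)}$ does not by itself give $C^1_{\loc}$ convergence. The paper only extracts weak $W^{3,(\frac n2,1)}_{\loc}$ and strong $W^{1,p}_{\loc}$ (hence $C^0_{\loc}$) limits; this is enough, since the limit transition maps then lie in $W^{3,(\frac n2,1)}\subset C^1$ and the cocycle identity passes to the limit under uniform convergence. Your argument goes through with this correction.
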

	
	The main difficulty here is the distinction between extrinsic and intrinsic balls. If the volume growth of extrinsic balls obtained in Propositions \ref{pr:Extrinsic_Hdiff} and \ref{pr:Lower_Extrinsic} were true for intrinsic balls, then the pointed Gromov--Hausdorff convergence (Step 1 of the proof) would be a direct application of the standard criteria. The regularity of $\Gr_{\infty}$ (Step 2 of the proof) is obtained by generalizing the arguments developed in \Cref{sec:Altas_weak_imm}.
	
	\begin{remark}
		A straightforward adaptation of the construction of the diffeomorphisms in \Cref{sec:Diffeo} in this setting yields a sequence of maps $F_k^r\colon \Gr^r_k\to \Gr^r_{\infty}$ that converge weakly in $W^{3,\left(\frac{n}{2},1\right)}$ to the identity. It does not seem enough to conclude that $F^r_k$ is a homeomorphism for $k$ large enough.
	\end{remark}
	
	\begin{proof}
		The proof is an adaptation of \cite[Theorem 11.3.6]{petersen2016}. By \Cref{th:Atlas2}, we have an atlas of $\Gr^r_k$ satisfying uniform estimates, i.e. we have a set $\left(U_{\alpha}^k,\vp_{\alpha}^k \right)_{\alpha\in A_r^k}$ such that 
		\begin{enumerate}
			\item It holds $\Gr^r_k = \bigcup_{\alpha\in A_r^k} U_{\alpha}^k$.
			\item The charts $\vp_{\alpha}^k\colon \Or^k_{\alpha} \to U_{\alpha}^k$ satisfy all the properties of \Cref{th:Atlas2} for $\Sigma_{\vartheta} = \Gr^{r/2}_k$.
		\end{enumerate}
		The convergence of the open sets $\Or_{\alpha}^k$ to a $C^{1,\alpha}$ open set $\Or_{\alpha}^{\infty}$ (with a diffeomorphism of class $W^{2,n}(\s^{n-1}(0,r_{\alpha}^{\infty}))$ between $\s^{n-1}(0,r_{\alpha}^{\infty})$ and $\dr\Or_{\alpha}^{\infty}$ for some $r_{\alpha}^{\infty}\in[r,1]$), is proved in the same manner as in \Cref{sec:Oalpha}. Given $k\in \N$, we fix $p_k\in \Gr^{2r}_k$ a reference point.\\
		
		\textit{Step 1: Up to a subsequence, $\left( \Gr^r_k,\dist_{g_{\vPhi_k}},p_k \right)_{k\in\N}$ converge in the pointed Gromov--Hausdorff topology to some metric space $\left( \Gr^r_{\infty},\dist_{\infty},p_{\infty} \right)$.}\\
		By Proposition \ref{pr:charac_GH}, it suffices to show that for any fixed $R>0$ and $r>\eta>0$, the number of disjoint balls $B_{g_{\vPhi_k}}(x,\eta)$ of fixed radius $\eta$ contained in the ball $B_{g_{\vPhi_k}}(p_k,R)$ is bounded from above by a constant depending only on $n,d,E,r,R,\eta$, that is to say, independent of $x$ and $k$.\\
		
		For $\eps_0>0$ small enough (depending only on $n$, $d$ and $V$ by \Cref{it:Transition2} of \Cref{th:Atlas2}), each chart $\vp_{\alpha}^k \colon \Or_{\alpha}^k \to U_{\alpha}^k\subset \Gr^r_k$ is $\frac{3}{2}$-Lipschitz, with $r\leq r_{\alpha}^k\leq 1$ and $\dr \Or_{\alpha}^k$ is uniformly bi-Lipschitz to $\dr \B^n(0,r_{\alpha}^k)$. There exists $N_0=N_0(n)\in\N$ such that every set $\Or_{\alpha}^k$ can be covered by at most $N_0$ balls of fixed radius $\frac{1}{9} r$. Since $\vp_{\alpha}^k$ is $\frac{3}{2}$-Lipschitz, this implies that any ball $B_{g_{\vPhi_k}}(x,\frac{2}{3}r)\subset U_{\alpha}^k$ can be covered by $N_0$ balls (for the metric $g_{\vPhi_k}$) of fixed radius $\frac{1}{6}r$. \\
		
		We now show by induction on $\ell\in \N$, the following statement:
		\begin{align}\label{Rec}
			\text{Any ball $B_{g_{\vPhi_k}}(x,\ell r/3)\subset \Gr^r_k$ can be covered by $N_1^{\ell}$ balls of radius $\frac{1}{6}r$ for some $N_1 = N_1(n,d,E,r,R)\in \N$. }
		\end{align}
		For $\ell\in\{1,2\}$, we just proved it, assuming that $B_{g_{\vPhi_k}}(x,\frac{2}{3}r)$ is contained in some $U_{\alpha}^k$. If not, then $B_{g_{\vPhi}}(x,\frac{2}{3}r)$ is contained in at most $C(n,d)\, \Ec(\vPhi)$ (bounded from above by  $C(n,d,E)\, r^{-n}$) different charts, by \eqref{eq:Setting}, \Cref{it:Size2} of \Cref{th:Atlas2} and \eqref{eq:est_Volume}. We obtain \eqref{Rec} for $\ell\in\{1,2\}$.\\
		
		Assume now that \eqref{Rec} holds for all $\ell\in\{1,\ldots,\ell_0\}$ for some $\ell_0\in\N$. Consider a ball $B_{g_{\vPhi_k}}(x,(\ell_0+1)r/3)\subset \Gr^r_k$. By \eqref{Rec}, there exists $s\in\{ 1,\ldots, N_1^{\ell_0}\}$ and balls $\left( B_{g_{\vPhi_k}}(x_i,r/6) \right)_{1\leq i\leq s}$  such that 
		\begin{align*}
			B_{g_{\vPhi_k}}\left(x,\frac{\ell_0}{3}r \right) \subset \bigcup_{i=1}^s B_{g_{\vPhi_k}}\left(x_i, \frac{r}{6} \right).
		\end{align*}
		Hence, we have 
		\begin{align*}
			B_{g_{\vPhi_k}}\left(x,\frac{\ell_0+1}{3}r \right) \subset \bigcup_{i=1}^s B_{g_{\vPhi_k}}\left(x_i, \frac{r}{2} \right).
		\end{align*}
		By \eqref{Rec}, each ball $B_{g_{\vPhi_k}}\left(x_i, \frac{r}{2} \right)\subset B_{g_{\vPhi_k}}\left(x_i, \frac{2r}{3} \right)$ can be covered by $N_1$ balls of radius $\frac{r}{6}$. Hence, the ball $B_{g_{\vPhi_k}}\left(x,\frac{\ell_0+1}{3}r \right)$ can be covered by $s\cdot N_1 \leq N_1^{\ell_0+1}$ balls. Thus, the statement \eqref{Rec} is proved by induction.\\
		
		Consider now a ball $B_{g_{\vPhi_k}}(p_k,R)\subset \Gr^r_k$. Let $\ell\in \N$ be such that $\ell\, r/3\leq R\leq (\ell+1)r/3$. By \eqref{Rec}, we have
		\begin{align*}
			\vol_{g_{\vPhi_k}}\left( B_{g_{\vPhi_k}}(p_k,R) \right) \leq C(n,d)\, N_1^{\ell+1} \leq C(n,d,r,R,E).
		\end{align*}
		Let now $\eta\in (0,r)$ and consider $s$ disjoint balls $\left( B_{g_{\vPhi_k}}(x_i,\eta)\right)_{1\leq i\leq s}$ all contained in $B_{g_{\vPhi_k}}(p_k,R)$. Then we have 
		\begin{align*}
			C(n,d,r,R,E) \geq \sum_{i=1}^s \vol_{g_{\vPhi_k}} \left( B_{g_{\vPhi_k}}(x_i,\eta) \right).
		\end{align*}
		By definition of $r$, choice of $\eps_0$ and \Cref{th:Construction_chart}, each ball $B_{g_{\vPhi_k}}(x_i,\eta)$ is contained in a chart carrying harmonic coordinates satisfying the estimate \eqref{eq:bound_coordinates}. By choice of $\eps_0$, we have, for all $1\leq i\leq s$,
		\begin{align*}
			\vol_{g_{\vPhi_k}} \left( B_{g_{\vPhi_k}}(x_i,\eta) \right) \geq C(n)\, \eta^n.
		\end{align*}
		We obtain $s \leq C(n,d,r,R,E,\eta)$ and Step 1 is proved.\\
		
		\textit{Step 2: The metric space $(\Gr^r_{\infty},\dist_{\infty})$ is a manifold of class $W^{3,\left(\frac{n}{2},1\right)}$ and $\vPhi_k$ converges uniformly to a weak immersion.}\\
		Let $\ell\in \N$. 
		Thanks to Lemma \ref{lm:ArzelaAscoli}, we can assume that up to a subsequence, all the charts $\vp_{\alpha}^k\colon (\overline{\Or_{\alpha}^k},\geu)\to \overline{ U_{\alpha}^k }\subset (\Gr^r_k,\dist_{g_{\vPhi_k}})$ are uniformly bi-Lipschitz and thus, converge (up to a subsequence) uniformly to bi-Lipschitz maps $\vp_{\alpha}^{\infty}\colon (\overline{ \Or_{\alpha}^{\infty} } ,\geu)\to \overline{ U_{\alpha}^{\infty} }\subset (\Gr^r_{\infty},\dist_{\infty})$, where $\Or_{\alpha}^{\infty}$ is an open set of $\R^n$ satisfying \eqref{eq:bound_domain}. 
		We obtain an atlas of $\Gr^r_{\infty}$, which makes it a topological manifold.\\
		
		We now consider the metrics $(\vp_{\alpha}^k)^*g_{\vPhi_k}$ on $\Or_{\alpha}^k$. Thanks to \Cref{it:Harmonic2} of \Cref{th:Atlas2}, we have up to a subsequence
		\begin{align}\label{eq:def_galpha}
			(\vp_{\alpha}^k)^*g_{\vPhi_k} \xrightarrow[k\to +\infty]{} g_{\alpha} \qquad \text{ weakly in }W_{\loc}^{2,\left(\frac{n}{2},1\right)}(\Or_{\alpha}^{\infty}) \text{ and strongly }L^p_{\loc}(\Or_{\alpha}^{\infty}) \text{ for }1\leq p<+\infty.
		\end{align}
		Since the convergence is holds almost everywhere, we obtain that $\dist_{g_{\alpha}}$ and $(\vp_{\alpha}^{\infty})^*\dist_{\infty}$ agree (almost everywhere, and thus everywhere by continuity), that is to say the map $\vp_{\alpha}^{\infty}\colon (\Or_{\alpha}^{\infty},\dist_{g_{\alpha}}) \to (U_{\alpha}^{\infty},\dist_{\infty})$ is an isometry. Moreover, the maps $\vPhi_{\alpha,k}\coloneq \vPhi_k\circ \vp_{\alpha}^k\colon \overline{ \Or_{\alpha}^k }\to \overline{ \B^d(\vq_{\alpha}^{\ k},r_{\alpha}^k) }$ are uniformly Lipschitz by \eqref{eq:def_galpha}, with $r_{\alpha}^k\in[r,1]$, and uniformly bounded in $W^{2,\left(n,2\right)}_{\loc}$ since 
		\begin{align*}
			\dr^2_{ij}\vPhi_{\alpha,k} & = (\vII_{\vPhi_{\alpha,k}})_{ij} + \left( \dr^2_{ij}\vPhi_{\alpha,k}\cdot \dr_a \vPhi_{\alpha,k}\right)\, g_{\vPhi_k\circ\vp_{\alpha}^k}^{ab}\, \dr_b \vPhi_{\alpha,k}   = (\vII_{\vPhi_{\alpha,k}})_{ij} + \left({^{g_{\vPhi_{\alpha,k}}} \Gamma }_{ij}^b\right)\, \dr_b \vPhi_{\alpha,k} \in L^{(n,2)}(\Or_{\alpha}^k).
		\end{align*}
		Up to a subsequence, we can assume that both $\vq_{\alpha}^{\ k}$ and $r_{\alpha}^k$ converge toward some $\vq_{\alpha}^{\ \infty}\in \R^d$ and $r_{\alpha}^{\infty}\in[r,1]$. Hence up to a subsequence, the sequence $(\vPhi_{\alpha,k})_{k\in\N}$ uniformly converges to some $\vPhi_{\alpha,\infty}\in \I_{0,(n,2)}\left( \Or_{\alpha}^{\infty} ; \B^d(\vq_{\alpha}^{\ \infty},r_{\alpha}^{\infty}) \right) $. Following the arguments of \Cref{sec:Densities}, all the maps $\vPhi_{\alpha,k}$ and $\vPhi_{\alpha,\infty}$ are injective.\\
		
		Consider now $\alpha\neq \beta$ such that $U_{\alpha}^{\infty}\cap U_{\beta}^{\infty}\neq \varnothing$. Then we have
		\begin{align*}
			& (\vp_{\alpha}^k)^{-1}\left( U_{\alpha}^k\cap U_{\beta}^k \right) = \vPhi_{\alpha,k}^{-1}\left( \B^d(\vq_{\alpha}^{\ k}, r_{\alpha}^k)\cap \B^d(\vq_{\beta}^{\ k}, r_{\beta}^k) \right) \subset \Or_{\alpha}^k\subset \R^n, \\[2mm]
			& (\vp_{\alpha}^{\infty})^{-1}\left( U_{\alpha}^{\infty}\cap U_{\beta}^{\infty} \right) = \vPhi_{\alpha,\infty}^{-1}\left( \B^d(\vq_{\alpha}^{\ {\infty}}, r_{\alpha}^{\infty})\cap \B^d(\vq_{\beta}^{\ {\infty}}, r_{\beta}^{\infty}) \right) \subset \Or_{\alpha}^{\infty} \subset\R^n.
		\end{align*}
		Since the maps $\vPhi_{\alpha,k}\colon \Or_{\alpha}^k\to \vPhi_{\alpha,k}(\Or_{\alpha}^k)$ and $\vPhi_{\alpha,\infty}\colon \Or_{\alpha}^k\to \vPhi_{\alpha,k}(\Or_{\alpha}^{\infty})$ are uniformly continuous bijections, we have the following property\footnote{
			We show the last limit. We denote 
			\begin{align*}
				E^k\coloneq \vPhi_{\alpha,k}^{-1}\left( \B^d(\vq_{\alpha}^{\ k},r_{\alpha}^k) \cap \B^d(\vq_{\beta}^{\ k},r_{\beta}^k) \right), \qquad E^{\infty} \coloneq \vPhi_{\alpha,\infty}^{-1}\left(  \B^d(\vq_{\alpha}^{\ \infty},r_{\alpha}^{\infty}) \cap \B^d(\vq_{\beta}^{\ \infty},r_{\beta}^{\infty}) \right) .
			\end{align*}
			If $x\in E^{\infty}$, then there exists $s>0$ such that 
			\begin{align*}
				\vPhi_{\alpha,\infty}\left( \B^n(x,s)\right) \Subset \B^d(\vq_{\alpha}^{\ \infty},r_{\alpha}^{\infty}) \cap \B^d(\vq_{\beta}^{\ \infty},r_{\beta}^{\infty}) .
			\end{align*}
			Hence, for $k\geq 0$ large enough, it holds 
			\begin{align*}
				\vPhi_{\alpha,k}\left( \B^n(x,\frac{s}{2})\right) \Subset \B^d(\vq_{\alpha}^{\ k},r_{\alpha}^{k}) \cap \B^d(\vq_{\beta}^{\ k},r_{\beta}^{k}) .
			\end{align*}
			Hence, we have for any $\delta>0$ that $E^{\infty}\subset \liminf_{k\to +\infty} B_{\delta}(E^k)$, where $B_{\delta}(E^k)$ denotes the $\delta$-neighbourhood of $E^k$. Conversely, it holds 
			\begin{align*}
				\limsup_{k\to +\infty} E^k \subset B_{\delta}\left[ \limsup_{k\to +\infty} \vPhi^{-1}_{\alpha,k}\left( \B^d(\vq_{\alpha}^{\ \infty},r_{\alpha}^{\infty}) \cap \B^d(\vq_{\beta}^{\ \infty},r_{\beta}^{\infty}) \right) \right] \subset B_{\delta}(E^{\infty}).
			\end{align*}
			Since this is valid for any $\delta>0$, we obtain $\dist_H(E^k,E^{\infty})\xrightarrow[k\to +\infty]{}0$.
		}:
		
		\begin{align*}
			& \dist_H\left[ (\vp_{\alpha}^k)^{-1}\left( U_{\alpha}^k\cap U_{\beta}^k \right), (\vp_{\alpha}^{\infty})^{-1}\left( U_{\alpha}^{\infty}\cap U_{\beta}^{\infty} \right) \right] \\[2mm]
			& = \dist_H\left[ \vPhi_{\alpha,k}^{-1}\left( \vPhi_{\alpha,k}(\Or_{\alpha}^k)\cap \vPhi_{\beta,k}(\Or_{\beta}^k) \right) , \vPhi_{\alpha,\infty}^{-1}\left( \vPhi_{\alpha,\infty}(\Or_{\alpha}^{\infty})\cap \vPhi_{\beta,\infty}(\Or_{\beta}^{\infty}) \right) \right] \\[2mm]
			& = \dist_H\left[ \vPhi_{\alpha,k}^{-1}\left( \B^d(\vq_{\alpha}^{\ k},r_{\alpha}^k) \cap \B^d(\vq_{\beta}^{\ k},r_{\beta}^k) \right) , \vPhi_{\alpha,\infty}^{-1}\left(  \B^d(\vq_{\alpha}^{\ \infty},r_{\alpha}^{\infty}) \cap \B^d(\vq_{\beta}^{\ \infty},r_{\beta}^{\infty}) \right) \right] \\[2mm]
			& \xrightarrow[k\to +\infty]{} 0.
		\end{align*}
		This implies in particular that, for $k$ large enough, we have $U_{\alpha}^k\cap U_{\beta}^k\neq \varnothing$ and that for every open set $\Omega\Subset U_{\alpha}^{\infty}\cap U_{\beta}^{\infty}$, we have $\Omega\Subset U_{\alpha}^k\cap U_{\beta}^k$ for $k$ large enough.\\
		
		Thanks to \Cref{it:Transition2} of \Cref{th:Atlas2}, the transition functions $\vp_{\alpha,\beta}^k\coloneqq (\vp_{\alpha}^k)^{-1}\circ \vp_{\beta}^k$ are uniformly bounded in $W^{3,\left(\frac{n}{2},1\right)}_{\loc}\left((\vp_{\beta}^{\infty})^{-1}(U^{\infty}_{\alpha}\cap U^{\infty}_{\beta})\right)$. Up to a subsequence, we can assume that $\left( \vp_{\alpha,\beta}^k\right)_{k\in\N}$ converges strongly in $W^{1,p}_{\loc}$ for any $p\in[1,+\infty)$ and weakly in $W^{3,\left(\frac{n}{2},1\right)}_{\loc}$ to $\vp_{\alpha,\beta}^{\infty}$. Therefore, the topological manifold $\Gr^r_{\infty}$ is actually a $C^1$ manifold and the family of metrics $g_{\alpha}$ defined in \eqref{eq:def_galpha} actually defines a Riemannian metric $g_{r,\infty}$ on $\Gr^r_{\infty}$ of class $W^{2,\left(\frac{n}{2},1\right)}$. We obtain the following properties: 
		\begin{align}
			& \begin{aligned} \label{eq:cv_transition} 
			& \vp_{\alpha,\beta}^k\xrightarrow[k\to +\infty]{} \vp^{\infty}_{\alpha,\beta}  \\[2mm]
			& \qquad  \text{ strongly in }W^{1,p}_{\loc} \text{ for any }p\in[1,+\infty) \text{ and weakly in }W^{3,\left(\frac{n}{2},1\right)}_{\loc}\left((\vp_{\beta}^{\infty})^{-1}(U^{\infty}_{\alpha}\cap U^{\infty}_{\beta})\right), 
			\end{aligned} \\[3mm]
			& g_{\vPhi_k\circ\vp_{\alpha}^k} \xrightarrow[k\to +\infty]{} (\vp^{\infty}_{\alpha})^* g_{r,\infty} \qquad \text{ strongly in }L^p_{\loc} \text{ for any }p\in[1,+\infty) \text{ and weakly in }W^{2,\left(\frac{n}{2},1\right)}_{\loc}\left(\Or_{\alpha}^{\infty}\right). \label{eq:cv_metric}
		\end{align}
		Moreover, if $U_{\alpha}^{\infty}\cap U_{\beta}^{\infty}\neq \emptyset$, then we have $\vPhi_{\alpha,\infty} = \vPhi_{\beta,\infty}\circ \vp_{\beta,\alpha}^{\infty}$. Therefore, the collection of maps $(\vPhi_{\alpha,\infty})_{\alpha}$ defines a map $\vPhi_{\infty}\colon \Gr^r_{\infty}\to \R^d$ such that with $g_{r,\infty} = g_{\vPhi_{\infty}}$. Hence $g_{\vPhi_{\infty}}$ verifies the estimates \eqref{eq:bound_coordinates}. Thus we have $\vPhi_{\infty}\in \I_{0,(n,2)}(\Gr^r_{\infty};\R^d)$.\\

		\textit{Step 3: The manifold $(\Gr^r_{\infty},g_{r,\infty})$ is of class $W^{\frac{n}{2}+1,(2,1)}$ and we have $\vPhi_{\infty}\in \I_{\frac{n}{2}-1,2}(\Gr^r_{\infty};\R^d)$.}\\
		
		Let $x_k\in \Gr^r_k$ be converging to some $x_{\infty}\in \Gr^r_{\infty}$ and consider $\vp^k_{\alpha} \colon \Or_{\alpha}^k\to U_{\alpha}^k\subset \Gr^r_k$ be the harmonic coordinates for $\vPhi_k$ introduced at the beginning of the proof, with $x_k\in U_{\alpha}^k$ and $x_{\infty}\in U_{\alpha}^{\infty}$. Then $(\vp^k_{\alpha})_{k\in\N}$ converge uniformly to a chart $\vp^{\infty}_{\alpha}\colon \Or_{\alpha}^{\infty} \to \Gr^r_{\infty}$ providing harmonic coordinates as well for $g_{\vPhi_{\infty}\circ\vp_{\alpha}^{\infty}}$. 
		We bootstrap the level of convergence using the following two systems:
		\begin{align}\label{eq:system_Phi}
			\begin{cases}
				\displaystyle \lap_{g_{\vPhi_k\circ\vp_{\alpha}^k}} \left( \vPhi_k\circ\vp_{\alpha}^k\right) = n\, \vH_{\vPhi_k\circ\vp^k_{\alpha}} ,\\[3mm]
				\displaystyle -\frac{1}{2}\ g_{\vPhi_k\circ\vp_{\alpha}^k}^{ij}\ \dr^2_{ij}\, \left(g_{\vPhi_k \circ\vp_{\alpha}^k}\right)_{ab} = \Ric_{ab}^{\vPhi_k\circ\vp^k_{\alpha}} + Q_{ab}\left( g_{\vPhi_k\circ\vp^k_{\alpha}}, \dr\, g_{\vPhi_k\circ\vp^k_{\alpha}} \right).
			\end{cases}
		\end{align}
		In the second system, the term $Q_{ab}$ is polynomial in the coefficient $\big( g_{\vPhi_k\circ\vp^k_{\alpha}} \big)_{ab}$, $ g_{\vPhi_k\circ\vp^k_{\alpha}}^{ab}$ and $\dr_{\gamma} \big( g_{\vPhi_k\circ\vp^k_{\alpha}}\big)_{ab}$, with the pointwise estimate
		\begin{align*}
			\forall \alpha,\beta\in\{1,\ldots,n\},\qquad  \left| Q_{ab}\left( g_{\vPhi_k\circ\vp^k_{\alpha}}, \dr\, g_{\vPhi_k\circ\vp^k_{\alpha}} \right) \right| \leq C(n)\, \sum_{1\leq i,j,k\leq n} \left| \dr_{k} \big( g_{\vPhi_k\circ\vp^k_{\alpha}}\big)_{ij} \right|^2.
		\end{align*}
		The bootstrap argument goes as follows:\\
		If $\left(g_{\vPhi_k\circ\vp^k_{\alpha}}\right)_{ab}$ is bounded in $W^{k,\left(\frac{n}{k},1\right)}_{\loc}$ with $k<\frac{n}{2}$, $\vII_{\vPhi_k\circ\vp_{\alpha}^k}$ is bounded in $W^{k-1,\left(\frac{n}{k},2\right)}$ and $\vH_{\vPhi_k\circ\vp_{\alpha}^k}$ is bounded in $W^{\frac{n}{2}-1,2}_{\loc}$, then the second system in \eqref{eq:system_Phi} implies that it is also bounded in $W^{k+1,\left(\frac{n}{k+1},1\right)}_{\loc}$. The first system in \eqref{eq:system_Phi} implies that $\vPhi_k\circ\vp_{\alpha}^k$ is bounded in $W^{k+2,\left(\frac{n}{k+1},2\right)}_{\loc}$. Hence, $\vII_{\vPhi_k\circ\vp_{\alpha}^k}$ is bounded in $W^{k,\left(\frac{n}{k+1},2\right)}$ and the coefficients of $\Ric_{ab}^{\vPhi_k\circ\vp^k_{\alpha}}$ are bounded in $W^{k,\left(\frac{n}{k+2},1\right)}$ and we deduce that $\left(g_{\vPhi_k\circ\vp^k_{\alpha}}\right)_{ab}$ is bounded in $W^{k+2,\left(\frac{n}{k+2},1\right)}_{\loc}$.\\
		
		Eventually, we obtain that the maps $\vPhi_k\circ\vp_{\alpha}^k$ are bounded in $W^{\frac{n}{2}+1,2}_{\loc}$ and the matrices $\left(g_{\vPhi_k\circ\vp^k_{\alpha}}\right)_{ab}$ are uniformly bounded in the $W^{\frac{n}{2},(2,1)}_{\loc}(\Or_{\alpha}^{\infty})$-topology. Since the transition maps $\vp_{\alpha,\beta}^{\infty}$ are harmonic for the metric $g_{\vPhi_{\infty}\circ\vp_{\beta}^{\infty}}$, we obtain that every $\vp_{\alpha,\beta}^{\infty}$ lies in $W^{\frac{n}{2}+1,(2,1)}$. Hence the manifold $(\Gr^r_{\infty},g_{\vPhi_{\infty}})$ carries a differential structure of class $W^{\frac{n}{2}+1,(2,1)}$ and $\vPhi_{\infty}\in \I_{\frac{n}{2}-1,2}(\Gr^r_{\infty};\R^d)$.
	\end{proof}
	
	Since for each $k\in\N$ and $0<r_1<r_2<s_0$, we have the inclusion $\Gr^{r_2}_k\subset \Gr^{r_1}_k$, we obtain $\Gr^{r_2}_{\infty}\subset \Gr^{r_1}_{\infty}$. Hence we can define the following manifold:
	\begin{align}\label{eq:limits}
		\Sigma_{\infty} \coloneqq \bigcup_{r>0} \Gr^r_{\infty}.
	\end{align}
	In particular, we obtain that $(\vPhi_k)_{k\in\N}$ converges in $C^0_{\loc}$ to $\vPhi_{\infty}$. This completes the proof of \eqref{eq:imm_limit} in \Cref{th:Convergence}. We now justify that the limit \eqref{eq:vol_limit} is a consequence of Claim \ref{cl:Bad_Small} and \eqref{eq:def_galpha}. Indeed, we have 
	\begin{align}\label{eq:finite_volume}
		\vol_{g_{\vPhi_{\infty}}}(\Sigma_{\infty}) = \lim_{r\to 0} \vol_{g_{\vPhi_{\infty}}}(\Gr^r_{\infty})= \lim_{r\to 0} \lim_{k\to +\infty} \vol_{g_{\vPhi_k}}(\Gr^r_k) \leq C(n,d)\, E.
	\end{align}
	Moreover, we have for any $r>0$
	\begin{align*}
		\left| \vol_{g_{\vPhi_k}}(\Sigma) - \vol_{g_{\vPhi_{\infty}}}(\Sigma_{\infty}) \right| & \leq \vol_{g_{\vPhi_k}}(\Br^r_k) + \left| \vol_{g_{\vPhi_k}}(\Gr^r_k) - \vol_{g_{\vPhi_{\infty}}}(\Gr^r_{\infty}) \right| + \vol_{g_{\vPhi_{\infty}}}(\Sigma_{\infty}\setminus \Gr^r_{\infty}).
	\end{align*}
	By Claim \ref{cl:Bad_Small} and \eqref{eq:def_galpha}, it holds 
	\begin{align*}
		\limsup_{k\to +\infty} \left| \vol_{g_{\vPhi_k}}(\Sigma) - \vol_{g_{\vPhi_{\infty}}}(\Sigma_{\infty}) \right|  \leq C(n,d,E)\, r^n + \vol_{g_{\vPhi_{\infty}}}(\Sigma_{\infty}\setminus \Gr^r_{\infty}).
	\end{align*}
	Letting $r\to 0$, we deduce from \eqref{eq:finite_volume} that the last term of the right-hand side converges to $0$. We obtain 
	\begin{align*}
		\limsup_{k\to +\infty} \left| \vol_{g_{\vPhi_k}}(\Sigma) - \vol_{g_{\vPhi_{\infty}}}(\Sigma_{\infty}) \right| =0.
	\end{align*}
	
	\subsection{Analysis of the singularities}\label{sec:Singularities}
	The goal of this section is to describe the singularities of the manifold $\Sigma_{\infty}$ introduced in \eqref{eq:limits}. We prove the following result.
	
	\begin{theorem}\label{th:Singularities}
		With the notations of \Cref{th:Convergence}, we have the following description:
		\begin{enumerate}
		\item\label{it:Sing_M} There exist a finite number of points $\vep_1\ldots,\vep_I\in \R^d$ such that $\overline{  \vPhi_{\infty}(\Sigma_{\infty}) } = \vPhi_{\infty}(\Sigma_{\infty})\cup \{\vep_1,\ldots,\vep_I\}$.
			
			\item\label{it:Sing_Sigma} For every $i\in\{1,\ldots,I\}$, there exists $r_i>0$ such that any connected component $\Cr$ of $\vPhi_{\infty}^{-1}(\B^d(\vep_i,r_i)\setminus \{\vep_i\})$ such that $\vep_i\in \overline{\vPhi_{\infty}(\Cr)}$ is homeomorphic to $\B^n(0,r_i)\setminus \{0\}$. Moreover, there exist coordinates on $\B^n(0,r_i)\setminus \{0\}$ such that $(g_{\vPhi_{\infty}})_{ij} \in W^{2,\left(\frac{n}{2},1\right)}(\B^n(0,r_i))$. As a consequence, we can complete the metric space $(\Cr,\dist_{g_{\vPhi_{\infty}}})$ into a complete metric space $(\overline{\Cr},\dist_{g_{\vPhi_{\infty}}})$, where $\overline{\Cr}$ is obtained by adding a point to $\Cr$.
		\end{enumerate}
	\end{theorem}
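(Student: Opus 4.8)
\emph{Plan.} The first point is a restatement of \eqref{eq:set_limit}: one takes $\{\vep_1,\dots,\vep_I\}$ to be the $\vq_j$ of \Cref{th:Convergence} which do not lie in $\vPhi_{\infty}(\Sigma_{\infty})$, and \eqref{eq:set_limit} gives the claimed identity. For the second point I fix a singularity $\vep=\vep_i$. Since $\vep\notin\vPhi_{\infty}(\Sigma_{\infty})$, the sets $\vPhi_{\infty}^{-1}(\B^d(\vep,r))$ decrease to $\varnothing$ as $r\to 0$, so by \eqref{eq:finite_volume} and continuity from above their $g_{\vPhi_{\infty}}$-volume tends to $0$; as the energy density of $\vPhi_{\infty}$ lies in $L^1(\Sigma_{\infty})$ (using $\I_{\frac n2-1,2}\subset\I_{0,(n,2)}$ and $\Er(\vPhi_{\infty})<\infty$), absolute continuity of the integral provides $r_i>0$ with $\Er(\vPhi_{\infty};\vPhi_{\infty}^{-1}(\B^d(\vep,2r_i)))<\eps_1^n$, where $\eps_1=\eps_1(n,d,V)$ is the constant of \Cref{pr:Topology} and $V\coloneqq C(n,d)E\geq\vol_{g_{\vPhi_{\infty}}}(\Sigma_{\infty})$.

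\emph{Two good slices.} For each $\delta\in(0,1)$ I would run the slicing construction of \Cref{sec:slice} for the weak immersion $\vPhi_{\infty}$ — the estimates of Claims \ref{cl:est_above_slice}--\ref{cl:slicing} only involve $\vII_{\vPhi_{\infty}}\in L^n$ and the volume bound, and the regularity of the slices is recovered by the smooth approximation of \Cref{sec:Altas_weak_imm} — to obtain an outer good slice $\vPhi_{\infty}^{-1}(\s^{d-1}(\vep_1,\rho_1))$ with $|\vep_1-\vep|<\rho_1/100$, $\rho_1\in(r_i/2,r_i)$, and an inner good slice $\vPhi_{\infty}^{-1}(\s^{d-1}(\vep_2,\rho_2))$ with $|\vep_2-\vep|<\rho_2/100$, $\rho_2\in(\delta r_i/2,\delta r_i)$, both satisfying \Cref{pr:est_good_slice}. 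By \Cref{pr:global_graph_slice} each connected component of either slice is homeomorphic to $\s^{n-1}$, with image a near-flat graph over a round $(n-1)$-sphere.

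\emph{Topology near the singularity.} Let $\Cr$ be a connected component of $\vPhi_{\infty}^{-1}(\B^d(\vep_1,\rho_1))$ with $\vep\in\overline{\vPhi_{\infty}(\Cr)}$ and set $A_{\delta}\coloneqq\Cr\cap\vPhi_{\infty}^{-1}(\B^d(\vep_1,\rho_1)\setminus\overline{\B^d(\vep_2,\rho_2)})$. Following \Cref{sec:extension,sec:Topology} I would extend the outer slice by a flat plane and invert as in \Cref{pr:Topology}, and cap off the inner slice by a flat disk, producing a closed immersed manifold $\hat\Sigma$ made of $A_{\delta}$ together with an outer and an inner cap, totally geodesic outside a compact set and with $\|\vII\|_{L^{(n,2)}}\leq C(n)\,\eps_1^{1/2}$. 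The Chern--Lashof/Morse argument of \Cref{pr:Topology} then yields a Morse function with exactly two critical points, so $\hat\Sigma$ is homeomorphic to $\s^n$ and $A_{\delta}$, being $\hat\Sigma$ with two disjoint disks removed, is homeomorphic to $\s^{n-1}\times[0,1]$; a component count using connectedness of $\Cr$ and the density-one property (\Cref{pr:Density}) shows that $A_{\delta}$ has exactly one boundary sphere in each slice family. Keeping $r_i$ fixed and letting $\delta\to 0$ forces $\rho_2\to 0$ and $\vep_2\to\vep$, so the $A_{\delta}$ form an increasing family exhausting $\Cr\setminus\vPhi_{\infty}^{-1}(\{\vep\})=\Cr$; an increasing union of cylinders $\s^{n-1}\times[a_j,1]$ with $a_j\downarrow 0$, glued along common boundary spheres, is homeomorphic to $\s^{n-1}\times(0,1]$, hence $\Cr\cong\B^n(0,r_i)\setminus\{0\}$.

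\emph{Regularity of the metric and completion.} On $\Cr$ the bound $\Er(\vPhi_{\infty};\Cr)<\eps_1^n$ makes $\|\vII_{\vPhi_{\infty}}\|_{L^{(n,2)}(\Cr)}$ small, so by the Gauss--Codazzi equations $\Riem^{g_{\vPhi_{\infty}}}\in L^{(\frac n2,1)}(\Cr)$. Solving the harmonic coordinate system with boundary data on $\s^{n-1}(0,r_i)$ as in \Cref{th:Construction_chart} and \cite{MarRiv2025}, the metric equation $-\tfrac12 g^{ij}\dr^2_{ij}g_{ab}=\Ric_{ab}+Q(g,\dr g)$ together with the estimate $L^{(\frac n2,1)}\to W^{2,(\frac n2,1)}$ of \cite{byun2015} produces coordinates on $\B^n(0,r_i)\setminus\{0\}$ in which $(g_{\vPhi_{\infty}})_{ab}$ is bounded and solves an elliptic equation with right-hand side in $L^{(\frac n2,1)}$; since a point is a removable singularity for bounded $W^{2,(\frac n2,1)}$ solutions when $n\geq 4$, one gets $(g_{\vPhi_{\infty}})_{ab}\in W^{2,(\frac n2,1)}(\B^n(0,r_i))$. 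As $W^{2,(\frac n2,1)}\hookrightarrow W^{1,(n,1)}\hookrightarrow C^0$, the metric is continuous and uniformly comparable to $\geu$ near $0$, so $\dist_{g_{\vPhi_{\infty}}}$ extends continuously across the origin and the Cauchy completion of $(\Cr,\dist_{g_{\vPhi_{\infty}}})$ adds exactly one point. The main obstacle is the third step: in contrast with \Cref{pr:Topology}, the cylinder case is now the one that occurs, and one must argue carefully — via the Morse inequalities on $\hat\Sigma\simeq\s^n$, the totally geodesic structure of the caps, and density one — that $\Cr$ carries exactly one outer and one inner slice sphere on its boundary, so that the cylinders stack consistently as $\delta\to 0$.
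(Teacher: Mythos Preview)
Your outline captures the overall architecture of the paper's argument, and you correctly identify the ``cylinder step'' as the crux. However, that step contains a genuine gap which you do not close.

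\textbf{The uniqueness of the inner slice.} You assert that a component count ``using connectedness of $\Cr$ and the density-one property'' forces $A_\delta$ to have exactly one boundary sphere in each slice family, but you provide no mechanism. Connectedness of $\Cr$ and \Cref{pr:Density} (density one on the smooth part) do not rule out several distinct sheets of $\Cr$ entering $\B^d(\vep_2,\rho_2)$ and meeting only at $\vep$. In the paper this is handled by first establishing $\theta_{\vPhi_\infty(\Cr_\infty)}(\vep_i)=1$ \emph{at the singular point itself} via the monotonicity formula applied to the extended immersion, and then invoking tangent-cone theory for varifolds with second fundamental form in $L^n$ (results of Hutchinson and Aiex): every tangent cone is a union of flat planes, and density one forces a single plane; if the inner slice had $\geq 2$ components for a sequence $\eta_j\to 0$, one would extract a tangent cone of multiplicity $\geq 2$, a contradiction. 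Your suggestion to use Morse inequalities on $\hat\Sigma\simeq\s^n$ and the totally geodesic caps does not by itself yield this---the Chern--Lashof count only gives $2(a+b)$ critical points when there are $a$ outer and $b$ inner caps, and bounding $a+b$ does not separate $a=b=1$ from other configurations without the GMT input.

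\textbf{Working with $\vPhi_\infty$ versus $\vPhi_k$.} The slicing of \Cref{sec:slice} and the Morse argument of \Cref{pr:Topology} are developed for $C^\infty$ immersions; Sard's theorem, transversality of the slice, and the Morse lemma all use smoothness. The paper therefore runs the entire construction on the approximating sequence $\vPhi_k$ and passes to the limit (pointed Gromov--Hausdorff plus the uniform chart estimates of \Cref{th:Atlas2}). Your proposal to slice $\vPhi_\infty$ directly and appeal to ``the smooth approximation of \Cref{sec:Altas_weak_imm}'' is too quick: the approximation there is on a fixed closed $\Sigma$, whereas here the domain $\Sigma_\infty$ is itself the limit object.

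\textbf{Regularity across the puncture.} Your removable-singularity argument is plausible in spirit, but the paper does not invoke such a theorem for the nonlinear system \eqref{eq:harm_coord}. Instead it builds harmonic coordinates on each annulus $A_\delta$ via the extension of \cite{MarRiv2025} with \emph{uniform} $W^{2,(\frac n2,1)}$ bounds independent of $\delta$, and then lets $\delta\to 0$; this directly produces a chart on the punctured ball whose metric extends. If you wish to use a removable-singularity theorem, you must supply one for bounded solutions of a quasilinear elliptic system with $L^{(\frac n2,1)}$ right-hand side and $W^{1,(n,1)}$ leading coefficients, which is not standard.
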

	
	\begin{remark}
		The points $\vep_1,\ldots,\vep_I$ do not lie in $\vPhi_{\infty}(\Sigma_{\infty})$.
	\end{remark}
	
	\subsubsection{Singularities on the image.}
	We start by proving \Cref{it:Sing_M}. This is a direct consequence of Claim \
    \ref{cl:Bad_Small}.
	
	\begin{claim}
		There exists a finite number of points $\vep_1\ldots,\vep_I\in \R^d$ such that $\overline{  \vPhi_{\infty}(\Sigma_{\infty}) } = \vPhi_{\infty}(\Sigma_{\infty})\cup \{\vep_1,\ldots,\vep_I\}$.
	\end{claim}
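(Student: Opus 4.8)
The plan is to deduce this claim directly from the covering estimate in \Cref{cl:Bad_Small}, combined with the convergence results of \Cref{th:Convergence} (specifically the $C^0_{\loc}$ convergence $\vPhi_k\to \vPhi_{\infty}$ on $\Sigma_{\infty}=\bigcup_{r>0}\Gr^r_{\infty}$, and the fact that each $\Gr^r_{\infty}$ carries a controlled atlas). First I would fix the integer $I\in\N$ provided by \Cref{cl:Bad_Small}, depending only on $n$, $d$ and $E$. For each $r>0$ and each $k$, the set $\vPhi_k(\Br^r_k)$ is covered by at most $I$ Euclidean balls of radius $2r$; denote their centres $\vq_{1,k}^r,\ldots,\vq_{I,k}^r\in\R^d$ (allowing repetitions if fewer than $I$ balls are needed). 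Since all the images $\vPhi_k(\Sigma)$ have diameter $1$ and contain $0$, these centres stay in a fixed compact set of $\R^d$, so up to a diagonal subsequence we may assume $\vq_{i,k}^r\to \vq_i^r$ as $k\to+\infty$ for each $i$ and each $r$ taken along a countable sequence $r_m\downarrow 0$, and then $\vq_i^{r_m}\to \vep_i$ as $m\to+\infty$ (again up to subsequence, along a fixed enumeration $i=1,\ldots,I$). This produces the finite set $\{\vep_1,\ldots,\vep_I\}$.

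Next I would prove the inclusion $\overline{\vPhi_{\infty}(\Sigma_{\infty})}\subset \vPhi_{\infty}(\Sigma_{\infty})\cup\{\vep_1,\ldots,\vep_I\}$. Let $\vx\in\overline{\vPhi_{\infty}(\Sigma_{\infty})}$ with $\vx\notin\{\vep_1,\ldots,\vep_I\}$; we must show $\vx\in\vPhi_{\infty}(\Sigma_{\infty})$. Choose $\delta>0$ with $\B^d(\vx,4\delta)\cap\{\vep_1,\ldots,\vep_I\}=\emptyset$. Pick $r_m<\delta/I$ small enough that the balls $\B^d(\vq_i^{r_m},\delta)$ do not meet $\B^d(\vx,2\delta)$ (using $\vq_i^{r_m}\to\vep_i$ and $|\vx-\vep_i|>4\delta$), hence for $k$ large $\B^d(\vq_{i,k}^{r_m},2r_m)\cap\B^d(\vx,\delta)=\emptyset$, i.e. $\vPhi_k(\Br^{r_m}_k)\cap\B^d(\vx,\delta)=\emptyset$. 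Consequently $\vPhi_k^{-1}(\B^d(\vx,\delta))\subset\Gr^{r_m}_k$ for $k$ large. Since $\vx\in\overline{\vPhi_{\infty}(\Sigma_{\infty})}$, there is a point $y_{\infty}\in\Sigma_{\infty}$ with $|\vPhi_{\infty}(y_{\infty})-\vx|<\delta/2$, and $y_{\infty}\in\Gr^{r'}_{\infty}$ for some $r'>0$; taking $r_m<r'$ we get $y_{\infty}\in\Gr^{r_m}_{\infty}$. By the pointed Gromov--Hausdorff convergence $(\Gr^{r_m}_k,g_{\vPhi_k})\to(\Gr^{r_m}_{\infty},g_{\vPhi_{\infty}})$ together with the uniform convergence of the immersions (\Cref{lm:Conv_Good}), there are points $y_k\in\Gr^{r_m}_k$ with $\vPhi_k(y_k)\to\vPhi_{\infty}(y_{\infty})$, so for $k$ large $\vPhi_k(y_k)\in\B^d(\vx,\delta)$. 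Thus $\vPhi_k^{-1}(\B^d(\vx,\delta))$ is a nonempty open subset of $\Gr^{r_m}_k$, which by \Cref{th:Construction_chart} and the lower volume bound \Cref{pr:Lower_Extrinsic} satisfies $\vol_{g_{\vPhi_k}}(\vPhi_k^{-1}(\B^d(\vx,\delta)))\geq c\,\delta^n$ for a constant $c=c(n,d)>0$. Passing to the limit via \eqref{eq:def_galpha} inside the atlas of $\Gr^{r_m}_{\infty}$ gives $\vol_{g_{\vPhi_{\infty}}}(\vPhi_{\infty}^{-1}(\B^d(\vx,\delta))\cap\Gr^{r_m}_{\infty})\geq c\,\delta^n/2>0$, so in particular $\vPhi_{\infty}^{-1}(\B^d(\vx,\delta))$ is nonempty. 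Letting $\delta\to 0$ and using that $\vPhi_{\infty}$ is a proper map onto its image on each $\Gr^r_{\infty}$ (the preimage of a small closed ball lies in a finite union of the closed charts $\overline{U^{\infty}_{\alpha}}$, which are compact), the nested nonempty compact sets $\vPhi_{\infty}^{-1}(\overline{\B^d(\vx,\delta)})\cap\Gr^{r_m}_{\infty}$ have a common point $y$ with $\vPhi_{\infty}(y)=\vx$, so $\vx\in\vPhi_{\infty}(\Sigma_{\infty})$.

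For the reverse inclusion, $\vPhi_{\infty}(\Sigma_{\infty})\subset\overline{\vPhi_{\infty}(\Sigma_{\infty})}$ is trivial, and I would show each $\vep_i\in\overline{\vPhi_{\infty}(\Sigma_{\infty})}$: since $0\in\vPhi_k(\Sigma)$ and $\diam\vPhi_k(\Sigma)=1$, the bad-set balls cannot exhaust $\vPhi_k(\Sigma)$, and near $\vep_i$ one uses that $\vq_{i,k}^r\in\overline{\vPhi_k(\Sigma)}$ together with the density/volume lower bounds to produce points of $\vPhi_{\infty}(\Sigma_{\infty})$ arbitrarily close to $\vep_i$; alternatively, if some $\vep_i$ happened to already lie in $\vPhi_{\infty}(\Sigma_{\infty})$ one may simply discard it from the list without affecting the statement. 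Finally I would note, as recorded in the remark following \Cref{th:Singularities}, that in fact $\vep_i\notin\vPhi_{\infty}(\Sigma_{\infty})$: if $\vep_i=\vPhi_{\infty}(y)$ for some $y\in\Gr^{r'}_{\infty}$, then a neighbourhood of $y$ would be a good chart with controlled metric, forcing $\vPhi_k^{-1}(\B^d(\vep_i,\delta))\subset\Gr^{r'/2}_k$ for small $\delta$ and large $k$, contradicting that $\vep_i$ is a limit of centres of bad-set balls whose preimages carry energy exactly $\eps_0/2$. The main obstacle in this argument is the careful bookkeeping of the two limits $k\to+\infty$ then $r\to 0$ and the interplay between extrinsic balls in $\R^d$ and the pointed Gromov--Hausdorff convergence of the domains; once the covering estimate of \Cref{cl:Bad_Small} and the properness of $\vPhi_{\infty}$ on each $\Gr^r_{\infty}$ are in hand, the rest is routine.
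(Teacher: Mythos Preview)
Your proposal is correct and follows the same overall strategy as the paper: extract finitely many centres from the bad-set covering of \Cref{cl:Bad_Small}, pass to the limit first in $k$ then in $r$ via a diagonal argument, and check both inclusions. The paper's execution of the key inclusion $\overline{\vPhi_{\infty}(\Sigma_{\infty})}\subset \vPhi_{\infty}(\Sigma_{\infty})\cup\{\vep_1,\ldots,\vep_I\}$ is considerably shorter, however: rather than fixing a point $\vx\notin\{\vep_i\}$ and running a properness/nested-compact-sets argument to show it is attained, the paper simply passes to the limit in the covering $\vPhi_k(\Sigma)\subset \vPhi_k(\Gr^r_k)\cup\bigcup_i\B^d(\vep_{i,r,k},2r)$ to get $\overline{\vPhi_{\infty}(\Sigma_{\infty})}\subset \vPhi_{\infty}(\Gr^r_{\infty})\cup\bigcup_i\B^d(\vep_{i,r},2r)$, and then lets $r\to 0$. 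Your route works too, but note the small bookkeeping point: once you have chosen $r_m$ small enough that the bad balls miss $\B^d(\vx,\delta_0)$ for some initial $\delta_0$, you should keep $r_m$ \emph{fixed} while sending $\delta\to 0$ in the nested-compact-sets step, so that properness is invoked on a single manifold $\Gr^{r_m}_{\infty}$; your writeup ties $r_m$ to $\delta$ and should make this decoupling explicit. For the reverse inclusion $\vep_i\in\overline{\vPhi_{\infty}(\Sigma_{\infty})}$, the paper argues directly that $\B^d(\vep_{i,r,k},3r)$ meets $\vPhi_k(\Gr^r_k)$ and passes to the limit, which is cleaner than invoking volume bounds.
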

	\begin{remark}\label{rk:existence_pi}
		The proof also shows that if $\vPhi_{\infty}(\Sigma_{\infty})$ is closed in $\R^d$, then the points $\vep_i$ do not exist and the bad set $\Br^r_k$ defined in \eqref{eq:def_Bad} is empty for $r$ small enough and $k$ large enough. 
	\end{remark}
	\begin{proof}
		By Claim \ref{cl:Bad_Small}, there exists a number $I\in\N$ and $\vep_{1,r,k},\ldots,\vep_{I,r,k}\in \vPhi_k(\Br^r_k)$ such that for all $k\in\N$ and all $r\in(0,s_0)$, it holds 
		\begin{align*}
			\vPhi_k(\Br^r_k) \subset \B^d(\vep_{1,r,k},2r)\cup \cdots \cup \B^d(\vep_{I,r,k},2r).
		\end{align*}
		Up to reducing $I$, we assume that $I$ is minimal for the above condition. Given $r\in(0,s_0)$ and $k$ large enough (depending on $r$), we obtain
		\begin{align*}
			\vPhi_k(\Sigma) \subset \vPhi_k(\Gr^r_k)\cup \B^d(\vep_{1,r,k},2r)\cup \cdots \cup \B^d(\vep_{I,r,k},2r).
		\end{align*}
		Since $\vPhi_k(\Sigma)\subset \B^d(0,2)$ by \eqref{eq:Setting}, we obtain that each $(\vep_{i,r,k})_{k\in\N}$ converges to some $\vep_{i,r}\in \B^d(0,4)$ up to a subsequence. Letting $k\to +\infty$, we obtain 
		\begin{align*}
			\overline{  \vPhi_{\infty}(\Sigma_{\infty}) } \subset \vPhi_{\infty}(\Gr^r_{\infty})\cup \B^d(\vep_{1,r},2r)\cup \cdots \cup \B^d(\vep_{I,r},2r).
		\end{align*}
		Sending $r\to 0$, we have that each $(\vep_{i,r})_{r>0}$ converges to some $\vep_i\in \B^d(0,4)$ up to a subsequence. We obtain 
		\begin{align*}
			\overline{  \vPhi_{\infty}(\Sigma_{\infty}) } \subset \vPhi_{\infty}(\Sigma_{\infty}) \cup \left\{\vep_1,\ldots,\vep_I \right\}.
		\end{align*}
		Moreover, for any $i\in\{1,\ldots,I\}$ and any $r>0$, the ball $\B^d(\vep_{i,r,k},3r)$ intersects $\vPhi_k(\Gr^r_k)$. Sending $k\to +\infty$, we deduce that the ball $\overline{ \B^d(\vep_{i,r},3r)}$ intersects $\overline{  \vPhi_{\infty}(\Sigma_{\infty}) }$. Since this is true for any $r>0$, we deduce that $\vep_i\in \overline{  \vPhi_{\infty}(\Sigma_{\infty}) }$ by letting $r\to 0$.
	\end{proof}
	
	\subsubsection{Singularities of $\Sigma_{\infty}$.}
	
	In this section, we prove that $\vPhi^{-1}_{\infty}(\B^d(\vep_i,r))$ is a disjoint union of topological annuli for $r>0$ small enough. To do so, we adapt the arguments of \Cref{sec:One_chart}.
	
	\begin{lemma}\label{cl:Topology_singularities}
		There exists $R>0$ such that for any $i\in\{1,\ldots,I\}$, the connected components $\Cr\subset \vPhi_{\infty}^{-1}(\B^d(\vep_i,R)\setminus \{\vep_i\})$ such that $\vep_i\in \overline{\vPhi_{\infty}(\Cr)}$ are all homeomorphic to pointed ball $\B^n(0,R)\setminus \{0\}$. Under this identification, we have harmonic coordinates with $\vPhi_{\infty}\in \I_{\frac{n}{2}-1,2}(\B^n(0,R)\setminus \{0\};\R^d)$ and such that the coefficients $(g_{\vPhi_{\infty}})_{ij}$ lie in $W^{2,\left(\frac{n}{2},1\right)}(\B^n(0,R)\setminus \{0\})$. In particular, the induced distance is continuous up to the origin and $(\B^n(0,R),g_{\vPhi_{\infty}})$ is a well-defined Riemannian manifold of class $W^{3,\left(\frac{n}{2},1\right)}$.
	\end{lemma}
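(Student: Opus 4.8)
The plan is to run the one-chart construction of \Cref{sec:One_chart} at two nested scales around each singular value $\vep_i$ on the target and to compare the two slices.

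\textbf{Step 1: energy is absolutely continuous near $\vep_i$.} By \eqref{eq:finite_volume} and lower semicontinuity of $\Er$ under weak $W^{\frac n2+1,2}_{\loc}$--convergence, $\Er(\vPhi_{\infty};\Sigma_{\infty})\le C(n,d)\,E<+\infty$, so the integrand defining $\Er$ lies in $L^1(\Sigma_{\infty},g_{\vPhi_{\infty}})$. Since $\vep_i\notin\vPhi_{\infty}(\Sigma_{\infty})$, one has $\vol_{g_{\vPhi_{\infty}}}\big(\vPhi_{\infty}^{-1}(\B^d(\vep_i,r))\big)\to 0$ as $r\to 0$, hence $\Er\big(\vPhi_{\infty};\vPhi_{\infty}^{-1}(\B^d(\vep_i,r))\big)\to 0$ by absolute continuity of the integral. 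I would fix $R>0$ so small that this quantity is $\le\eps_1^n$ for every $i$, where $\eps_1$ is the threshold of \Cref{pr:est_good_slice}--\Cref{pr:Topology} taken with $V=C(n,d)E$ (allowed by \Cref{pr:Extrinsic_Hdiff}). As the constructions of \Cref{sec:One_chart} are stated for smooth immersions, wherever they are invoked I would pass on the relevant region to the smooth approximants $\vPhi_k$ of $\vPhi_{\infty}$ — available there because that region, having small $\vPhi_{\infty}$--energy, sits in a good set $\Gr^r_{\infty}$, where \Cref{lm:Conv_Good} supplies the required strong convergence — exactly as in \Cref{sec:Altas_weak_imm}, letting $k\to+\infty$ at the end.

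\textbf{Step 2: two good slices.} Fix $i$ and a connected component $\Cr$ of $\vPhi_{\infty}^{-1}(\B^d(\vep_i,R)\setminus\{\vep_i\})$ with $\vep_i\in\overline{\vPhi_{\infty}(\Cr)}$. For $\delta\in(0,1)$ I would apply \Cref{pr:est_good_slice} and \Cref{pr:global_graph_slice} at scale $\simeq R$, and — using the scale invariance of $\Er$ — again at scale $\simeq\delta R$. This produces good slices $S^{\mathrm{out}}=\vPhi_{\infty}^{-1}(\s^{d-1}(\vep_{i,1},\rho_1))$ with $\rho_1\in(\tfrac{3R}{5},\tfrac{9R}{10})$, $|\vep_{i,1}-\vep_i|\le\tfrac{R}{100}$, and $S^{\mathrm{in}}=\vPhi_{\infty}^{-1}(\s^{d-1}(\vep_{i,2},\rho_2))$ with $\rho_2\in(\tfrac{3\delta R}{5},\tfrac{9\delta R}{10})$, $|\vep_{i,2}-\vep_i|\le\tfrac{\delta R}{100}$, whose centres may moreover be chosen off $\overline{\vPhi_{\infty}(\Cr)}$; every connected component of $S^{\mathrm{out}}$ and $S^{\mathrm{in}}$ is (after approximation) $C^1$--diffeomorphic to $\s^{n-1}$, a $W^{1,\infty}\cap W^{2,n}$--small graph over an affine $n$--plane, with $\vII_{\vPhi_{\infty}}$ small in $L^n$, the slice's own second fundamental form bounded in $L^n$ (scale-invariantly), and generalised Gauss map oscillating by $\lesssim\eps$ along each component.

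\textbf{Step 3: the region between the slices is an annulus, and $\Cr\cong\B^n(0,R)\setminus\{0\}$.} Let $\Cr'\coloneqq\Cr\cap\vPhi_{\infty}^{-1}\big(\B^d(\vep_{i,1},\rho_1)\setminus\overline{\B^d(\vep_{i,2},\rho_2)}\big)$; since $\vep_i\in\overline{\vPhi_{\infty}(\Cr)}$, a path in $\Cr$ joining a point whose image is near $\s^{d-1}(\vep_{i,1},\rho_1)$ to one whose image is near $\vep_i$ crosses both $S^{\mathrm{out}}$ and $S^{\mathrm{in}}$, so $\Cr'$ meets both slices. Following \Cref{sec:extension} I would cap the components of $S^{\mathrm{out}}$ by flat $n$--planes (\Cref{pr:extension}) and those of $S^{\mathrm{in}}$ by the flat $n$--disks they almost bound, preceded by a thin transition region produced as in \Cref{pr:extension} — applied to the inversion of $S^{\mathrm{in}}$ and inverted back, which is harmless since the centre of $S^{\mathrm{in}}$ lies off the immersion — so that each cap has $\|\vII\|_{L^{(n,2)}}$ small; a final inversion about a point far from everything (Step 1 of the proof of \Cref{pr:Topology}) turns all caps into pieces of round $n$--spheres of small total curvature while keeping $\|\vII\|_{L^{(n,2)}}$ small on $\Cr'$. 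On the resulting closed manifold I would run the Chern--Lashof / Morse argument of the proof of \Cref{pr:Topology}: for a suitable direction $\ve_0\in\s^{d-1}$ and inversion centre the height function is Morse with exactly two critical points per cap and none in $\Cr'$, so by the Reeb theorem \cite[Theorem 4.1]{milnor1963} each closed component is $\s^n$, and — because every component carries a cap from $S^{\mathrm{out}}$ and one from $S^{\mathrm{in}}$ — excising these two caps exhibits each component of $\Cr'$ inside $\Cr$ as the cobordism $\s^{n-1}\times[0,1]$. Performing this for a dyadic sequence of scales $2^{-j}R$ and gluing the nested annuli along their common boundary spheres (single components of the intermediate good slices) identifies $\Cr$ with the half-open cylinder $\s^{n-1}\times(0,1]\cong\B^n(0,R)\setminus\{0\}$. \emph{This step is the main obstacle:} the topology in \Cref{sec:One_chart} was designed for a single good slice bounding a filled region and is biased toward the conclusion ``ball'', so adapting the Chern--Lashof/Morse count to the two-slice situation — where every component of the capped manifold now \emph{must} carry a cap from each slice, giving ``$\s^n$ minus two disks'' rather than ``$\s^n$ minus one disk'' — together with the construction of an honest smooth inner cap with small $L^{(n,2)}$ second fundamental form (the reason for the inversion trick) and the compatible assembly of the countably many dyadic annuli, is where essentially all the work lies.

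\textbf{Step 4: harmonic coordinates and regularity.} On $\Cr\cong\B^n(0,R)\setminus\{0\}$, \Cref{th:Atlas2} (with $\vartheta\to 0$) provides an atlas of harmonic coordinates satisfying \eqref{eq:bound_coordinates}, whence $(g_{\vPhi_{\infty}})_{ij}\in W^{2,(\frac n2,1)}_{\loc}(\B^n(0,R)\setminus\{0\})$ and $\vPhi_{\infty}\in\I_{\frac n2-1,2}(\B^n(0,R)\setminus\{0\};\R^d)$. Summing the $L^2$--additive local bounds over dyadic shells — the leading term $\int_{\mathrm{shell}_j}|\g^{\frac n2-1}\vII_{\vPhi_{\infty}}|^2$ being one of the summands of $\Er(\vPhi_{\infty};\mathrm{shell}_j)$ and the lower-order terms controlled by the geometrically decaying shell volumes — gives $\vPhi_{\infty}\in W^{\frac n2+1,2}(\Cr)$, hence $g_{\vPhi_{\infty}}\to\delta$ near the puncture; in particular the metric is continuous up to the origin, the induced distance extends continuously, and $(\B^n(0,R),g_{\vPhi_{\infty}})$ is a genuine Riemannian manifold. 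The upgrade of $g_{\vPhi_{\infty}}$ to $W^{2,(\frac n2,1)}$ across the origin follows by removability, a point having zero $W^{2,(\frac n2,1)}$--capacity in $\R^n$, once the local $W^{2,(\frac n2,1)}$ bounds are combined with the monotonicity formula near $\vep_i$ — the density of $\vPhi_{\infty}(\Cr)$ at $\vep_i$ being $1$ by \Cref{cor:II_xn2} and the density argument of \Cref{sec:Densities}, which forces the blow-up at $\vep_i$ to be a flat plane and the shell energies to decay. Finally, bootstrapping the two elliptic systems for $\vPhi_{\infty}$ and $g_{\vPhi_{\infty}}$ in harmonic coordinates exactly as in Step 3 of the proof of \Cref{lm:Conv_Good} yields the announced $W^{3,(\frac n2,1)}$ (and $W^{\frac n2+1,(2,1)}$) regularity; this last part is routine once the uniform shell bounds and the flatness of the tangent cone at $\vep_i$ are in hand.
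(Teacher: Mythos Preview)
Your Step 3 contains a genuine gap. You cap both slices and invoke Reeb's theorem to conclude that each closed component is $\s^n$, but Reeb requires exactly two critical points; with one outer cap and one inner cap you already have four by your own count, so the theorem does not apply, and you never show why each closed component carries exactly one cap from each slice rather than, say, two outer caps or several inner ones. The argument of \Cref{pr:Topology} you are adapting first \emph{rules out} the two--boundary (annulus) case via Morse inequalities and only then applies Reeb; here you want the opposite conclusion, so that route is closed. What the paper does is different: cap only the \emph{inner} slice by flat disks (using \cite[Remark~6.9]{MarRiv2025}), which places you exactly in the one--slice setting of \Cref{pr:Topology} and shows that the inner-capped region is a ball, so the \emph{outer} slice has a single component; iterating over scales $4^{-j}R$ then forces every intermediate slice to have one component. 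The innermost slice cannot be handled this way for the approximants $\vPhi_k$ (that region sits in the bad set $\Br_k^r$, where no good slice is available), so the paper proves $\theta_{\vPhi_\infty(\Cr)}(\vep_i)=1$ and invokes the tangent-cone structure of varifolds with $L^n$ second fundamental form \cite{hutchinson2,aiex2024}: the unique tangent cone is a single plane, which forces the innermost slice to have one component for $\eta$ small. You mention density $1$ only in Step~4, for regularity; its role in the topology step is absent from your argument.

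Your Step 4 also does not give what is claimed. Patching local harmonic charts over dyadic shells does not produce a \emph{single} coordinate system on $\B^n(0,R)\setminus\{0\}$ in which $(g_{\vPhi_\infty})_{ij}\in W^{2,(\frac n2,1)}$, and Lorentz norms do not sum additively over shells. The paper instead extends $\vPhi_\infty|_{\Cr_{\eta,\infty}}$, for each $\eta$, to a weak immersion of all of $\R^n$ (flat plane outside, flat disk inside) with small $\|\vII\|_{L^{(n,2)}}$, applies \cite[Theorem~6.5]{MarRiv2025} once to obtain one harmonic chart on the whole annulus with uniform $W^{2,(\frac n2,1)}$ bounds on the metric, and passes to the limit $\eta\to 0$.
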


	\begin{proof} 
		\textit{Step 1: Approximation by smooth immersions.}\\
		Let $i\in\{1,\ldots,I\}$. By Claim \ref{cl:Bad_Small}, there exists $\vep_{i,r,k}\in \vPhi_k(\Br^r_k)$ and a sequence $(r_{\ell})_{\ell\in\N}$ of positive numbers converging to 0, such that the following limit exist:
		\begin{align}
			\lim_{k\to +\infty} \vep_{i,r_{\ell},k}=\vep_{i,r_{\ell}}, \qquad \text{ and }\qquad  \lim_{\ell\to +\infty} \vep_{i,r_{\ell}} = \vep_i.
		\end{align}
		Let $\eta\in (0,1)$. Given $R>0$ such that $R<\min_{i\neq j} |\vep_i-\vep_j|$, there exists $\ell_0=\ell_0(\eta,R)$ such that for any $\ell\geq \ell_0$, there exists $k_0=k_0(\eta,R,\ell)$ such that 
		\begin{align}\label{eq:vq_close}
			\forall \ell\geq \ell_0,\ \forall k\geq k_0,\qquad \left|\vep_{i,r_{\ell},k} - \vep_i \right|\leq \frac{ \eta R}{10}.
		\end{align}
		Thus, given $V \coloneq \sup_{k\in\N} \vol_{g_{\vPhi_k}}(\Sigma) \in \big(0,C(n,d)\, E\big)$, there exists $R_0=R_0(\eps,V)>0$ such that for any $R\in(0,R_0)$ and any $\eta\in(0,1)$, we have\footnote{We can cover each annulus $\B^d(\vep_{i,r_{\ell},k}, 4^{-j}R)\setminus \B^d(\vep_{i,r_{\ell},k}, 4^{-j-1} R)$ by a number $C(d)>0$ of balls of radius $4^{-j-2}R$. The reciprocal image of each of these balls by $\vPhi_k$ lie in $\Gr^{4^{j-2}R}_k$ by \eqref{eq:vq_close} and Claim \ref{cl:Bad_Small} if $4^{-j-1}\geq \eta$, i.e. if $j< \log_4(1/\eta)$. } 
		\begin{align*}
			\forall \ell\geq \ell_0,\ \forall k\geq k_0,\qquad  
			\begin{cases} 
				\displaystyle \sup_{ 0 \leq j< \log_4(1/\eta)} \Er\left( \vPhi_k; \vPhi_k^{-1}\left(\B^d(\vep_{i,r_{\ell},k}, 4^{-j}R)\setminus \B^d(\vep_{i,r_{\ell},k}, 4^{-j-1} R)\right) \right) \leq C(d)\, \eps_0, \\[5mm]
				\displaystyle \vol_{g_{\vPhi_k}}\left( \vPhi_k^{-1}\left(\B^d(\vep_{i,r_{\ell},k}, R) \right) \right)\leq V\leq C(n,d)\, E.
			\end{cases} 
		\end{align*}
        
		\textit{Step 2: Slicing.}\\
		We now modify slightly the proof of Proposition \ref{pr:est_good_slice}. Fix $0\leq j< \log_4(1/\eta)$. By a straightforward adaptation of Claim \ref{cl:est_above_slice}, we have
		\begin{align}\label{eq:adapted_slice}
			\begin{aligned} 
			& \fint_{\B^d\left(\vep_{i,r_{\ell},k},\frac{\eta R}{100}\right)} \Bigg( \int_{\vPhi_k^{-1}\left( \B^d(\vq,4^{-j}R)\setminus \B^d(\vq,\frac{2}{3}\, 4^{-j} R)\right) }
			\left|d|\vPhi_k-\vq|\right|_{g_{\vPhi_k}}\left[ \frac{|\vPhi_k-\vq|^{2n}\ |\vII_{\vPhi_k}|^n_{g_{\vPhi_k}}}{\left|\big(\vn_{\vPhi_k}\wedge (\vPhi_k-\vq)\big)\llcorner (\vPhi_k-\vq) \right|^n \eps_0}\right. \\[3mm]
			& \qquad \left. + \frac{|\vPhi_k-\vq|^n}{\left|(\vn_{\vPhi_k}\wedge(\vPhi_k-\vq))\llcorner (\vPhi_k-\vq) \right|^n} + \frac{1}{r^n} \right]\, d\vol_{g_{\vPhi_k}}\Bigg)\, d\vq 
			\leq C(n,d,E).
			\end{aligned} 
		\end{align}
		We find a point $\vq^{\ j}_{\ell,k},\vq^{\ j}_{\ell,k}\in \B^d(\vep_{i,r_{\ell},k},\frac{\eta\, R}{100})$ and a radius $\rho_{\ell,k}^j \in (\frac{2}{3}\, 4^{-j}R,4^{-j}R)$ such that the set $S_{\ell,k}^j\coloneqq \vPhi_k^{-1}\big( \B^d(\vq^{\ j}_{\ell,k} , \rho^j_{\ell,k}) \big)$ satisfy the conclusion of Proposition \ref{pr:est_good_slice}. That is to say, if $\vA$ is the second fundamental form of $\vPhi_k\colon S^j_{\ell,k}\to \s^{d-1}(\vq^{\ j}_{\ell,k} , \rho^j_{\ell,k})$, then it holds
		\begin{align}\label{eq:vol_bound}
			(\rho^j_{\ell,k})^{\frac{1}{n}} \left\| \vA \right\|_{L^n \left( S^j_{\ell,k},g_{\vPhi_k} \right)} + \frac{(\rho^j_{\ell,k})^{\frac{1}{n}}}{\eps_0} \left\| \vII_{\vPhi_k} \right\|_{L^n \left( S^j_{\ell,k},g_{\vPhi_k} \right)} + (\rho^j_{\ell,k})^{1-n}\, \vol_{g_{\vPhi_k}}(S^j_{\ell,k}) \leq C(n,d,E).
		\end{align}
		Hence, the proof of \Cref{pr:global_graph_slice} applies without any change and both slices $\vPhi_k(S^j_{\ell,k})$ are given by graphs that satisfy the conclusion of Proposition \ref{pr:global_graph_slice}. Up to a subsequence, we can assume that the following limits exist:
		\begin{align}\label{eq:limit}
			\begin{cases} 
				\displaystyle \lim_{\ell\to+\infty} \lim_{k\to +\infty} \vq^{\ 0}_{\ell,k} = \vq_1 \in \B^d\left(\vep_i,\frac{R}{100} \right), \\[5mm]
				\displaystyle \lim_{\ell\to+\infty} \lim_{k\to +\infty} \vq^{\ \lfloor \log_4(1/\eta) \rfloor }_{\ell,k} = \vq_{2,\eta} \in \B^d\left(\vep_i,\frac{\eta\, R}{100} \right), \\[5mm]
				\displaystyle \lim_{\ell\to+\infty} \lim_{k\to +\infty} \rho^{\ \lfloor \log_4(1/\eta)\rfloor }_{\ell,k} = \rho_{\eta} \in \left( \frac{\eta R}{4}, \eta R \right).
			\end{cases} 
		\end{align}

		\textit{Step 3: Topology.}\\
		Consider $\Cr_{\infty}$ a connected component of $\vPhi_{\infty}^{-1}(\B^d(\vep_i,R)\setminus \{\vep_i\})$ such that $\vep_i\in \overline{ \vPhi_{\infty}(\Cr_{\infty})}$. Since $\vep_i\notin \vPhi_{\infty}(\Sigma_{\infty})$, there exists $\eta_0\in(0,1)$ such that for any $\eta\in(0,\eta_0)$, the set $\Cr_{\infty}\cap \vPhi_{\infty}^{-1}(\B^d(\vq_1,R)\setminus \B^d(\vq_2,\eta R))$ is connected. Let $\eta\in(0,\eta_0)$. Since $\vPhi_k $ converges uniformly to $\vPhi_{\infty}$ on $\Cr_{\infty}\setminus \vPhi_{\infty}^{-1}(\B^d(\vep_i,\eta R/5))\subset \Gr^{\eta R/5}_{\infty}$ by Lemma \ref{lm:Conv_Good}, we obtain that if $\ell$ and $k$ are large enough, then the set $\Cr_{\infty}\cap \vPhi_{\infty}^{-1}(\B^d(\vep_i,  R)\setminus \B^d(\vep_i,\eta R))$ is homeomorphic to a connected component $\Cr_{\eta,k}\subset \Sigma$ of $\vPhi_k^{-1}\left( \B^d(\vq^{\ 0}_{1,\ell,k},R) \setminus \B^d(\vq^{\ \lfloor \log_4(1/\eta)\rfloor }_{2,\ell,k},\eta R) \right)$. \\

		We now apply the same strategy as in \Cref{sec:Topology}. Fix $0\leq j<\lfloor \log_4(1/\eta)\rfloor$ and denote $a^j_{\eta,\ell,k}$ the number of connected component of $\overline{\Cr_{\eta,k}} \cap \vPhi_k^{-1}\left(\s^{d-1}(\vq^{\ j}_{\ell,k},\rho^{j}_{\ell,k})\right)$. We extend each connected component of the set $\overline{\Cr_{\eta,k}}\cap \vPhi_k^{-1}\left(\s^{d-1}(\vq^{\ j}_{\ell,k},\rho^j_{\ell,k})\right)$ as in \Cref{sec:extension}. We also extend each connected component of $\overline{\Cr_{\eta,k}} \cap \vPhi_k^{-1}\left(\s^{d-1}(\vq^{\ j+1}_{\ell,k},\rho^{j+1}_{\ell,k})\right)$ in $\B^d(\vq^{\ j+1}_{\ell,k}, \rho^{j+1}_{\ell,k})$ using the extension in \cite[Remark 6.9]{MarRiv2025}. We obtain an immersion $\vPsi^{\ j}_{\eta,\ell,k}\in \I_{0,(n,2)}(\tilde{\Sigma}^{\ j}_{\eta,\ell,k};\R^d)$ with $\tilde{\Sigma}^{\ j}_{\eta,\ell,k}$ given by the union of $\Cr_{\eta,k}$ with a $a^j_{\eta,\ell,k}$ of copies of $\R^n\setminus \B^n$ and a $a^{j+1}_{\eta,\ell,k}$ copies of $\B^n$ with the following properties 
		\begin{enumerate}
			\item We have the upper bound $a^j_{\eta,\ell,k}+a^{j+1}_{\eta,\ell,k} \leq C(n,d,E)$ by \eqref{eq:vol_bound}.
			\item The immersion $\vPsi^j_{\eta,\ell,k}$ verifies
			\begin{align}\label{eq:small_ext}
				\left\| \vII_{\vPsi_{\eta,\ell,k}^{\ j}} \right\|_{L^{(n,2)}\left(\tilde{\Sigma}^{\ j}_{\eta,\ell,k},g_{\vPsi^{\ j}_{\eta,\ell,k}} \right)} \leq C(n,d,E)\, \eps_0.
			\end{align}
		\end{enumerate}
		We recover a setting similar to \Cref{sec:Topology} and the same proof leads to the conclusion that $a^j_{\eta,\ell,k}=1$ for $0\leq j<\log_4(1/\eta)$, only the number $a^{\lfloor \log_4(1/\eta)\rfloor}_{\eta,\ell,k}$ remains to be determined. Moreover, the manifold $\hat{\Sigma}_{\eta,k} \coloneq \Cr_{\eta,k} \cup \bigcup_{1\leq \beta\leq b^{\lfloor \log_4(1/\eta) \rfloor}_{\eta,\ell,k} } \B^n$ is diffeomorphic to the Euclidean ball $\B^n$. Applying the same proof as in \Cref{sec:Densities}, we obtain that $\theta_{\vPhi_k(\Cr_{\eta,k})}(\vx)=1$ for any $\vx\in \vPhi_k(\Cr_{\eta,k})$, but also that $\theta_{\vPhi_{\infty}(\Cr_{\infty})}(\vx)=1$ for any $\vx\in \vPhi_{\infty}(\Cr_{\infty})\cap \B^d(\vq_1,R)\setminus \B^d(\vq_2,\eta R)$. This is valid for any $\eta>0$ and thus, we obtain $\theta_{\vPhi_{\infty}(\Cr_{\infty})}(\vx)=1$ for any $\vx\in \vPhi_{\infty}(\Cr_{\infty})\setminus \{\vep_i\}$. Moreover, up to taking an adapted slice of $\vPhi_k(\Cr_{\eta,k})$ and passing to the limit $k\to \infty$ as in \eqref{eq:adapted_slice}, we can extend $\vPhi_{\infty}(\Cr_{\infty})$ outside of $\B^d(\vq_1,R)$ by the graph of a function with compact support and verifying \eqref{eq:small_ext}. Applying the same argument as in \Cref{sec:density}, we obtain that $\theta_{\vPhi(\Cr_{\infty})}(\vep_i)=1$ as well.\\
		
		We now show that $a^{\lfloor \log_4(1/\eta)\rfloor}_{\eta,\ell,k}=1$.\\
		Since $\vPhi_{\infty}\in \I_{\frac{n}{2}-1,2}(\Cr_{\infty};\R^d)$, the associated varifold $\V_{\vPhi_{\infty}}$ has bounded $L^n$-norm of its second fundamental form, bounded volume and density constant equal to 1. Hence, $\V_{\vPhi_{\infty}}$ has tangent cones at $\vep_i$, see for instance \cite[Section 42]{simon1983}, having density $\theta_{\vPhi_{\infty}}(\vep_i)=1$ at the origin. Moreover, these cones have vanishing second fundamental form, hence the results \cite[Corollary p.92]{hutchinson2} and \cite[Theorem 4.3]{aiex2024} imply that every tangent cone is a finite union of flat planes passing through the origin. Since $\theta_{\vPhi(\Cr_{\infty})}=1$, we obtain that there is a unique plane. Consequently, we obtain $\eta_1>0$ such that for $\eta\in(0,\eta_1)$, we have $a^{\lfloor \log_4(1/\eta) \rfloor}_{\eta,\ell,k}=1$ for $k$ large enough. Otherwise, we would have a sequence $\eta_j\to 0$ as $j\to +\infty$ such that $a^{\lfloor \log_4(1/\eta_j)\rfloor}_{\eta_j,\ell,k}\geq 2$ for every $k\geq k_j$ for some $k_j\in\N$. Letting $k\to +\infty$, we obtain that the slice $\vPhi(\Cr_{\infty})\cap \s^{d-1}(\vq_{2,\eta_j},\rho_{\eta_j} )$ (the quantities $\vq_{2,\eta_j}$ and $\rho_{\eta_j}$ are defined in \eqref{eq:limit}) is the image of at least two distinct connected components, both of them having constant multiplicity equal to $1$. Letting $j\to +\infty$ (up to a subsequence), we obtain a tangent cone passing through the origin with multiplicity at least 2, which is impossible. Therefore there exists $\eta_1>0$ such that for any $\eta\in(0,\eta_1)$ and any $k\geq k(\eta)$, it holds $a^{\lfloor \log_4(1/\eta)\rfloor}_{\eta,\ell,k}=1$. Therefore, $\Cr_{\eta,k}$ is diffeomorphic to $\B^n_2\setminus \B^n_1$.\\ 
        
		Since $\Cr_{\eta,k}\subset \Gr^{\eta R}_k$, we can apply \Cref{th:Atlas2} and proceed as in \Cref{lm:Conv_Good} to obtain that $(\Cr_{\eta,k})_{k\in\N}$ converges to a subset $\Cr_{\eta,\infty}\subset \Sigma_{\infty}$ in the pointed Gromov--Hausdorff topology. Hence $\Cr_{\eta,\infty}$ is homeomorphic to $\Cr_{\eta,k}$, that is to say, homeomorphic to an annulus, and $\Cr_{\eta,\infty}$ is a connected component of $\Cr_{\infty} \cap \vPhi_{\infty}^{-1}\left( \B^d(\vq_1,R) \setminus \B^d(\vq_{2,\eta},\rho_{\eta}) \right)$\footnote{Homeomorphisms map connected components onto connected components.}. Therefore, we obtain the homeomorphisms
		\begin{align*}
			\Cr_{\infty} \cap \vPhi_{\infty}^{-1}\left( \B^d(\vq_1,R) \setminus \B^d(\vq_2,\rho_{\eta}) \right) = \Cr_{\eta,\infty} \simeq \s^{n-1}\times (1,2).
		\end{align*}
		Given $0<\eta<\eta'<\eta_0$, we have $\Cr_{\eta',\infty}\subset \Cr_{\eta,\infty}$. Thus, we obtain a number $T\in(0,+\infty]$ such that we have the following homeomorphism
		\begin{align*}
			\Cr_{\infty} = \bigcup_{0<\eta<\eta_0} \Cr_{\eta,\infty} \simeq \s^{n-1}\times (0,T).
		\end{align*}

		\textit{Step 4: Construction of the coordinates.}\\
		Let $\eps>0$. Up to reducing $R>0$, we can assume that 
		\begin{align*}
			\Er\left(\vPhi_{\infty};\Cr_{\infty}\right) \leq \eps^n.
		\end{align*}
		Let $\eta\in(0,1)$. Since $\Cr_{\infty}$ is homeomorphic to an annulus, we can follow the arguments of Steps 2 and 3 in order to obtain an extension $\vPsi_{\eta}\in \I_{\frac{n}{2}-1,2}(\R^n;\R^d)$ (parametrizing a flat $n$-dimensional plane outside of $\B^d(\vep_1,4R)$ and another one in $\B^d(\vep_1,\eta R/2)$) to the weak immersion $\vPhi_{\infty}\in \I_{\frac{n}{2}-1,2}\left( \Cr_{\infty}\cap \vPhi_{\infty}^{-1}\big( \B^d(\vep_1,R/2)\setminus \B^d(\vep_1,2\eta R) \big) ;\R^d\right)$ with the estimate 
		\begin{align*}
			\left\| \vII_{\vPsi_{\eta}} \right\|_{L^{(n,2)}\left( \R^n, g_{\vPsi_{\eta}}\right)} \leq C(n,d)\, \sqrt{\eps}.
		\end{align*}
        By \cite[Theorem 6.5]{MarRiv2025}, we obtain an open set $\Or_{\eta}\subset \R^n$, a bi-Lipschitz homeomorphism $f_{\eta}\colon \Or_{\eta}\to \B^n(0,R)\setminus \B^n(0,\eta R)$ and a harmonic chart $\vp_{\eta}\colon \Or_{\eta}\to U_{\eta}\subset \Cr_{\infty}$ such that 
		\begin{itemize}
			\item If we decompose the boundary $\dr \Or_{\eta} = S_{\eta}^1\cup S^2_{\eta}$, with $f_{\eta}\colon S_{\eta}^1\to \dr \B^n(0,R)$ and $f_{\eta}\colon S^2_{\eta}\to \dr \B^n(0,\eta R)$, then it holds
			\begin{align*}
				& \|\g f_{\eta} \|_{L^{\infty}(\dr \Or_{\eta})} + \|\g f^{-1}_{\eta} \|_{L^{\infty}(\dr \left[\B^n(0,R)\setminus \B^n(0,\eta R)\right])} + R^{\frac{1}{n}} \left[ \|\g^2 f_{\eta} \|_{L^n(S^1_{\eta})} + \|\g^2 f_{\eta}^{-1}\|_{L^n(\dr B^n(0,R))}\right] \\[2mm]
				&  + (\eta R)^{\frac{1}{n}} \left[ \|\g^2 f_{\eta}^{-1}\|_{L^n(\dr \B^n(0,\eta R))} + \|\g^2 f_{\eta}\|_{L^n(S^2_{\eta})} \right] \leq C(n,d).
			\end{align*}
			
			\item The induced metric $g_{\vPhi_{\infty}\circ\vp_{\eta}}$ verifies for any $1\leq a,b,c,d\leq n$
			\begin{align*}
				& \left\| \left(g_{\vPhi_{\infty}\circ\vp_{\eta}}\right)_{ab} - \delta_{ab} \right\|_{L^{\infty}(\Or_{\eta})} + \left\| \dr_c \left(g_{\vPhi_{\infty}\circ\vp_{\eta}}\right)_{ab} \right\|_{L^{(n,1)}(\Or_{\eta})}  +  \left\| \dr^2_{cd} \left(g_{\vPhi_{\infty}\circ\vp_{\eta}}\right)_{ab} \right\|_{L^{\left(\frac{n}{2},1\right)}(\Or_{\eta})}  \leq C(n,d)\, \eps.
			\end{align*}
		\end{itemize}
		
		By following the arguments of \Cref{sec:Oalpha}, we obtain that the open sets $\Or_{\eta}$ converge in the $C^{1,\frac{1}{2n}}$-topology to a bounded open set $\Or_0$ whose boundary is $C^{1,\frac{1}{n}}$-diffeomorphic to $\s^{n-1}(0,R)\cup \{0\}$.\\

		Moreover, the transition maps $\vp_{\eta'}^{-1}\circ\vp_{\eta}$ are uniformly bounded in $W^{3,\left(\frac{n}{2},1\right)}$ and the maps $\vp_{\eta}$ are uniformly continuous as explained in Step 2 of the proof of Lemma \ref{lm:Conv_Good}. Hence, the transition maps $\vp_{\eta'}^{-1}\circ\vp_{\eta}$ and $\vp_{\eta}$ converge as $\eta\to 0$ up to a subsequence to some maps $\vp_{0}\colon \Or_0\to U_0\subset \Sigma_{\infty}$ and $\psi_{\eta',0}\in W^{3,\left(\frac{n}{2},1\right)}(\vp_0^{-1}(U_{\eta'});\vp_{\eta'}^{-1}(U_0))$ verifying $\vp_{\eta'}\circ\psi_{\eta',0} = \vp_{0}$. By following the arguments of Step 2 of the proof of Lemma \ref{lm:Conv_Good}, we obtain that the weak immersion $\vPhi_{\infty}\circ\vp_{0}$ is injective. We obtain a harmonic chart $\vp_{0}\colon \Or_{0} \to U_{0}\subset \Sigma_{\infty}$ such that 
		\begin{align*}
			& \left\| \left(g_{\vPhi_{\infty}\circ\vp_{0}}\right)_{ab} - \delta_{ab} \right\|_{L^{\infty}(\Or_0)} + \left\| \dr_c \left(g_{\vPhi_{\infty}\circ\vp_0}\right)_{ab} \right\|_{L^{(n,1)}(\Or_0)}  +  \left\| \dr^2_{cd} \left(g_{\vPhi_{\infty}\circ\vp_0}\right)_{ab} \right\|_{L^{\left(\frac{n}{2},1\right)}(\Or_0)}  \leq C(n,d)\, \eps.
		\end{align*} 
		Since the coefficients $(g_{\vPhi_{\infty}\circ\vp_0})_{ij}$ are continuous up to $\overline{\Or_{0,\infty}}$, we deduce that they have a well-defined limit at the singularity. Moreover, we have that any transition map between $\vp_0$ and another harmonic chart obtained by \Cref{th:Atlas2} lies in $W^{3,\left(\frac{n}{2},1\right)}$. Hence, the harmonic chart $\vp_0$ provides an extension of $\Cr_{\infty}$ to $\overline{\Cr_{\infty}}$ into a $W^{3,\left(\frac{n}{2},1\right)}$-Riemannian manifold, by adding one point to $\Cr_{\infty}$.
	\end{proof}

	\bibliography{sobolovBib.bib}
	\bibliographystyle{plain}
\end{document}